\DeclareMathOperator\Gal{Gal}
\DeclareMathOperator\Hom{Hom}
\DeclareMathOperator\QB{QB}
\DeclareMathOperator\wt{wt}
\DeclareMathOperator\inv{inv}
\DeclareMathOperator\maxinv{max\,inv}
\DeclareMathOperator\supp{supp}
\DeclareMathOperator\LP{LP}
\DeclareMathOperator\cl{cl}
\DeclareMathOperator\Adm{Adm}
\DeclareMathOperator\Perm{Perm}
\DeclareMathOperator\GSp{GSp}
\def\GL{{\mathrm{GL}}}
\def\dom{{\mathrm{dom}}}
\author{Felix Schremmer}\date{\today}
\title{Affine Bruhat order and Demazure products}
\numberwithin{equation}{section}
\newtheorem{theorem}[equation]{Theorem}
\newtheorem{proposition}[equation]{Proposition}
\newtheorem{lemma}[equation]{Lemma}
\newtheorem{corollary}[equation]{Corollary}
\theoremstyle{definition}
\newtheorem{definition}[equation]{Definition}
\newtheorem{situation}[equation]{Situation}
\theoremstyle{remark}
\newtheorem{example}[equation]{Example}
\newtheorem{remark}[equation]{Remark}
\def\abs#1{{\left\lvert{#1}\right\rvert}}
\let\oldqedsymbol\qedsymbol
\def\qedaddendum{}
\def\qedsymbol{\oldqedsymbol\qedaddendum}
\def\af{{\mathrm{af}}}
\def\rightqed{\pushQED{\qed}\qedhere\popQED}
\def\presig{\prescript\sigma{}}
\newif\ifthesis
\def\textname{article}
\def\Xast{X_\ast}
\def\crossRef#1#2{\cite[\protect\NoHyper#1\ref{ex-#2}\protect\endNoHyper]{Schremmer2022_newton}}
\def\weightEstimateLong{\crossRef{Lemma~}{lem:weightEstimate}}
\def\positiveLengthFormulaLong{Lemma~\ref{lem:positiveLengthFormula}}
\def\positiveLengthFormulaShort{L\ref{lem:positiveLengthFormula}}
\begin{document}
\maketitle
\begin{abstract}We give new descriptions of the Bruhat order and Demazure products of affine Weyl groups in terms of the weight function of the quantum Bruhat graph. These results can be understood to describe certain closure relations concerning the Iwahori-Bruhat decomposition of an algebraic group. As an application towards affine Deligne-Lusztig varieties, we present a new formula for generic Newton points.\end{abstract}
\section{Introduction}
Let us begin by considering a Coxeter group $(W, S)$. The \emph{Bruhat order} on $W$ can be defined by inclusion of reduced words, namely $x_1\leq x_2$ if some reduced word for $x_1$ can be obtained from some fixed reduced word for $x_2$ by deleting any number of letters. This partial order is of central importance for the general theory of Coxeter groups, and it enjoys a number of remarkable properties and applications \cite[Chapter~2 and beyond]{Bjorner2005}. E.g.\ the Kazhdan-Lusztig polynomials associated with $(W, S)$ satisfy that $P_{u,v}\neq 0$ if and only if $u\leq v$ \cite[Proposition~5.1.5]{Bjorner2005}.

Related to this is the notion of \emph{Demazure products}. The Demazure product $x_1\ast x_2$ of two elements $x_1, x_2\in W$ is the largest element of the form $x_1' x_2'\in W$ where $x_1'\leq x_1$ and $x_2'\leq x_2$ in the Bruhat order. The Demazure product describes the multiplication in the $0$-Hecke algebra of $(W, S)$, cf.\ \cite[Section~1.2]{He2021c}. It, too, has a number of remarkable properties and applications.

In this paper, we focus on a specific class of (quasi-)Coxeter groups, namely \emph{affine Weyl groups}. These groups arise naturally in the context of arithmetic geometry. In a sense, affine Weyl groups are the \enquote{simplest} examples of infinite Coxeter groups, so they are also important examples from a pure Coxeter theoretic viewpoint.

If $G$ is a connected reductive group over a non-archimedian local field $F$, we get an associated \emph{extended affine Weyl group} $\widetilde W$. This group famously occurs as the indexing set of the \emph{Iwahori-Bruhat decomposition}
\begin{align*}
G(\breve F) = \bigsqcup_{x\in \widetilde W} IxI.
\end{align*}
Here, $\breve F$ is the maximal unramified extension of $F$, and $I\subseteq G(\breve F)$ is an Iwahori subgroup.

The closure relations of the above decomposition are precisely given by the Bruhat order, i.e.\
\begin{align*}
\overline{IxI} = \bigsqcup_{y\leq x}IyI\subseteq G(\breve F).
\end{align*}
If $x, y\in \widetilde W$, the product $IxI\cdot IyI\subseteq G(\breve F)$ will in general not be of the form $IzI$ for any $z\in \widetilde W$. However, if we pass to closures, we have
\begin{align*}
\overline{IxIyI} = \overline{I(x\ast y)I}
\end{align*}
for the Demazure product.

The Iwahori-Bruhat decomposition has been studied intensively, partly because of its connection to the Bruhat-Tits building \cite[Section~4]{Bruhat1972}. Due to this, both the Bruhat order and Demazure products of affine Weyl groups have been used and studied in the past. We mention the definition of admissible sets due to Kottwitz and Rapoport \cite{Kottwitz2000, Rapoport2002}, the description of generic Newton points in terms of the Bruhat order due to Viehmann \cite{Viehmann2014} and the recent works on generic Newton points and Demazure products due to He and Nie \cite{He2021b, He2021c}.

The \emph{Iwahori Hecke algebra} $\mathcal H$ of $G$, that received tremendous interest starting with the discovery of the Satake isomorphism \cite{Satake1963}, can be defined as follows: $\mathcal H$ is an algebra over $\mathbb Z[v, v^{-1}]$, and it is a free $\mathbb Z[v^{\pm 1}]$ module with basis given by $\{T_x\mid x\in \widetilde W\}$. The multiplication is defined by
\begin{align*}
T_x T_y =& T_{xy}\qquad\qquad\qquad\text{ if }\ell(xy) = \ell(x) + \ell(y),
\\T_s^2 =&(v -v^{-1})T_s+1~~\text{ if }s\in \widetilde W\text{ is a simple affine reflection}.
\end{align*}
The multiplication of the Iwahori Hecke algebra is quite complicated and poorly understood. For $x, y\in \widetilde W$, the product $T_x T_y$ will in general have the form
\begin{align*}
T_x T_y = \sum_{z\in \widetilde W}f_{x,y,z}(v-v^{-1})T_z
\end{align*}
for some polynomials $f_{x,y,z}(X)\in \mathbb Z[X]$. This product $T_x T_y$ can be seen as a combinatorial model for the multiplication of Iwahori double cosets $IxI\cdot IyI$ in $G(\breve F)$. Among all $z\in \widetilde W$ such that $f_{x,y,z}\neq 0$, there is a unique largest one, which is the Demazure product $z = x\ast y$. We may summarize that understanding Demazure products is a first step towards fully understanding the multiplication in Iwahori Hecke algebras, which is related to important geometric problems. E.g.\ the dimensions of affine Deligne-Lusztig varieties can be expressed in terms of degrees of class polynomials of the Iwahori-Hecke algebra \cite[Theorem~6.1]{He2014}. In view of this connection, our result on Demazure products is enough to describe generic Newton points associated with the Iwahori-Bruhat decomposition of an algebraic group.

Our main results fully describe the Bruhat order and Demazure products for $\widetilde W$. We refer to the corresponding sections for the most general statements. To summarize our results roughly, recall that each element $x\in \widetilde W$ can be written as $x = w\varepsilon^\mu$, where $w$ is an element of the \emph{finite Weyl group} $W$ and $\mu$ is an element of an abelian group denoted $X_\ast$ (that can be chosen as the coweight lattice of our root system). By $\wt : W\times W\rightarrow X_\ast$, we denote the weight function of the quantum Bruhat graph, cf.\ Section~\ref{chap:quantum-bruhat-graph}.
\begin{theorem}\label{thm:bruhat1}
Let $x_1, x_2\in \widetilde W$, and write them as $x_1 = w_1\varepsilon^{\mu_1}, x_2 = w_2 \varepsilon^{\mu_2}$. Then $x_1\leq x_2$ in the Bruhat order if and only if for each $v_1\in W$, there exists some $v_2\in W$ satisfying
\begin{align*}
v_1^{-1}\mu_1 + \wt(v_2\Rightarrow v_1) + \wt(w_1 v_1\Rightarrow w_2 v_2)\leq v_2^{-1}\mu_2.
\end{align*}
\end{theorem}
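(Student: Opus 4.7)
My overall plan is to prove both implications by induction on $\ell(x_2)$, reducing the Bruhat order to its covering relations via simple affine reflections. The parameter $v_1$ is best interpreted as selecting an \enquote{alcove direction}: the rewriting $x_1 v_1 = (w_1 v_1)\varepsilon^{v_1^{-1}\mu_1}$ exposes $v_1^{-1}\mu_1$ as the translation part with respect to this direction, and the inequality in the theorem becomes a chamber-by-chamber comparison of $x_1$ and $x_2$ after a common renormalization. The two quantum Bruhat graph terms then measure, respectively, the \enquote{rotational cost} of aligning the chambers ($\wt(v_2 \Rightarrow v_1)$) and the \enquote{rotational cost} of aligning the finite Weyl group parts ($\wt(w_1 v_1 \Rightarrow w_2 v_2)$).

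For the forward direction, pick a simple affine reflection $s$ with $\ell(x_2 s) < \ell(x_2)$. By the lifting property of the Bruhat order, $x_1 \leq x_2$ forces $x_1 \leq x_2 s$ or $x_1 s \leq x_2 s$ (or both), both of which fall within the inductive hypothesis. Given $v_1$, the inductive hypothesis produces some $v_2'$, and I would convert it into a valid $v_2$ for $(x_1, x_2)$ by a case analysis on $s$: if $s = s_\alpha$ is a finite simple reflection, then $w_2$ changes under the reduction and $v_2 = s_\alpha v_2'$ is a natural candidate; if $s$ is the affine simple reflection attached to the highest root, then $\mu_2$ shifts by a coroot and $v_2 = v_2'$ is more natural. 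In each case, the single covering arrow in the QBG introduced (or removed) by $s$ contributes exactly the right amount to the three weight terms on the left-hand side of the target inequality, compensating for the change in $w_2$ or $\mu_2$.

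For the reverse direction, assume the hypothesis holds for $(x_1, x_2)$; if $x_1 = x_2$ we are done. Otherwise I would pick a simple affine reflection $s$ with $\ell(x_2 s) < \ell(x_2)$ for which the hypothesis transfers to $(x_1, x_2 s)$; by induction $x_1 \leq x_2 s$, hence $x_1 \leq x_2$. The critical observation is that such an $s$ always exists: were the hypothesis to fail for every descent of $x_2$, a combined witness $v_1$ could be extracted that also fails the hypothesis for $(x_1, x_2)$ itself, contradicting the assumption. The universal quantifier over $v_1$ is essential here, since the reduction must be consistent across all chamber directions simultaneously.

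The main obstacle will be the QBG weight bookkeeping in these inductive conversions. The weight function is only subadditive along path composition (a triangle inequality $\wt(u \Rightarrow w) \leq \wt(u \Rightarrow v) + \wt(v \Rightarrow w)$ holds, generally strictly), so the passages $v_2 \leftrightarrow v_2'$ are not formal but must invoke specific covering data in the quantum Bruhat graph and exploit whether the relevant arrows are Bruhat or quantum edges. Two technical inputs will be indispensable: the length formula \positiveLengthFormulaShort, which expresses $\ell(w\varepsilon^\mu)$ in terms of QBG weights along maximal paths and thereby converts length inequalities into QBG-weight inequalities, and the weight estimates \weightEstimateShort, which control composed paths. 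With these tools, each inductive step reduces to a finite combinatorial verification in the quantum Bruhat graph at the level of a single simple reflection.
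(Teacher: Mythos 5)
Your forward direction is plausible (the paper instead reduces by transitivity of the relation \enquote{$\preceq$} defined by the weight inequality to a single covering relation $x < xr_a$, then does the case analysis on $\Phi^+(v^{-1}\alpha)$ directly — see Lemma~\ref{lem:bruhat1} — which avoids the need for induction on $\ell(x_2)$ entirely). But your reverse direction has a genuine gap, and it is exactly at the point you label the \enquote{critical observation.} The claim that if the hypothesis held for $(x_1,x_2)$ but failed for $(x_1,x_2 s)$ for every descent $s$ of $x_2$, one could \enquote{extract a combined witness $v_1$} contradicting the hypothesis for $(x_1,x_2)$, is unsupported, and it is precisely where the actual difficulty lies. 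There is no mechanism offered for combining failures across descents: the failures come with different $v_1^{(s)}$, and the weight function is merely subadditive, so there is no formal way to glue them. The paper instead chooses a \emph{particular} descent via a maximal root $\alpha$ in $\{\alpha \in \Phi^+ : \ell(x',\alpha)<0\}$, and the transfer of the weight inequality under this descent is what Lemma~\ref{lem:criterionSlightImprovement} and Proposition~\ref{prop:maximalEdges} (on maximal inversions) are designed to secure. This is not a finite verification one can outsource to $\LP$-formula and weight-estimate bookkeeping; it requires the new structural results on the quantum Bruhat graph proved in Section~\ref{chap:quantum-bruhat-graph}.

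A second, related problem: your induction on $\ell(x_2)$ is anchored at the wrong end. Your base case is $x_1 = x_2$ (implicitly $\ell(x_2)$ small), but the paper's chain of Lemmas~\ref{lem:bruhat-criterion1}--\ref{lem:bruhat-criterion4} anchors at the \emph{superregular} end, where the Lenart--Naito--Sagaki--Schilling--Shimozono lifting (Theorem~\ref{thm:pqbgLift}, in the form of Corollary~\ref{cor:superdominantParabolicEmbedding}) identifies Bruhat covers with QBG edges exactly. That lifting theorem is the foundational bridge between the weight function and the affine Bruhat order, and it does not appear anywhere in your proposal. Without it there is nothing tying $\wt(\cdot\Rightarrow\cdot)$ to the Bruhat order at all; the induction step would have no base to descend \emph{to}. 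Finally, the paper also needs a second induction — on $\ell(v)$ for the length-positive element $v$ of $x_1$, in Lemma~\ref{lem:bruhat-criterion4} — which your single induction on $\ell(x_2)$ does not account for.
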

For more refined descriptions of the Bruhat order, we refer to Theorems \ref{thm:bruhat2} and \ref{thm:bruhat4} as well as Remark~\ref{rem:bruhat5}. The description of Demazure products has the following form:
\begin{theorem}[Cf.\ Theorem~\ref{thm:demazure}]\label{thm:demazure-rough}
Let $x_1, x_2\in \widetilde W$, written as $x_1 = w_1\varepsilon^{\mu_1}$ and $x_2 = w_2 \varepsilon^{\mu_2}$. Then for explicitly described $v_1, v_2\in W$, we have
\begin{align*}
x_1 \ast x_2 = w_1 v_1 v_2^{-1}\varepsilon^{v_2 v_1^{-1}\mu_1 + \mu_2 - v_2\wt(v_1\Rightarrow w_2 v_2)}.
\end{align*}
\end{theorem}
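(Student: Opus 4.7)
The plan is to combine the Bruhat order description of Theorem~\ref{thm:bruhat1} with the defining property of the Demazure product,
\begin{align*}
x_1 \ast x_2 = \max\bigl\{ x_1' x_2' \mid x_1' \leq x_1,\ x_2' \leq x_2 \bigr\},
\end{align*}
and to prove $x_1 \ast x_2 \geq z$ and $x_1 \ast x_2 \leq z$ separately, where $z$ denotes the claimed right-hand side and $v_1, v_2 \in W$ are as constructed in Theorem~\ref{thm:demazure}.

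For the lower bound, a direct application of the multiplication rule in $\widetilde W$ shows that
\begin{align*}
z = x_1 \cdot x_2' \qquad \text{where} \qquad x_2' := v_1 v_2^{-1}\varepsilon^{\mu_2 - v_2\wt(v_1\Rightarrow w_2 v_2)}.
\end{align*}
It therefore suffices to verify $x_2' \leq x_2$ by means of Theorem~\ref{thm:bruhat1}. For a given test element $u_1 \in W$, the correct companion $u_2 \in W$ is dictated by the extremality properties that define $v_1, v_2$ (taking $u_2 = v_2$ works already at $u_1 = v_2$ and gives equality), and in general the resulting weight inequality should reduce to a tautology via the quantum Bruhat graph weight identities.

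For the upper bound, I would take arbitrary $y_1 \leq x_1$, $y_2 \leq x_2$ and, writing $y_i = u_i \varepsilon^{\nu_i}$ so that $y_1 y_2 = u_1 u_2 \varepsilon^{u_2^{-1}\nu_1 + \nu_2}$, verify $y_1 y_2 \leq z$ via Theorem~\ref{thm:bruhat1}. Given a test element $t_1 \in W$, the companion $t_2 \in W$ is to be constructed by chaining the witnesses supplied by $y_2 \leq x_2$ and $y_1 \leq x_1$; the required estimate then follows by combining the two weight inequalities with the triangle-type bound
\begin{align*}
\wt(a \Rightarrow b) + \wt(b \Rightarrow c) \geq \wt(a \Rightarrow c)
\end{align*}
for the quantum Bruhat graph, together with the extremality of the terms appearing in~$z$.

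The main obstacle will be the upper bound, where one must show that the specific pair $v_1, v_2$ is genuinely optimal: no other factorization $y_1 y_2$ may exceed $z$ in the Bruhat order. This optimality appears to rest on a careful interplay between minimal-weight paths in the quantum Bruhat graph and the $W$-action on $X_\ast$, and the argument will likely be streamlined by appealing to the more refined Bruhat order descriptions of Theorems~\ref{thm:bruhat2} and~\ref{thm:bruhat4}, whose companion vectors can be chosen more flexibly than those of Theorem~\ref{thm:bruhat1} and therefore provide tighter control over the witness $t_2$.
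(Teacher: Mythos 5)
Your lower bound — writing $z = x_1 x_2'$ with $x_2'\leq x_2$ — is the same factorization the paper exploits (Lemma~\ref{lem:analysis_x2} and, dually, Lemma~\ref{lem:analysis_x1}), but the verification $x_2'\leq x_2$ is not a tautology: one must first show that $v_2$ is length positive for $x_2'$, and this is exactly what the minimality of the pair $(v_1,v_2)$ buys you via Lemma~\ref{lem:genericActionConstruction}, which is itself a nontrivial argument that studies the Demazure product of $x_1$ with a superregular pure translation. Your remark that equality holds ``already at $u_1 = v_2$'' glosses over this; without the length positivity, the single equality at one test element does not give $x_2'\leq x_2$.

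The real gap is the upper bound. Your plan to prove $y_1 y_2 \leq z$ for \emph{all} $y_1\leq x_1$, $y_2\leq x_2$ is a genuine departure from the paper and I believe it would be substantially harder to execute. Unwinding Theorem~\ref{thm:bruhat1} for each of $y_1\leq x_1$ and $y_2\leq x_2$ separately produces two independent witnesses. Writing $y_i = u_i\varepsilon^{\nu_i}$, the translation part of $y_1 y_2$ is $u_2^{-1}\nu_1 + \nu_2$, so the first witness must be evaluated at the shifted test element $u_2 t$ rather than $t$; you are then trying to reconcile $s_1^{-1}\mu_1$ against $t_2^{-1} v_2 v_1^{-1}\mu_1$ for two witnesses $s_1$ (from $y_1\leq x_1$) and $t_2$ (from $y_2\leq x_2$) that you have no way of correlating. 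Forcing $s_1 = v_1 v_2^{-1}t_2$ requires knowing that this element is again length positive, which would pull in Corollary~\ref{cor:lengthPositiveComparison} and the generic action machinery of Section~\ref{sec:generic-action} — at which point the plan has become at least as heavy as the paper's proof.

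The paper sidesteps all of this with a length sandwich that uses only one factorization, not all of them: from $z \leq x_1\ast x_2$ one has $\ell(z) \leq \ell(x_1\ast x_2)$; Lemma~\ref{lem:demazureLengthEstimate} gives $\ell(x_1\ast x_2)\leq \ell(x_1)+\ell(x_2)-d(v_1'\Rightarrow w_2 v_2')$ for \emph{some} pair in $\LP(x_1)\times\LP(x_2)$; Lemma~\ref{lem:analysis_xast} gives $\ell(z)\geq \ell(x_1)+\ell(x_2)-d(v_1\Rightarrow w_2 v_2)$; and minimality of $d(v_1\Rightarrow w_2 v_2)$ closes the chain, forcing $\ell(z)=\ell(x_1\ast x_2)$ and hence $z=x_1\ast x_2$. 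This is where the minimality condition actually earns its keep, and I would reorganize your upper bound along these lines rather than attempting the direct combinatorial chaining.
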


As an application of our results, we describe the admissible sets as introduced in \cite{Kottwitz2000} and \cite{Rapoport2002} as Propositions \ref{prop:admissibleSubset} and \ref{prop:generalizedAdmissibleSubset}. We also get an explicit description of Bruhat covers in $\widetilde W$ (Proposition~\ref{prop:bruhatCovers}) and of the semi-infinite order on $\widetilde W$ (Corollary~\ref{cor:semiInfiniteOrder}). Finally, combining the aforementioned result of Viehmann \cite{Viehmann2014} with ideas of He \cite{He2021b}, we present a new description of generic Newton points (Theorem~\ref{thm:gnpViaDemazure}).

The methods of this paper build upon a previous paper by the same author \cite{Schremmer2022_newton}. In particular, the language and results on length functionals as introduced there will be used throughout this paper. To complement the combinatorial prerequisites, this paper introduces and proves a number of new properties of the quantum Bruhat graph in Sections \ref{chap:quantum-bruhat-graph} and \ref{sec:generic-action}. These new results on the quantum Bruhat graph are not only the foundation of our results on the Bruhat order and Demazure products, they also may have potentially further-reaching applications, given the previous usage of the quantum Bruhat graph for quantum cohomology \cite{Postnikov2005} or Kirillov-Reshetikhin crystals \cite{Lenart2015, Lenart2017}. In addition to the previously studied weight functions of the (parabolic) quantum Bruhat graph, we introduce a new \emph{semi-affine} weight function.

Note that while both this paper and our previous paper \cite{Schremmer2022_newton} provide explicit formulas for generic Newton points, these results are actually complementing rather than overlapping. In terms of logical dependencies, this paper only relies on the discussion of root functionals and length positivity in Section~2.2 of \cite{Schremmer2022_newton} and is otherwise independent. Together, both papers cover the contents of the author's PhD thesis.
\subsection{Acknowledgements}
First and foremost, I would like to thank my advisor Eva Viehmann for her constant support throughout my PhD time. I am deeply thankful for her invaluable help in both mathematical and administrative matters.

I would like to thank Paul Hamacher and Xuhua He for inspiring discussions.

The author was partially supported by the ERC Consolidator Grant 770936: \emph{NewtonStrat}, the German Academic Scholarship Foundation, the Marianne-Plehn-Program and the DFG Collaborative Research Centre 326: \emph{GAUS}.
 \section{Affine root system}
In this section, we describe the fundamental root-theoretic setup. In the literature, there are several different notions of \emph{affine Weyl groups} studied in different contexts, so we present a uniform setup that covers all cases. Readers with a combinatorial background are invited to consider any reduced root datum, whereas readers whose background is closer to arithmetic geometry may find more appealing to have the context of an algebraic group, as presented e.g.\ in \crossRef{Section~}{sec:notation}.

Let $\Phi$ be a reduced crystallographic root system. We choose a basis $\Delta\subseteq \Phi$ and denote the set of positive/negative roots by $\Phi^{\pm}$. 

Let $X_\ast$ denote an abelian group with a fixed embedding of the coroot lattice $\mathbb Z\Phi^\vee\subseteq X_\ast$. The group $X_\ast$ is allowed to have a torsion part. We assume that a bilinear map
\begin{align*}
\langle\cdot,\cdot\rangle : X_\ast\otimes \mathbb Z\Phi\rightarrow\mathbb Z
\end{align*}
has been chosen that extends the natural pairing between $\Phi^\vee$ and $\Phi$. E.g.\ both the coroot lattice $X_\ast = \mathbb Z\Phi^\vee$ and the coweight lattice $X_\ast = \Hom_{\mathbb Z}(\mathbb Z\Phi, \mathbb Z)$ are possible choices for $X_\ast$. We turn $X_\ast$ and $X_\ast\otimes\mathbb Q$ into ordered abelian groups by defining that $\mu_1\leq\mu_2$ if $\mu_2-\mu_1$ is a $\mathbb Z_{\geq 0}$-linear, resp.\ $\mathbb Q_{\geq 0}$-linear, combination of positive coroots. An element $\mu$ in $X_\ast$ or $X_\ast\otimes\mathbb Q$ will be called \emph{$C$-regular} for some constant $C>0$ if $\abs{\langle\mu,\alpha\rangle}\geq C$ for all $\alpha\in \Phi$. Typically, we will not specify the constant and talk of \emph{sufficiently regular} or \emph{superregular} elements. An element $\mu$ in $X_\ast$ or $X_\ast\otimes\mathbb Q$ is dominant if $\langle \mu,\alpha\rangle\geq 0$ for each positive root $\alpha$.

Denote the Weyl group of $\Phi$ by $W$, and the set of simple reflections by \begin{align*}
S = \{s_\alpha\mid\alpha\in \Delta\}\subseteq W.
\end{align*}
The Weyl group $W$ acts on $X_\ast$ via the usual convention
\begin{align*}
s_\alpha(\mu) = \mu-\langle\mu,\alpha\rangle\alpha^\vee,\qquad \alpha\in \Phi,~\mu\in X_\ast.
\end{align*}
The semi-direct product $\widetilde W:=W\ltimes X_\ast$ is called \emph{extended affine Weyl group}. Elements in $\widetilde W$ will typically be expressed as $x = w\varepsilon^\mu\in \widetilde W$ with $w\in W$ and $\mu\in X_\ast$.

By abuse of notation, we write $\Phi^+$ for the indicator function of positive roots, i.e.
\begin{align*}
\Phi^+(\alpha) :=\begin{cases}1,&\alpha\in \Phi^+,\\
0,&\alpha\in \Phi^-.\end{cases}
\end{align*}
The following easy facts will be used often, usually without further reference:
\begin{lemma}\label{lem:phiPlusFacts}
Let $\alpha\in \Phi$.
\begin{enumerate}[(a)]
\item $\Phi^+(\alpha) + \Phi^+(-\alpha)=1$.
\item If $\beta\in \Phi$ and $k,\ell\geq 1$ are such that $k\alpha+\ell \beta\in \Phi$, we have
\begin{align*}
&0\leq \Phi^+(\alpha)+\Phi^+(\beta)-\Phi^+(k\alpha+\ell\beta)\leq 1.\rightqed
\end{align*}
\end{enumerate}
\end{lemma}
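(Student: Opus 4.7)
For part (a), the statement is immediate from the fact that $\Phi = \Phi^+ \sqcup \Phi^-$ as a disjoint union, so exactly one of $\alpha, -\alpha$ lies in $\Phi^+$ and the other in $\Phi^-$; thus exactly one of $\Phi^+(\alpha), \Phi^+(-\alpha)$ equals $1$.

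For part (b), my plan is a case analysis on the signs of $\alpha$ and $\beta$. The quantity $\Phi^+(\alpha)+\Phi^+(\beta)-\Phi^+(k\alpha+\ell\beta)$ a priori lies in $\{-1,0,1,2\}$, so the task is to rule out the extremes. The key input is that $\Phi^+ = \Phi \cap \mathbb{Z}_{\geq 0}\Delta$, so any $\mathbb{Z}_{\geq 0}$-linear combination of positive roots that happens to be a root must itself be positive (and symmetrically for negatives). I would argue:
\begin{itemize}
\item If $\alpha,\beta\in\Phi^+$, then since $k,\ell\geq 1$, the sum $k\alpha+\ell\beta\in\mathbb{Z}_{\geq 1}\Delta$, so it is positive. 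Hence the expression equals $1+1-1=1$.
\item If $\alpha,\beta\in\Phi^-$, then $-(k\alpha+\ell\beta)\in\mathbb{Z}_{\geq 1}\Delta$ is positive, so $k\alpha+\ell\beta$ is negative. The expression equals $0+0-0=0$.
\item If exactly one of $\alpha,\beta$ is positive, then $\Phi^+(\alpha)+\Phi^+(\beta)=1$, and subtracting $\Phi^+(k\alpha+\ell\beta)\in\{0,1\}$ gives a value in $\{0,1\}$ regardless.
\end{itemize}

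In each case the expression lies in $\{0,1\}$, yielding the claimed inequality. There is no real obstacle here; the only conceptual point is invoking the characterization of $\Phi^+$ as the intersection of $\Phi$ with the nonnegative span of $\Delta$, which rules out the mixed-sign contradictions (positive + positive $=$ negative, or negative + negative $=$ positive).
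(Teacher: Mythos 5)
Your proof is correct. The paper states this lemma with a right-aligned QED symbol and no proof, treating it as an elementary fact; your argument is exactly the natural one that fills the gap, using that $\Phi^+$ consists of the roots lying in $\mathbb{Z}_{\geq 0}\Delta$ and running the three-way sign case analysis.
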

The sets of \emph{affine roots, positive affine roots, negative affine roots} and \emph{simple affine roots} are given by
\begin{align*}
\Phi_\af :=&\Phi\times\mathbb Z,
\\\Phi_\af^+:=&(\Phi^+\times \mathbb Z_{\geq 0})\sqcup (\Phi^-\times \mathbb Z_{\geq 1}) = \{(\alpha,k)\in \Phi_\af\mid k\geq \Phi^+(-\alpha)\},
\\\Phi_\af^- :=&-\Phi_\af^+ = \Phi_\af\setminus \Phi_\af^+= \{(\alpha,k)\in \Phi_\af\mid k< \Phi^+(-\alpha)\},
\\\Delta_\af:=&\{(\alpha,0)\mid\alpha\in \Delta\}\cup\\&~\{(-\theta,1)\mid\theta\text{ is the longest root of an irreducible component }\Phi'\subseteq \Phi\}\subseteq \Phi_\af^+.
\end{align*}
One checks that the positive affine roots are precisely those affine roots which are a sum of simple affine roots.

The action of $\widetilde W$ on $\Phi_\af$ is given by
\begin{align*}
(w\varepsilon^\mu)(\alpha,k) := (w\alpha,k-\langle \mu,\alpha\rangle).
\end{align*}
The \emph{length} of an element $x=w\varepsilon^\mu\in \widetilde W$ is defined as
\begin{align*}
\ell(x) := \#\{a\in \Phi_\af^+\mid xa\in\Phi_\af^-\}.
\end{align*}

Associated to each affine root $a=(\alpha,k)$, we have the affine reflection
\begin{align*}
r_a = s_\alpha \varepsilon^{k\alpha^\vee}\in \widetilde W.
\end{align*}
Denote by $W_\af\subseteq W$ the subgroup generated by the affine reflections (called \emph{affine Weyl group}) and write $S_\af := \{r_a\mid a\in \Delta_\af\}$ (the set of \emph{simple affine reflections}). It is easy to check that $(W_\af, S_\af)$ is a Coxeter group with length function $\ell$ as defined above, and $W_\af = W\ltimes\mathbb Z\Phi^\vee\subseteq \widetilde W$.

Denoting the subgroup of length zero elements of $\widetilde W$ by $\Omega \leq \widetilde W$, we get a semi-direct product decomposition $\widetilde W = \Omega\ltimes W_\af$.

The Bruhat order on $W_\af$ is the usual Coxeter-theoretic notion. We define the Bruhat order on $\widetilde W$ by declaring that
\begin{align*}
\omega_1 x_1\leq \omega_2 x_2\iff \left(\omega_1=\omega_2\text{ and }x_1\leq x_2\in W_\af\right),
\end{align*}
where $\omega_1, \omega_2\in \Omega$ and $x_1, x_2\in W_\af$. Equivalently, this is the partial order on $\widetilde W$ generated by the relations $x< xr_a$ for $x\in \widetilde W$ and $a\in \Phi_\af$ such that $\ell(x)<\ell(xr_a)$.

We will occasionally denote the \emph{classical part} of an affine root $a = (\alpha,k)$ or an element $x = w\varepsilon^\mu\in \widetilde W$ by
\begin{align*}
\cl(a) = \alpha\in \Phi,\qquad \cl(x) = w\in W.
\end{align*}

We need the language of length functionals from \crossRef{Section~}{sec:root-functionals}. We recall the basic definitions here, and refer to the cited paper for some geometric intuition and fundamental properties.
\begin{definition}Let $x = w\varepsilon^\mu\in \widetilde W$.
\begin{enumerate}[(a)]
\item For $\alpha\in \Phi$, we define the \emph{length functional} of $x$ by
\begin{align*}
\ell(x,\alpha) := \langle\mu,\alpha\rangle + \Phi^+(\alpha) - \Phi^+(w\alpha).
\end{align*}
\item An element $v\in W$ is called \emph{length positive for $x$}, written as $v\in \LP(x)$, if every positive root $\alpha\in \Phi^+$ satisfies $\ell(x,v\alpha)\geq 0$.
\item If $v\in W$ is not length positive for $x$ and $\alpha\in \Phi^+$ satisfies $\ell(x,v\alpha)<0$, we call $vs_\alpha\in W$ an \emph{adjustment} of $v$ for $\ell(x,\cdot)$.
\end{enumerate}
\end{definition}
The name \enquote{length functional} comes from the fact that the length of $x$ can be expressed as the sum of all positive values $\ell(x,\alpha)$ for $\alpha\in \Phi$.

We prove in \crossRef{Lemma~}{lem:rootFunctionalAdjustment} that iteratively adjusting any given $v\in W$ yields a length positive element for $x$. The following characterization of length positive elements will frequently come handy:
\begin{lemma}[{\crossRef{Corollary~}{cor:positiveLengthFormula}}]\label{lem:positiveLengthFormula}
Let $x = w\varepsilon^\mu\in \widetilde W$ and $v\in W$. Then
\begin{align*}
\ell(x)\geq \langle v^{-1}\mu,2\rho\rangle - \ell(v) + \ell(wv).
\end{align*}
Equality holds if and only if $v$ is length positive for $x$.\rightqed
\end{lemma}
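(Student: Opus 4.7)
The plan is to express both sides of the inequality as sums indexed by roots of the length functional $\ell(x,\cdot)$, and then compare them termwise. The main technical step is to establish the identity
\begin{align*}
\ell(x) = \sum_{\alpha\in\Phi}\max(0,\ell(x,\alpha)).
\end{align*}
To get this, I would start from the definition of length and write out what $a=(\alpha,k)\in\Phi_\af^+$ with $xa\in\Phi_\af^-$ means in terms of $\mu$ and $w$: namely $\Phi^+(-\alpha)\le k<\langle\mu,\alpha\rangle+\Phi^+(-w\alpha)$. Using Lemma~\ref{lem:phiPlusFacts}(a) to rewrite $\Phi^+(-\beta)=1-\Phi^+(\beta)$, the number of integers $k$ in this interval is exactly $\max(0,\ell(x,\alpha))$, and summing over $\alpha\in\Phi$ gives the identity.

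Next, I would expand the right-hand side of the claim. Writing $2\rho=\sum_{\alpha\in\Phi^+}\alpha$ gives $\langle v^{-1}\mu,2\rho\rangle=\sum_{\alpha\in\Phi^+}\langle\mu,v\alpha\rangle$. The standard inversion-count description of the length of a finite Weyl group element yields $\ell(v)=\sum_{\alpha\in\Phi^+}(1-\Phi^+(v\alpha))$ and analogously for $\ell(wv)$, so
\begin{align*}
\langle v^{-1}\mu,2\rho\rangle-\ell(v)+\ell(wv)=\sum_{\alpha\in\Phi^+}\bigl(\langle\mu,v\alpha\rangle+\Phi^+(v\alpha)-\Phi^+(wv\alpha)\bigr)=\sum_{\alpha\in\Phi^+}\ell(x,v\alpha).
\end{align*}

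With both expressions at hand, the inequality is the two-step chain
\begin{align*}
\sum_{\alpha\in\Phi^+}\ell(x,v\alpha)\le\sum_{\alpha\in\Phi^+}\max(0,\ell(x,v\alpha))\le\sum_{\beta\in\Phi}\max(0,\ell(x,\beta))=\ell(x),
\end{align*}
where the second inequality uses that $v\Phi^+\subseteq\Phi$. Equality in the first step holds iff $\ell(x,v\alpha)\ge 0$ for every $\alpha\in\Phi^+$, which is exactly the definition of $v\in\LP(x)$. Equality in the second step holds iff $\max(0,\ell(x,\beta))=0$ for every $\beta\in v\Phi^-$; since $\ell(x,-\gamma)=-\ell(x,\gamma)$ (a direct consequence of $\Phi^+(-\gamma)=1-\Phi^+(\gamma)$), this condition is equivalent to $\ell(x,v\alpha)\ge 0$ for every $\alpha\in\Phi^+$, i.e.\ again $v\in\LP(x)$. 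Hence both equalities occur simultaneously under the claimed condition.

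The only nontrivial step is the initial counting identity for $\ell(x)$; everything else reduces to unwinding definitions and applying Lemma~\ref{lem:phiPlusFacts}(a). The equality case then drops out of the shared characterization of when the two estimates in the chain are sharp.
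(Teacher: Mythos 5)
Your proof is correct. The paper does not reproduce a proof of this lemma — it cites \crossRef{Corollary~}{cor:positiveLengthFormula} from the companion paper and marks the statement with a terminal $\qed$ — but your argument is precisely the natural one, and the paper itself gestures at your key identity just before the lemma when it remarks that "the length of $x$ can be expressed as the sum of all positive values $\ell(x,\alpha)$ for $\alpha\in\Phi$." Your derivation of $\ell(x)=\sum_{\alpha\in\Phi}\max(0,\ell(x,\alpha))$ by counting lattice points $k$ in the half-open interval $[\Phi^+(-\alpha),\langle\mu,\alpha\rangle+\Phi^+(-w\alpha))$ is correct (the length of the interval simplifies to $\ell(x,\alpha)$ via $\Phi^+(-\beta)=1-\Phi^+(\beta)$), the rewriting of the right-hand side as $\sum_{\alpha\in\Phi^+}\ell(x,v\alpha)$ via the inversion-count formula for $\ell(v)$ and $\ell(wv)$ is standard, and your analysis of when each of the two inequalities in the chain is an equality correctly reduces both conditions to $v\in\LP(x)$ using $\ell(x,-\gamma)=-\ell(x,\gamma)$. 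This is a complete and self-contained proof of the cited result.
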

The length functional can be used to characterize the \emph{shrunken Weyl chambers} \crossRef{Proposition~}{prop:shrunkenWeylChamber}: We have that $x \in \widetilde W$ is in a shrunken Weyl chamber if and only if $\ell(x,\alpha)\neq 0$ for all $\alpha\in \Phi$, which is equivalent to saying that $\LP(x)$ contains only one element.
\section{Quantum Bruhat graph}\label{chap:quantum-bruhat-graph}
In this section, we recall the definition of quantum Bruhat graphs and study its weight functions.
Before turning to the abstract theory of these graphs, we will discuss the situation of root systems of type $A_n$ as a motivational example.

For each simple affine root $a = (\alpha,k)\in \Delta_\af$, we define a coweight $\omega_a \in \mathbb Q \Phi^\vee$ as follows: For $\beta\in \Delta$, we define
\begin{align*}
\langle \omega_a,\beta\rangle = \begin{cases}1,&\alpha=\beta,\\
0,&\alpha\neq\beta.\end{cases}
\end{align*}
In particular, $\omega_a = 0$ if $\alpha\notin \Delta$.

Let now $x_1 = w_1\varepsilon^{\mu_1}, x_2 = w_2 \varepsilon^{\mu_2}\in \widetilde W$. By \cite[Theorem~8.3.7]{Bjorner2005}, we have
\begin{align*}
x_1\leq x_2\iff \forall a,a'\in \Delta_{\af}:~(\mu_1 + \omega_a - w_1^{-1}\omega_{a'})^{\dom} \leq (\mu_2 + \omega_a -w_2^{-1}\omega_{a'})^{\dom}.
\end{align*}
Here, we write $\nu^\dom\in X_\ast$ for the unique dominant element in the $W$-orbit of $\nu\in X_\ast$.

Suppose that $\mu_1$ and $\mu_2$ are sufficiently regular, such that we find $v_1, v_2\in W$ with
\begin{align*}
\forall a, a'\in \Delta_{\af}:~(\mu_i + \omega_a - w_i^{-1}\omega_{a'})^{\dom} = v_i^{-1}(\mu_i + \omega_a - w_i^{-1}\omega_{a'}).
\end{align*}
Then we conclude
\begin{align*}
x_1\leq x_2\iff& \forall a, a':~v_1^{-1}(\mu_1 + \omega_a - w_1^{-1}\omega_{a'})\leq v_2^{-1}(\mu_2 + \omega_a - w_2^{-1}\omega_{a'})
\\\iff&v_1^{-1}\mu_1 + \sup_{a\in \Delta_{\af}} (v_1^{-1}\omega_a - v_2^{-1}\omega_a) + \sup_{a'\in \Delta_{\af}} (w_2 v_2)^{-1} \omega_{a'} - (w_1 v_1)^{-1}\omega_{a'}\leq v_2^{-1}\mu_2.
\end{align*}
So if we define
\begin{align}
\wt(v_1\Rightarrow v_2) :=  \sup_{a\in \Delta_{\af}} (v_2^{-1}\omega_a - v_1^{-1}\omega_a),\label{eq:wtAn}
\end{align}
we can conclude a version of our result on the Bruhat order (Theorem~\ref{thm:bruhat1}).

Indeed, formula (\ref{eq:wtAn}) holds true for root systems of type $A_n$, but not for any other root system. Many properties of the weight function are easier to prove for type $A_n$, where an explicit formula exists, so it is helpful to keep this example in mind.

We refer to a paper of Ishii \cite{Ishii2021} for explicit formulas for the weight functions of all classical root systems (while he discusses explicit criteria for the semi-infinite order, these can be translated to explicit formulas for the weight function as outlined above in the $A_n$ case).
\subsection{(Parabolic) quantum Bruhat graph}
We start with a discussion of the quantum roots in $\Phi^+$.
\begin{lemma}\label{lem:quantumRoots}
Let $\alpha \in \Phi^+$. Then
\begin{align*}
\ell(s_\alpha) \leq \langle \alpha^\vee,2\rho\rangle-1.
\end{align*}
Equality holds if and only if for all $\alpha\neq \beta\in \Phi^+$ with $s_\alpha(\beta)\in \Phi^-$, we have $\langle \alpha^\vee,\beta\rangle = 1$.
\end{lemma}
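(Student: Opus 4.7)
The plan is to interpret both sides in terms of the \emph{left inversion set}
\[
I(s_\alpha) := \{\beta \in \Phi^+ \mid s_\alpha(\beta) \in \Phi^-\},
\]
which has cardinality $\ell(s_\alpha)$. Note that $\alpha \in I(s_\alpha)$ since $s_\alpha(\alpha) = -\alpha$. The claim to prove becomes the combinatorial inequality $\sum_{\beta \in I(s_\alpha)} 1 \leq \langle \alpha^\vee, 2\rho\rangle - 1$, which I would derive by rewriting $\langle \alpha^\vee, 2\rho\rangle$ itself as a sum indexed by $I(s_\alpha)$.

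The core identity I would use is the standard $\rho - s_\alpha(\rho) = \sum_{\beta \in I(s_\alpha)} \beta$, valid for any element of $W$ applied to $\rho$. On the other hand, from the definition of $s_\alpha$ we have $\rho - s_\alpha(\rho) = \langle \alpha^\vee, \rho\rangle \, \alpha$. Pairing both expressions against $\alpha^\vee$ and using $\langle \alpha^\vee, \alpha\rangle = 2$ gives
\[
\langle \alpha^\vee, 2\rho\rangle \;=\; \sum_{\beta \in I(s_\alpha)} \langle \alpha^\vee, \beta\rangle.
\]
So everything reduces to controlling the summands $\langle \alpha^\vee, \beta\rangle$ for $\beta \in I(s_\alpha)$.

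Next I would establish the pointwise bound that for every $\beta \in I(s_\alpha)$ with $\beta \neq \alpha$ one has $\langle \alpha^\vee, \beta\rangle \geq 1$. For this, expand $\alpha = \sum d_i \alpha_i$ and $\beta = \sum c_i \alpha_i$ with $d_i, c_i \geq 0$ in the simple root basis. Then
\[
s_\alpha(\beta) = \beta - \langle \alpha^\vee, \beta\rangle \alpha \;=\; \sum_i \bigl(c_i - \langle \alpha^\vee, \beta\rangle d_i\bigr) \alpha_i.
\]
For this to lie in $\Phi^-$ all coefficients must be $\leq 0$; since $\beta \neq \alpha$ has some $c_i > 0$ with $d_i > 0$ somewhere, this forces $\langle \alpha^\vee, \beta\rangle \geq 1$. (The reader should be reminded that, since $\Phi$ is reduced, the condition $\beta \neq \alpha$ within $\Phi^+$ is equivalent to $\beta$ not being a positive multiple of $\alpha$.)

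Putting the pieces together: $\alpha \in I(s_\alpha)$ contributes exactly $2$ to the sum, and each of the remaining $\ell(s_\alpha) - 1$ elements contributes at least $1$, so $\langle \alpha^\vee, 2\rho\rangle \geq \ell(s_\alpha) + 1$. Equality holds precisely when $\langle \alpha^\vee, \beta\rangle = 1$ for all $\beta \in I(s_\alpha) \setminus \{\alpha\}$, which is exactly the stated condition. I expect no real obstacle here — the only subtle point is recognising the Weyl-group-theoretic identity $\rho - s_\alpha(\rho) = \sum_{\beta \in I(s_\alpha)} \beta$, everything else is a bookkeeping exercise once one translates $\ell(s_\alpha) = \#I(s_\alpha)$ into an equality via the pairing with $\alpha^\vee$.
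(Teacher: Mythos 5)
Your proposal is correct and follows essentially the same route as the paper: express $\langle\alpha^\vee,2\rho\rangle$ as $\sum_{\beta\in I}\langle\alpha^\vee,\beta\rangle$ over the inversion set $I$ of $s_\alpha$, observe that $\alpha$ contributes $2$ and every other $\beta\in I$ contributes at least $1$, and read off both the inequality and the equality condition. The only cosmetic differences are that you invoke the standard identity $\rho-s_\alpha(\rho)=\sum_{\beta\in I}\beta$ and pair with $\alpha^\vee$, whereas the paper derives the same identity directly from $s_\alpha(I)=-I$ and the $s_\alpha$-invariance of the pairing; your phrasing of the pointwise bound $\langle\alpha^\vee,\beta\rangle\geq 1$ is a little loose (it should read: some $c_i>0$ since $\beta\in\Phi^+$, and then $c_i-\langle\alpha^\vee,\beta\rangle d_i\leq 0$ forces $\langle\alpha^\vee,\beta\rangle d_i\geq c_i>0$), but the conclusion is sound.
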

Roots satisfying the equivalent properties of Lemma~\ref{lem:quantumRoots} are called \emph{quantum roots}. We see that all long roots are quantum (so in a simply laced root system, all roots are quantum). Moreover, all simple roots are quantum.

The first inequality of Lemma~\ref{lem:quantumRoots} is due to \cite[Lemma~4.3]{Brenti1998}. By \cite[Lemma~7.2]{Braverman2011}, we have the following more explicit (but somehow less useful for us) result: A short root $\alpha$ is quantum if and only if $\alpha$ is a sum of short simple roots.
\begin{proof}[Proof of Lemma~\ref{lem:quantumRoots}]
We calculate
\begin{align*}
\langle \alpha^\vee, 2\rho\rangle = \frac 12\left(\langle \alpha^\vee, 2\rho\rangle + \langle s_\alpha(\alpha^\vee), s_\alpha(2\rho)\rangle\right) = \frac 12\langle \alpha^\vee, 2\rho - s_\alpha(2\rho)\rangle.
\end{align*}
Let
\begin{align*}
I := \{\beta\in \Phi^+\mid s_\alpha(\beta)\in \Phi^-\}.
\end{align*}
Then $s_\alpha(I) = -I$ and $s_\alpha(\Phi^+\setminus I) = \Phi^+\setminus I$. It follows that
\begin{align*}
2\rho - s_\alpha(2\rho) = &\sum_{\beta \in I} \left(\beta - s_\alpha(\beta)\right) + \sum_{\beta\in \Phi^+\setminus I} \left(\beta-s_\alpha(\beta)\right)
\\=&2\sum_{\beta\in I} \beta.
\end{align*}
Therefore, we obtain
\begin{align*}
\langle \alpha^\vee, 2\rho\rangle = \sum_{\beta\in I}\langle\alpha^\vee, \beta\rangle.
\end{align*}
Certainly, $\alpha\in I$. Hence
\begin{align*}
\langle \alpha^\vee, 2\rho\rangle = 2+\sum_{\substack{\alpha \neq \beta\in \Phi^+\\s_\alpha(\beta)\in \Phi^-}}\langle\alpha^\vee, \beta\rangle.
\end{align*}
Now if $\alpha,\beta\in \Phi^+$ and $s_\alpha(\beta) = \beta-\langle \alpha^\vee, \beta\rangle \alpha\in \Phi^-$, we get $\langle \alpha^\vee, \beta\rangle\geq 1$. We conclude
\begin{align*}
\langle \alpha^\vee, 2\rho\rangle = 2+\sum_{\substack{\alpha \neq \beta\in \Phi^+\\s_\alpha(\beta)\in \Phi^-}}\langle\alpha^\vee, \beta\rangle\geq 2 + \#\{\beta\in \Phi^+\setminus\{\alpha\}\mid s_\alpha(\beta)\in \Phi^-\} = 1+\ell(s_\alpha).
\end{align*}
All claims of the lemma follow immediately from this.
\end{proof}
The parabolic quantum Bruhat graph as introduced by Lenart-Naito-Sagaki-Schilling-Schi\-mo\-zo\-no \cite{Lenart2015} is a generalization of the classical construction of the quantum Bruhat graph by Brenti-Fomin-Postnikov \cite{Brenti1998}. To avoid redundancy, we directly state the definition of the parabolic quantum Bruhat graph, even though we will be mostly concerned with the (ordinary) quantum Bruhat graph.

Fix a subset $J\subseteq \Delta$. We denote by $W_J$ the Coxeter subgroup of $W$ generated by the reflections $s_\alpha$ for $\alpha\in J$. We let
\begin{align*}
W^J = \{w\in W\mid w(J)\subseteq \Phi^+\}.
\end{align*}
For each $w\in W$, let $w^J\in W^J$ and $w_J\in W_J$ be the uniquely determined elements with $w = w^J \cdot w_J$ \cite[Proposition~2.4.4]{Bjorner2005}.

We write $\Phi_J = W_J(J)$ for the root system generated by $J$. The sum of positive roots in $\Phi_J$ is denoted $2\rho_J$. The quotient lattice $\mathbb Z\Phi^\vee / \mathbb Z \Phi_J^\vee$ is ordered by declaring $\mu_1+\Phi_J^\vee \leq \mu_2 + \Phi_J^\vee$ if the difference $\mu_2 - \mu_1 + \Phi_J^\vee$ is equal to a sum of positive coroots modulo $\Phi_J^\vee$.
\begin{definition}
\begin{enumerate}[(a)]
\item The \emph{parabolic quantum Bruhat graph} associated with $W^J$ is a directed and $(\mathbb Z\Phi^\vee/\mathbb Z\Phi_J^\vee)$-weighted graph, denoted $\QB(W^J)$. The set of vertices is given by $W^J$. For $w_1, w_2\in W^J$, we have an edge $w_1\rightarrow w_2$ if there is a root $\alpha\in \Phi^+\setminus \Phi_J$ such that $w_2 = (w_1s_\alpha)^J$ and one of the following conditions is satisfied:\begin{itemize}
\item[(B)] $\ell(w_2) = \ell(w_1)+1$ or
\item[(Q)] $\ell(w_2) = \ell(w_1) + 1 - \langle \alpha^\vee,2\rho - 2\rho_J\rangle$.
\end{itemize}
Edges of type (B) are \emph{Bruhat edges} and have weight $0\in \mathbb Z\Phi^\vee/\mathbb Z\Phi_J^\vee$. Edges of type (Q) are \emph{quantum edges} and have weight $\alpha^\vee \in \mathbb Z\Phi^\vee/\mathbb Z\Phi_J^\vee$.
\item A \emph{path} in $\QB(W^J)$ is a sequence of adjacent edges
\begin{align*}
p : w = w_1\rightarrow w_2\rightarrow\cdots \rightarrow w_{\ell+1} = w'.
\end{align*}
The \emph{length} of $p$ is the number of edges, denoted $\ell(p)\in \mathbb Z_{\geq 0}$. The \emph{weight} of $p$ is the sum of its edges' weights, denoted $\wt(p)\in \mathbb Z\Phi^\vee/\mathbb Z\Phi^\vee_J$. We say that $p$ is a path \emph{from $w$ to $w'$}.
\item If $w, w'\in W^J$, we define the \emph{distance function} by
\begin{align*}
d_{\QB(W^J)}(w\Rightarrow w') = \inf\{\ell(p)\mid p\text{ is a path in $\QB(W^J)$ from $w$ to $w'$}\}\in \mathbb Z_{\geq 0}\cup\{\infty\}.
\end{align*}
A path $p$ from $w$ to $w'$ of length $d_{\QB(W^J)}(w\Rightarrow w')$ is called \emph{shortest}.
\item The \emph{quantum Bruhat graph} of $W$ is the parabolic quantum Bruhat graph associated with $J=\emptyset$, denoted $\QB(W) := \QB(W^\emptyset)$. We also shorten our notation to \begin{align*}
d(w\Rightarrow w') := d_{\QB(W)}(w\Rightarrow w').
\end{align*}
\end{enumerate}
\end{definition}
\begin{remark}
Let us consider the case $J=\emptyset$, i.e.\ the quantum Bruhat graph. If $w \in W$ and $\alpha\in \Delta$, then $w\rightarrow ws_\alpha$ is always an edge of weight $\alpha^\vee \Phi^+(-w\alpha)$.

The quantum edges are precisely the edges of the form $w\rightarrow ws_\alpha$ where $\alpha$ is a quantum root and $\ell(ws_\alpha) = \ell(w) - \ell(s_\alpha)$.
\end{remark}
\begin{proposition}[{\cite[Proposition~8.1]{Lenart2015} and \cite[Lemma~7.2]{Lenart2017}}]\label{prop:qbgWeightDefinition}Consider $w, w'\in W^J$.
\begin{enumerate}[(a)]
\item The graph $\QB(W^J)$ is strongly connected, i.e.\ there exists a path from $w$ to $w'$ in $\QB(W^J)$.
\item Any two shortest paths from $w$ to $w'$ have the same weight, denoted \begin{align*}
\wt_{\QB(W^J)}(w\Rightarrow w')\in \mathbb Z\Phi^\vee/\mathbb Z\Phi_J^\vee.
\end{align*}
\item Any path $p$ from $w$ to $w'$ has weight $\wt(p)\geq \wt_{\QB(W^J)}(w\Rightarrow w')\in \mathbb Z\Phi^\vee/\mathbb Z\Phi_J^\vee$.
\item The image of
\begin{align*}
\wt(w\Rightarrow w') := \wt_{\QB(W)}(w\Rightarrow w')\in \mathbb Z\Phi^\vee
\end{align*}
under the canonical projection $\mathbb Z\Phi^\vee\rightarrow \mathbb Z\Phi^\vee/\mathbb Z\Phi_J^\vee$ is given by $\wt_{\QB(W^J)}(w\Rightarrow w')$.\rightqed
\end{enumerate}
\end{proposition}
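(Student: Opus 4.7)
The plan is to first establish parts (a)--(c) for the ordinary quantum Bruhat graph $\QB(W)$ following Brenti-Fomin-Postnikov \cite{Brenti1998}, and then deduce the parabolic version together with part (d) via a projection argument relating $\QB(W)$ and $\QB(W^J)$.

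For the ordinary case ($J=\emptyset$), strong connectedness in (a) is immediate: for every $w\in W$ and every simple root $\alpha\in\Delta$ there is an edge $w\to ws_\alpha$ in $\QB(W)$, Bruhat if $w\alpha\in\Phi^+$ and quantum if $w\alpha\in\Phi^-$, since simple roots are quantum by Lemma~\ref{lem:quantumRoots}. One can therefore navigate between any two vertices using only simple reflections. For (b) and (c) the essential tool is the diamond lemma of Brenti-Fomin-Postnikov: given distinct edges $w\to u_1$ and $w\to u_2$ in $\QB(W)$ there exist edges $u_1\to u$ and $u_2\to u$ in $\QB(W)$ such that the resulting length-two paths have equal total weight. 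A straightforward induction on the minimum path length then shows that all shortest paths from $w$ to $w'$ share a common weight and that every path from $w$ to $w'$ has weight greater than or equal to this shortest-path weight in the order on $\mathbb Z\Phi^\vee$.

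To handle the parabolic case and part (d) uniformly, I would work with the projection $\pi: W\to W^J$, $u\mapsto u^J$, and prove a correspondence lemma: for each edge $u_1\to u_1 s_\alpha$ in $\QB(W)$, if $\alpha\in\Phi_J$ then $\pi(u_1)=\pi(u_1 s_\alpha)$ and the edge weight (either $0$ or $\alpha^\vee$) lies in $\mathbb Z\Phi_J^\vee$; while if $\alpha\in\Phi^+\setminus\Phi_J$ then $\pi(u_1)\to\pi(u_1 s_\alpha)$ is an edge in $\QB(W^J)$ of the same type (Bruhat or quantum) with weight congruent to the original weight modulo $\Phi_J^\vee$. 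This sends any path in $\QB(W)$ between elements $w,w'\in W^J$ to a path in $\QB(W^J)$ of equal weight modulo $\Phi_J^\vee$ and weakly smaller length. Conversely, every edge in $\QB(W^J)$ lifts to an edge of the same type in $\QB(W)$, so paths in $\QB(W^J)$ lift to paths in $\QB(W)$ of equal length and matching weight modulo $\Phi_J^\vee$ (between appropriate lifts of the endpoints, which one adjusts using moves inside $W_J$-cosets of weight in $\mathbb Z\Phi_J^\vee$). Transferring (a)--(c) from $\QB(W)$ to $\QB(W^J)$ is then routine, and (d) follows by applying the projection to a shortest path from $w$ to $w'$ in $\QB(W)$ and invoking the minimality statement in $\QB(W^J)$.

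The main obstacle is establishing the precise edge correspondence when $\alpha\in\Phi^+\setminus\Phi_J$: one must verify that the Bruhat or quantum nature of the edge is preserved under $\pi$. Concretely, the length jump $\langle\alpha^\vee,2\rho\rangle$ in the ordinary quantum-edge condition decomposes as $\langle\alpha^\vee,2\rho-2\rho_J\rangle+\langle\alpha^\vee,2\rho_J\rangle$, and one must check, using the factorization $\ell(u)=\ell(u^J)+\ell(u_J)$, that the second term exactly accounts for the $W_J$-length difference between the endpoints, so that the residual length jump on $W^J$ matches the condition $\ell(w_2)=\ell(w_1)+1-\langle\alpha^\vee,2\rho-2\rho_J\rangle$ defining a quantum edge in $\QB(W^J)$. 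This combinatorial bookkeeping, while elementary, is the heart of the argument and is precisely the reason that no new diamond lemma is needed in the parabolic setting once the ordinary one is in place.
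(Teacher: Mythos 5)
The paper cites this result from Lenart--Naito--Sagaki--Schilling--Shimozono without reproducing its proof, so I am evaluating your argument on its own. There is a genuine gap: the edge-correspondence lemma on which you base the parabolic case is false. Take $\Phi$ of type $A_2$ with simple roots $\alpha_1,\alpha_2$, highest root $\theta=\alpha_1+\alpha_2$, and $J=\{\alpha_1\}$, so that $W^J=\{1,s_2,s_1s_2\}$. The Bruhat edge $s_1\to s_1s_2$ in $\QB(W)$ is given by the root $\alpha=\alpha_2\in\Phi^+\setminus\Phi_J$. Under $\pi:u\mapsto u^J$ this would have to correspond to an edge $1=(s_1)^J\to(s_1s_2)^J=s_1s_2$ in $\QB(W^J)$; but no such edge exists. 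The only reflection $s_\beta$ with $\beta\in\Phi^+\setminus\Phi_J$ and $(s_\beta)^J=s_1s_2$ is $s_\theta$, and then $\ell(s_1s_2)-\ell(1)=2$ rules out a Bruhat edge, while the quantum condition would require $\ell(s_1s_2)=\ell(1)+1-\langle\theta^\vee,2\rho-2\rho_J\rangle=-2$. So $\pi$ does not take edges of $\QB(W)$ to edges of $\QB(W^J)$, hence does not take paths to paths, and your deduction of parts (a)--(d) for $W^J$ from the case $J=\emptyset$ cannot proceed in this way. The proof in Lenart et al.\ treats the parabolic graph directly by a parabolic adaptation of the Brenti--Fomin--Postnikov machinery rather than by projection from $\QB(W)$, and part (d) is a separate comparison result.

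There is also a smaller imprecision in your handling of $J=\emptyset$. The diamond property you invoke --- that any two outgoing edges $w\to u_1$, $w\to u_2$ complete to some $u_1\to u$, $u_2\to u$ with equal total weights --- does not single out a well-defined invariant. Still in type $A_2$, from $w=s_1s_2$ the edges to $w_0=s_1s_2s_1$ (weight $0$) and to $s_1$ (weight $\alpha_2^\vee$) complete at $u=1$ with common weight $\theta^\vee$ and also at $u=s_2s_1$ with common weight $\alpha_2^\vee$. Since $\theta^\vee\neq\alpha_2^\vee$, a bare existential diamond lemma cannot by itself yield that all shortest paths carry the same weight; what \cite{Brenti1998} actually uses is a reflection-ordering/shellability structure that selects a distinguished completion, and that selection is essential to the induction.
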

One interpretation of the weight function is that it measures the failure of the inequality $w_1 W_J\leq w_2 W_J$ in the Bruhat order on $W/W_J$ (cf.\ \cite[Section~2.5]{Bjorner2005}): Indeed,  $w_1W_J\leq w_2 W_J$ if and only if $\wt_{\QB(W^J)}(w_1\Rightarrow w_2)=0$.

We have the following converse to part (c) of Proposition~\ref{prop:qbgWeightDefinition}:
\begin{lemma}[{Cf.\ \cite[Formula~4.3]{Milicevic2020}}]\label{lem:weight2rho}
Let $w_1, w_2\in W^J$.
 For any path $p$ from $w_1$ to $w_2$ in $\QB(W^J)$, we have
\begin{align*}
\langle \wt(p),2\rho-2\rho_J\rangle = \ell(p) + \ell(w_1) - \ell(w_2).
\end{align*}
In particular, 
\begin{align*}
\langle \wt_{\QB(W^J)}(w_1\Rightarrow w_2),2\rho-2\rho_J\rangle = d_{\QB(W^J)}(w_1\Rightarrow w_2) + \ell(w_1) - \ell(w_2),
\end{align*}
and $p$ is shortest if and only if $\wt(p) = \wt_{\QB(W^J)}(w_1\Rightarrow w_2)$.
\end{lemma}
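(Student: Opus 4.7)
The plan is to prove the identity by a straightforward induction on $\ell(p)$, after a preliminary check that the linear form $\langle\,\cdot\,,2\rho-2\rho_J\rangle$ descends to the quotient group $\mathbb Z\Phi^\vee/\mathbb Z\Phi_J^\vee$ in which $\wt(p)$ lives. Here the key point is that $2\rho-2\rho_J$ equals the sum of the positive roots lying outside $\Phi_J$, and this set is permuted by every simple reflection $s_\alpha$ with $\alpha\in J$. Hence $s_\alpha(2\rho-2\rho_J) = 2\rho-2\rho_J$, which gives $\langle\alpha^\vee,2\rho-2\rho_J\rangle=0$ for all $\alpha\in\Phi_J$, so the pairing is well-defined on the quotient.

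Next I would verify the formula for a single edge $w_1\to w_2$. For a Bruhat edge the weight is $0$ and $\ell(w_2)=\ell(w_1)+1$, so both sides of the asserted equality vanish. For a quantum edge associated with $\alpha\in\Phi^+\setminus\Phi_J$, the weight is $\alpha^\vee$ and by definition $\ell(w_2)=\ell(w_1)+1-\langle\alpha^\vee,2\rho-2\rho_J\rangle$, which on rearrangement is exactly
\[
\langle\alpha^\vee,2\rho-2\rho_J\rangle = 1+\ell(w_1)-\ell(w_2),
\]
i.e., the desired formula for a path of length $1$. Splitting an arbitrary path $p$ as a first edge $w_1\to w_1'$ followed by a sub-path $p'$ from $w_1'$ to $w_2$ of length $\ell(p)-1$, both sides are additive in a telescoping way: $\wt(p) = \wt(\text{first edge}) + \wt(p')$ and $\ell(p) = 1+\ell(p')$, while the $\ell(w_1')$ terms cancel. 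So induction on $\ell(p)$ closes the argument.

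For the \emph{in particular} part I would apply the just-proven formula to any shortest path $p$; combined with Proposition~\ref{prop:qbgWeightDefinition}(b), which says that every shortest path has the common weight $\wt_{\QB(W^J)}(w_1\Rightarrow w_2)$, this yields the displayed identity for the pairing of this weight with $2\rho-2\rho_J$. For the final equivalence, the direction ``shortest $\Rightarrow$ weight equals $\wt_{\QB(W^J)}(w_1\Rightarrow w_2)$'' is again Proposition~\ref{prop:qbgWeightDefinition}(b). Conversely, if $\wt(p)=\wt_{\QB(W^J)}(w_1\Rightarrow w_2)$ in the quotient, then their pairings with $2\rho-2\rho_J$ coincide, and the formula forces $\ell(p) = d_{\QB(W^J)}(w_1\Rightarrow w_2)$, so $p$ is shortest.

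I do not expect a genuine obstacle: once the pairing is shown to descend to the quotient, everything reduces to checking the two edge types against the definitions and running the induction. The only mildly delicate bookkeeping is to make sure that $\wt(p)$ is treated as an element of $\mathbb Z\Phi^\vee/\mathbb Z\Phi_J^\vee$ throughout, so that the single-edge computation in case (Q) really corresponds, via the preliminary observation, to the prescribed change of $\ell$.
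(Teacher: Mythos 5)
Your proof is correct and follows essentially the same route as the paper: verify the identity for a single edge directly from the definitions of Bruhat and quantum edges, then iterate along the path, and deduce the \emph{in particular} clause from Proposition~\ref{prop:qbgWeightDefinition}. The preliminary observation that $\langle\,\cdot\,,2\rho-2\rho_J\rangle$ vanishes on $\mathbb Z\Phi_J^\vee$ is a worthwhile detail that the paper leaves implicit.
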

\begin{proof}
Note that if $p: w_1\rightarrow w_2 = (w_1s_\alpha)^J$ is an edge in $\QB(W^J)$, then by definition,
\begin{align*}
\ell(w_2) = \ell(w_1) + 1 - \langle \wt(p),2\rho-2\rho_J\rangle.
\end{align*}
In general, iterate this observation for all edges of $p$.
\end{proof}
The weights of non-shortest paths do not add more information:
\begin{lemma}\label{lem:pathForWeight}
Let $\mu \in \mathbb Z\Phi^\vee/\mathbb Z\Phi_J^\vee$ and $w_1, w_2\in W$. Then $\mu\geq \wt_{\QB(W^J)}(w_1\Rightarrow w_2)$ if and only if there is a path $p$ from $w_1$ to $w_2$ in $\QB(W^J)$ of weight $\mu$.
\end{lemma}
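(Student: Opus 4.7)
The easy direction ($\Leftarrow$) is immediate from Proposition~\ref{prop:qbgWeightDefinition}(c): for any path $p$ from $w_1$ to $w_2$ of weight $\mu$, one has $\mu = \wt(p) \geq \wt_{\QB(W^J)}(w_1 \Rightarrow w_2)$.

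For the nontrivial direction ($\Rightarrow$), my plan is to start from a shortest path $p_0$ from $w_1$ to $w_2$, which already realizes weight $\wt_{\QB(W^J)}(w_1 \Rightarrow w_2)$, and then append closed loops at $w_2$ to bump its weight up to $\mu$. Set $\nu := \mu - \wt_{\QB(W^J)}(w_1 \Rightarrow w_2) \geq 0$ in $\mathbb Z\Phi^\vee/\mathbb Z\Phi_J^\vee$. Since every positive coroot is a non-negative integer combination of simple coroots, and since simple coroots of roots in $J$ vanish in the quotient, $\nu$ admits a decomposition $\nu \equiv \sum_i \alpha_i^\vee \pmod{\mathbb Z\Phi_J^\vee}$ with each $\alpha_i \in \Delta \setminus J$. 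Hence it is enough to establish the following sublemma: for each $w \in W^J$ and each $\alpha \in \Delta \setminus J$, there is a closed path at $w$ in $\QB(W^J)$ of weight $\alpha^\vee \pmod{\mathbb Z\Phi_J^\vee}$. Given this, concatenating $p_0$ at $w_2$ with one such closed loop for each $\alpha_i$ in the decomposition of $\nu$ yields the desired path of total weight $\mu$.

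In the non-parabolic case $J = \emptyset$, the sublemma is straightforward: the two-edge cycle $w \to w s_\alpha \to w$, with both edges corresponding to the simple reflection $s_\alpha$, has total weight $\alpha^\vee \bigl(\Phi^+(-w\alpha) + \Phi^+(w\alpha)\bigr) = \alpha^\vee$ by Lemma~\ref{lem:phiPlusFacts}(a), since exactly one of the two edges is Bruhat and the other quantum. For general $J$, I would either work directly in $\QB(W^J)$ by taking the edge $w \to (w s_\alpha)^J$ labeled by $\alpha$ followed by a shortest return path and then computing its total weight via Lemma~\ref{lem:weight2rho}, or alternatively lift the construction to the ordinary $\QB(W)$ and project along the canonical map of Proposition~\ref{prop:qbgWeightDefinition}(d).

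The main obstacle is precisely the sublemma in the full parabolic setting: when $w s_\alpha \notin W^J$, the clean two-edge cycle from the non-parabolic case is no longer at hand, so one must either analyze carefully how shortest return paths in $\QB(W^J)$ interact with the coset decomposition $W = W^J W_J$, or justify that a closed loop in $\QB(W)$ of weight $\alpha^\vee$ based at $w \in W^J$ projects, edge by edge, to a valid closed loop of the same weight in $\QB(W^J)$ modulo $\mathbb Z\Phi_J^\vee$. In either approach, this step should ultimately reduce to a bookkeeping argument using the additivity of weights along concatenated paths and the length formula of Lemma~\ref{lem:weight2rho}.
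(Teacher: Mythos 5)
Your proposal is essentially the paper's proof: the if direction is Proposition~\ref{prop:qbgWeightDefinition}(c), and for the only if direction both you and the paper start from a shortest path and append ``silly'' two-edge loops $w\to ws_\alpha\to w$ with $\alpha\in\Delta$, exactly one of which is a quantum edge of weight $\alpha^\vee$. The parabolic step you flag as the main obstacle is also treated only cursorily in the paper---a one-sentence reduction to $J=\emptyset$ via Proposition~\ref{prop:qbgWeightDefinition}(d)---and your second suggested route (lift the desired weight to $\mathbb Z\Phi^\vee$, construct the path in $\QB(W)$, then project) is the intended reading of that reduction.
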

\begin{proof}
By part (d) of Proposition~\ref{prop:qbgWeightDefinition}, it suffices to consider the case $J=\emptyset$, i.e.\ the quantum Bruhat graph.

The \emph{if} condition is part (c) of Proposition~\ref{prop:qbgWeightDefinition}. It remains to show the \emph{only if} condition. Note that for each $w\in W$ and $\alpha\in\Delta$, we get a \enquote{silly path} of the form
\begin{align*}
w\rightarrow ws_\alpha\rightarrow w
\end{align*}
in $\QB(W)$. Precisely one of the edges is quantum with weight $\alpha^\vee$, and the other one is Bruhat with weight $0$.

If $\mu\geq \wt(w_1\Rightarrow w_2)$, we may compose a shortest path from $w_1$ to $w_2$ with suitably chosen silly paths as above to obtain a path from $w_1$ to $w_2$ of weight $\mu$.
\end{proof}
\begin{lemma}[{\cite[Lemma~7.7]{Lenart2015}}]\label{lem:qbgDiamond}Let $J\subseteq \Delta$, $w_1, w_2\in W^J$ and $a = (\alpha,k)\in \Delta_\af$ such that $w_2^{-1}\alpha \in \Phi^-$.
\begin{enumerate}[(a)]
\item We have an edge $(s_\alpha w_2)^J\rightarrow w_2$ in $\QB(W^J)$ of weight  $-kw_2^{-1}\alpha^\vee\in\mathbb Z\Phi^\vee/\mathbb Z\Phi^\vee_J$.
\item If $w_1^{-1}\alpha\in \Phi^+$, then the above edge is part of a shortest path from $w_1$ to $w_2$, i.e.
\begin{align*}
d_{\QB(W^J)}(w_1\Rightarrow w_2) = d_{\QB(W^J)}(w_1\Rightarrow (s_\alpha w_2)^J) + 1.
\end{align*}
\item If $w_1^{-1}\alpha\in \Phi^-$, we have
\begin{align*}
d_{\QB(W^J)}(w_1\Rightarrow w_2) =& d_{\QB(W^J)}((s_\alpha w_1)^J\Rightarrow (s_\alpha w_2)^J),
\\
\wt_{\QB(W^J)}(w_1\Rightarrow w_2) =& \wt_{\QB(W^J)}((s_\alpha w_1)^J\Rightarrow (s_\alpha w_2)^J) + k(w_1^{-1}\alpha^\vee - w_2^{-1}\alpha^\vee).\rightqed
\end{align*}
\end{enumerate}
\end{lemma}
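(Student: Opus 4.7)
The approach is to split on $a=(\alpha,k)\in\Delta_\af$: either $\alpha\in\Delta$ with $k=0$, or $\alpha=-\theta$ (for $\theta$ the longest root of some irreducible component) with $k=1$. The unifying identity $s_\alpha w_2=w_2 s_\beta$ with $\beta:=-w_2^{-1}\alpha\in\Phi^+$ realizes the edge in (a) as the parabolic QBG edge with jumping root $\beta$. One first checks $\beta\notin\Phi_J$ in the nondegenerate case: if $\beta\in\Phi_J^+$, then $w_2\in W^J$ forces $w_2\beta=-\alpha\in\Phi^+$, which contradicts $\alpha\in\Delta$ when $k=0$, and yields the trivial case $(s_\alpha w_2)^J=w_2$ when $k=1$, in which all claims reduce to identities modulo $\mathbb Z\Phi_J^\vee$.

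For (a), in the case $k=0$ the classical fact $\ell(s_\alpha w_2)=\ell(w_2)-1$ together with the constraint $\beta\notin\Phi_J$ places the edge into the Bruhat class, with weight $0=-kw_2^{-1}\alpha^\vee$. In the case $k=1$, $\beta=w_2^{-1}\theta$ is $W$-conjugate to the long root $\theta$, hence is itself long, hence quantum by Lemma~\ref{lem:quantumRoots}; a length computation from the parabolic QBG's defining conditions certifies the edge as quantum, with weight $\beta^\vee=-w_2^{-1}\alpha^\vee$ as required. For (b), prepending the edge from (a) to a shortest path $w_1\to\cdots\to(s_\alpha w_2)^J$ produces a path to $w_2$ of length $d_{\QB(W^J)}(w_1\Rightarrow(s_\alpha w_2)^J)+1$; applying Lemma~\ref{lem:weight2rho} reformulates the claim that this is shortest as a weight identity, which can be verified from the hypothesis $w_1^{-1}\alpha\in\Phi^+$ combined with the Bruhat-order lifting property controlling the length comparison $\ell((s_\alpha w_2)^J)$ versus $\ell(w_2)$.

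For (c), the key idea is to construct a correspondence between paths $p:w_1\to\cdots\to w_2$ and paths $s_\alpha\cdot p:(s_\alpha w_1)^J\to\cdots\to(s_\alpha w_2)^J$ obtained by left-multiplying each intermediate vertex by $s_\alpha$ and renormalizing into $W^J$. Edge-by-edge analysis on the signs of $u^{-1}\alpha$ and $(us_\gamma)^{-1}\alpha$ and on the interaction of the jumping root $\gamma$ with $\alpha$ shows that each edge transforms into an edge of the same length whose type (Bruhat versus quantum) and weight change in a controlled way. The local weight shifts telescope along $p$ to the global correction $k(w_1^{-1}\alpha^\vee-w_2^{-1}\alpha^\vee)$, and Lemma~\ref{lem:weight2rho} then certifies that the correspondence preserves shortest-path status and hence distances. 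The main obstacle is this case analysis in (c), especially for edges whose jumping root $\gamma$ satisfies $u\gamma=\pm\alpha$, where left-multiplication by $s_\alpha$ may degenerate into a loop or force a different jumping root; the type reclassification must be matched against Lemma~\ref{lem:weight2rho} in each subcase, and weight differences modulo $\mathbb Z\Phi_J^\vee$ must be tracked via Proposition~\ref{prop:qbgWeightDefinition}(d).
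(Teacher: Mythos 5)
The paper does not actually prove this lemma: it is cited directly from \cite[Lemma~7.7]{Lenart2015}, with the terminal \oldqedsymbol\ in the statement signalling that no proof is supplied. So there is no ``paper's own proof'' to compare against; what you have written must be judged on its own as a proof.

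As such, what you have is an outline rather than a proof. Your overall strategy (realizing the edge via $s_\alpha w_2 = w_2 s_{-w_2^{-1}\alpha}$, then transporting paths by left-multiplication with $s_\alpha$ for part (c)) is the right one and matches the diamond-type argument used in \cite{Lenart2015}, but you explicitly defer the entire content of part (c) --- the edge-by-edge case analysis, the subcases where $u\gamma = \pm\alpha$, the reclassification of Bruhat versus quantum edges, and the telescoping of local weight shifts --- by calling it ``the main obstacle'' without resolving it. Similarly, for part (b) the lower bound $d_{\QB(W^J)}(w_1\Rightarrow w_2) \geq d_{\QB(W^J)}(w_1\Rightarrow (s_\alpha w_2)^J)+1$ is asserted to follow from ``the Bruhat-order lifting property controlling the length comparison'' without spelling out which property and how. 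More seriously, your handling of the case $\beta := -w_2^{-1}\alpha\in\Phi_J$ is incorrect: you claim that when $(s_\alpha w_2)^J = w_2$ ``all claims reduce to identities modulo $\mathbb Z\Phi_J^\vee$,'' but in that situation (a) would assert a self-loop edge, which is not an edge of $\QB(W^J)$, and (b) would degenerate to $d = d+1$. For instance, in type $A_2$ with $J=\{\alpha_1\}$, $a=(-\theta,1)$, $w_2=s_2$, one has $w_2^{-1}\alpha=-\alpha_1\in\Phi^-$ yet $(s_\alpha w_2)^J = w_2$. You therefore need to either prove that this degeneracy is excluded by the standing hypotheses or make explicit the additional assumption in \cite[Lemma~7.7]{Lenart2015} that rules it out; dismissing it as trivial is a genuine gap.
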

We can use this lemma to reduce the calculation of weights $\wt(w_1\Rightarrow w_2)$ to weights of the form $\wt(w\Rightarrow 1)$: If $w_2\neq 1$, we find a simple root $\alpha\in \Delta$ with $w_2^{-1}\alpha\in \Phi^-$. Then
\begin{align*}
\wt(w_1\Rightarrow w_2) =& \begin{cases}\wt(w_1\Rightarrow s_\alpha w_2),&w_1^{-1}\alpha\in \Phi^+,\\
\wt(s_\alpha w_1\Rightarrow s_\alpha w_2),&w_1^{-1}\alpha \in \Phi^-,\end{cases}
\\=&\wt(\min(w_1, s_\alpha w_1), s_\alpha w_2).
\end{align*}
For an alternative proof of this reduction, cf.\ \cite[Corollary~3.3]{Sadhukhan2021}.

The quantum Bruhat graph has a number of useful automorphisms.

\begin{lemma}\label{lem:weightIdentities}
Let $w_1, w_2\in W$, and let $w_0\in W$ be the longest element.
\begin{enumerate}[(a)]
\item $\wt(w_0 w_1\Rightarrow w_0 w_2) = \wt(w_2 \Rightarrow w_1)$.
\item $\wt(w_0 w_1 w_0\Rightarrow w_0 w_2 w_0) = -w_0\wt(w_1 \Rightarrow w_2)$.
\item $\wt(w_1\Rightarrow 1) = \wt(w_1^{-1}\Rightarrow 1)$.
\end{enumerate}
\end{lemma}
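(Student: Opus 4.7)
The plan is to prove all three identities by exhibiting bijections on paths in $\QB(W)$ that act predictably on weights.

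For (a), I define a map that sends a path $w_1=v_0\to v_1\to\cdots\to v_k=w_2$ in $\QB(W)$ to the reversed path $w_0 v_k\to w_0 v_{k-1}\to\cdots\to w_0 v_0$ obtained by left-multiplying each vertex by $w_0$. Each new edge $w_0 v_i\to w_0 v_{i-1}$ carries the same positive root label $\alpha_i$ as the original edge $v_{i-1}\to v_i$, since $w_0 v_{i-1}=(w_0 v_i)s_{\alpha_i}$. Using $\ell(w_0 v)=\ell(w_0)-\ell(v)$ together with the identity $v_i\alpha_i=-v_{i-1}\alpha_i$, I will check that the Bruhat/quantum classification of each edge is preserved and that the corresponding length condition is still satisfied, so each new edge lies in $\QB(W)$ with the same type and weight. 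This gives a weight-preserving involution on paths, yielding $\wt(w_0 w_2\Rightarrow w_0 w_1)=\wt(w_1\Rightarrow w_2)$, which is (a) after relabeling.

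For (b), the same construction with right-multiplication by $w_0$ produces an analogous path bijection in which the new edge labels become $\beta_i:=-w_0\alpha_i\in\Phi^+$. Since $\langle\alpha_i^\vee,2\rho\rangle=\langle\beta_i^\vee,2\rho\rangle$, the edge types and length conditions still match, but the quantum-edge weight contribution now transforms via $\alpha_i^\vee\mapsto-w_0\alpha_i^\vee$. Composing this bijection with the one from (a) yields conjugation by $w_0$ as a (non-reversing) weight-twisting automorphism of $\QB(W)$, from which (b) follows directly.

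For (c), the inversion map $w\mapsto w^{-1}$ does not preserve $\QB(W)$-edges, so the argument must be more indirect. My plan is to proceed by induction on $\ell(w_1)$, with $w_1=1$ as the trivial base case. For the inductive step, (a) translates the desired equality into $\wt(w_0\Rightarrow w_0 w_1)=\wt(w_0\Rightarrow w_0 w_1^{-1})$, and Lemma~\ref{lem:qbgDiamond}(c) (with $k=0$) provides a way to simultaneously left-multiply source and target on both sides by a common simple reflection $s_\alpha$, provided one can find a simple root $\alpha_0:=-w_0\alpha\in\Delta$ that is neither a left nor a right descent of $w_1$. The main obstacle is that such a root need not exist for every $w_1$ (already for $w_1=s_1 s_2$ in type $A_2$ one has left descent $\{\alpha_1\}$ and right descent $\{\alpha_2\}$, covering all of $\Delta$). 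Handling those $w_1$ whose descent sets cover $\Delta$ will require a case analysis or alternative reduction steps—possibly mixing asymmetric reductions on $w_1$ and $w_1^{-1}$, or reducing each side separately along a carefully chosen reduced expression for $w_0 w_1^{\pm1}$ until both reach a common terminal form where the identity becomes evident.
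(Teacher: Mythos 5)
Your arguments for (a) and (b) are correct, and they take a more hands-on route than the paper: you build explicit weight-preserving (resp.\ weight-twisting) bijections between paths, whereas the paper cites \cite[Proposition~4.3]{Lenart2015} for (a) and then, for (b), invokes the fact that $\alpha\mapsto -w_0\alpha$ is an automorphism of $\Phi$ inducing the automorphism $w\mapsto w_0 w w_0$ of $W$, which acts on $\QB(W)$ and on weights in the obvious way. Both treatments are fine; yours has the advantage of being self-contained, at the cost of re-proving a standard fact.

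For (c) there is a genuine gap, which you yourself flag but do not close. Your plan is to apply Lemma~\ref{lem:qbgDiamond}(c) simultaneously to both weights $\wt(w_0\Rightarrow w_0 w_1)$ and $\wt(w_0\Rightarrow w_0 w_1^{-1})$ using the \emph{same} simple root $\alpha$, and you correctly observe that this requires a simple root that is neither a left nor a right descent of $w_1$ --- which need not exist (your $A_2$ example is exactly right, and any $w_1$ with $\ell(w_1)\geq\#\Delta$ in a sufficiently small rank can fail). The fallback you sketch (``mixing asymmetric reductions,'' ``reducing each side separately to a common terminal form'') is not a proof: you would need to argue that the two independent reductions land on the same expression, and no mechanism for that is given.

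The paper's proof of (c) sidesteps the whole issue by \emph{not} doing a two-sided induction. Instead, fix a single reduced expression $w_0 w_1 = s_1\cdots s_q$ and reduce one weight all the way:
\begin{align*}
\wt(w_1\Rightarrow 1) \underset{\text{(a)}}= \wt(w_0\Rightarrow s_1\cdots s_q) = \wt(s_1 w_0\Rightarrow s_2\cdots s_q) = \cdots = \wt(s_q\cdots s_1 w_0\Rightarrow 1).
\end{align*}
Each step applies Lemma~\ref{lem:qbgDiamond}(c) with $k=0$ and the simple root $\alpha_i$; the hypothesis $(s_i\cdots s_q)^{-1}\alpha_i\in\Phi^-$ holds because $s_i\cdots s_q$ is reduced with left descent $s_i$, and the hypothesis $(s_{i-1}\cdots s_1 w_0)^{-1}\alpha_i\in\Phi^-$ holds because $s_1\cdots s_{i-1}\alpha_i$ is an inversion of $s_1\cdots s_q$, hence positive, and $w_0$ flips it. No descent obstruction arises, and at the end $s_q\cdots s_1 w_0 = (w_0 w_1)^{-1} w_0 = w_1^{-1}$ gives $\wt(w_1\Rightarrow 1)=\wt(w_1^{-1}\Rightarrow 1)$ directly. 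You should replace your inductive plan for (c) with this one-sided reduction.
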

\begin{proof}
Part (a) follows from \cite[Proposition~4.3]{Lenart2015}.

For part (b), observe that we have an automorphism of $\Phi$ given by $\alpha\mapsto -w_0\alpha$. The induced automorphism of $W$ is given by $w\mapsto w_0 w w_0$. Since the function $\wt(\cdot\Rightarrow\cdot)$ is compatible with automorphisms of $\Phi$, we get the claim.

Now for (c), consider a reduced expression
\begin{align*}
w_0 w_1 = s_1\cdots s_q.
\end{align*}
Then, iterating Lemma~\ref{lem:qbgDiamond}, we get
\begin{align*}
\wt(w_1\Rightarrow 1) \underset{\text{(a)}}=& \wt(w_0\Rightarrow w_0 w_1) = \wt(w_0\Rightarrow s_1\cdots s_q)
\\=&\wt(s_1w_0\Rightarrow s_2\cdots s_q)=\cdots = \wt(s_q\cdots s_1 w_0\Rightarrow 1)
\\=&\wt((w_0 w_1)^{-1}w_0\Rightarrow 1) = \wt(w_1^{-1}\Rightarrow 1).\qedhere
\end{align*}
\end{proof}
\subsection{Lifting the parabolic quantum Bruhat graph}
For sufficiently regular elements of the extended affine Weyl group, the Bruhat covers in $\widetilde W$ are in a one-to-one correspondence with edges in the quantum Bruhat graph \cite[Proposition~4.4]{Lam2010}. This result is very useful for deriving properties about the quantum Bruhat graph. Moreover, our strategy to prove our results on the Bruhat order will be to reduce to this superregular case.

The result of Lam and Shimozono has been generalized by Lenart et.\ al.\ \cite[Theorem~5.2]{Lenart2015}, and the extra generality of the latter result will be useful for us. Throughout this section, let $J\subseteq \Delta$ be any subset.
\begin{definition}[\cite{Lenart2015}]\label{def:liftingPQBG}
\begin{enumerate}[(a)]
\item Define
\begin{align*}
(W^J)_\af :=& \{x\in W_\af\mid \forall \alpha\in \Phi_J:~\ell(x,\alpha)=0\},
\\
\widetilde{(W^J)} :=& \{x\in \widetilde W\mid \forall \alpha\in \Phi_J:~\ell(x,\alpha)=0\}.
\end{align*}
\item Let $C>0$ be any real number. We define $\Omega_J^{-C}$ to be the set of all elements $x = w\varepsilon^\mu\in \widetilde{(W^J)}$ such that
\begin{align*}
\forall \alpha\in \Phi^+\setminus \Phi_J:~\langle\mu,\alpha\rangle\leq -C.
\end{align*}
Similarly, we say $x\in \Omega_J^C$ if
\begin{align*}
\forall \alpha\in \Phi^+\setminus \Phi_J:~\langle\mu,\alpha\rangle\geq C.
\end{align*}
\item For elements $x, x'\in \widetilde W$, we write $x\lessdot x'$ and call $x'$ a \emph{Bruhat cover} of $x$ if $\ell(x') = \ell(x)+1$ and $x^{-1} x'$ is an affine reflection in $\widetilde W$.
\end{enumerate}
\end{definition}
\begin{theorem}[{\cite[Theorem~5.2]{Lenart2015}}]\label{thm:pqbgLift}
There is a constant $C>0$ depending only on $\Phi$ such that the following holds:
\begin{enumerate}[(a)]
\item If $x = w\varepsilon^\mu\lessdot x' = w'\varepsilon^{\mu'}$ is a Bruhat cover with $x\in \Omega^{-C}_J$ and $x' \in \widetilde {(W^J)}$, there exists an edge $(w')^J\rightarrow w^J$ in $\QB(W^J)$ of weight $\mu- \mu'+\mathbb Z \Phi_J^\vee$.
\item If $x = w\varepsilon^\mu\in \Omega_J^{-C}$ and $\tilde w'\rightarrow w^J$ is an edge in $\QB(W^J)$ of weight $\omega$, then there exists a unique element $x\lessdot x' = w'\varepsilon^{\mu'}\in \widetilde {(W^J)}$ with $\tilde w' = (w')^J$ and $\mu \equiv \mu' + \omega\pmod{\mathbb Z \Phi_J^\vee}$.\rightqed
\end{enumerate}
\end{theorem}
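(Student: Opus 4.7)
The overall strategy is to encode a Bruhat cover $x \lessdot x'$ combinatorially via length functionals and the positive length formula (Lemma~\ref{lem:positiveLengthFormula}), and then to recognize the resulting data as an edge in $\QB(W^J)$. A cover $x \lessdot x'$ means $x' = x r_{(\alpha,k)}$ for some affine root; after normalizing $\alpha \in \Phi^+$, expansion in $\widetilde W$ yields $w' = w s_\alpha$ and $\mu' - \mu = (k - \langle\mu,\alpha\rangle)\alpha^\vee$, so the combinatorial object attached to the cover is the triple $(w, w s_\alpha, \langle\mu,\alpha\rangle-k)$.

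The first key step is to pin down the set $\LP(x)$ for $x \in \Omega_J^{-C}$ with $C$ large. Because $\langle\mu,\beta\rangle \leq -C$ for all $\beta \in \Phi^+\setminus\Phi_J$, any $v \in \LP(x)$ must satisfy $v^{-1}\beta \in \Phi^-$ for such $\beta$; on $\Phi_J$, length positivity reduces to the standing vanishing condition $\ell(x,\beta)=0$ for $\beta\in\Phi_J$. Thus $v$ is essentially determined up to its $W_J$-component, and a single affine reflection cannot destroy length positivity of $v$ (provided $C$ is chosen large enough to absorb the $\pm 1$ contributions from indicator functions and the single coroot shift of $\mu$). This is what forces the constant $C$ to depend only on $\Phi$.

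Applying Lemma~\ref{lem:positiveLengthFormula} to $x$ and $x'$ with such a common length-positive $v$, the condition $\ell(x') - \ell(x) = 1$ translates into
\begin{align*}
(k-\langle\mu,\alpha\rangle)\,\langle v^{-1}\alpha^\vee,2\rho\rangle + \ell(w s_\alpha v) - \ell(wv) = 1.
\end{align*}
Using the explicit description of $v$, the $W$-length difference equals $\pm 1$ modulo a $W_J$-contribution that cancels between $x$ and $x'$. This first forces $\alpha \notin \Phi_J$ (otherwise $\ell(x,\alpha)=0$ pins down $k$ in conflict with the regularity), and then leaves exactly two possibilities: $k = \langle\mu,\alpha\rangle$ with $\ell((w')^J) - \ell(w^J) = +1$, i.e.\ a Bruhat edge of weight $0$, or $k = \langle\mu,\alpha\rangle - 1$ with the length drop $1 - \langle\alpha^\vee,2\rho-2\rho_J\rangle$ dictated by the quantum edge definition, of weight $\alpha^\vee \bmod \mathbb Z\Phi_J^\vee$. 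This proves (a). Part (b) follows by running the computation in reverse: given an edge $\tilde w' \to w^J$ of weight $\omega$, the root $\alpha \in \Phi^+\setminus\Phi_J$ is determined by $\tilde w' = (w s_\alpha)^J$, and $k$ is then the unique integer such that $\mu - \mu' \equiv \omega \pmod{\mathbb Z\Phi_J^\vee}$ and $x' = x r_{(\alpha,k)} \in \widetilde{(W^J)}$; length positivity of the shared $v$ then yields $\ell(x') = \ell(x)+1$.

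The main obstacle is the interplay between the parabolic condition $\widetilde{(W^J)}$ and the superregularity hypothesis: one needs to choose $C$ uniformly in $\Phi$ so that $\LP(x)$ is rigidly controlled, this rigidity persists after a single Bruhat cover, the case $\alpha \in \Phi_J$ is excluded, and the class of $\mu-\mu'$ modulo $\mathbb Z\Phi_J^\vee$ faithfully reproduces the two edge-weight possibilities. Once these bookkeeping issues are settled, the match with $\QB(W^J)$'s defining edge conditions is immediate. This argument essentially recovers the proof of \cite[Theorem~5.2]{Lenart2015}, and indeed in the present paper I would prefer to cite that reference and simply recall the statement in our language.
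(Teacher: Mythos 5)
The paper does not actually prove this theorem: it is imported verbatim from \cite[Theorem~5.2]{Lenart2015} with the QED symbol placed directly after the statement, and the only added content is the one-sentence remark that the passage from $(W^J)_\af$ to $\widetilde{(W^J)}$ is straightforward. Your concluding sentence --- that you would cite the reference and recall the statement --- is therefore exactly the paper's approach, and had you stopped there the two would coincide.

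As for the sketch itself, the high-level plan (encode a cover $x\lessdot x'=xr_{(\alpha,k)}$ via the length functional, find a common $v\in\LP(x)\cap\LP(x')$, apply Lemma~\ref{lem:positiveLengthFormula} to both $x$ and $x'$, and read off the Bruhat/quantum dichotomy from the resulting integer equation) is a reasonable translation of the Lenart et al.\ argument into this paper's length-functional language, and the displayed identity you derive is correct. The genuine gap is the step where you assert that a common length-positive $v$ exists and that $C$ can be chosen ``large enough to absorb the $\pm 1$ contributions from indicator functions and the single coroot shift of $\mu$.'' The relation $\ell(x')=\ell(x)+1$ does not by itself bound the integer $m:=k-\langle\mu,\alpha\rangle$, so $\mu'-\mu=m\alpha^\vee$ is a priori an arbitrarily large coroot multiple; yet the existence of a shared $v$, the uniform choice of $C$, and the exclusion of $\alpha\in\Phi_J$ all rest on bounding $|m|$ in terms of $\Phi$ alone. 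That bound is exactly the non-trivial content of \cite[Theorem~5.2]{Lenart2015} (and of its ancestor \cite[Proposition~4.4]{Lam2010}); it requires combining the one-sided inequalities from Lemma~\ref{lem:positiveLengthFormula} for $v\in\LP(x)$ and for $v'\in\LP(x')$ together with the structure of $\Omega_J^{-C}$, and cannot be dispatched by the phrase ``chosen large enough.'' In short: your sketch identifies the right ingredients, but the superregularity bookkeeping that the theorem is really about is assumed rather than established, which is why the paper opts to cite the result outright.
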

This theorem \enquote{lifts} $\QB(W^J)$ into the Bruhat covers of $\Omega_J^{-C}$ for sufficiently large $C$.

The theorem is originally formulated only for $(W^J)_\af$, but the generalization to $\widetilde{(W^J)}$ is straightforward.

With a bit of book-keeping, we can compare paths in $\QB(W^J)$ (i.e.\ sequences of edges) with the Bruhat order on $\Omega_J^{-C}$ (i.e.\ sequences of Bruhat covers).
\begin{lemma}
Let $C_1>0$ be any real number. Then there exists some $C_2>0$ such that for all $x = w\varepsilon^{\mu}\in \Omega_J^{C_2}$ and $x' = w'\varepsilon^{\mu'}\in \widetilde {(W^J)}$ with $\ell(x^{-1}x')\leq C_1$, we have
\begin{align*}
x\leq x'\iff \mu - \wt(w'\Rightarrow w)\leq \mu'\pmod{\Phi_J^\vee}.
\end{align*}
The latter condition is shorthand for
\begin{align*}
\mu - \wt(w'\Rightarrow w)-\mu' + \mathbb Z\Phi_J^\vee\leq 0 + \mathbb Z\Phi_J^\vee\in \mathbb Z\Phi^\vee/\mathbb Z\Phi_J^\vee.
\end{align*}
\end{lemma}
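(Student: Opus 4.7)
The strategy is to match Bruhat chains in $\widetilde W$ with paths in $\QB(W^J)$ via Theorem~\ref{thm:pqbgLift}, which in the sufficiently-regular region translates Bruhat covers into edges of $\QB(W^J)$. First, I would pick $C_2$ large enough so that every Bruhat chain $x = x_0 \lessdot x_1 \lessdot \cdots \lessdot x_n = x'$ stays inside the region where Theorem~\ref{thm:pqbgLift} is applicable. Since $n = \ell(x') - \ell(x) \leq \ell(x^{-1}x') \leq C_1$ and each cover in the good region shifts the translation part $\mu_i$ only by a single coroot (modulo $\mathbb Z \Phi_J^\vee$), a choice $C_2 = C + K C_1$ for the theorem's constant $C$ and a root-data constant $K$ suffices to keep every intermediate $x_i$ in the correct region.

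For the forward direction, assume $x \leq x'$ and pick such a Bruhat chain. Applying Theorem~\ref{thm:pqbgLift}(a) to each cover $x_i \lessdot x_{i+1}$ yields an edge in $\QB(W^J)$ connecting $w_i^J$ and $w_{i+1}^J$ of weight $\mu_i - \mu_{i+1}$ modulo $\mathbb Z \Phi_J^\vee$. Concatenating these edges produces a path in $\QB(W^J)$ of total weight $\mu - \mu'$ modulo $\mathbb Z \Phi_J^\vee$ between $w^J$ and $(w')^J$. Proposition~\ref{prop:qbgWeightDefinition}(c) then bounds this weight by the minimal-path weight $\wt(w' \Rightarrow w)$ between the relevant endpoints, which rearranges to the claimed inequality on $\mu, \mu', \wt$.

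For the backward direction, assume the stated inequality. By Lemma~\ref{lem:pathForWeight}, there exists a path $p$ in $\QB(W^J)$ whose total weight matches $\mu - \mu'$ modulo $\mathbb Z \Phi_J^\vee$. I would then lift $p$ edge by edge via Theorem~\ref{thm:pqbgLift}(b), starting at $x$, to produce a Bruhat chain in $\widetilde W$ ending at some $\tilde x' \in \widetilde{(W^J)}$ with $(\tilde w')^J$ and $\tilde \mu'$ matching those of $x'$ modulo $\mathbb Z \Phi_J^\vee$. The uniqueness clause in Theorem~\ref{thm:pqbgLift}(b), combined with the hypothesis $x' \in \widetilde{(W^J)}$, forces $\tilde x' = x'$, whence $x \leq x'$.

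The principal technical obstacle lies in the backward direction: one must choose the path $p$ short enough that its lift stays inside the region of applicability of Theorem~\ref{thm:pqbgLift}(b). This is arranged via Lemma~\ref{lem:weight2rho}, which expresses the length of any path of weight $\omega$ in $\QB(W^J)$ as $\langle \omega, 2\rho - 2\rho_J\rangle$ up to a bounded correction coming from the endpoint lengths. Since $\mu - \mu'$ is controlled in terms of $\ell(x^{-1}x') \leq C_1$, the path $p$ can be chosen of length $O(C_1)$, matching the chain-length budget that dictated the choice of $C_2$ in the first step.
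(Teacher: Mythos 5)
Your proposal follows the same approach as the paper: both directions are handled by translating a Bruhat chain (forward) or a path in $\QB(W^J)$ (backward) into its counterpart via Theorem~\ref{thm:pqbgLift}, with Lemma~\ref{lem:pathForWeight} supplying the path in the backward direction and an \emph{a priori} bound on chain length / path length ensuring all intermediate objects remain in the superregular region where the lifting theorem applies. This matches the paper's proof essentially step for step; the only cosmetic difference is that the paper bounds the quantity $\ell(x_1^{-1}x_2)$ for covers in $\Omega_J^{-C}$ by a fixed constant $C'$ and uses it to set $C_2$, whereas you phrase the same bookkeeping in terms of the shift of the translation part per cover — both yield the same $O(C_1)$ control.

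One small caution: the lemma as printed actually carries a typo — the proof (and the subsequent corollary, which converts it to the superdominant case via $\alpha\mapsto -w_0\alpha$) work with $\Omega_J^{-C_2}$ and the inequality $\mu - \wt(w'\Rightarrow w)\geq\mu'\pmod{\Phi_J^\vee}$, not $\Omega_J^{C_2}$ with $\leq$ as the display suggests. You did not flag this, and your forward argument says the bound from Proposition~\ref{prop:qbgWeightDefinition}(c) "rearranges to the claimed inequality," which in fact gives the $\geq$ version; be aware that the sign you actually obtain is the one the paper's proof uses. Similarly, your closing appeal to the "uniqueness clause" of Theorem~\ref{thm:pqbgLift}(b) to conclude $\tilde x'=x'$ is the same brevity the paper allows itself — it is not literally what the uniqueness clause asserts (which concerns uniqueness of the lift, not endpoint identification), and a fully airtight argument would verify that the $\widetilde{(W^J)}$-constraint together with the coset and $\bmod\,\Phi_J^\vee$ matches forces equality. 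Since the paper itself elides this point in the same way, I would not count it against you.
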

\begin{proof}
Let $C>0$ be a constant sufficiently large for the conclusion of Theorem~\ref{thm:pqbgLift} to hold. We see that if $x_1\lessdot x_2$ is any cover in $\Omega_J^{-C}$, then there are only finitely many possibilities for $x_1^{-1} x_2$, so the length $\ell(x_1^{-1} x_2)$ is bounded. We fix a bound $C'>0$ for this length.

We can pick $C_2>0$ such that for all $x_1 = w\varepsilon^\mu\in \Omega_J^{-C_2}$ and $x_2\in \widetilde W^J$ with $\ell(x_1^{-1}x_2)\leq C_1 C'$, we must at least have $x_2 \in \Omega_J^{-C}$.

We now consider elements $x = w\varepsilon^\mu\in \Omega_J^{-C_2}$ and $x' = w'\varepsilon^{\mu'}\in \widetilde W^J$ with $\ell(x^{-1} x')\leq C_1$.

First suppose that $x\leq x'$. We find elements $x= x_1\lessdot x_2\lessdot\cdots\lessdot x_k = x'$. Note that $k = \ell(x') - \ell(x) \leq \ell(x^{-1} x')\leq C_1$. By choice of $C'$, we conclude that $\ell(x^{-1} x_i)\leq C' i\leq C'C_1$ for $i=1,\dotsc,k$. Thus $x_i \in \Omega_J^{-C}$.

By Theorem~\ref{thm:pqbgLift}, we get a path from $(w')^J$ to $w^J$ of weight $\mu-\mu'+\mathbb Z\Phi_J^\vee$. Thus \begin{align*}\wt(w_1\Rightarrow w_2)\leq \mu-\mu'\pmod{\Phi_J^\vee},\end{align*}
which is the estimate we wanted to prove.

Now suppose conversely that we are given $\mu - \wt(w'\Rightarrow w)\geq \mu'\pmod{\Phi_J^\vee}$. By Lemma~\ref{lem:pathForWeight}, we find a path $(w')^J = w_1\rightarrow w_2\rightarrow \cdots\rightarrow w_k = w^J$ in $\QB(W^J)$ of weight $\mu-\mu'+\mathbb Z\Phi_J^\vee$. Since $\mu-\mu'$ is bounded in terms of $C_1$, the length $k$ of this path is bounded in terms of $C_1$ as well. By adding another lower bound for $C_2$, we can guarantee that each such path $w_1\rightarrow\cdots\rightarrow w_k$ can indeed be lifted to $\Omega_J^{-C}$, proving that $x\leq x'$.
\end{proof}
We find working with superdominant instead superantidominant coweights a bit easier, so let us restate the lemma for $\Omega_J^C$ instead of $\Omega_J^{-C}$.

\begin{corollary}\label{cor:superdominantParabolicEmbedding}
Let $C_1>0$ be any real number. Then there exists some $C_2>0$ such that for all $x = w\varepsilon^\mu\in \Omega_J^{C_2}$ and $x' = w'\varepsilon^{\mu'}\in \widetilde {(W^J)}$ with $\ell(x^{-1} x')\leq C_1$, we have
\begin{align*}
x\leq x'\iff\mu + \wt(w\Rightarrow w')\leq \mu'\pmod{\Phi_J^\vee}.
\end{align*}
\end{corollary}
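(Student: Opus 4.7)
The plan is to mirror the proof of the preceding lemma, with the superantidominant region replaced by the superdominant region throughout. The central new ingredient needed is a \emph{superdominant analog} of Theorem~\ref{thm:pqbgLift}: for $C > 0$ sufficiently large depending only on $\Phi$, every Bruhat cover $x = w\varepsilon^\mu \lessdot x' = w'\varepsilon^{\mu'}$ with $x \in \Omega_J^{C}$ and $x' \in \widetilde{(W^J)}$ corresponds bijectively to an edge $w^J \rightarrow (w')^J$ in $\QB(W^J)$ of weight $\mu' - \mu + \mathbb Z \Phi_J^\vee$. Compared with the superantidominant case, the direction of the arrow in $\QB(W^J)$ is reversed, reflecting the fact that going up in the Bruhat order from a superdominant element tends to increase $\mu$ by a single $\QB$-weight rather than decrease it.

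I would establish this superdominant lifting by applying the order-preserving anti-involution $x \mapsto x^{-1}$ together with Lemma~\ref{lem:weightIdentities}(a), which provides the identity $\wt(w \Rightarrow w') = \wt(w_0 w'\Rightarrow w_0 w)$ relating $\QB$-weights in opposite directions. An alternative is to reprove Theorem~\ref{thm:pqbgLift} verbatim with reversed sign conventions on dominance, following the combinatorial argument of Lenart--Naito--Sagaki--Schilling--Shimozono. The technical subtlety is that inversion does not send $\widetilde{(W^J)}$ to a standard parabolic subset of $\widetilde W$, so some care is required in interpreting the conjugate parabolic data $w^{-1} J$ and translating between $\QB(W^{w^{-1}J})$ and $\QB(W^J)$; however, this translation is compatible with the bijections involved and preserves all relevant weights.

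Given the superdominant lifting, the proof of the corollary follows the preceding lemma almost verbatim. I would fix $C_2$ large enough (in terms of $C_1$, $C$, and a uniform bound on the coweight change across a single Bruhat cover) so that any chain of at most $C_1$ covers starting in $\Omega_J^{C_2}$ remains in $\Omega_J^{C}$. For the forward direction, a chain $x = x_1 \lessdot \cdots \lessdot x_k = x'$ lifts to a path in $\QB(W^J)$ from $w^J$ to $(w')^J$ of total weight $\mu' - \mu \pmod{\Phi_J^\vee}$, and Proposition~\ref{prop:qbgWeightDefinition}(c) gives $\wt(w\Rightarrow w') \leq \mu' - \mu \pmod{\Phi_J^\vee}$, equivalent to the claimed inequality. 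The converse direction uses Lemma~\ref{lem:pathForWeight} to produce a path of weight $\mu' - \mu$ from $w^J$ to $(w')^J$, whose edges lift back to a chain of Bruhat covers from $x$ to $x'$. The main obstacle is the clean establishment of the superdominant lifting with its parabolic bookkeeping; once that is available, the remainder is a routine adaptation of the preceding proof.
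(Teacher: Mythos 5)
Your overall scaffolding is sound: a superdominant analog of Theorem~\ref{thm:pqbgLift} (a cover $x\lessdot x'$ starting in $\Omega_J^C$ gives an edge $w^J\rightarrow (w')^J$ of weight $\mu'-\mu+\mathbb Z\Phi_J^\vee$, with arrow reversed and sign flipped) is indeed the right statement, and once you have it the bookkeeping with chains of covers and Lemma~\ref{lem:pathForWeight} is routine. The problem is the route you propose for establishing it.

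The map $x\mapsto x^{-1}$ does not do what you want here. If $x=w\varepsilon^\mu$ with $\mu$ superdominant, then $x^{-1}=w^{-1}\varepsilon^{-w\mu}$, and whether $-w\mu$ is superantidominant relative to some standard parabolic depends entirely on $w$: you need $w^{-1}\alpha\in\Phi^+$ for the roots $\alpha$ in question, which is not guaranteed by $x\in\widetilde{(W^J)}$. The \enquote{conjugate parabolic data $w^{-1}J$} you mention is $w$-dependent and typically not a subset of $\Delta$, so there is no graph $\QB(W^{w^{-1}J})$ in the sense of the paper; this is not a bookkeeping annoyance that gets absorbed by Lemma~\ref{lem:weightIdentities}, it is a genuine obstruction. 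Your fallback (reproving the LNSSS lifting theorem verbatim with dominance reversed) would work, but it is substantial extra effort and you have not actually carried it out.

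The paper sidesteps both problems with a cleaner reduction: instead of inversion, right-multiply by the fixed element $w_0(J)w_0$, where $w_0(J)$ is the longest element of $W_J$. This is a length-additive product (because $w_0(J)w_0\in\LP(x)$ when $x\in\Omega_J^{C_2}$, and similarly for $x'$), so by iterating the lifting property of the Bruhat order one gets $x\leq x'\iff xw_0(J)w_0\leq x'w_0(J)w_0$. The image lands in $\Omega_{-w_0(J)}^{-C_2}$ — superantidominant for the standard parabolic $-w_0(J)\subseteq\Delta$ — and the preceding lemma applies directly. One then unwinds the weight using Lemma~\ref{lem:weightIdentities} and the fact that right-multiplication by $W_J$ does not change $W^J$-cosets. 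This gives the corollary without proving any new lifting theorem. I would replace your first two paragraphs with this reduction; your final paragraph then survives essentially unchanged.
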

\begin{proof}
Let $w_0(J)\in W_J$ be the longest element. Let $C_2>0$ such that the conclusion of the previous Lemma is satisfied.

If $x\in \Omega_J^{C_2}$, then $x w_0(J) w_0 \in \Omega_{-w_0(J)}^{-C_2}$. Moreover, $w_0(J)w_0$ is a length positive element for $x$, so $\ell(x w_0(J) w_0) = \ell(x) + \ell(w_0(J) w_0)$.  Choosing $C_2$ appropriately, we similarly may assume $x' \in \Omega^{C}_{J}$ for some $C>0$ and obtain $\ell(x'w_0(J)w_0) = \ell(x') + \ell(w_0(J)w_0)$. Then, with the right choice of constants and using the automorphism $\alpha\mapsto -w_0\alpha$ of $\Phi$, we get
\begin{align*}
x\leq x'\iff&xw_0(J)w_0\leq x'w_0(J)w_0
\\\iff&w_0 w_0(J)\mu - \wt(w' w_0(J) w_0\Rightarrow ww_0(J) w_0)\geq w_0 w_0(J)\mu'\pmod{\Phi_{-w_0(J)}^\vee}
\\\iff&w_0(J)\mu + \wt(w_0 w'w_0(J)\Rightarrow w_0 ww_0(J))\leq w_0(J)\mu'\pmod{\Phi_J^\vee}
\\\underset{\text{\cite[Pr.\,4.3]{Lenart2015}}}\iff&w_0(J)\mu + \wt(w\Rightarrow w')\leq w_0(J)\mu'\pmod{\Phi_J^\vee}
\end{align*}
Since $w_0(J)\mu \equiv \mu\pmod {\Phi_J^\vee}$, we get the desired conclusion.
\end{proof}
\ifthesis
As an immediate consequence, we obtain a crucial estimate on the weight function.
\begin{corollary}\label{cor:weightEstimate}
Let $w\in W$ and $\alpha\in \Phi^+$. Then
\begin{align*}
\wt(ws_\alpha\Rightarrow w)\leq \Phi^+(w\alpha)\alpha^\vee.
\end{align*}
\end{corollary}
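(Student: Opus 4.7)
The plan is to derive the inequality as a consequence of the lifting of the quantum Bruhat graph into the affine Bruhat order. Fix a sufficiently antidominant $\mu\in X_\ast$ and set
\[
x_1 := w\varepsilon^\mu, \qquad x_2 := ws_\alpha\varepsilon^{\mu - \Phi^+(w\alpha)\alpha^\vee}.
\]
A direct computation yields $x_2 x_1^{-1} = s_{w\alpha}\varepsilon^{-\Phi^+(w\alpha)(w\alpha)^\vee}$, which is the affine reflection $r_a$ for $a = (w\alpha, -\Phi^+(w\alpha))$. Hence $x_1$ and $x_2$ differ by a single reflection.

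Next I would compare lengths. Since $\mu$ is antidominant, $w_0\in\LP(x_i)$ for $i=1,2$, so \positiveLengthFormulaLong (applied with $v = w_0$) gives $\ell(x_i) = -\langle\mu_i, 2\rho\rangle - \ell(\cl(x_i))$ and thus
\[
\ell(x_2)-\ell(x_1) = \Phi^+(w\alpha)\langle\alpha^\vee, 2\rho\rangle + \ell(w) - \ell(ws_\alpha).
\]
If $w\alpha\in\Phi^+$ this equals $\langle\alpha^\vee,2\rho\rangle-1$, which is at least $\ell(s_\alpha)\geq 1$ by Lemma~\ref{lem:quantumRoots}; if $w\alpha\in\Phi^-$ it equals $1$. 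Either way $x_1<x_2$ in the Bruhat order, and the length difference $n:=\ell(x_2)-\ell(x_1)$ is bounded by a constant depending only on $\Phi$.

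I would then pick any saturated chain $x_1 = y_0\lessdot y_1\lessdot\cdots\lessdot y_n = x_2$ and write $y_j = \cl(y_j)\varepsilon^{\mu_j}$. By Theorem~\ref{thm:pqbgLift}(a), as long as $y_{j-1}\in\Omega_\emptyset^{-C}$, each cover $y_{j-1}\lessdot y_j$ lifts to an edge $\cl(y_j)\to\cl(y_{j-1})$ in $\QB(W)$ of weight $\mu_{j-1}-\mu_j$, which is a non-negative element of $\mathbb Z\Phi^\vee$. These weights telescope to $\mu_0-\mu_n = \Phi^+(w\alpha)\alpha^\vee$, so each individual weight is bounded by this fixed vector; consequently, taking $\mu$ sufficiently antidominant ensures every $y_j$ remains in $\Omega_\emptyset^{-C}$. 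Concatenating the lifted edges yields a path in $\QB(W)$ from $ws_\alpha$ to $w$ of total weight $\Phi^+(w\alpha)\alpha^\vee$, and Proposition~\ref{prop:qbgWeightDefinition}(c) then gives $\wt(ws_\alpha\Rightarrow w)\leq\Phi^+(w\alpha)\alpha^\vee$.

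The main subtlety is that Corollary~\ref{cor:superdominantParabolicEmbedding} cannot be invoked directly: the word distance $\ell(x_1^{-1}x_2)=\ell(s_\alpha\varepsilon^{k\alpha^\vee})$ grows linearly with $\langle\mu,\alpha\rangle$, violating its bounded-$C_1$ hypothesis. The correct observation is that only the \emph{Bruhat} distance $\ell(x_2)-\ell(x_1)$ controls the chain, and this is bounded independently of $\mu$; hence the cover-by-cover lifting via Theorem~\ref{thm:pqbgLift} remains well-behaved even though the word length of $x_1^{-1}x_2$ is large.
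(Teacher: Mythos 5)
Your overall strategy is essentially the paper's own: pass to a superregular translation part, realize $ws_\alpha\varepsilon^{\mu-\Phi^+(w\alpha)\alpha^\vee} < w\varepsilon^\mu$ as a relation via a single affine reflection, and transport this along a saturated chain using the lifting Theorem~\ref{thm:pqbgLift}. You use antidominant $\mu$ where the paper uses dominant, and you fold the two cases $w\alpha\in\Phi^{\pm}$ into one computation — cosmetic differences. (A small slip: with $v=w_0$ one gets $\ell(x_2)-\ell(x_1)=\Phi^+(w\alpha)\langle\alpha^\vee,2\rho\rangle+\ell(w)-\ell(ws_\alpha)$, which for $w\alpha\in\Phi^+$ is $\langle\alpha^\vee,2\rho\rangle-\bigl(\ell(ws_\alpha)-\ell(w)\bigr)$, not $\langle\alpha^\vee,2\rho\rangle-1$, and for $w\alpha\in\Phi^-$ is $\ell(w)-\ell(ws_\alpha)$, not $1$; your conclusions $\ell(x_1)<\ell(x_2)$ and $n:=\ell(x_2)-\ell(x_1)$ bounded still hold.)

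The genuine gap is in the step ``these weights telescope to $\Phi^+(w\alpha)\alpha^\vee$, so each individual weight is bounded by this fixed vector; consequently every $y_j$ remains in $\Omega_\emptyset^{-C}$.'' This is circular. To know that each $\mu_{j-1}-\mu_j$ is nonnegative you must already have $y_{j-1}\in\Omega_\emptyset^{-C}$ so that Theorem~\ref{thm:pqbgLift}(a) applies; and to bound the partial sum $\mu_0-\mu_j$ by the total $\Phi^+(w\alpha)\alpha^\vee$, you need $\mu_j-\mu_n\geq 0$, i.e.\ that the \emph{later} covers have also been lifted — which is precisely what you have not yet shown at step $j$. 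So the telescoping identity cannot be used to bootstrap the $\Omega_\emptyset^{-C}$ membership.

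The repair is easy and does not use the telescope: once $y_{j-1}\in\Omega_\emptyset^{-C}$ is known, Theorem~\ref{thm:pqbgLift}(a) yields a $\QB(W)$-edge, and any single edge has weight $0$ or $\gamma^\vee$ for some quantum root $\gamma$. There are only finitely many of these, so $\abs{\langle\mu_{j-1}-\mu_j,\beta\rangle}\leq M$ for a constant $M$ depending only on $\Phi$. Since $n$ is also uniformly bounded, taking $\mu\in\Omega_\emptyset^{-(C+nM)}$ forces $y_j\in\Omega_\emptyset^{-C}$ for all $j$ by induction. Only then is the telescoping identity legitimate, giving a path from $ws_\alpha$ to $w$ of total weight $\Phi^+(w\alpha)\alpha^\vee$, whence Proposition~\ref{prop:qbgWeightDefinition}(c) concludes. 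Your observation that $\ell(x_1^{-1}x_2)$ is unbounded and Corollary~\ref{cor:superdominantParabolicEmbedding} cannot be cited verbatim is correct and worth making; the replacement argument just needs the per-step bound above rather than the global telescope.
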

\begin{proof}
The claim is clear if $w\alpha\in \Phi^-$, as then $ws_\alpha<w$ in the Bruhat order, and we find a path from $ws_\alpha$ to $w$ consisting solely of Bruhat edges.

Now suppose that $w\alpha\in \Phi^+$. Let $\mu \in Q^\vee$ be dominant and superregular. Put $x := w\varepsilon^\mu$. Then $x(\alpha,\langle\mu,\alpha\rangle-1)\in \Phi_\af^-$, so that
\begin{align*}
w\varepsilon^\mu = x>w\varepsilon^\mu s_\alpha \varepsilon^{(\langle\mu,\alpha\rangle-1)\alpha^\vee}
=ws_\alpha \varepsilon^{\mu -\alpha^\vee}.
\end{align*}
With the superregularity constant for $\mu$ sufficiently large, we get\begin{align*}
\mu -\alpha^\vee + \wt(ws_\alpha\Rightarrow w)\leq \mu,
\end{align*}
showing the desired claim.
\end{proof}
\fi
\subsection{Computing the weight function}
We already saw in Lemma~\ref{lem:qbgDiamond} how to find for all $w_1, w_2\in W$ an element $w \in W$ such that $\wt(w_1\Rightarrow w_2) = \wt(w\Rightarrow 1)$. It remains to find a method to compute these weights. First, we note that we only need to consider quantum edges for this task.
\begin{lemma}[{\cite[Proposition~4.11]{Milicevic2020}}]\label{lem:quantumEdgesSuffice}For each $w\in W$, there is a shortest path from $w$ to $1$ in $\QB(W)$ consisting only of quantum edges.\rightqed
\end{lemma}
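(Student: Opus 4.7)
I will proceed by strong induction on $\ell(w)$; the base case $\ell(w)=0$, i.e.\ $w=1$, is handled by the empty path. For the inductive step with $\ell(w)\geq 1$, it suffices to produce a single quantum edge $w\to w'$ that begins some shortest path from $w$ to $1$. Indeed, every quantum edge strictly decreases length (since $\ell(w')=\ell(w)-\ell(s_\alpha)\leq\ell(w)-1$), so the induction hypothesis applies to $w'$ and yields a shortest path from $w'$ to $1$ consisting only of quantum edges, which may then be prepended.

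To find such an initial quantum edge, pick any shortest path $p\colon w=w_0\to w_1\to\cdots\to w_n=1$. If $w_0\to w_1$ is already quantum we are done, so assume it is Bruhat; then $\ell(w_1)=\ell(w)+1$. Since Bruhat edges always increase length by one and $\ell(w_n)=0<\ell(w_0)$, not every edge of $p$ can be Bruhat, so there is a smallest index $j\geq 1$ for which $w_j\to w_{j+1}$ is quantum. The plan is to replace the consecutive edges $w_{j-1}\to w_j\to w_{j+1}$, of type Bruhat-then-quantum, by consecutive edges $w_{j-1}\to w'\to w_{j+1}$ whose first edge is quantum, leaving the rest of $p$ untouched. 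This exchange moves the first quantum edge one position to the left; iterating it $j$ times produces a shortest path from $w$ to $1$ whose initial edge is quantum.

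The technical core of the argument is therefore the following diamond property: whenever $u\to us_\alpha\to us_\alpha s_\beta$ is a \emph{shortest} two-edge path in $\QB(W)$ with first edge Bruhat and second edge quantum, there exists a positive root $\gamma$ such that $u\to us_\gamma$ is a quantum edge in $\QB(W)$ and $us_\gamma\to us_\alpha s_\beta$ is an edge in $\QB(W)$. The shortness assumption is automatic for the relevant subpath $w_{j-1}\to w_j\to w_{j+1}$ of $p$, because any subpath of a shortest path is shortest. Given the diamond property, the replacement two-edge path has the same endpoints and length, so it remains shortest, and by Proposition~\ref{prop:qbgWeightDefinition} and Lemma~\ref{lem:weight2rho} it automatically has the same weight as the original.

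The main obstacle is establishing this diamond property. Since a single swap involves at most the two roots $\alpha,\beta$, one may reduce to the rank-two subsystem they generate and verify the claim by direct case analysis over the irreducible rank-two root systems $A_1\times A_1$, $A_2$, $B_2$ and $G_2$, where the relevant portion of $\QB(W)$ is small enough to inspect explicitly. This is the content of \cite[Proposition~4.11]{Milicevic2020}, which we invoke.
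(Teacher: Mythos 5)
Your outline — induction on $\ell(w)$ plus an edge-swap that pushes quantum edges to the front of a shortest path — is a plausible strategy, but as written it is not a proof. The paper does not prove this lemma: the statement carries a citation to Mili\'cevi\'c's Proposition~4.11 in place of a proof. You close your argument by invoking the very same reference for the diamond property, so the proposal has not reduced the problem to anything strictly simpler than the lemma itself; from the reader's point of view the reference is formally circular.

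More substantively, the rank-two reduction you sketch for the diamond property is unjustified. Whether $w\to ws_\alpha$ is a quantum edge of $\QB(W)$ depends on $\ell(s_\alpha)$ computed in $W$ and on $\langle\alpha^\vee, 2\rho\rangle$, where $2\rho$ is the sum of \emph{all} positive roots of $\Phi$; both quantities change when one passes to the rank-two subsystem spanned by $\alpha$ and $\beta$. Consequently a quantum edge of $\QB(W)$ need not restrict to, nor lift from, a quantum edge of the rank-two quantum Bruhat graph, and inspecting the small graphs for $A_1\times A_1$, $A_2$, $B_2$, $G_2$ does not directly encode the relevant portion of $\QB(W)$. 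A case analysis over rank-two subsystems must therefore be coupled with an argument that controls how the global length function and $2\rho$ interact with the local configuration; this coupling is the technical heart of the known proofs and is exactly what your proposal leaves unaddressed.
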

So we only need to find for each $w\in W\setminus\{1\}$ a quantum edge $w\rightarrow w'$ in $\QB(W)$ with $d(w'\Rightarrow 1) = d(w\Rightarrow 1)-1$. In this section, we present a new method to obtain such edges.
\begin{definition}Let $w\in W$.
\begin{enumerate}[(a)]
\item The set of \emph{inversions} of $w$ is
\begin{align*}
\inv(w) := \{\alpha\in \Phi^+\mid w^{-1}\alpha\in \Phi^-\}.
\end{align*}
\item An inversion $\gamma\in \inv(w)$ is a \emph{maximal inversion} if there is no $\alpha\in \inv(w)$ with $\alpha\neq \gamma\leq \alpha$. Here, $\gamma\leq\alpha$ means that $\alpha-\gamma$ is a sum of positive roots.

We write $\maxinv(w)$ for the set of maximal inversions of $w$.
\end{enumerate}
\end{definition}
\begin{remark}
If $\theta\in \inv(w)$ is the longest root of an irreducible component of $\Phi$, then certainly $\theta\in \maxinv(w)$. In this case, everything we want to prove is already shown in \cite[Section~5.5]{Lenart2015}. Our strategy is to follow their arguments as closely as possible while keeping the generality of maximal inversions.
\end{remark}
\begin{lemma}\label{lem:maximalEdges}
Let $w\in W$ and $\gamma\in\maxinv(w)$. Then $w\rightarrow s_\gamma w$ is a quantum edge.
\end{lemma}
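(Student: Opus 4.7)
\emph{Proof plan.} Write $\alpha := -w^{-1}\gamma \in \Phi^+$, so that $w\alpha = -\gamma$ and $s_\gamma w = ws_\alpha$. The claim that $w \to s_\gamma w$ is a quantum edge amounts to
\begin{align*}
\ell(ws_\alpha) = \ell(w) + 1 - \langle \alpha^\vee, 2\rho\rangle,
\end{align*}
which I split into two independent statements: \textbf{(a)} $\ell(ws_\alpha) = \ell(w) - \ell(s_\alpha)$, equivalently $w\beta \in \Phi^-$ for every $\beta \in \Phi^+$ with $s_\alpha\beta \in \Phi^-$; and \textbf{(b)} $\alpha$ is a quantum root, which by Lemma~\ref{lem:quantumRoots} means $\langle \alpha^\vee, \beta\rangle = 1$ for every $\beta \in \Phi^+\setminus\{\alpha\}$ with $s_\alpha\beta \in \Phi^-$.

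The common tool is a $\gamma$-string analysis. Fix such a $\beta$ and set $n := \langle\alpha^\vee, \beta\rangle \geq 1$. The string property of crystallographic root systems gives $\beta - k\alpha \in \Phi$ for $0 \leq k \leq n$, so the elements
\begin{align*}
\delta_k := w(\beta - k\alpha) = w\beta + k\gamma, \qquad k = 0, 1, \ldots, n,
\end{align*}
are all roots. Since $\gamma \in \Phi^+$, once one of them lies in $\Phi^+$ it stays there, yielding a unique threshold $k_0 \in \{0, \ldots, n+1\}$ with $\delta_k \in \Phi^-$ for $k < k_0$ and $\delta_k \in \Phi^+$ for $k \geq k_0$. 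Whenever $\delta_n \in \Phi^+$, the identity $w^{-1}\delta_n = s_\alpha\beta \in \Phi^-$ puts $\delta_n \in \inv(w)$; whenever $\delta_0 \in \Phi^-$, analogously $-\delta_0 \in \inv(w)$.

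For (a), suppose $w\beta \in \Phi^+$ for contradiction, i.e.\ $k_0 = 0$. Then $\delta_n \in \inv(w)$, and $\delta_n - \gamma = w\beta + (n-1)\gamma$ is a nonzero non-negative combination of the positive roots $w\beta$ and $\gamma$; so $\delta_n > \gamma$ strictly, contradicting $\gamma \in \maxinv(w)$. For (b), assume $n \geq 2$ for contradiction; part (a) then forces $k_0 \geq 1$. If $k_0 \leq n - 1$, then $\delta_n \in \inv(w)$ and $\delta_n - \gamma = \delta_{n-1} \in \Phi^+$ yield $\delta_n > \gamma$, a contradiction. If $k_0 \geq n$ (so $k_0 \in \{n, n+1\}$), then $-\delta_0 \in \inv(w)$, and $-\delta_0 - \gamma = -\delta_1 \in \Phi^+$ holds because $\delta_1 \in \Phi^-$ (using $1 < k_0$, which follows from $n \geq 2$); so $-\delta_0 > \gamma$, again contradicting maximality. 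Hence $n = 1$, proving (b).

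The main obstacle is the case split in (b): a naive dichotomy on the sign of $w\beta$ is insufficient, because the \enquote{mixed} regime $w\beta \in \Phi^-$, $ws_\alpha\beta \in \Phi^+$ conceals further sub-cases depending on where the sign flip $k_0$ occurs inside the $\gamma$-string. Splitting on $k_0 \leq n - 1$ versus $k_0 \geq n$ produces, in each situation, a witnessing inversion ($\delta_n$ at the top of the string, respectively $-\delta_0$ coming from the bottom) which strictly exceeds $\gamma$ in the root order; this is precisely the leverage furnished by $\gamma \in \maxinv(w)$.
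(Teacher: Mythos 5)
Your proof is correct and rests on the same core mechanism as the paper's --- pushing the $\alpha$-string through $\beta$ forward by $w$ to obtain $\delta_k = w\beta + k\gamma$, and locating an inversion of $w$ strictly above $\gamma$ to contradict maximality --- but you organize it differently in two worthwhile ways. First, you reverse the logical dependency between the two halves: you establish the length-additivity~(a) on its own (the point being that $\delta_n - \gamma = w\beta + (n-1)\gamma$ is already a sum of positive roots for every $n\geq 1$ once $w\beta\in\Phi^+$, so no control on $n$ is required), and only then feed the resulting $k_0\geq 1$ into~(b); the paper instead proves that $\alpha = -w^{-1}\gamma$ is a quantum root first (its Step~1) and invokes $\langle\alpha^\vee,\cdot\rangle = 1$ from Step~1 inside its length argument (Step~2). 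Second, your observation that the sign of $\delta_k$ is monotone in $k$ --- each step adds the positive root $\gamma$, so a positive root cannot turn negative --- gives a clean threshold $k_0$ and collapses the paper's four-bullet case discussion in Step~1 into the dichotomy $k_0\leq n-1$ versus $k_0\geq n$, with $\delta_n$ and $-\delta_0$ serving symmetrically as the witnessing inversions. Both routes produce the same contradictions against $\gamma\in\maxinv(w)$; the threshold framing is a bit more transparent, and it makes explicit why the two endpoints of the string are the only places one needs to look.
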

\begin{proof}
Note that $s_\gamma w = ws_{-w^{-1}\gamma}$. We have to show that $-w^{-1}\gamma$ is a quantum root and that 
\begin{align*}
\ell(ws_{-w^{-1}\gamma}) = \ell(w) - \ell(s_{-w^{-1}\gamma}).
\end{align*}
\textbf{Step 1.} We show that $-w^{-1}\gamma$ is a quantum root using Lemma~\ref{lem:quantumRoots}. So pick an element $-w^{-1}\gamma\neq \beta\in \Phi^+$ with $s_{-w^{-1}\gamma}(\beta)\in \Phi^-$. We want to show that $\langle -w^{-1}\gamma^\vee,\beta\rangle=1$.

Note that
\begin{align*}
s_{-w^{-1}\gamma}(\beta) = \beta+\langle -w^{-1}\gamma^\vee,\beta\rangle w^{-1}\gamma.
\end{align*}
In particular, $k := \langle -w^{-1}\gamma^\vee,\beta\rangle>0$. It follows from the theory of root systems that
\begin{align*}
\beta_i := \beta +i w^{-1}\gamma\in \Phi,\qquad i=0,\dotsc,k.
\end{align*}
Since $\beta_0 = \beta\in \Phi^+$ and $\beta_{k} = s_{-w^{-1}\gamma}(\beta)\in \Phi^-$, we find some $i\in\{0,\dotsc,k-1\}$ with $\beta_i\in \Phi^+$ and $\beta_{i+1}\in \Phi^-$. We show that $k\leq 1$ as follows:
\begin{itemize}
\item Suppose $w\beta_i \in \Phi^+$. Then $w\beta_{i+1} = w\beta_i+\gamma > \gamma$. In particular, $w\beta_{i+1}\in \Phi^+$. We see that $w\beta_{i+1}\in\inv(w)$, contradicting maximality of $\gamma$.
\item Suppose $w\beta_{i+1}\in \Phi^-$. Then $-w\beta_i = -w\beta_{i+1}+\gamma > \gamma$. In particular, $-w\beta_{i}\in \Phi^+$. We see that $-w\beta_{i}\in\inv(w)$, contradicting maximality of $\gamma$.
\item Suppose $i\geq 1$. Then $\gamma - w\beta_i = -w\beta_{i-1}\in \Phi$. We already proved $w\beta_i\in \Phi^-$, so $-w\beta_i\in \inv(w)$. Since also $\gamma\in \inv(w)$, we conclude $\gamma<-w\beta_{i-1}\in \inv(w)$, contradicting the maximality of $\gamma$.
\item Suppose $i\leq k-2$. Then $w\beta_{i+2} = w\beta_{i+1} + \gamma \in \Phi$. Since both $\gamma$ and $w\beta_{i+1}$ are in $\inv(w)$, we conclude that $\gamma < w\beta_{i+2}\in \inv(w)$, which is a contradiction to the maximality of $\gamma$.
\end{itemize}
In summary, we conclude $0=i\geq k-1$, thus $k\leq 1$. This shows $\langle -w^{-1}\gamma^\vee,\beta\rangle=1$.

\textbf{Step 2.} We show that
\begin{align*}
\ell(ws_{-w^{-1}\gamma}) = \ell(w) - \ell(s_{-w^{-1}\gamma}).
\end{align*}
Suppose this is not the case. Then we find some $\alpha\in \Phi^+$ such that $w\alpha\in \Phi^+$ and $s_{-w^{-1}\gamma}(\alpha) \in \Phi^-$. As we saw before, $\langle -w^{-1}\gamma^\vee,\alpha\rangle=1$, so $s_{-w^{-1}\gamma}(\alpha) = \alpha + w^{-1}\gamma\in \Phi^-$. Now consider the element $ws_{-w^{-1}\gamma}(\alpha) = w\alpha + \gamma\in \Phi$. Since $w\alpha\in \Phi^+$ by assumption, we have $ws_{-w^{-1}\gamma}(\alpha)>\gamma$, in particular $ws_{-w^{-1}\gamma}(\alpha)\in \Phi^+$. We conclude $ws_{-w^{-1}\gamma}(\alpha)\in \inv(w)$, yielding a final contradiction to the maximality of $\gamma$.
\end{proof}
\begin{lemma}
Let $w\in W$ and $\alpha\in \Phi^+$ such that $w\rightarrow ws_\alpha$ is a quantum edge. Let moreover $-w\alpha\neq \gamma\in \maxinv(w)$. Then $\gamma\in \maxinv(ws_\alpha)$ and $\langle -w^{-1}\gamma^\vee,\alpha\rangle \geq 0$.
\end{lemma}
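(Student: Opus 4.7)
I would set $\beta := -w^{-1}\gamma \in \Phi^+$ and $\delta := -w\alpha \in \Phi^+$; the latter is positive because $w \to ws_\alpha$ being a quantum edge forces $\ell(ws_\alpha) < \ell(w)$, hence $w\alpha \in \Phi^-$ and $\delta \in \inv(w)$. The hypothesis $-w\alpha \neq \gamma$ translates to $\beta \neq \alpha$. My plan rests throughout on a standard reformulation of the quantum edge condition: the length identity $\ell(ws_\alpha) = \ell(w) - \ell(s_\alpha)$ (which is part of the quantum edge hypothesis, since $\alpha$ is a quantum root) is equivalent, by an elementary inversion-counting argument, to $w(\inv(s_\alpha)) \subseteq \Phi^-$.

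For the inequality $\langle -w^{-1}\gamma^\vee, \alpha\rangle \geq 0$, I would argue by contradiction. If $\langle\alpha^\vee, \beta\rangle \leq -1$, the $\alpha$-string through $\beta$ contains $\beta + \alpha \in \Phi^+$. Applying $-w$ gives $\gamma + \delta = -w(\beta + \alpha) \in \Phi^+$ (it is a root, and a sum of positive roots). Since $w^{-1}(\gamma + \delta) = -(\beta + \alpha) \in \Phi^-$, we obtain $\gamma + \delta \in \inv(w)$ strictly larger than $\gamma$, contradicting $\gamma \in \maxinv(w)$.

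For the membership $\gamma \in \inv(ws_\alpha)$, equivalently $s_\alpha\beta \in \Phi^+$, I argue by contradiction again. If $s_\alpha\beta \in \Phi^-$, then since $\alpha$ is a quantum root and $\beta \neq \alpha$, Lemma~\ref{lem:quantumRoots} gives $\langle\alpha^\vee, \beta\rangle = 1$, so $s_\alpha\beta = \beta - \alpha$ and $\alpha - \beta \in \Phi^+$. A direct computation $s_\alpha(\alpha - \beta) = -\beta \in \Phi^-$ shows $\alpha - \beta \in \inv(s_\alpha)$, so the reformulation forces $w(\alpha - \beta) \in \Phi^-$; but $w(\alpha - \beta) = \gamma - \delta$, so $\delta > \gamma$ with $\delta \in \inv(w)$, contradicting maximality once more. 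For the maximality of $\gamma$ in $\inv(ws_\alpha)$: given $\gamma' \in \inv(ws_\alpha)$ with $\gamma' > \gamma$, it suffices to show $\gamma' \in \inv(w)$, which immediately contradicts $\gamma \in \maxinv(w)$. If instead $w^{-1}\gamma' \in \Phi^+$, then $w^{-1}\gamma' \in \inv(s_\alpha)$ (since $s_\alpha w^{-1}\gamma' \in \Phi^-$ by the definition of $\inv(ws_\alpha)$), and the reformulation yields $\gamma' = w(w^{-1}\gamma') \in \Phi^-$, contradicting $\gamma' \in \Phi^+$.

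The main obstacle I would expect is identifying the correct auxiliary root $\alpha - \beta \in \inv(s_\alpha)$ in the second step and, more fundamentally, recognizing that the quantum edge condition is best exploited in the form $w(\inv(s_\alpha)) \subseteq \Phi^-$. Once this reformulation is in place, both the membership of $\gamma$ in $\inv(ws_\alpha)$ and its maximality reduce to applying the identity to a judiciously chosen root, and the rest is disciplined sign bookkeeping combined with the maximality of $\gamma$ in $\inv(w)$.
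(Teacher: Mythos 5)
Your proof is correct and uses the same essential ideas as the paper's: in each step you contradict the maximality of $\gamma$ in $\inv(w)$, and the quantum-edge hypothesis enters via length additivity, with Lemma~\ref{lem:quantumRoots} supplying $\langle \alpha^\vee,\beta\rangle=1$ exactly where the paper uses it. Making the reformulation $\ell(ws_\alpha)=\ell(w)-\ell(s_\alpha)\iff w(\inv(s_\alpha))\subseteq\Phi^-$ explicit and using it systematically is a genuine streamlining: applied to $\alpha-\beta\in\inv(s_\alpha)$ it forces $\gamma-\delta\in\Phi^-$ outright, so you skip the paper's case split on the sign of $\gamma+w\alpha$, and applied to $w^{-1}\gamma'$ it gives a two-line proof of the containment $\inv(ws_\alpha)\subseteq\inv(w)$ that the paper takes as known. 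The only step where you genuinely diverge is the inequality $\langle -w^{-1}\gamma^\vee,\alpha\rangle\geq 0$, where you use the $\alpha$-string witness $\gamma+\delta$ in place of the paper's $s_\gamma(-w\alpha)=\delta+\langle w^{-1}\gamma^\vee,\alpha\rangle\gamma$; both produce a root of $\inv(w)$ strictly above $\gamma$, so the choice is cosmetic.
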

\begin{proof}
We first show $\gamma\in \inv(ws_\alpha)$, i.e.\ $s_\alpha w^{-1}\gamma\in \Phi^-$.

Aiming for a contradiction, we thus suppose that \begin{align*}
s_\alpha (-w^{-1}\gamma) = \langle \alpha^\vee,w^{-1}\gamma\rangle \alpha - w^{-1}\gamma  \in \Phi^-.
\end{align*} Then $-w^{-1}\gamma$ is a positive root whose image under $s_\alpha$ is negative. Since $\alpha$ is quantum, we conclude $\langle \alpha^\vee,-w^{-1}\gamma\rangle=1$. Thus $-\alpha - w^{-1}\gamma \in \Phi^-$. Consider the element \begin{align*}w(\alpha+w^{-1}\gamma) = \gamma+w\alpha \in \Phi.\end{align*} We distinguish the following cases:
\begin{itemize}
\item If $\gamma+w\alpha\in \Phi^-$, we get $\gamma<-w\alpha \in \inv(w)$, contradicting maximality of $\gamma$.
\item If $\gamma+w\alpha\in \Phi^+$, we compute
\begin{align*}
ws_\alpha(-w^{-1}\gamma) = -(ws_\alpha w^{-1})\gamma = -s_{w\alpha}(\gamma) = -(\gamma+w\alpha)\in \Phi^-.
\end{align*}
In other words, the positive root $-w^{-1}\gamma\in \Phi^+$ gets mapped to negative roots both by $s_\alpha$ and by $ws_\alpha\in W$. This is a contradiction to $\ell(w) = \ell(ws_\alpha) + \ell(s_\alpha)$ (since $w\rightarrow ws_\alpha$ was supposed to be a quantum edge).
\end{itemize}
In any case, we get a contradiction. Thus $\gamma\in \inv(ws_\alpha)$.

The quantum edge condition $w\rightarrow ws_\alpha$ implies $\ell(w) = \ell(ws_\alpha) + \ell(s_\alpha)$, so $\inv(ws_\alpha)\subset \inv(w)$. Because $\gamma$ is maximal in $\inv(w)$ and $\gamma \in \inv(ws_\alpha)\subseteq \inv(w)$, it follows that $\gamma$ must be maximal in $\inv(ws_\alpha)$ as well.

Finally, we have to show $\langle -w^{-1}\gamma^\vee,\alpha\rangle\geq 0$. If this was not the case, then we would get
\begin{align*}
\gamma <s_\gamma(-w\alpha) = -w\alpha + \langle w^{-1}\gamma^\vee,\alpha\rangle \gamma\in \inv(w),
\end{align*}
again contradicting maximality of $\gamma$.
\end{proof}
\begin{proposition}\label{prop:maximalEdges}
Let $w\in W$ and $\gamma\in \maxinv(w)$. Then
\begin{align*}
\wt(w\Rightarrow 1) = \wt(s_\gamma w\Rightarrow 1) - w^{-1}\gamma^\vee.
\end{align*}
\end{proposition}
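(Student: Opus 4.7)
The plan is to prove the weight identity as a pair of inequalities, the harder direction being established through an inductive argument that shows the edge $w\to s_\gamma w$ from Lemma~\ref{lem:maximalEdges} can always serve as the first edge of some shortest path from $w$ to $1$ in $\QB(W)$.

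For the easy direction $\wt(w\Rightarrow 1)\leq \wt(s_\gamma w\Rightarrow 1) - w^{-1}\gamma^\vee$, I would invoke Lemma~\ref{lem:maximalEdges} to obtain the quantum edge $w\to s_\gamma w$ of weight $(-w^{-1}\gamma)^\vee=-w^{-1}\gamma^\vee$, concatenate it with any shortest path from $s_\gamma w$ to $1$, and conclude via Proposition~\ref{prop:qbgWeightDefinition}(c).

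To promote this inequality to an equality, I would use Lemma~\ref{lem:weight2rho} to translate the weight identity into the equivalent distance identity $d(w\Rightarrow 1)=d(s_\gamma w\Rightarrow 1)+1$; the calculation uses $\ell(s_\gamma w)=\ell(w)-\ell(s_{-w^{-1}\gamma})$ together with the quantum-root formula $\langle -w^{-1}\gamma^\vee,2\rho\rangle = \ell(s_{-w^{-1}\gamma})+1$. The ``$\leq$'' part of this distance identity is again immediate from the quantum edge, so what remains is to produce a path from $s_\gamma w$ to $1$ of length $d(w\Rightarrow 1)-1$. I would prove this by induction on $\ell(w)$, the base case $w=1$ being vacuous since $\maxinv(1)=\emptyset$. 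In the inductive step, Lemma~\ref{lem:quantumEdgesSuffice} supplies a shortest path $w=w_0\to w_1 = w_0 s_{\alpha_1}\to\cdots\to w_k=1$ consisting of quantum edges. If $\gamma = -w_0\alpha_1$ then $s_\gamma w = w_1$ and truncating the given path already yields what is needed. Otherwise, the (unnamed) lemma immediately preceding this proposition gives $\gamma\in\maxinv(w_1)$ together with the constraint $\langle w^{-1}\gamma^\vee,\alpha_1\rangle\leq 0$, and by the inductive hypothesis the edge $w_1\to s_\gamma w_1$ begins a shortest path from $w_1$ to $1$ of length $k-1$.

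The main obstacle is then to bridge $s_\gamma w$ to $s_\gamma w_1 = s_\gamma w\cdot s_{\alpha_1}$ inside $\QB(W)$ by a single edge, so as to splice in the inductive path and obtain a path of total length $k-1$. When $\langle w^{-1}\gamma^\vee,\alpha_1\rangle=0$ the roots $w^{-1}\gamma$ and $\alpha_1$ are orthogonal, hence $w_1^{-1}\gamma = w^{-1}\gamma$ and $\ell(s_{-w_1^{-1}\gamma})=\ell(s_{-w^{-1}\gamma})$; a direct length comparison then shows that $s_\gamma w\to s_\gamma w_1$ is itself a quantum edge of weight $\alpha_1^\vee$, yielding the clean splicing. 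In the remaining cases $\langle w^{-1}\gamma^\vee,\alpha_1\rangle\in\{-1,-2,-3\}$ (the lower values only arising in non-simply-laced types), one exploits the quantum-root constraint from Lemma~\ref{lem:quantumRoots} applied to $\alpha_1$ and to $-w^{-1}\gamma$ to perform a finer combinatorial analysis: depending on the relative position of $\alpha_1$ and $-w^{-1}\gamma$, either a Bruhat-type bridge $s_\gamma w \to s_\gamma w_1$ exists of the required weight contribution, or the initial quantum edge $w\to w_1$ of the shortest path can be re-chosen to reduce to the orthogonal case before applying the inductive hypothesis.
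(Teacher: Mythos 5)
Your overall architecture matches the paper's: prove the easy inequality $\wt(w\Rightarrow 1)\leq\wt(s_\gamma w\Rightarrow 1)-w^{-1}\gamma^\vee$ from the quantum edge of Lemma~\ref{lem:maximalEdges} (the paper instead invokes the weight estimate $\wt(ws_\alpha\Rightarrow w)\leq\Phi^+(w\alpha)\alpha^\vee$, but the upshot is the same), then establish the reverse inequality by induction on $\ell(w)$, picking a quantum edge $w\to w_1 = ws_{\alpha_1}$ on a shortest path via Lemma~\ref{lem:quantumEdgesSuffice}, using the unnamed lemma to pass $\gamma$ along, and then bridging $s_\gamma w$ to $s_\gamma w_1$. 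Your preference for phrasing the argument in terms of distances rather than weights is fine: with the easy inequality in hand, Lemma~\ref{lem:weight2rho} makes the distance identity equivalent to the weight identity, exactly as you say.

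The gap is in the case analysis of $c := \langle -w^{-1}\gamma^\vee,\alpha_1\rangle\in\mathbb Z_{\geq 0}$ at the bridging step. You handle $c=0$ correctly. But for $c\geq 1$ your proposal only gestures at "a finer combinatorial analysis" with two hypothetical outcomes, and one of them --- re-choosing the initial quantum edge of the shortest path to reduce to the orthogonal case --- is neither justified nor actually what makes the argument close. What actually happens is crisper and you should be able to nail it down. First, combining the two instances of Lemma~\ref{lem:maximalEdges} with Lemma~\ref{lem:weight2rho} yields a clean length formula for the fourth corner of the diamond,
\begin{align*}
\ell(s_\gamma w s_{\alpha_1}) = \ell(s_\gamma w) + 1 + (c-1)\langle\alpha_1^\vee,2\rho\rangle.
\end{align*}
For $c=1$ this says $\ell(s_\gamma w s_{\alpha_1})=\ell(s_\gamma w)+1$, so $s_\gamma w\to s_\gamma w_1$ is a genuine Bruhat edge and your splicing goes through. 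For $c\geq 2$ this formula is simply incompatible with the trivial bound $\ell(s_\gamma w s_{\alpha_1})\leq\ell(s_\gamma w)+\ell(s_{\alpha_1})\leq\ell(s_\gamma w)+\langle\alpha_1^\vee,2\rho\rangle -1$ (Lemma~\ref{lem:quantumRoots}), so the case $c\geq 2$ never occurs at all. Thus there is no second branch to pursue; you are always in $c\in\{0,1\}$, and in both of these the bridge exists. Until you rule out $c\geq 2$ (rather than proposing to handle it), the induction is not closed.
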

\begin{proof}
Since the estimate
\begin{align*}
\wt(w\Rightarrow 1)\leq&\wt(w\Rightarrow s_\gamma w) + \wt(s_\gamma w\Rightarrow 1)
\\\leq&-w^{-1}\gamma^\vee + \wt(s_\gamma w\Rightarrow 1)
\end{align*}
follows from \weightEstimateLong, all we have to show is the inequality \enquote{$\geq$}.

For this, we use induction on $\ell(w)$. If $1\neq w\in W$, we find by Lemma~\ref{lem:quantumEdgesSuffice} some quantum edge $w\rightarrow ws_\alpha$ with $\wt(w\Rightarrow 1) = \wt(ws_\alpha\Rightarrow 1)+\alpha^\vee$. If $\alpha=-w^{-1}\gamma$, we are done.

Otherwise, $\gamma\in \maxinv(ws_\alpha)$ and $\langle -w^{-1}\gamma^\vee,\alpha\rangle\geq 0$ by the previous lemma. By induction, we have
\begin{align}
\wt(w\Rightarrow 1)=&\wt(ws_\alpha \Rightarrow 1)+\alpha^\vee
\notag\\=& \wt(s_\gamma ws_\alpha\Rightarrow 1) + \alpha^\vee - (ws_\alpha)^{-1}\gamma^\vee.\label{eq:maximalEdges1}
\end{align}
By Lemma~\ref{lem:maximalEdges}, we get the following three quantum edges:
\begin{align*}
\begin{tikzcd}[ampersand replacement=\&]
\&w\ar[dl]\ar[dr]\\
ws_\alpha\ar[dr]\&\&s_\gamma w\\
\&s_\gamma ws_\alpha
\end{tikzcd}
\end{align*}
This allows for the following computation:
\begin{align}
\ell(s_\gamma ws_\alpha) =&\,\ell(ws_\alpha) + 1-\langle -(ws_\alpha)^{-1}\gamma^\vee,2\rho\rangle
\notag\\=&\,\ell(w) + 2-\langle \alpha^\vee,2\rho\rangle - \langle -w^{-1}\gamma^\vee - \langle -w^{-1}\gamma^\vee,\alpha\rangle \alpha^\vee,2\rho\rangle
\notag\\=&\,\ell(s_\gamma w) + 1 +(\langle -w^{-1}\gamma^\vee,\alpha\rangle-1)\langle \alpha^\vee,2\rho\rangle.\label{eq:maximalEdges2}
\end{align}
We now distinguish several cases depending on the value of $\langle -w^{-1}\gamma^\vee,\alpha\rangle \in\mathbb Z_{\geq 0}$.
\begin{itemize}
\item Case $\langle -w^{-1}\gamma^\vee,\alpha\rangle=0$. In this case, we get a quantum edge $s_\gamma w\rightarrow s_\gamma w s_\alpha$ by (\ref{eq:maximalEdges2}). Evaluating this in (\ref{eq:maximalEdges1}), we get
\begin{align*}
\wt(w\Rightarrow 1)=&\wt(s_\gamma w s_\alpha\Rightarrow 1)+\alpha^\vee - (ws_\alpha)^{-1}\gamma^\vee
\\\geq&\wt(s_\gamma w\Rightarrow 1) - s_\alpha w^{-1}\gamma^\vee
\\=&\wt(s_\gamma w\Rightarrow 1) - w^{-1}\gamma^\vee.
\end{align*}
\item Case $\langle -w^{-1}\gamma^\vee,\alpha\rangle=1$. In this case, we get a Bruhat edge $s_\gamma w\rightarrow s_\gamma w s_\alpha$ by (\ref{eq:maximalEdges2}). Evaluating this in (\ref{eq:maximalEdges1}), we get
\begin{align*}
\wt(w\Rightarrow 1)=&\wt(s_\gamma w s_\alpha\Rightarrow 1)+\alpha^\vee - (ws_\alpha)^{-1}\gamma^\vee
\\\geq&\wt(s_\gamma w\Rightarrow 1) + \alpha^\vee - s_\alpha w^{-1}\gamma^\vee
\\=&\wt(s_\gamma w\Rightarrow 1) - w^{-1}\gamma^\vee.
\end{align*}
\item Case $\langle -w^{-1}\gamma^\vee,\alpha\rangle\geq 2$.
We get
\begin{align*}
\ell(s_\gamma ws_\alpha) \leq& \ell(s_\gamma w) + \ell(s_\alpha) \underset{\text{L\ref{lem:quantumRoots}}}\leq \ell(s_\gamma w) + \langle \alpha^\vee,2\rho\rangle -1
\\<&\ell(s_\gamma w) + \ell(s_\alpha) \leq \ell(s_\gamma w) +1+\left(\langle -w^{-1}\gamma^\vee,\alpha\rangle-1\right) \langle \alpha^\vee,2\rho\rangle.
\end{align*}
This is a contradiction to \eqref{eq:maximalEdges2}.
\end{itemize}
In any case, we get a contradiction or the required conclusion, finishing the proof.
\end{proof}
\begin{remark}
\begin{enumerate}[(a)]
\item By Lemma~\ref{lem:weight2rho}, it follows that concatenating the quantum edge $w\rightarrow s_\gamma w$ with a shortest path $s_\gamma w\Rightarrow 1$ yields indeed a shortest path from $w$ to $1$. Thus, iterating Proposition~\ref{prop:maximalEdges}, we get a shortest path from $w$ to $1$.
\item If $w\in W^J$ and $\gamma\in\maxinv(w)$, we do not in general have a quantum edge $w\rightarrow (s_\gamma w)^J$ in $\QB(W^J)$. However, we can concatenate a shortest path from $w$ to $(s_\gamma w)^J$ (which will have weight $-w^{-1}\gamma^\vee + \mathbb Z \Phi_J^\vee$) with a shortest path from $(s_\gamma w)^J$ to $1$ in $\QB(W^J)$ to get a shortest path from $w$ to $1$.
\end{enumerate}
\end{remark}
\subsection{Semi-affine quotients}\label{sec:semi-affine-quotients}
We saw that for $w_1, w_2\in W$ and $J\subseteq \Delta$, we can assign a weight to the cosets $w_1 W_J$ and $w_2 W_J$ in $\mathbb Z\Phi^\vee / \mathbb Z\Phi_J^\vee$. In this section, we consider left cosets $W_J w$ instead. This is pretty straightforward if $J\subseteq \Delta$; however, it is more interesting if $J$ is instead allowed to be a subset of $\Delta_{\af}$. The quotient of the finite Weyl group by a set of simple affine roots will be called \emph{semi-affine} quotient.

In this section, we introduce the resulting \emph{semi-affine weight function}. This new function generalizes properties of the ordinary weight function. We have the following two motivations to study it:
\begin{itemize}
\item For root systems of type $A_n$, we can explicitly express the weight function using formula~\eqref{eq:wtAn}:
\begin{align*}
\wt(v_2\Rightarrow v_1) =  \sup_{a\in \Delta_{\af}} (v_2^{-1}\omega_a - v_1^{-1}\omega_a).
\end{align*}
Using the semi-affine weight function, we can prove a generalization of this formula, expressing the weight $\wt(v_2\Rightarrow v_1)$ as a supremum of semi-affine weights (Lemmas \ref{lem:semiAffineWeightSimplification} and \ref{lem:semiAffineWeightSupremum})
\item There is a close relationship between the quantum Bruhat graph and the Bruhat order of the extended affine Weyl group $\widetilde W$. Now \emph{Deodhar's lemma} \cite{Deodhar1977} is an important result on the Bruhat order of general Coxeter groups. Translating the statement of Deodhar's lemma to the quantum Bruhat graph yields exactly the semi-affine weight function.

Conversely, using the semi-affine weight function and Deodhar's lemma, we can generalize our result on the Bruhat order in Section~\ref{sec:bruhat-order-deodhar}.
\end{itemize}
In this \textname{}, the results of this section are only used in Section~\ref{sec:bruhat-order-deodhar}, whose results are not used later. A reader who is not interested in the aforementioned applications is thus invited to skip these two sections.
\begin{definition}
Let $J\subseteq \Delta_{\af}$ be any subset.
\begin{enumerate}[(a)]
\item We denote by $\Phi_J$ the root system generated by the roots
\begin{align*}
\cl J := \{\cl( a)\mid  a\in J\} = \{\alpha\mid (\alpha,k)\in J\}.
\end{align*}
\item We denote by $W_J$ the Weyl group of the root system $\Phi_J$, i.e.\ the subgroup of $W$ generated by $\{s_\alpha\mid \alpha\in \cl J\}$.
\item Similarly, we denote by $(\Phi_\af)_J\subseteq \Phi_J$ the (affine) root system generated by $J$, and by $\widetilde W_J$ the Coxeter subgroup of $W_\af$ generated by the reflections $r_{ a}$ with $ a\in J$.
\item We say that $J$ is a \emph{regular} subset of $\Delta_{\af}$ if no connected component of the affine Dynkin diagram of $\Phi_{\af}$ is contained in $J$, i.e.\ if $\widetilde W_J$ is finite.
\end{enumerate}
\end{definition}
\begin{lemma}\label{lem:semiAffineRootSystemFacts}
Let $J\subseteq \Delta_{\af}$ be a regular subset. 
\begin{enumerate}[(a)]
\item
$\cl J$ is a basis of $\Phi_J$. The map $(\Phi_\af)_J\rightarrow \Phi_J, (\alpha,k)\mapsto \alpha$ is bijective.
\item Writing $\Phi_J^+$ for the positive roots of $\Phi_J$ with respect to the basis $\cl J$, we get a bijection
\begin{align*}
\Phi_J^+\rightarrow (\Phi_\af)_J^+,\quad \alpha\mapsto (\alpha,\Phi^+(-\alpha)).
\end{align*}
\end{enumerate}
\end{lemma}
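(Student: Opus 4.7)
The plan is to reduce to the case of a single irreducible component and then carry out an explicit computation in the basis supplied by $J$. Both $(\Phi_\af)_J$, $\widetilde W_J$ and the assertions split as a direct sum over the irreducible components of $\Phi$, so I may assume $\Phi$ is irreducible. Write $\Delta = \{\alpha_1, \dots, \alpha_n\}$ and let $\theta = \sum_{i=1}^n c_i\alpha_i$ be the highest root, so each $c_i \geq 1$. Then $\Delta_\af = \{a_0, a_1, \dots, a_n\}$ with $a_0 := (-\theta, 1)$ and $a_i := (\alpha_i, 0)$, and regularity amounts to $J \subsetneq \Delta_\af$.

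For part~(a) the first step is linear independence of $\cl J$. If $a_0 \notin J$ this is immediate since $\cl J \subseteq \Delta$. Otherwise some $a_i$ with $i \geq 1$ is missing from $J$: in any relation $\lambda(-\theta) + \sum_{j \geq 1,\, a_j \in J} \mu_j \alpha_j = 0$, the coefficient of $\alpha_i$ in the $\Delta$-basis equals $-\lambda c_i$, forcing $\lambda = 0$ (as $c_i \geq 1$) and then all $\mu_j = 0$. Hence $\cl \colon J \to \cl J$ is a bijection, and since the Cartan integers of affine roots depend only on their classical parts, $\cl J$ is the simple-root basis of a finite root system isomorphic to $(\Phi_\af)_J$ (which is finite by regularity). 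The induced map $\cl \colon (\Phi_\af)_J \to \Phi_J$ is therefore bijective: every $a \in (\Phi_\af)_J$ has a unique expansion $a = \sum_{b \in J} n_b b$ with the $n_b$ of common sign, and the linear independence of $\cl J$ means that $\cl(a) = \sum_b n_b \cl(b)$ already determines the $n_b$, hence determines $a$; surjectivity is automatic.

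For part~(b) I observe that each $a \in \Delta_\af$ satisfies $a = (\cl a,\, \Phi^+(-\cl a))$ by direct inspection of the two cases. For $\alpha \in \Phi_J^+$, writing $\alpha = \sum_{a \in J} n_a \cl(a)$ with $n_a \geq 0$, the unique preimage of $\alpha$ under the bijection of~(a) is $(\alpha, k)$ with $k = \sum_a n_a \Phi^+(-\cl a)$; the goal is to identify $k$ with $\Phi^+(-\alpha)$. If $a_0 \notin J$ or $n_{a_0} = 0$, then $k = 0$ and $\alpha$ is a nonnegative combination of simple roots, so $\Phi^+(-\alpha) = 0$. Otherwise set $m := n_{a_0} \geq 1$, so that $k = m$ and, in the $\Delta$-basis, $\alpha = -m\theta + \sum_{j \geq 1, a_j \in J} n_{a_j}\alpha_j$; picking any missing $a_i$ with $i \geq 1$, the $\alpha_i$-coefficient of $\alpha$ is precisely $-m c_i$.

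The main technical obstacle is to rule out $m \geq 2$. This uses the classical fact that every coefficient of a root in the basis $\Delta$ is bounded in absolute value by the corresponding coefficient of $\theta$, so $m c_i \leq c_i$ and therefore $m = 1$. Once $m = 1$, the strictly negative coefficient $-c_i$ forces $\alpha \in \Phi^-$, hence $\Phi^+(-\alpha) = 1 = m$, finishing the identification $k = \Phi^+(-\alpha)$. The bijectivity statement in~(b) now follows from~(a): applying the same computation to both $\alpha$ and $-\alpha$ shows that the bijection $(\Phi_\af)_J \to \Phi_J$ sends $(\Phi_\af)_J^+$ into $\Phi_J^+$ and $(\Phi_\af)_J^-$ into $\Phi_J^-$, and a global bijection respecting a partition on both sides restricts to a bijection on each part.
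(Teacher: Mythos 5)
Your proof is correct. Part~(a) follows the same route as the paper's (compare the Cartan matrices of $J$ and $\cl J$, invoke regularity to get a finite root system), though you add a welcome explicit verification that $\cl J$ is linearly independent, which the paper folds into ``both claims follow immediately.'' Part~(b) is where you genuinely diverge: the paper argues by induction on the height of $\alpha\in\Phi_J^+$ relative to $\cl J$, adding one simple element $\beta\in\cl J$ at a time and reducing to the identity $\Phi^+(-\alpha)+\Phi^+(-\beta)=\Phi^+(-\alpha-\beta)$, with the only nontrivial case being $\beta=-\theta$. You instead expand $\alpha$ globally in the basis $\cl J$, rewrite in the $\Delta$-basis, and use the classical bound that every coefficient of a root is dominated in absolute value by the corresponding coefficient of $\theta$ to force the $(-\theta)$-coefficient $m$ to be $0$ or $1$; in the latter case the visibly negative $\alpha_i$-coefficient at a missing node $a_i$ identifies $\alpha$ as a negative root, matching $k=1$. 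The paper's inductive argument is shorter and stays internal to the root-string combinatorics; yours is more self-contained and makes the role of regularity (the existence of a missing node $a_i$) explicit at each step. Both are valid, and your reduction to a single irreducible component is a harmless and clarifying preliminary.
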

\begin{proof}
\begin{enumerate}[(a)]
\item Consider the Cartan matrix
\begin{align*}
C_{\alpha,\beta} := \langle \alpha^\vee,\beta\rangle,\qquad \alpha,\beta\in \cl J.
\end{align*}
This must be the Cartan matrix associated to a certain Dynkin diagram, namely the subdiagram of the affine Dynkin diagram of $\Phi_{\af}$ with set of nodes given by $J$. We know that this must coincide with the Dynkin diagram of a finite root system by regularity of $J$. Hence, $C_{\bullet,\bullet}$ is the Cartan matrix of a finite root system. Both claims follow immediately from this observation.
\item
Let $\varphi$ denote the map
\begin{align*}
\varphi : \Phi_J^+\rightarrow \Phi_\af^+,\qquad \alpha\mapsto (\alpha,\Phi^+(-\alpha)).
\end{align*}
By (a), the map is injective. For each root $\alpha\in \cl(J)$, we certainly have $\varphi(\alpha)\in \Phi_J^+$.

Now, for an inductive argument, suppose that $\alpha\in \Phi_J^+, \beta\in \cl(J)$ and $\alpha+\beta\in \Phi$ satisfy $\varphi(\alpha)\in \Phi_J^+$. We want to show that $\varphi(\alpha+\beta)\in \Phi_J^+$.

We have $(\alpha,\Phi^+(-\alpha)),(\beta,\Phi^+(-\beta))\in \Phi_J^+$, hence
\begin{align*}
(\alpha+\beta,\Phi^+(-\alpha) + \Phi^+(-\beta))\in \Phi_J^+.
\end{align*}
Hence it suffices to show that $\Phi^+(-\alpha) + \Phi^+(-\beta) = \Phi^+(-\alpha-\beta)$.

If $\beta\in \Delta$, this is clear. Hence we may assume that $\beta=-\theta$, where $\theta$ is the longest root of the irreducible component of $\Phi$ containing $\alpha,\beta$. Then $\alpha-\theta\in \Phi$ implies $\alpha\in \Phi^+$ and $\alpha-\theta\in \Phi^-$. We see that $\Phi^+(-\alpha) + \Phi^+(\theta) = \Phi^+(-\alpha+\theta)$ holds true.
\qedhere
\end{enumerate}
\end{proof}

The parabolic subgroup $\widetilde W_J\subseteq W_\af$ allows the convenient decomposition of $W_\af$ as $W_\af = \widetilde W_J \cdot \prescript{J}{}{W_\af}$ \cite[Proposition~2.4.4]{Bjorner2005}. We get something similar for $W_J\subseteq W$.
\begin{definition}
Let $J\subseteq \Delta_{\af}$.
\begin{enumerate}[(a)]
\item By $\Phi_J^+$, we denote the set of positive roots in $\Phi_J$ with respect to the basis $\cl(J)$. By abuse of notation, we also use $\Phi^+_J$ as the symbol for the indicator function of $\Phi^+_J$, i.e.
\begin{align*}
\Phi^+_J(\alpha) :=\begin{cases}1,&\alpha\in \Phi^+_J,\\
0,&\alpha\in \Phi\setminus \Phi^+_J.\end{cases}.
\end{align*}
\item We define
\begin{align*}
\prescript J{}W :=& \{w\in W\mid \forall b\in J: w^{-1}\cl(b)\in \Phi^+\}
\\=& \{w\in W\mid \forall \beta\in \Phi_J^+: w^{-1}\beta\in \Phi^+\}.
\end{align*}
\item For $w\in W$, we put \begin{align*}
\prescript J{}\ell(w) :=\#\{\beta\in \Phi^+_J\mid w^{-1}\beta \in \Phi^-\}.
\end{align*}
\end{enumerate}
\end{definition}
\begin{lemma}\label{lem:semiAffineLength}
If $w\in W$ and $\beta\in \Phi^+_J$ satisfy $w^{-1}\beta \in \Phi^-$, then
\begin{align*}
\prescript J{}\ell(s_\beta w)<\prescript J{}\ell(w).
\end{align*}
\end{lemma}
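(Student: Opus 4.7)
The plan is to reduce the statement to the standard descent property of length in a finite Weyl group, by establishing a coset decomposition of $W$ with respect to the finite reflection subgroup $W_J$.

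First, I would show that every $w\in W$ admits a unique decomposition $w = uv$ with $u\in W_J$ and $v\in \prescript J{}W$. For existence, note that $w\Phi^+ \cap \Phi_J$ is a system of positive roots of the finite root system $\Phi_J$: intersecting a positive system of $\Phi$ with any sub-root system yields a positive system in that sub-root system, because both the disjoint union $\Phi_J = \Pi \sqcup (-\Pi)$ and closure under addition within $\Phi_J$ are preserved by intersection. Since $W_J$ acts simply transitively on positive systems of $\Phi_J$, there exists a unique $u\in W_J$ with $u(\Phi^+_J) = w\Phi^+\cap \Phi_J$; setting $v := u^{-1}w$, the identity $u(\Phi^+_J)\subseteq w\Phi^+$ translates to $v^{-1}(\Phi^+_J)\subseteq \Phi^+$, i.e.\ $v\in \prescript J{}W$. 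Uniqueness follows because the only element of $W_J$ stabilizing $\Phi^+_J$ is the identity.

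Next, I would verify that $\prescript J{}\ell(w)$ coincides with the ordinary Coxeter length $\ell_J(u)$ of $u$ in the finite Weyl group $W_J$ (with simple reflections $\{s_\alpha\mid \alpha\in \cl J\}$). For $\beta\in \Phi^+_J$ we have $w^{-1}\beta = v^{-1}(u^{-1}\beta)$; since $v\in \prescript J{}W$, the element $v^{-1}$ sends $\Phi^+_J$ into $\Phi^+$ and, by negation, $\Phi^-_J$ into $\Phi^-$. Thus $w^{-1}\beta\in \Phi^-$ if and only if $u^{-1}\beta\in \Phi^-_J$, so $\prescript J{}\ell(w) = \#\{\beta\in \Phi^+_J \mid u^{-1}\beta\in \Phi^-_J\} = \ell_J(u)$.

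With these two ingredients the conclusion is immediate. From $w^{-1}\beta\in \Phi^-$ and the previous paragraph we get $u^{-1}\beta\in \Phi^-_J$, and the classical descent property inside the finite Weyl group $W_J$ yields $\ell_J(s_\beta u) = \ell_J(u) - 1$. Since $s_\beta u\in W_J$ and $v\in \prescript J{}W$ is unchanged, uniqueness of the decomposition applied to $s_\beta w = (s_\beta u)\cdot v$ identifies this as the canonical decomposition of $s_\beta w$, hence $\prescript J{}\ell(s_\beta w) = \ell_J(s_\beta u) < \ell_J(u) = \prescript J{}\ell(w)$. The main technical content lies in the decomposition step; once one has verified that $w\Phi^+\cap \Phi_J$ is a positive system of $\Phi_J$ and that $W_J$ acts simply transitively on these, the remainder is a direct transport of the classical descent property through the decomposition.
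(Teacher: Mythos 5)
Your proof is correct, but it takes a genuinely different route from the paper's. The paper proves the inequality directly by a counting argument on $\Phi^+_J$: it introduces the set $I$ of roots $\gamma\neq\beta$ in $\Phi^+_J$ that $s_\beta$ sends outside $\Phi^+_J$, observes that $s_\beta$ permutes $\Phi^+_J\setminus(I\cup\{\beta\})$, and shows via the estimate $w^{-1}s_\beta(\gamma) > w^{-1}\gamma$ that the count cannot increase on $I$, with the lone root $\beta$ accounting for the strict drop. You instead build a unique factorization $w = uv$ with $u\in W_J$ and $v\in\prescript J{}W$ by exploiting the fact that $w\Phi^+\cap\Phi_J$ is a positive system of the finite root system $\Phi_J$ and that $W_J$ acts simply transitively on such systems; you then identify $\prescript J{}\ell(w)$ with the Coxeter length $\ell_J(u)$ and transport the conclusion from the standard descent property inside $W_J$. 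Your approach is more structural and arguably more illuminating, since it exhibits $\prescript J{}\ell$ as a genuine Coxeter length along a coset decomposition (a fact the paper only obtains implicitly, via the affine lift in Lemma~\ref{lem:semiAffineProjection}); the paper's proof is more elementary and self-contained, essentially re-deriving the descent property in situ rather than invoking it.

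One small inaccuracy: you assert $\ell_J(s_\beta u) = \ell_J(u)-1$, but for a non-simple $\beta\in\Phi^+_J$ the reflection $s_\beta$ can drop the length by any odd amount; the classical descent characterization only gives $\ell_J(s_\beta u) < \ell_J(u)$ from $u^{-1}\beta\in\Phi^-_J$. Since your final chain of inequalities only uses the strict inequality, the conclusion is unaffected, but the equality claim should be weakened.
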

\begin{proof}
Write
\begin{align*}
I := \{\beta\neq \gamma\in \Phi^+_J\mid s_\beta(\gamma)\notin \Phi^+_J\}.
\end{align*}
Then
\begin{align*}
\prescript J{}\ell(s_\beta w) =&\, \#\{\gamma\in \Phi^+_J\mid w^{-1}s_\beta(\gamma)\in \Phi^-\}
\\=&\,\#\{\gamma\in \Phi^+_J\setminus (I\cup\{\beta\})\mid w^{-1}s_\beta(\gamma)\in \Phi^-\} + \#\{\gamma\in I\mid w^{-1}s_\beta(\gamma)\in \Phi^-\}.\\
\intertext{Since $s_\beta$ permutes the set $\Phi^+_J\setminus (I\cup\{\beta\})$, we get}
\ldots =& \,\#\{\gamma\in \Phi^+_J\setminus (I\cup\{\beta\})\mid w^{-1}\gamma\in \Phi^-\} + \#\{\gamma\in I\mid w^{-1}s_\beta(\gamma)\in \Phi^-\}.
\end{align*}
Note that if $\gamma\in I$, then $\langle \beta^\vee,\gamma\rangle >0$ and thus 
\begin{align*}
w^{-1}s_\beta(\gamma) = w^{-1}\gamma - \langle \beta^\vee,\gamma\rangle w^{-1}\beta > w^{-1}\gamma.
\end{align*}
We obtain
\begin{align*}
&\#\{\gamma\in \Phi^+_J\setminus (I\cup\{\beta\})\mid w^{-1}\gamma\in \Phi^-\} + \#\{\gamma\in I\mid w^{-1}s_\beta(\gamma)\in \Phi^-\}
\\\leq&\,\#\{\gamma\in \Phi^+_J\setminus (I\cup\{\beta\})\mid w^{-1}\gamma\in \Phi^-\} + \#\{\gamma\in I\mid w^{-1}\gamma\in \Phi^-\}
\\=&\,\prescript J{}\ell(w)-1.\qedhere
\end{align*}
\end{proof}
\begin{lemma}\label{lem:semiAffineProjection}
Let $J\subseteq \Delta_{\af}$ be a regular subset. Then there exists a uniquely determined map $\prescript J{}\pi : W\rightarrow \prescript J{}W\times \mathbb Z\Phi^\vee$ with the following two properties:
\begin{enumerate}[(1)]
\item For all $w\in \prescript J{}W$, we have $\prescript J{}\pi(w) = (w,0)$.
\item For all $w\in W$ and $\beta\in \Phi_J^+$ where we write $\prescript J{}\pi(w) = (w', \mu)$, we have
\begin{align*}
\prescript J{}\pi(s_\beta w) = (w', \mu + \Phi^+(-\beta)w^{-1}\beta^\vee)
\end{align*}
and $w\mu \in \mathbb Z \cl(J)$.
\end{enumerate}
\end{lemma}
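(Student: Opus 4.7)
My plan is to realize $\prescript J{}\pi$ as coming from the canonical lift of $W_J$ into the affine parabolic subgroup $\widetilde W_J \subseteq \widetilde W$. This conceptual identification will make both the cocycle-style recursion in (2) and the lattice condition fall out of the group law in $\widetilde W = W \ltimes X_\ast$.

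\textbf{Decomposition and uniqueness.} Before constructing $\prescript J{}\pi$ I would first record the parabolic-style factorization $W = W_J \cdot \prescript J{}W$: every $w \in W$ is uniquely of the form $w = vu$ with $v \in W_J$ and $u \in \prescript J{}W$. Existence is obtained by taking a $\prescript J{}\ell$-minimizer of the left coset $W_J w$ and using Lemma~\ref{lem:semiAffineLength} to force any such minimizer into $\prescript J{}W$; uniqueness uses that $u \in \prescript J{}W$ sends $-\Phi_J^+$ into $\Phi^-$, so no non-trivial $v \in W_J$ preserves $\prescript J{}W$. Uniqueness of $\prescript J{}\pi$ is then a short induction on $\ell_{W_J}(v)$: the base case $v = 1$ is forced by (1); for $v \neq 1$, I pick $\alpha \in \cl(J)$ with $\ell_{W_J}(s_\alpha v) < \ell_{W_J}(v)$ and apply (2) to $s_\alpha v u$ with $\beta = \alpha$, using $(s_\alpha vu)^{-1}\alpha = -w^{-1}\alpha^\vee$ to solve for $\prescript J{}\pi(w)$ in terms of $\prescript J{}\pi(s_\alpha vu)$.

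\textbf{Existence via the affine lift; main obstacle.} For existence, regularity of $J$ combined with Lemma~\ref{lem:semiAffineRootSystemFacts}(a) identifies the sub-diagram of the affine Dynkin diagram supported on $J$ with the Dynkin diagram of $\Phi_J$ on the basis $\cl(J)$, so both $\widetilde W_J$ and $W_J$ are the \emph{same} finite Coxeter group. The classical projection $\cl: \widetilde W_J \to W_J$ sends $r_a = s_{\cl(a)}\varepsilon^{k\cl(a)^\vee}$ (for $a = (\cl(a), k) \in J$) to $s_{\cl(a)}$, so it is a Coxeter isomorphism. Denote its inverse by $\sigma: W_J \hookrightarrow \widetilde W$, write $\sigma(v) = v\varepsilon^{\tilde\mu(v)}$ with $\tilde\mu(v) \in \mathbb Z\Phi_J^\vee$ (automatic since $\widetilde W_J \subseteq W_J \ltimes \mathbb Z\Phi_J^\vee$), and define $\prescript J{}\pi(w) := (u, u^{-1}\tilde\mu(v))$ for $w = vu$. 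Property (1) is then immediate. For property (2), the crucial input is Lemma~\ref{lem:semiAffineRootSystemFacts}(b): for any $\beta \in \Phi_J^+$, the corresponding positive affine root in $(\Phi_\af)_J^+$ is $(\beta, \Phi^+(-\beta))$, so $\sigma(s_\beta) = s_\beta \varepsilon^{\Phi^+(-\beta)\beta^\vee}$; multiplying $\sigma(s_\beta)\sigma(v)$ inside $\widetilde W$ produces the cocycle identity $\tilde\mu(s_\beta v) = \tilde\mu(v) + \Phi^+(-\beta) v^{-1}\beta^\vee$, and since $(s_\beta v) u$ is the decomposition of $s_\beta w$, applying $u^{-1}$ translates this directly into (2). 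The lattice condition $w\mu = v\tilde\mu(v) \in \mathbb Z\cl(J)$ is then free because $v \in W_J$ preserves the coroot lattice $\mathbb Z\Phi_J^\vee$. I expect the only non-routine step to be justifying that $\cl: \widetilde W_J \to W_J$ is actually an isomorphism of groups, a point that rests squarely on regularity of $J$ via Lemma~\ref{lem:semiAffineRootSystemFacts}; everything downstream is bookkeeping in the semi-direct product.
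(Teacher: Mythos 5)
Your proof is correct but takes a cleaner route than the paper's. The paper fixes a superregular dominant $\lambda \in \mathbb Z\Phi^\vee$, decomposes $w\varepsilon^\lambda = y \cdot w'\varepsilon^{\lambda'}$ in $W_\af$ with $y \in \widetilde W_J$ and $w'\varepsilon^{\lambda'} \in \prescript{J}{}{W_\af}$, and sets $\prescript{J}{}\pi(w) := (w', \lambda - \lambda')$; the superregularity of $\lambda$ is used to force $w' \in \prescript{J}{}W$, and independence of the choice of $\lambda$ is only recovered afterwards via the uniqueness argument. You remove the superregular detour entirely: you first establish the finite-level decomposition $W = W_J \cdot \prescript{J}{}W$ (via $\prescript{J}{}\ell$ and Lemma~\ref{lem:semiAffineLength}), then invert the classical projection $\cl : \widetilde W_J \to W_J$ --- a Coxeter isomorphism because regularity of $J$ and Lemma~\ref{lem:semiAffineRootSystemFacts}(a) give both groups the same Coxeter presentation --- to obtain the lift $\sigma$, and define $\prescript{J}{}\pi(vu) := (u, u^{-1}\tilde\mu(v))$ with $\sigma(v) = v\varepsilon^{\tilde\mu(v)}$. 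Unwinding, this is the same map as the paper's (with $y = \sigma(v)$, $w' = u$, $\lambda - \lambda' = u^{-1}\tilde\mu(v)$), but your cocycle computation of property~(2) from $\sigma(s_\beta)\sigma(v) = \sigma(s_\beta v)$ and Lemma~\ref{lem:semiAffineRootSystemFacts}(b) is more transparent, and the lattice condition falls out for free from $\tilde\mu(v) \in \mathbb Z\Phi_J^\vee$ and $W_J$-stability of that lattice. (Note in passing: the statement's ``$w\mu \in \mathbb Z\cl(J)$'' is a typo for ``$w\mu \in \mathbb Z\cl(J)^\vee = \mathbb Z\Phi_J^\vee$'', and you interpreted it correctly.) The uniqueness induction is essentially identical in both proofs.
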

\begin{proof}
We fix an element $\lambda\in\mathbb Z\Phi^\vee$ that is dominant and sufficiently regular (the required regularity constant follows from the remaining proof).

For $w\in W$, we consider the element $w\varepsilon^\lambda\in \widetilde W$. Then there exist uniquely determined elements $w'\varepsilon^{\lambda'}\in \prescript J{}{W_\af}$ and $y \in \widetilde W_J$ such that\begin{align*}
w\varepsilon^\lambda = y\cdot w' \varepsilon^{\lambda'}.
\end{align*}
We define $\prescript J{}\pi(w) := (w', \lambda-\lambda')$ and check that it has the required properties.
\begin{enumerate}[(1)]
\addtocounter{enumi}{-1}
\item $w'\in \prescript J{}W$: Since $\widetilde W_J$ is a finite group, we may assume that $\lambda'$ is superregular and dominant as well. For $(\alpha,k)\in J$, we have
\begin{align*}
(w'\varepsilon^{\lambda'})^{-1}(\alpha,k) = ((w')^{-1}\alpha, k+\langle \lambda', (w')^{-1} \alpha\rangle)\in \Phi^+_{\af},
\end{align*}
because $w'\varepsilon^{\lambda'}\in \prescript J{}{W_\af}$. Since $\lambda'$ is superregular and dominant, we have
\begin{align*}
((w')^{-1}\alpha, k+\langle \lambda', (w')^{-1} \alpha\rangle)\in \Phi^+_{\af}\iff (w')^{-1}\alpha\in \Phi^+.
\end{align*}
This proves $w'\in \prescript J{}W$.
\item If $w\in \prescript J{}W$, then $\prescript J{}\pi(w) = (w,0)$: The proof of (0) shows that $w\varepsilon^\lambda\in \prescript J{}{W_\af}$, so that $w\varepsilon^\lambda = w'\varepsilon^{\lambda'}$.
\item Let $w\in W$ and $\beta\in \Phi_J^+$. We have to show
\begin{align*}
\prescript J{}\pi(s_\beta w) = (w', \lambda - \lambda' + \Phi^+(-\beta)w^{-1}\beta^\vee).
\end{align*}
Put
\begin{align*}
 b := (\beta, \Phi^+(-\beta))\in \Phi_{\af}^+.
\end{align*}
By Lemma~\ref{lem:semiAffineRootSystemFacts}, we have $ b \in (\Phi_\af)_J^+$. The projection of
\begin{align*}
r_{ b}w\varepsilon^\lambda = s_\beta w \varepsilon^{\lambda + \Phi^+(-\beta)w^{-1}\beta^\vee} \in \widetilde W_J\cdot w\varepsilon^\lambda
\end{align*}
onto $\prescript J{}{W_\af}$ must again be $w'\varepsilon^{\lambda'}$. We obtain
\begin{align*}
\prescript J{}\pi(s_\beta w) = (w', \lambda + \Phi^+(-\beta)w^{-1}\beta^\vee-\lambda')
\end{align*}
as desired.

For the second claim, it suffices to observe that
\begin{align*}
\varepsilon^{w(\lambda-\lambda')} = w\varepsilon^\lambda \varepsilon^{-\lambda'} w^{-1} = y w' \varepsilon^{\lambda'} \varepsilon^{-\lambda'} w^{-1} = y \underbrace{w' w^{-1}}_{\in W_J}\in \widetilde W_J.
\end{align*}
\end{enumerate}
The fact that 
$\prescript J{}\pi$ is uniquely determined (in particular, independent of the choice of $\lambda$) can be seen as follows: If $w\in \prescript J{}W$, then $\prescript J{}\pi(w)$ is determined by (1). Otherwise, we find $\beta \in \Phi_J^+$ with $w^{-1}\beta \in \Phi^-$. We multiply $w$ on the left with $s_\beta$, and iterate this process, until we obtain an element in $\prescript J{}W$. This process will terminate after at most $\prescript J{}\ell(w)$ steps with an element in $\prescript J{}W$. Now for each of these steps, we can use property (2) to determine the value of $\prescript J{}\pi(w)$.
\end{proof}
We call the set $\prescript J{}W$ a \emph{semi-affine quotient} of $W$, as it is a quotient of a finite Weyl group by a set of affine roots. The map $\prescript J{}\pi$ is the \emph{semi-affine projection}. We now introduce the semi-affine weight function.
\begin{lemma}\label{lem:semiAffineWeight}
Let $w_1, w_2\in W$ and $J\subseteq \Delta$ be a regular subset. Write
\begin{align*}
\prescript J{}\pi(w_1) = (w_1', \mu_1),\qquad \prescript J{}\pi(w_2) = (w_2', \mu_2).
\end{align*}
Then
\begin{align*}
\wt(w_1'\Rightarrow w_2') - \mu_1+\mu_2 = \wt(w_1'\Rightarrow w_2) -\mu_1 \leq \wt(w_1\Rightarrow w_2).
\end{align*}
\end{lemma}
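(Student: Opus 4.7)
The plan is to prove the equality and the inequality separately by two independent inductions, each peeling off one layer of the semi-affine projection via Lemma \ref{lem:qbgDiamond}. The common device is that whenever $w_i \notin \prescript{J}{}W$, the very definition of $\prescript{J}{}W$ produces some $b = (\beta,k) \in J \subseteq \Delta_{\af}$ with $w_i^{-1}\beta \in \Phi^-$, so $\beta = \cl(b) \in \cl(J)$ is precisely the ``classical part of a simple affine root'' needed as input to Lemma \ref{lem:qbgDiamond}.

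For the equality $\wt(w_1'\Rightarrow w_2') + \mu_2 = \wt(w_1'\Rightarrow w_2)$, I induct on $\prescript{J}{}\ell(w_2)$ with $w_1'$ fixed. The base case $w_2\in\prescript{J}{}W$ is immediate from property~(1) of Lemma \ref{lem:semiAffineProjection}. In the inductive step, I pick $b = (\beta,k)\in J$ with $w_2^{-1}\beta\in\Phi^-$. Property~(2) of Lemma \ref{lem:semiAffineProjection} gives $\prescript{J}{}\pi(s_\beta w_2) = (w_2',\, \mu_2 + \Phi^+(-\beta)\,w_2^{-1}\beta^\vee)$, and Lemma \ref{lem:semiAffineLength} permits applying the induction hypothesis to $s_\beta w_2$, yielding
\[
\wt(w_1'\Rightarrow s_\beta w_2) = \wt(w_1'\Rightarrow w_2') + \mu_2 + \Phi^+(-\beta)\,w_2^{-1}\beta^\vee.
\]
On the quantum Bruhat side, Lemma \ref{lem:qbgDiamond}(a) produces an edge $s_\beta w_2\to w_2$ of weight $-k\,w_2^{-1}\beta^\vee$; since $w_1'\in\prescript{J}{}W$ forces $(w_1')^{-1}\beta\in\Phi^+$, Lemma \ref{lem:qbgDiamond}(b) promotes this edge to the last step of a shortest path, giving $\wt(w_1'\Rightarrow w_2) = \wt(w_1'\Rightarrow s_\beta w_2) - k\,w_2^{-1}\beta^\vee$. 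Adding the two relations produces $\wt(w_1'\Rightarrow w_2') + \mu_2 + (\Phi^+(-\beta)-k)\,w_2^{-1}\beta^\vee$, and the error term vanishes because $\Phi^+(-\beta)=k$ is built into every simple affine root $(\beta,k)\in\Delta_\af$.

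For the inequality $\wt(w_1'\Rightarrow w_2) - \mu_1 \leq \wt(w_1\Rightarrow w_2)$, I induct analogously on $\prescript{J}{}\ell(w_1)$ with $w_2$ fixed. In the inductive step, after picking $b=(\beta,k)\in J$ with $w_1^{-1}\beta\in\Phi^-$, Lemma \ref{lem:qbgDiamond}(a) provides the edge $s_\beta w_1\to w_1$ of weight $-k\,w_1^{-1}\beta^\vee$; concatenating it with a shortest path from $w_1$ to $w_2$ produces a (generally non-shortest) path from $s_\beta w_1$ to $w_2$ of weight $\wt(w_1\Rightarrow w_2) - k\,w_1^{-1}\beta^\vee$. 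By Proposition \ref{prop:qbgWeightDefinition}(c) this bounds $\wt(s_\beta w_1\Rightarrow w_2)$ from above, and combining with the induction hypothesis applied to $s_\beta w_1$ (whose shift is $\Phi^+(-\beta)\,w_1^{-1}\beta^\vee$ by property~(2) of Lemma \ref{lem:semiAffineProjection}) yields the claim, again exactly via the identity $\Phi^+(-\beta)=k$. The one delicate point is precisely this matching: it ensures that the $\Phi^+(-\beta)\,w^{-1}\beta^\vee$ shift from the semi-affine recursion and the $-k\,w^{-1}\beta^\vee$ shift from the quantum Bruhat edge cancel, which is what makes the statement clean and additive.
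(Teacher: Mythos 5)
Your proof is correct and follows essentially the same two-step induction as the paper: first the equality by induction on $\prescript J{}\ell(w_2)$ via Lemma~\ref{lem:qbgDiamond}, then the inequality by an analogous induction on $\prescript J{}\ell(w_1)$, with the cancellation $\Phi^+(-\beta)=k$ for $(\beta,k)\in\Delta_\af$ doing the work in both steps. The only deviation is in the inequality step, where the paper invokes the external weight estimate $\wt(ws_\alpha\Rightarrow w)\leq\Phi^+(w\alpha)\alpha^\vee$ (together with the implicit triangle inequality), whereas you re-derive the same upper bound on $\wt(s_\beta w_1\Rightarrow w_2)$ directly from Lemma~\ref{lem:qbgDiamond}(a) and Proposition~\ref{prop:qbgWeightDefinition}(c) by concatenating the single edge $s_\beta w_1\to w_1$ with a shortest path; this is a self-contained substitute for the cited estimate and is equally valid.
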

\begin{proof}
We first show the equation
\begin{align*}
\wt(w_1'\Rightarrow w_2')+\mu_2 = \wt(w_1'\Rightarrow w_2).
\end{align*}
Induction by $\prescript J{}\ell(w_2)$. The statement is trivial if $w_2\in \prescript J{}W$. Otherwise, we find some $\alpha\in \cl(J)$ with $w_2^{-1}\alpha \in \Phi^-$.
Because $(w_1')^{-1}\alpha\in \Phi^+$, we obtain from Lemma~\ref{lem:qbgDiamond} that
\begin{align*}
\wt(w_1'\Rightarrow w_2) =& \wt(w_1'\Rightarrow s_\alpha w_2) - \Phi^+(-\alpha)w_2^{-1}\alpha^\vee.
\end{align*}
By Lemma~\ref{lem:semiAffineProjection}, we have
\begin{align*}
\prescript J{}\pi(s_\alpha w_2) = (w'_2, \mu_2 + \Phi^+(-\alpha)w_2^{-1}\alpha^\vee).
\end{align*}
Using the inductive hypothesis, we get
\begin{align*}
\wt(w_1'\Rightarrow w_2) =& \wt(w_1'\Rightarrow s_\alpha w_2) - \Phi^+(-\alpha)w_2^{-1}\alpha^\vee
\\=&\wt(w_1'\Rightarrow w_2') + \mu_2 + \Phi^+(-\alpha)w_2^{-1}\alpha^\vee - \Phi^+(-\alpha)w_2^{-1}\alpha^\vee
\\=&\wt(w_1'\Rightarrow w_2') + \mu_2.
\end{align*}
This finishes the induction.

It remains to prove the inequality
\begin{align*}
\wt(w_1'\Rightarrow w_2)-\mu_1\leq \wt(w_1\Rightarrow w_2).
\end{align*}
The argument is entirely analogous, using \weightEstimateLong{} in place of Lemma~\ref{lem:qbgDiamond}.
\end{proof}
\begin{definition}\label{def:semi-affine-weight}
Let $w_1, w_2\in W$ and $J\subseteq \Delta_\af$ be a regular subset. We write
\begin{align*}
\prescript J{}\pi(w_1) = (w_1', \mu_1),\qquad \prescript J{}\pi(w_2) = (w_2', \mu_2).
\end{align*}
\begin{enumerate}[(a)]
\item
We define the \emph{semi-affine weight function} by
\begin{align*}
\prescript J{}\wt(w_1\Rightarrow w_2) := \wt(w_1'\Rightarrow w_2') - \mu_1+\mu_2 = \wt(w_1'\Rightarrow w_2) -\mu_1\in \mathbb Z\Phi^\vee.
\end{align*}
\item If $\beta \in \Phi_J$ and $(\beta,k)\in (\Phi_\af)_J$ is the image of $\beta$ under the bijection of Lemma~\ref{lem:semiAffineWeightProperties}, we define $\chi_J(\beta) := -k$.

If $\beta \in \Phi\setminus \Phi_J$, we define $\chi_J(\beta) := \Phi^+(\beta)$.

In other words, for $\beta\in \Phi$, we have
\begin{align*}
\chi_J(\beta) = \Phi^+(\beta) - \Phi^+_J(\beta).
\end{align*}
\end{enumerate}
\end{definition}
\begin{example}
Suppose that $\Phi$ is irreducible of type $A_2$ with basis $\alpha_1, \alpha_2$.
Let $J = \{(-\theta,1)\} = \{(-\alpha_1-\alpha_2,1)\}$, such that $\Phi_J^+ = \{-\theta\} = \{-\alpha_1-\alpha_2\}$. We want to compute $\prescript J{}\wt(1\Rightarrow s_1 s_2)$ (writing $s_i := s_{\alpha_i}$).

Observe that $\prescript J{}\pi(1) = (s_\theta,\theta^\vee)$. Hence
\begin{align*}
\prescript J{}\wt(1\Rightarrow s_1) =& \wt(s_\theta\Rightarrow s_1 s_2) - \theta^\vee
\\=&\wt(s_1 s_2 s_1\Rightarrow s_1 s_2) - \alpha_1^\vee-\alpha_2^\vee = -\alpha_2^\vee.
\end{align*}
Unlike the usual weight function, the value $\prescript J{}\wt(w_1\Rightarrow w_2)$ no longer needs to be a sum of positive coroots. In general for root systems of type $A_n$, we have
\begin{align*}
\prescript J{}{}\wt(w_1\Rightarrow w_2) = \sup_{\alpha\in \Delta_\af\setminus J} (w_1^{-1}\omega_a - w_2^{-1}\omega_a).
\end{align*}
\end{example}
\begin{lemma}\label{lem:semiAffineWeightProperties}
Let $w_1, w_2, w_3\in W$ and let $J\subseteq \Delta$ be a regular subset.
\begin{enumerate}[(a)]
\item The semi-affine weight function satisfies the triangle inequality,
\begin{align*}
\prescript J{}\wt(w_1\Rightarrow w_3)\leq \prescript J{}\wt(w_1\Rightarrow w_2) + \prescript J{}\wt(w_2\Rightarrow w_3).
\end{align*}
\item If $\alpha\in \Phi_J$, we have
\begin{align*}
\prescript J{}\wt(s_\alpha w_1\Rightarrow w_2)=& \prescript J{}\wt(w_1\Rightarrow w_2) + \chi_J(\alpha)w_1^{-1}\alpha^\vee,\\
\prescript J{}\wt(w_1\Rightarrow s_\alpha w_2)=& \prescript J{}\wt(w_1\Rightarrow w_2) - \chi_J(\alpha)w_2^{-1}\alpha^\vee.
\end{align*}
\item If $\beta\in \Phi^+$, we have
\begin{align*}
\prescript J{}\wt(w_1s_\beta\Rightarrow w_2)\leq& \prescript J{}\wt(w_1\Rightarrow w_2) + \chi_J(w_1\beta)\beta^\vee,\\
\prescript J{}\wt(w_1\Rightarrow w_2s_\beta)\leq& \prescript J{}\wt(w_1\Rightarrow w_2) + \chi_J(-w_2\beta)\beta^\vee.
\end{align*}
\end{enumerate}
\end{lemma}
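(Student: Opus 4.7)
For part (a), writing $\prescript{J}{}\pi(w_i) = (w_i', \mu_i)$ for $i = 1, 2, 3$ and expanding both sides using Definition~\ref{def:semi-affine-weight}, the $\mu_i$ terms cancel telescopically. What remains is exactly $\wt(w_1' \Rightarrow w_3') \leq \wt(w_1' \Rightarrow w_2') + \wt(w_2' \Rightarrow w_3')$, which follows from Proposition~\ref{prop:qbgWeightDefinition}(c) applied to the concatenation of shortest paths.

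For part (b), I plan to split on whether $\alpha \in \Phi_J^+$ or $\alpha \in \Phi_J^-$. In the first case, Lemma~\ref{lem:semiAffineProjection}(2) gives $\prescript{J}{}\pi(s_\alpha w_1) = (w_1', \mu_1 + \Phi^+(-\alpha) w_1^{-1}\alpha^\vee)$, so unpacking Definition~\ref{def:semi-affine-weight} produces the shift $-\Phi^+(-\alpha) w_1^{-1}\alpha^\vee$ in $\prescript{J}{}\wt(s_\alpha w_1 \Rightarrow w_2) - \prescript{J}{}\wt(w_1 \Rightarrow w_2)$. For $\alpha \in \Phi_J^+$ one has $\chi_J(\alpha) = \Phi^+(\alpha) - 1 = -\Phi^+(-\alpha)$, matching the claim. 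The case $\alpha \in \Phi_J^-$ reduces to the first via $s_\alpha = s_{-\alpha}$, with the sign on $\alpha^\vee$ compensating the piecewise definition of $\chi_J$. The second identity of (b) is symmetric, carried out on $w_2$ instead of $w_1$.

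For part (c), the key observation is $w_1 s_\beta = s_\alpha w_1$ with $\alpha := w_1 \beta$, so that $w_1^{-1}\alpha^\vee = \beta^\vee$. If $\alpha \in \Phi_J$, part (b) applies verbatim and yields the claim with equality. If $\alpha \notin \Phi_J$, then $\chi_J(\alpha) = \Phi^+(\alpha)$, and I combine the triangle inequality from (a) with the bound $\prescript{J}{}\wt \leq \wt$ of Lemma~\ref{lem:semiAffineWeight} and \weightEstimateLong{} to estimate
\[
\prescript{J}{}\wt(w_1 s_\beta \Rightarrow w_1) \leq \wt(w_1 s_\beta \Rightarrow w_1) \leq \Phi^+(\alpha)\beta^\vee = \chi_J(\alpha)\beta^\vee,
\]
from which the first inequality follows. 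The second inequality is treated with $\alpha' := w_2\beta$ in place of $\alpha$. In the subcase $\alpha' \in \Phi_J$, part (b) gives $\prescript{J}{}\wt(w_1 \Rightarrow w_2 s_\beta) = \prescript{J}{}\wt(w_1 \Rightarrow w_2) - \chi_J(\alpha')\beta^\vee$, and the desired bound reduces to the identity $\chi_J(\gamma) + \chi_J(-\gamma) = 0$ for $\gamma \in \Phi_J$, which is immediate from the piecewise definition. In the subcase $\alpha' \notin \Phi_J$, I apply \weightEstimateLong{} with $w$ replaced by $w_2 s_\beta$ to bound $\wt(w_2 \Rightarrow w_2 s_\beta) \leq \Phi^+(-\alpha')\beta^\vee = \chi_J(-\alpha')\beta^\vee$ and conclude via (a) and Lemma~\ref{lem:semiAffineWeight}.

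The main obstacle is bookkeeping: the piecewise definition of $\chi_J$ forces a three-way split between $\Phi_J^+$, $\Phi_J^-$ and $\Phi\setminus\Phi_J$, and one must verify in each case that the shift produced by Lemma~\ref{lem:semiAffineProjection} (in part (b)) or by \weightEstimateLong{} (in part (c)) matches $\chi_J$ times the appropriate coroot. Beyond this arithmetic, no genuine combinatorial difficulty arises.
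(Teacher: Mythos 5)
Your proof is correct and follows essentially the same route as the paper. Part (a) is the telescoping cancellation plus the known triangle inequality for $\wt$; part (b) invokes Lemma~\ref{lem:semiAffineProjection} after reducing to $\alpha\in\Phi_J^+$ (the paper phrases this as ``up to replacing $\alpha$ by $-\alpha$,'' but the content is identical, including the observation that $\chi_J(\alpha)+\chi_J(-\alpha)=0$ for $\alpha\in\Phi_J$); part (c) is the same two-case split into $w_1\beta\in\Phi_J$ (apply (b)) versus $w_1\beta\notin\Phi_J$ (triangle inequality, $\prescript{J}{}\wt\leq\wt$ from Lemma~\ref{lem:semiAffineWeight}, then \weightEstimateLong). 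You additionally spell out the second inequality of (c), which the paper dismisses as analogous.
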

\begin{proof}
Part (a) follows readily from the definition. Let us prove part (b). We focus on the first identity, as the proof of the second identity is analogous.

Up to replacing $\alpha$ by $-\alpha$, which does not change the reflection $s_\alpha$ nor the value of
\begin{align*}
\chi_J(\alpha)w_1^{-1}\alpha^\vee,
\end{align*}
we may assume $\alpha\in \Phi_J^+$. Now write
\begin{align*}
\prescript J{}\pi(w_1) = (w_1', \mu_1),\qquad \prescript J{}\pi(w_2) = (w_2', \mu_2).
\end{align*}
Then $\prescript J{}\pi(s_\alpha w_1) = (w_1', \mu_1 + \Phi^+(-\alpha)w_1^{-1}\alpha^\vee)$. Thus
\begin{align*}
\prescript J{}\wt(s_\alpha w_1\Rightarrow w_2) =& \wt(w_1'\Rightarrow w_2') - \mu_1 - \Phi^+(-\alpha)w_1^{-1}\alpha^\vee + \mu_2
\\=&\prescript J{}\wt(w_1\Rightarrow w_2) - \Phi^+(-\alpha)w_1^{-1}\alpha^\vee
\\=& \prescript J{}\wt(w_1\Rightarrow w_2) +\chi_J(\alpha)w_1^{-1}\alpha^\vee
\end{align*}
as $\alpha\in \Phi_J^+$.

Now we prove part (c). Again, we only show the first inequality. If $w_1\beta \in \Phi_J$, the inequality follows from part (b). Otherwise, we use (a) and \weightEstimateLong to compute
\begin{align*}
\prescript J{}\wt(w_1s_\beta\Rightarrow w_2)\leq~\,& \prescript J{}\wt(w_1s_\alpha\Rightarrow w_1) + \prescript J{}\wt(w_1\Rightarrow w_2)
\\\underset{\text{L\ref{lem:semiAffineWeight}}}\leq&\wt(w_1s_\alpha\Rightarrow w_1) + \prescript J{}\wt(w_1\Rightarrow w_2)
\\\leq~\,&\Phi^+(w\alpha)\alpha^\vee + \prescript J{}\wt(w_1\Rightarrow w_2)
\\=~\,&\chi_J(w\alpha)\alpha^\vee + \prescript J{}\wt(w_1\Rightarrow w_2).
\end{align*}
This finishes the proof.
\end{proof}
\begin{lemma}\label{lem:semiAffineWeightSimplification}
Let $w_1, w_2\in W$ and $J\subseteq \Delta_{\af}$ be regular. Suppose that for all $\alpha\in \Phi^+_J$, at least one of the following conditions is satisfied:
\begin{align*}w_1^{-1}\alpha\in \Phi^+\text{ or }w_2^{-1}\alpha\in \Phi^-.\end{align*}
Then $\prescript J{}\wt(w_1\Rightarrow w_2) = \wt(w_1\Rightarrow w_2)$.
\end{lemma}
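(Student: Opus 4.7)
By Lemma~\ref{lem:semiAffineWeight} we already have $\prescript J{}\wt(w_1\Rightarrow w_2)\leq \wt(w_1\Rightarrow w_2)$ unconditionally, so I only need to establish the reverse inequality. I plan to prove the equality directly by induction on $\prescript J{}\ell(w_1)$. The base case is when $w_1\in\prescript J{}W$, in which case $\prescript J{}\pi(w_1)=(w_1,0)$, so the second form of Definition~\ref{def:semi-affine-weight} gives $\prescript J{}\wt(w_1\Rightarrow w_2) = \wt(w_1\Rightarrow w_2) - 0 = \wt(w_1\Rightarrow w_2)$ immediately.

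For the inductive step, assume $w_1\notin \prescript J{}W$. Using the characterization of $\prescript J{}W$ via the simple affine roots in $J$, I pick $\alpha\in\cl(J)$ with $w_1^{-1}\alpha\in\Phi^-$. Since $\cl(J)\subseteq\Phi_J^+$, the lemma's hypothesis applied to $\alpha$ forces $w_2^{-1}\alpha\in\Phi^-$. The simple affine root $a = (\alpha,\Phi^+(-\alpha))\in J\subseteq\Delta_\af$ then satisfies both conditions of Lemma~\ref{lem:qbgDiamond}(c), giving
\begin{align*}
\wt(w_1\Rightarrow w_2) = \wt(s_\alpha w_1\Rightarrow s_\alpha w_2) + \Phi^+(-\alpha)\bigl(w_1^{-1}\alpha^\vee - w_2^{-1}\alpha^\vee\bigr).
\end{align*}
At the same time, Lemma~\ref{lem:semiAffineProjection} yields $\prescript J{}\pi(s_\alpha w_i) = (w_i',\mu_i + \Phi^+(-\alpha)w_i^{-1}\alpha^\vee)$, so unwinding Definition~\ref{def:semi-affine-weight} shows
\begin{align*}
\prescript J{}\wt(s_\alpha w_1\Rightarrow s_\alpha w_2) = \prescript J{}\wt(w_1\Rightarrow w_2) - \Phi^+(-\alpha)\bigl(w_1^{-1}\alpha^\vee - w_2^{-1}\alpha^\vee\bigr).
\end{align*}
The two shifts cancel, so the equality $\wt = \prescript J{}\wt$ for $(w_1,w_2)$ is equivalent to the same equality for the pair $(s_\alpha w_1, s_\alpha w_2)$.

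To close the induction I have to verify two things. First, $\prescript J{}\ell(s_\alpha w_1) < \prescript J{}\ell(w_1)$, which is Lemma~\ref{lem:semiAffineLength}. Second, the lemma's hypothesis is preserved under $(w_1,w_2)\mapsto (s_\alpha w_1, s_\alpha w_2)$. For $\beta=\alpha$, I note $(s_\alpha w_1)^{-1}\alpha = -w_1^{-1}\alpha\in\Phi^+$, so the first alternative holds automatically. For $\beta\in\Phi_J^+\setminus\{\alpha\}$, I use the key fact that $\alpha$ is a simple root of the finite root system $\Phi_J$ (by Lemma~\ref{lem:semiAffineRootSystemFacts}), hence $s_\alpha$ permutes $\Phi_J^+\setminus\{\alpha\}$; applying the original hypothesis to the element $s_\alpha\beta\in\Phi_J^+\setminus\{\alpha\}$ and reading it backwards translates exactly into the required hypothesis for $\beta$ and the new pair.

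The only delicate point — not an obstacle, but the bookkeeping detail on which the argument hinges — is that $\alpha$ plays two compatible roles: as a simple affine root in $\Delta_\af$ so that Lemma~\ref{lem:qbgDiamond}(c) applies with $k = \Phi^+(-\alpha)$ and so that Lemma~\ref{lem:semiAffineProjection} produces exactly the matching shift, and simultaneously as a simple root of the finite root system $\Phi_J$ so that the permutation property of $s_\alpha$ on $\Phi_J^+\setminus\{\alpha\}$ propagates the hypothesis through the inductive step.
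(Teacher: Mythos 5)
Your proof is correct and follows the same inductive strategy as the paper: reduce on $\prescript J{}\ell(w_1)$, pick $\alpha\in\cl(J)$ with $w_1^{-1}\alpha\in\Phi^-$, deduce $w_2^{-1}\alpha\in\Phi^-$ from the hypothesis, apply Lemma~\ref{lem:qbgDiamond}(c) to shift $\wt$, shift $\prescript J{}\wt$ by the matching amount, and verify the hypothesis propagates to $(s_\alpha w_1,s_\alpha w_2)$. The only cosmetic difference is that you compute the $\prescript J{}\wt$ shift directly from Lemma~\ref{lem:semiAffineProjection} and Definition~\ref{def:semi-affine-weight}, whereas the paper cites Lemma~\ref{lem:semiAffineWeightProperties}(b); your explicit bookkeeping with $\Phi^+(-\alpha)$ in both displays keeps the signs transparent and the cancellation manifest.
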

\begin{proof}

We show the claim via induction on $\prescript J{}\ell(w_1)$. If $w_1\in \prescript J{}W$, then the claim follows from Lemma~\ref{lem:semiAffineWeight}.

Otherwise, we find some $\alpha \in \cl(J)$ with $w_1^{-1}\alpha \in \Phi^-$. By assumption, also $w_2^{-1}\alpha \in \Phi^-$. Using Lemma~\ref{lem:qbgDiamond}, we get
\begin{align*}
\wt(w_1\Rightarrow w_2)=&\wt(s_\alpha w_1\Rightarrow s_\alpha w_2) + \chi_J(\alpha)w_1^{-1}\alpha^\vee - \chi_J(\alpha)w_2^{-1}\alpha^\vee.
\end{align*}
Since $\prescript J{}\ell(s_\alpha w_1)<\prescript J{}\ell(w_1)$ by Lemma~\ref{lem:semiAffineLength}, we want to show that $(s_\alpha w_1,s_\alpha w_2)$ also satisfy the condition stated in the lemma.

For this, let $\beta\in \Phi_J^+$. If $\beta = \alpha$, then $(s_\alpha w_1)^{-1}\alpha = -w_1^{-1}\alpha\in \Phi^+$ by choice of $\alpha$. Now assume that $\beta\neq\alpha$, so that $s_\alpha\beta \in \Phi^+_J$. By the assumption on $w_1$ and $w_2$, we must have $w_1^{-1}s_\alpha(\beta)\in \Phi^+$ or $w_2^{-1} s_\alpha(\beta)\in \Phi^-$. In other words, we have
\begin{align*}
(s_\alpha w_1)^{-1}\beta\in \Phi^+\text{ or }(s_\alpha w_2)^{-1}\beta \in \Phi^-.
\end{align*}
This shows that $(s_\alpha w_1, s_\alpha w_2)$ satisfy the desired properties.

By the inductive hypothesis and Lemma~\ref{lem:semiAffineWeightProperties}, we get
\begin{align*}
&\wt(s_\alpha w_1\Rightarrow s_\alpha w_2) + \chi_J(\alpha) w_1^{-1}\alpha^\vee  - \chi_J(\alpha)w_2^{-1}\alpha^\vee\\=&
\prescript J{}\wt(s_\alpha w_1\Rightarrow s_\alpha w_2) + \chi_J(\alpha) w_1^{-1}\alpha^\vee  - \chi_J(\alpha)w_2^{-1}\alpha^\vee
\\=&\prescript J{}\wt(w_1\Rightarrow w_2).
\end{align*}
This completes the induction and the proof.
\end{proof}
\section{Bruhat order}\label{chap:bruhat-order}
The Bruhat order on $\widetilde W$ is a fundamental Coxeter-theoretic notion that has been studied with great interest, e.g.\ \cite{Bjorner1995, Kottwitz2000, Rapoport2002, Lenart2015}. In this section, we present new characterizations of the Bruhat order on $\widetilde W$.

The structure of this section is as follows: In Section~\ref{sec:bruhat-order-statement}, we state our main criterion for the Bruhat order as Theorem~\ref{thm:bruhat2} and discuss some of its applications. We then prove this criterion in Section~\ref{sec:bruhat-order-proof}. Finally, Section~\ref{sec:bruhat-order-deodhar} will cover some consequences of Deodhar's lemma (cf.\ \cite{Deodhar1977}) and feature an even more general criterion.
\subsection{A criterion}\label{sec:bruhat-order-statement}
\begin{definition}
Let $x = w\varepsilon^\mu\in \widetilde W$. A \emph{Bruhat-deciding datum} for $x$ is a tuple $(v, J_1,\dotsc,J_m)$ where $v\in W$ and $J_\bullet$ is a finite collection of arbitrary subsets $J_1,\dotsc,J_m\subseteq \Delta$ with $m\geq 1$, satisfying the following two properties:
\begin{enumerate}[(1)]
\item The element $v$ is length positive for $x$, i.e.\ $\ell(x,v\alpha)\geq 0$ for all $\alpha\in \Phi^+$.
\item Writing $J := J_1\cap\cdots\cap J_m$, we have $\ell(x,v\alpha)=0$ for all $\alpha\in \Phi_J$.
\end{enumerate}
\end{definition}
The name \emph{Bruhat-deciding} is justified by the following result.
\begin{theorem}\label{thm:bruhat2}
Let $x = w\varepsilon^\mu, x' = w'\varepsilon^{\mu'}\in \widetilde W$. Fix a Bruhat-deciding datum $(v, J_1,\dotsc,J_m)$ for $x$. Then the following are equivalent:
\begin{enumerate}[(1)]
\item $x\leq x'$.
\item For all $i=1,\dotsc,m$, there exists an element $v'_i\in W$ such that
\begin{align*}
v^{-1}\mu + \wt(v'_i\Rightarrow v) + \wt(wv\Rightarrow w'v'_i)\leq (v'_i)^{-1}\mu'\pmod{\Phi_{J_i}^\vee}.
\end{align*}
\end{enumerate}
\end{theorem}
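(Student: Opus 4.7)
The strategy is to reduce to the superregular setting governed by Corollary~\ref{cor:superdominantParabolicEmbedding}, which characterizes the Bruhat order on sufficiently dominant elements via the parabolic quantum Bruhat graph $\QB(W^J)$. The bridge is right-multiplication of both $x$ and $x'$ by a common translation $\varepsilon^{v\lambda}$, with $\lambda\in X_\ast$ dominant and sufficiently regular, yielding $y := x\varepsilon^{v\lambda}$ and $y' := x'\varepsilon^{v\lambda}$. Length positivity of $v$ for $x$ combined with Lemma~\ref{lem:positiveLengthFormula} gives $\ell(y)=\ell(x)+\langle\lambda,2\rho\rangle$; the standard Coxeter fact that Bruhat order is preserved under reduced right multiplications (once applied to both factors) then converts $x\leq x'$ to $y\leq y'$, provided $\lambda$ is chosen large enough that the analogous reducedness holds on the $x'$-side with respect to an appropriate length-positive refinement $v'_i$ of $v$ for $x'$.

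For the forward direction $(1)\Rightarrow(2)$, fix $i$, set $J:=J_i$, and, after the natural $v$-twist, place $y$ into $\widetilde{(W^J)}\cap\Omega_J^{C_2}$. The Bruhat-deciding condition $\ell(x,v\alpha)=0$ for $\alpha\in\Phi_{J_1\cap\cdots\cap J_m}\subseteq\Phi_{J_i}$ is essential here: it ensures that after translating by $\varepsilon^{v\lambda}$, the length functional of $y$ vanishes on the required range. Choose $v'_i$ analogously for $y'$. Applying Corollary~\ref{cor:superdominantParabolicEmbedding} to the pair $(y,y')$ yields a weight inequality in $\mathbb Z\Phi^\vee/\Phi_{J_i}^\vee$. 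Unwinding this inequality through the $v$- and $v'_i$-frames using the diamond identities of Lemma~\ref{lem:qbgDiamond} and the symmetries of Lemma~\ref{lem:weightIdentities}, and cancelling the common $\lambda$-contributions, produces
\begin{equation*}
v^{-1}\mu + \wt(v'_i\Rightarrow v) + \wt(wv\Rightarrow w'v'_i)\leq (v'_i)^{-1}\mu' \pmod{\Phi_{J_i}^\vee},
\end{equation*}
where the two weight summands arise precisely as the discrepancies between the two length-positive frames.

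For the reverse direction $(2)\Rightarrow(1)$, the given inequality modulo each $\Phi_{J_i}^\vee$ is promoted via Lemma~\ref{lem:pathForWeight} to a path in $\QB(W^{J_i})$ of the appropriate weight, which is then lifted via Theorem~\ref{thm:pqbgLift} to a chain of Bruhat covers in $\widetilde W$ from $y$ up to an element $z_i$ congruent to $y'$ modulo $\widetilde W_{J_i}$. Running this for each $i$ and intersecting the $m$ congruence classes — using the identity $\widetilde W_{J_1\cap\cdots\cap J_m}=\bigcap_i\widetilde W_{J_i}$ — concentrates the data into a congruence modulo $\widetilde W_{J_1\cap\cdots\cap J_m}$; the remaining ambiguity is eliminated by the rigidifying condition $\ell(x,v\alpha)=0$ on $\Phi_{J_1\cap\cdots\cap J_m}$, which forces the common lift to coincide with $y'$.

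The principal obstacle I anticipate is the reverse direction: assembling $m$ separate modular Bruhat comparisons into a single Bruhat comparison in $\widetilde W$ demands delicate compatibility between the different choices of $v'_i$ and the lifts they determine via Theorem~\ref{thm:pqbgLift}. In particular, one must verify that the endpoints $z_i$ across varying $i$ can be selected coherently, and that the rigidifying condition on the intersection $\Phi_{J_1\cap\cdots\cap J_m}$ suffices to kill all residual ambiguity. The forward direction, by contrast, is largely mechanical — a direct unwinding of Corollary~\ref{cor:superdominantParabolicEmbedding} combined with the QBG weight identities.
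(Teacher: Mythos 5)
Your overall map of the ingredients is reasonable — Corollary~\ref{cor:superdominantParabolicEmbedding}, Lemma~\ref{lem:pathForWeight}, and Theorem~\ref{thm:pqbgLift} are indeed the backbone of the superregular analysis. But both directions of your proposal have gaps that the paper's actual proof is specifically structured to avoid.

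\textbf{Forward direction.} You translate both $x$ and $x'$ by the \emph{same} $\varepsilon^{v\lambda}$. This preserves the Bruhat order only if both multiplications are length-additive, i.e.\ only if $v\in\LP(x)\cap\LP(x')$. Length positivity of $v$ for $x$ is given, but for $x'$ it is not; if $v\notin\LP(x')$ then $\ell(x'\varepsilon^{v\lambda})<\ell(x')+\ell(\varepsilon^\lambda)$ for superregular dominant $\lambda$, and $x\leq x'$ no longer implies $x\varepsilon^{v\lambda}\leq x'\varepsilon^{v\lambda}$. Appealing to a ``length-positive refinement $v'_i$ of $v$ for $x'$'' does not repair this, since you have already committed to the single translation $\varepsilon^{v\lambda}$. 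Moreover, even granting a superregular Bruhat comparison, Corollary~\ref{cor:superdominantParabolicEmbedding} gives an inequality in a single frame; extracting the two-summand shape $\wt(v'_i\Rightarrow v)+\wt(wv\Rightarrow w'v'_i)$ with $v'_i\neq v$ requires an explicit frame change that your sketch glosses over. The paper sidesteps all of this: the implication $(1)\Rightarrow(2)$ is Lemma~\ref{lem:bruhat1}, a direct verification on Bruhat generators $x<xr_{\mathbf a}$ using only \weightEstimateLong{} and transitivity, with no superregular embedding at all.

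\textbf{Reverse direction.} Here the gap is larger. Lifting a path in $\QB(W^{J_i})$ via Theorem~\ref{thm:pqbgLift} produces a chain of covers in $\widetilde W$ whose endpoint is determined only modulo $\widetilde W_{J_i}$, and the endpoints $z_i$ for different $i$ come from independent choices of $v'_i$ and paths; there is no mechanism in your sketch that forces them to be compatible, and the identity $\widetilde W_{J_1\cap\cdots\cap J_m}=\bigcap_i\widetilde W_{J_i}$ does not by itself convert ``$y\leq z_i\equiv y'\pmod{\widetilde W_{J_i}}$ for all $i$'' into $y\leq y'$. The paper's proof of $(2)\Rightarrow(1)$ is instead a four-stage induction. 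Lemma~\ref{lem:bruhat-criterion1} handles the base case where $(1,J_\bullet)$ is Bruhat-deciding for \emph{both} $x$ and $x'$ (this is essentially the one step that uses Corollary~\ref{cor:superdominantParabolicEmbedding} in the way you envisage, with $v=v'_i=1$). Lemma~\ref{lem:bruhat-criterion2} removes the constraint on $x'$ by induction on $\ell(x')$, descending via $x'\mapsto x'r_a$. Lemma~\ref{lem:bruhat-criterion3} then allows general $v'_i$, and this step depends crucially on the maximal-inversion theory — specifically Proposition~\ref{prop:maximalEdges} via Lemma~\ref{lem:criterionSlightImprovement} — to choose $v'_i$ compatibly with the descent on $\ell(x')$. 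Finally Lemma~\ref{lem:bruhat-criterion4} allows general $v$ by induction on $\ell(v)$. The maximal-inversion machinery and the nested inductions are the heart of the argument, and nothing in your sketch substitutes for them.

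You should also note that the ``intersection of quotients'' flavour of reasoning you want is in fact made rigorous in the paper, but only in Section~\ref{sec:bruhat-order-deodhar} (Theorem~\ref{thm:bruhat4}), where Deodhar's lemma (Proposition~\ref{prop:deodhar}(d)) provides exactly the tool for combining comparisons across intersections of parabolics — and that section presupposes Theorem~\ref{thm:bruhat2} rather than proving it.
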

We again use the shorthand notation $\mu_1\leq \mu_2\pmod{\Phi^\vee_J}$ for $\mu_1-\mu_2+\mathbb Z\Phi^\vee_J\leq 0+\mathbb Z\Phi^\vee_J$ in $\mathbb Z\Phi^\vee/\mathbb Z\Phi^\vee_J$.

This theorem is the main result of this section. We give a proof in Section~\ref{sec:bruhat-order-proof}.

First, let us remark that the construction of a Bruhat-deciding datum is easy. It suffices to choose any length positive element $v$ for $x$, and then $(v,\emptyset)$ is Bruhat-deciding.

The inequality of Theorem~\ref{thm:bruhat2} is only interesting for $v\in \LP(x)$ and $v_i'\in \LP(x')$, as explained by the following lemma in conjunction with \crossRef{Lemma~}{lem:rootFunctionalAdjustment}.
\begin{lemma}\label{lem:criterionAdjustment}
Let $x = w\varepsilon^\mu, x' = w'\varepsilon^{\mu'}\in \widetilde W$. Suppose we are given elements $v, v'\in W$, a subset $J\subseteq \Delta$ and a positive root $\alpha\in \Phi^+$.
\begin{enumerate}[(a)]
\item Assume $\ell(x,v\alpha)<0$. Then the inequality
\begin{align*}
(vs_\alpha)^{-1}\mu + \wt(v'\Rightarrow vs_\alpha) + \wt(wvs_\alpha\Rightarrow w'v')\leq (v')^{-1}\mu'\pmod{\Phi_J^\vee}
\end{align*}
implies
\begin{align*}
v^{-1}\mu + \wt(v'\Rightarrow v) + \wt(wv\Rightarrow w'v')\leq (v')^{-1}\mu'\pmod{\Phi_J^\vee}.
\end{align*}
\item Assume $\ell(x',v\alpha)<0$. Then the inequality
\begin{align*}
v^{-1}\mu + \wt(v'\Rightarrow v) + \wt(wv\Rightarrow w'v')\leq (v')^{-1}\mu'\pmod{\Phi_J^\vee}
\end{align*}
implies
\begin{align*}
v^{-1}\mu + \wt(v's_\alpha\Rightarrow v) + \wt(wv\Rightarrow w'v's_\alpha)\leq (v's_\alpha)^{-1}\mu'\pmod{\Phi_J^\vee}.
\end{align*}
\end{enumerate}
\end{lemma}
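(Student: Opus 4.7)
Both parts (a) and (b) follow the same strategy: the plan is to show that the left-hand side of the desired inequality differs from the left-hand side of the hypothesized inequality by a non-positive multiple of $\alpha^\vee$, so the hypothesis immediately implies the conclusion by transitivity of the order on $\mathbb Z\Phi^\vee/\mathbb Z\Phi^\vee_J$. All of the needed ingredients are at hand: the defining formula $\ell(x,\beta)=\langle\mu,\beta\rangle+\Phi^+(\beta)-\Phi^+(w\beta)$, the reflection identity $\nu - s_\alpha\nu = \langle\nu,\alpha\rangle\alpha^\vee$, the triangle inequality for $\wt$ obtained by concatenating paths (Proposition~\ref{prop:qbgWeightDefinition}(c)), and the basic estimate $\wt(us_\alpha\Rightarrow u)\leq \Phi^+(u\alpha)\alpha^\vee$ of \weightEstimateLong.

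For part (a) I would expand the difference of the two left-hand sides into three brackets and bound each. The coweight bracket yields $v^{-1}\mu - (vs_\alpha)^{-1}\mu = \langle v^{-1}\mu,\alpha\rangle\alpha^\vee$ by the reflection identity. The split $\wt(v'\Rightarrow v)\leq \wt(v'\Rightarrow vs_\alpha) + \wt(vs_\alpha\Rightarrow v)$ combined with the weight estimate bounds the second bracket by $\Phi^+(v\alpha)\alpha^\vee$. The split $\wt(wv\Rightarrow w'v')\leq \wt(wv\Rightarrow wvs_\alpha)+\wt(wvs_\alpha\Rightarrow w'v')$ combined with the weight estimate applied to $u=(wv)s_\alpha$ (giving $\wt(wv\Rightarrow wvs_\alpha)\leq \Phi^+(-wv\alpha)\alpha^\vee$) bounds the third bracket by $\Phi^+(-wv\alpha)\alpha^\vee$. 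Adding these and using $\Phi^+(-wv\alpha)=1-\Phi^+(wv\alpha)$ collapses the total to $(\ell(x,v\alpha)+1)\alpha^\vee$, which is non-positive precisely under the hypothesis $\ell(x,v\alpha)<0$.

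Part (b) is structurally identical, with the adjustment now taking place on $v'$ rather than $v$. Analogous triangle-inequality splits and applications of \weightEstimateLong give $\wt(v's_\alpha\Rightarrow v) - \wt(v'\Rightarrow v) \leq \Phi^+(v'\alpha)\alpha^\vee$ and $\wt(wv\Rightarrow w'v's_\alpha) - \wt(wv\Rightarrow w'v') \leq \Phi^+(-w'v'\alpha)\alpha^\vee$, while the coweight contribution $(v')^{-1}\mu' - (v's_\alpha)^{-1}\mu'$ equals $\langle (v')^{-1}\mu',\alpha\rangle\alpha^\vee$. The sum collapses to $(\ell(x',v'\alpha)+1)\alpha^\vee$, which is $\leq 0$ under the natural hypothesis $\ell(x',v'\alpha)<0$ (so the printed condition $\ell(x',v\alpha)<0$ presumably contains a typographical error).

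There is no real obstacle here; the entire argument is an arithmetical manipulation of the three identities listed in the first paragraph. The only care required is choosing the orientation of each triangle inequality and the argument of the weight estimate so that the resulting $\Phi^+$ terms come out with the correct signs to recombine with the coweight contribution into the length functional $\ell(x,v\alpha)$ (resp.\ $\ell(x',v'\alpha)$); once set up this way, the positivity of $\alpha^\vee$ together with the sign hypothesis on the length functional delivers the desired implication in both parts.
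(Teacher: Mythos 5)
Your argument is correct and is essentially the paper's own proof: the paper carries out exactly the same chain of estimates (reflection identity for the coweight term, triangle inequality for $\wt$, and the bound $\wt(us_\alpha\Rightarrow u)\leq\Phi^+(u\alpha)\alpha^\vee$), recombining the $\Phi^+$-terms into $(\ell(x,v\alpha)+1)\alpha^\vee\leq 0$, and then says part (b) is completely analogous. You are also right that the hypothesis printed in part (b) should read $\ell(x',v'\alpha)<0$ rather than $\ell(x',v\alpha)<0$: the adjustment replaces $v'$ by $v's_\alpha$, the analogous computation produces the coefficient $\ell(x',v'\alpha)+1$, and every application of part (b) in the paper (e.g.\ in Corollary~\ref{cor:bruhatOrderShrunken} and Lemma~\ref{lem:bruhat-criterion4}) invokes it to push $v'$ toward $\LP(x')$, which confirms that $\ell(x',v'\alpha)<0$ is the intended condition.
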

\begin{proof}
\begin{enumerate}[(a)]
\item We have
\begin{align*}
(v')^{-1}\mu'\geq&(vs_\alpha)^{-1}\mu + \wt(v'\Rightarrow vs_\alpha) + \wt(wvs_\alpha\Rightarrow w'v')
\\\geq&v^{-1}\mu - \langle v^{-1}\mu,\alpha\rangle \alpha^\vee + \wt(v'\Rightarrow v) - \wt(vs_\alpha\Rightarrow v) \\&+ \wt(wv\Rightarrow w'v') - \wt(wv\Rightarrow wvs_\alpha)
\\\underset{(\ast)}\geq&v^{-1}\mu - \langle v^{-1}\mu,\alpha\rangle \alpha^\vee + \wt(v'\Rightarrow v) - \Phi^+(v\alpha)\alpha^\vee\\&+ \wt(wv\Rightarrow w'v') - \Phi^+(-wv\alpha)\alpha^\vee
\\=&v^{-1}\mu + \wt(v'\Rightarrow v) + \wt(wv\Rightarrow w'v') - (\ell(x,v\alpha)+1)\alpha^\vee
\\\geq&v^{-1}\mu + \wt(v'\Rightarrow v) + \wt(wv\Rightarrow w'v')\pmod{\Phi_J^\vee}.
\end{align*}
The inequality $(\ast)$ is \weightEstimateLong.
\item The calculation is completely analogous.\qedhere
\end{enumerate}
\end{proof}

\begin{proof}[Proof of Theorem~\ref{thm:bruhat1} using Theorem~\ref{thm:bruhat2}]
We use the notation of Theorem~\ref{thm:bruhat1}. In view of Lemma~\ref{lem:criterionAdjustment} and \crossRef{Lemma~}{lem:rootFunctionalAdjustment}, the condition
\begin{align*}
\exists v_2\in W:~
v_1^{-1}\mu_1 + \wt(v_2\Rightarrow v_1) + \wt(w_1 v_1\Rightarrow w_2 v_2)\leq v_2^{-1}\mu_2\tag{$\ast$}
\end{align*}
is true for all $v_1\in \LP(x)$ iff it is true for all $v_1\in W$. We see that asking condition $(\ast)$ for all $v_1\in W$ is equivalent to asking condition (2) of Theorem~\ref{thm:bruhat2} \emph{for each Bruhat-deciding datum}. In this sense, Theorem~\ref{thm:bruhat2} implies Theorem~\ref{thm:bruhat1}.
\end{proof}

If $x'$ is in a shrunken Weyl chamber, there is a canonical choice for $v'$.
\begin{corollary}\label{cor:bruhatOrderShrunken}
Let $x = w\varepsilon^\mu$ and $x' = w'\varepsilon^{\mu'}$. Assume that $x'$ is in a shrunken Weyl chamber and that $v'$ is the length positive element for $x'$. Pick any length positive element $v$ for $x$. Then $x\leq x'$ if and only if
\begin{align*}
v^{-1}\mu + \wt(v'\Rightarrow v) + \wt(wv\Rightarrow w'v')\leq (v')^{-1}\mu'.
\end{align*}
\end{corollary}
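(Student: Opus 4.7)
My plan is to specialize Theorem~\ref{thm:bruhat2} to the Bruhat-deciding datum $(v,\emptyset)$ for $x$. Condition~(1) of a Bruhat-deciding datum holds by hypothesis, and condition~(2) is vacuous since $J_1 = \emptyset$ forces $\Phi_J = \emptyset$. Moreover, the congruence ``$\pmod{\Phi_\emptyset^\vee}$'' appearing in Theorem~\ref{thm:bruhat2} degenerates to an honest inequality in $\mathbb Z\Phi^\vee$. Applied with this datum, Theorem~\ref{thm:bruhat2} asserts that $x\leq x'$ if and only if there exists some $v_1'\in W$ satisfying
\begin{align*}
v^{-1}\mu + \wt(v_1'\Rightarrow v) + \wt(wv\Rightarrow w'v_1') \leq (v_1')^{-1}\mu'.
\end{align*}

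It remains to upgrade this existence statement to the specific choice $v_1' = v'$. The ``if'' direction is immediate, since any witness suffices in Theorem~\ref{thm:bruhat2}. For the ``only if'' direction, suppose $x\leq x'$ and fix some $v_1'\in W$ satisfying the displayed inequality. Whenever $v_1'$ fails to be length positive for $x'$, there is a positive root $\alpha\in \Phi^+$ with $\ell(x', v_1'\alpha)<0$; by Lemma~\ref{lem:criterionAdjustment}(b), we may then replace $v_1'$ by its adjustment $v_1' s_\alpha$ while preserving the inequality. By \crossRef{Lemma~}{lem:rootFunctionalAdjustment}, this iterative adjustment procedure terminates in finitely many steps at an element of $\LP(x')$. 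Because $x'$ lies in a shrunken Weyl chamber, the characterization of shrunken chambers recalled just after Lemma~\ref{lem:positiveLengthFormula} forces $\LP(x') = \{v'\}$, so the process must terminate at $v_1' = v'$, yielding the desired inequality.

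The only nontrivial bookkeeping is checking that $(v,\emptyset)$ really is Bruhat-deciding (which it is, almost by fiat) and that the adjustment process in Lemma~\ref{lem:criterionAdjustment}(b) converges to the unique length positive witness $v'$ thanks to the shrunken hypothesis. No new ideas beyond Theorem~\ref{thm:bruhat2} and the adjustment lemmas are needed, so I expect the formal proof to be quite short.
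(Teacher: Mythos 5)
Your proof is correct and follows essentially the same route as the paper's: both specialize Theorem~\ref{thm:bruhat2} to the Bruhat-deciding datum $(v,\emptyset)$ and then use Lemma~\ref{lem:criterionAdjustment}(b) together with the iterative adjustment argument to upgrade an arbitrary witness $v_1'$ to the unique length-positive element $v'$ for $x'$. (The paper cites the Corollary version of the adjustment lemma rather than the Lemma version you cite, but the content is the same; also note a typo in the statement of Lemma~\ref{lem:criterionAdjustment}(b), where the hypothesis should read $\ell(x',v'\alpha)<0$ rather than $\ell(x',v\alpha)<0$, which you have implicitly corrected.)
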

\begin{proof}
$(v,\emptyset)$ is a Bruhat-deciding datum for $x$. By Lemma~\ref{lem:criterionAdjustment} and \crossRef{Corollary~}{cor:rootFunctionalAdjustments}, the inequality in Theorem~\ref{thm:bruhat2} (2) is satisfied by \emph{some} $v'\in W$ iff it is satisfied by the unique length positive element $v'$ for $x'$.
\end{proof}
We now show how Theorem~\ref{thm:bruhat2} can be used to describe Bruhat covers in $\widetilde W$. The following proposition generalizes the previous results of Lam-Shimozono \cite[Proposition~4.1]{Lam2010} and Mili\'cevi\'c \cite[Proposition~4.2]{Milicevic2021}.
\begin{proposition}\label{prop:bruhatCovers}
Let $x = w\varepsilon^\mu, x' = w'\varepsilon^{\mu'}\in \widetilde W$ and $v\in \LP(x)$. Then the following are equivalent:
\begin{enumerate}[(a)]
\item $x\lessdot x'$, i.e.\ $x < x'$ and $\ell(x) = \ell(x')-1$.
\item There exists some $v'\in \LP(x')$ such that
\begin{itemize}
\item[(b.1)] $v^{-1}\mu + \wt(v'\Rightarrow v) + \wt(wv\Rightarrow w'v') = (v')^{-1}\mu'$ and
\item[(b.2)] $d(v'\Rightarrow v) + d(wv\Rightarrow w'v')=1$.
\end{itemize}
\item There is a root $\alpha\in \Phi^+$ satisfying at least one of the following conditions:
\begin{itemize}
\item[(c.1)] There exists a Bruhat edge $v' := s_\alpha v\rightarrow v$ in $\QB(W)$ with $x' = xs_\alpha$ and $v'\in \LP(x')$.
\item[(c.2)] There exists a quantum edge $v' := s_\alpha v\rightarrow v$ in $\QB(W)$ with $v^{-1}\alpha\in \Phi^+, x' = xr_{(-\alpha,1)}$ and $v'\in \LP(x')$.
\item[(c.3)] There exists a Bruhat edge $wv\rightarrow s_\alpha wv$ in $\QB(W)$ such that $x' = s_\alpha x$ and $v\in \LP(x')$.
\item[(c.4)] There exists a quantum edge $wv\rightarrow s_\alpha wv$ in $\QB(W)$ with $(wv)^{-1}\alpha\in \Phi^-$, $x' = r_{(-\alpha,1)} x$ and $v\in \LP(x')$.
\end{itemize}
\item There exists a root $\alpha\in \Phi^+$ satisfying at least one of the following conditions:
\begin{itemize}
\item[(d.1)] We have $w' = ws_\alpha, \mu' = s_\alpha(\mu), \ell(s_\alpha v) = \ell(v)-1$ and for all $\beta\in \Phi^+$:
\begin{align*}
\ell(x,v\beta) + \Phi^+(s_\alpha v\beta) - \Phi^+(v\beta)\geq 0.
\end{align*}
\item[(d.2)] We have $w' = ws_\alpha, \mu' = s_\alpha(\mu)-\alpha^\vee, \ell(s_\alpha v) = \ell(v)-1+\langle v^{-1}\alpha^\vee,2\rho\rangle$ and for all $\beta\in \Phi^+$:
\begin{align*}
\ell(x,v\beta) + \langle \alpha^\vee,v\beta\rangle + \Phi^+(s_\alpha v\beta) - \Phi^+(v\beta)\geq 0.
\end{align*}
\item[(d.3)] We have $w' = s_\alpha w, \mu' = \mu, \ell(s_\alpha wv) = \ell(wv)+1$ and for all $\beta\in \Phi^+$:
\begin{align*}
\ell(x,v\beta) + \Phi^+(wv\beta) - \Phi^+(s_\alpha wv\beta)\geq 0.
\end{align*}
\item[(d.4)] We have $w' = s_\alpha w, \mu' = \mu-w^{-1}\alpha^\vee, \ell(s_\alpha wv) = \ell(wv)+1+\langle (wv)^{-1}\alpha^\vee,2\rho\rangle$ and for all $\beta\in \Phi^+$:
\begin{align*}
\ell(x,v\beta) + \langle \alpha^\vee,wv\beta\rangle + \Phi^+(wv\beta) - \Phi^+(s_\alpha wv\beta)\geq 0.
\end{align*}
\end{itemize}
\end{enumerate}
\end{proposition}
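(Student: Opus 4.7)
The plan is to prove the four conditions equivalent cyclically as $(a) \Leftrightarrow (b) \Rightarrow (c) \Rightarrow (d) \Rightarrow (a)$. The first step uses Theorem~\ref{thm:bruhat2} as the engine; the other three reduce to direct computation, once the Bruhat cover condition is translated into the language of the quantum Bruhat graph.

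For $(a)\Leftrightarrow(b)$, apply Theorem~\ref{thm:bruhat2} to the trivially admissible Bruhat-deciding datum $(v,\emptyset)$. This provides some $v'\in W$ with
\[v^{-1}\mu + \wt(v'\Rightarrow v) + \wt(wv\Rightarrow w'v') \leq (v')^{-1}\mu'.\]
Pairing both sides with $2\rho$ and invoking Lemma~\ref{lem:weight2rho} rewrites the weight terms as distances in $\QB(W)$ plus length differences. Combining this with Lemma~\ref{lem:positiveLengthFormula} applied to $x$ (equality, since $v\in\LP(x)$) and to $x'$ (inequality), one obtains
\[\ell(x) + d(v'\Rightarrow v) + d(wv\Rightarrow w'v') \leq \ell(x') = \ell(x)+1.\]
All terms on the left are nonnegative integers, so the right-hand equality forces (i) $v'\in\LP(x')$, (ii) equality in the weight inequality, and (iii) the two distances summing to exactly $1$. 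This is precisely condition (b). The converse $(b)\Rightarrow(a)$ is the same calculation read in reverse: the weight inequality gives $x\le x'$ by Theorem~\ref{thm:bruhat2}, and pairing with $2\rho$ shows $\ell(x') = \ell(x) + 1$.

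For $(b)\Rightarrow(c)$, exactly one of the two distances equals $1$. If $d(v'\Rightarrow v)=1$, the unique edge $v'\rightarrow v$ in $\QB(W)$ is either Bruhat or quantum, and writing $v' = s_\alpha v$ for the positive root $\alpha$ labelling the edge (after translating between the right-multiplication convention $v = v' s_\gamma$ used in the definition of $\QB(W)$ and the left-multiplication form in (c)), the weight equality (b.1) combined with $wv = w'v'$ uniquely determines $x'$: in the Bruhat case the weight contribution is $0$ and $x' = xs_\alpha$; in the quantum case the weight contribution is a coroot and solving gives $x' = xr_{(-\alpha,1)}$, with the sign condition $v^{-1}\alpha\in\Phi^+$ recording that the edge uses a positive root on the right. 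The symmetric case $d(wv\Rightarrow w'v') = 1$ yields (c.3) and (c.4) by the same computation with the roles of $v$ and $wv$ exchanged.

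For $(c)\Leftrightarrow(d)$, I compute in each subcase the explicit forms of $w'$ and $\mu'$ and substitute into the definition of the length functional. For instance, in (c.1) with $x' = xs_\alpha$, $w' = ws_\alpha$, $\mu' = s_\alpha\mu$, $v' = s_\alpha v$, a direct substitution gives
\[\ell(x', v'\beta) = \ell(x, v\beta) + \Phi^+(s_\alpha v\beta) - \Phi^+(v\beta),\]
so that $v'\in\LP(x')$ becomes exactly (d.1); the other three cases are analogous. The length-match conditions on $\ell(s_\alpha v)$ or $\ell(s_\alpha wv)$ in (d) record the Bruhat-versus-quantum type of the QBG edge via the defining formulas of the graph (with Lemma~\ref{lem:quantumRoots} ensuring the quantum type is indeed available). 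Finally, $(d)\Rightarrow(a)$ is a short verification: the explicit form shows that $x^{-1}x'$ is an affine reflection in each case, and applying Lemma~\ref{lem:positiveLengthFormula} to the length-positive element provided (either $v'$ for $x'$ in (d.1)--(d.2) or $v$ in (d.3)--(d.4)) gives $\ell(x') = \ell(x)+1$.

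The main obstacle will be the bookkeeping in $(b)\Rightarrow(c)$: I must carefully translate between the right-multiplication edge convention of $\QB(W)$ and the left-multiplication description in (c), match the coroot appearing as the edge weight against the explicit shift in $\mu$ induced by the candidate $x'$, and verify that the sign conditions in (c.2) and (c.4) emerge correctly. All other equivalences are essentially mechanical.
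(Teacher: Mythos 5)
Your overall strategy mirrors the paper's: use Theorem~\ref{thm:bruhat2} plus the $2\rho$-pairing for $(a)\Leftrightarrow(b)$, then translate (b.2) into an edge in $\QB(W)$, then unwind. The only genuine variation is routing the closure through $(d)\Rightarrow(a)$ rather than reversing the $(b)\Rightarrow(c)$ computation; both are fine. However, there is a real gap in your $(a)\Rightarrow(b)$ step.

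You derive
\begin{align*}
\ell(x) + d(v'\Rightarrow v) + d(wv\Rightarrow w'v') \leq \ell(x') = \ell(x)+1,
\end{align*}
and then assert that nonnegativity and integrality force the two distances to sum to \emph{exactly} $1$, and that this in turn forces equality throughout (hence $v'\in\LP(x')$ and (b.1)). But the displayed inequality alone only gives $d(v'\Rightarrow v)+d(wv\Rightarrow w'v')\in\{0,1\}$. If the sum were $0$ — that is, if $v=v'$ and $wv=w'v'$ — you would get $\ell(x)\leq\ell(x')$ with slack, and neither $v'\in\LP(x')$ nor (b.1) would follow, so (b) would fail. The missing ingredient is the observation (used by the paper) that $x\lessdot x'$ forces $x^{-1}x'$ to be an affine reflection, hence $w'=ws_\alpha\neq w$; therefore $v\neq v'$ or $wv\neq w'v'$, so at least one distance is positive. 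With that in hand, the sum is sandwiched between $1$ and $1$, and the chain of inequalities collapses to equalities. This is a short fix, but as written the argument does not establish (b.2), nor the equalities needed for (b.1) and $v'\in\LP(x')$.

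The remaining steps are sound. Your translation in $(b)\Rightarrow(c)$ between the right-multiplication edge $v=v's_\gamma$ and the left form $v'=s_\alpha v$ (with $\alpha$ the positive root proportional to $v\gamma$) is correct, and the sign dichotomy $v^{-1}\alpha\in\Phi^\mp$ for Bruhat versus quantum edges matches the definitions. The $(c)\Leftrightarrow(d)$ unwinding of length positivity and the $(d)\Rightarrow(a)$ verification via Lemma~\ref{lem:positiveLengthFormula} (together with the Coxeter-theoretic fact that a length difference of $1$ across a reflection is a cover) are both fine.
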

\begin{proof}
(a) $\iff$ (b): We start with a key calculation for $v'\in \LP(x')$:
\begin{align*}
&\langle (v')^{-1}\mu' - \wt(v'\Rightarrow v) - \wt(wv\Rightarrow w'v') - v^{-1}\mu,2\rho\rangle
\\\underset{\text{L\ref{lem:weight2rho}}}=&\langle (v')^{-1}\mu,2\rho\rangle - d(v'\Rightarrow v) -\ell(v') + \ell(v) \\&\qquad-d(wv\Rightarrow w'v') - \ell(wv) + \ell(w'v') - \langle v^{-1}\mu,2\rho\rangle
\\\underset{\text{\positiveLengthFormulaShort}}=&\ell(x') - \ell(x) - d(v'\Rightarrow v) - d(wv\Rightarrow w'v').
\end{align*}
First assume that (a) holds, i.e.\ $x\lessdot x'$. By Theorem~\ref{thm:bruhat2} and Lemma~\ref{lem:criterionAdjustment}, we find $v'\in \LP(x')$ such that
\begin{align*}
(v')^{-1}\mu' - \wt(v'\Rightarrow v) - \wt(wv\Rightarrow w'v') - v^{-1}\mu\geq 0
\end{align*}
By the above key calculation, we see that
\begin{align*}
\ell(x') \geq \ell(x) + d(v'\Rightarrow v) + d(wv\Rightarrow w'v'),
\end{align*}
where equality holds if and only if (b.1) is satisfied. Note that $x\lessdot x'$ implies that $x^{-1}x'$ must be an affine reflection, thus $w\neq w'$. We see that $v\neq v'$ or $wv\neq w'v'$, thus in particular
\begin{align*}
\ell(x)+1= \ell(x') \geq \ell(x) + d(v'\Rightarrow v) + d(wv\Rightarrow w'v')\geq \ell(x)+1.
\end{align*}
Since equality must hold, we get (b.1) and (b.2).

Now assume conversely that (b) holds. By (b.1) and Theorem~\ref{thm:bruhat2}, we see that $x<x'$. Now using the key calculation and (b.2), we get $\ell(x') =\ell(x)+1$.

(b) $\iff$ (c): The condition (b.2) means that either $v=v'$ and $wv\rightarrow w'v'$ is an edge in $\QB(W)$, or $wv = w'v'$ and $v'\rightarrow v$ is an edge. If we now distinguish between Bruhat and quantum edges, we get the explicit conditions of (c) (or (d)).

Let us first assume that (b) holds. We distinguish the following cases:
\begin{enumerate}[(1)]
\item $wv = w'v'$ and $v'\rightarrow v$ is a Bruhat edge: Then we can write $v' = s_\alpha v$ for some $\alpha\in \Phi^+$ with $v^{-1}\alpha\in \Phi^-$. Now the condition $wv = w'v'$ implies $w' = ws_\alpha$. Condition (b.1) implies $v^{-1}\mu = (v')^{-1}\mu'$, so $\mu' = s_\alpha(\mu)$. We get (c.1).
\item $wv = w'v'$ and $v'\rightarrow v$ is a quantum edge: Then we can write $v' = s_\alpha v$ for some $\alpha\in \Phi^+$ with $v^{-1}\alpha\in \Phi^+$. Now the condition $wv = w'v'$ implies $w' = ws_\alpha$. Condition (b.1) implies $v^{-1}\mu+v^{-1}\alpha^\vee = (v')^{-1}\mu'$, so $\mu' = s_\alpha(\mu)-\alpha^\vee$. We get (c.2).
\item $v = v'$ and $wv\rightarrow w'v'$ is a Bruhat edge: Then we can write $w'v' = s_\alpha wv$ for some $\alpha\in \Phi^+$ with $(wv)^{-1}\alpha\in \Phi^-$. Now the condition $v=v'$ implies $w' = s_\alpha w$. Condition (b.1) implies $v^{-1}\mu = (v')^{-1}\mu$, so $\mu' = \mu$. We get (c.3).
\item $v=v'$ and $wv\rightarrow w'v'$ is a quantum edge: Then we can write $w'v' = s_\alpha wv$ for some $\alpha\in \Phi^+$ with $(wv)^{-1}\alpha\in \Phi^-$. Now the condition $v=v'$ implies $w' = s_\alpha w$. Condition (b.1) implies $v^{-1}\mu-(wv)^{-1}\alpha^\vee = (v')^{-1}\mu$, so $\mu' = \mu-w^{-1}\alpha^\vee$. We get (c.4).
\end{enumerate}

Reversing the calculations above shows that (c) $\implies$ (b).

For (c) $\iff$ (d), we just explicitly rewrite the conditions for length positivity of $v'$, and the definition of edges in the quantum Bruhat graph.
\end{proof}
\begin{remark}
If the translation part $\mu$ of $x=w\varepsilon^\mu$ is sufficiently regular, the estimates for the length function of $x$ in part (d) of Proposition~\ref{prop:bruhatCovers} are trivially satisfied. Writing $\LP(x) = \{v\}$, we get a one-to-one correspondence
\begin{align*}
\{\text{Bruhat covers of }x\}\leftrightarrow \{\text{edges }?\rightarrow v\}\sqcup\{\text{edges }wv\rightarrow ?\}.
\end{align*}
\end{remark}
We obtain the following useful technical observation from Proposition~\ref{prop:bruhatCovers}:
\begin{corollary}\label{cor:simpleCoverLPFix}
Let $x \in \widetilde W$, $v\in \LP(x)$ and $(\alpha,k)\in \Delta_\af$ with $\ell(x,\alpha)=0$. If $v^{-1}\alpha\in \Phi^+$, then $s_\alpha v\in \LP(x)$.
\end{corollary}
\begin{proof}
Since $x(\alpha,k)\in \Phi^+$ by \crossRef{Lemma~}{lem:lengthFunctionalAsCountingAffineRoots}, we have $x<xr_a$. Since $a$ is a simple affine root, we must have $x\lessdot xr_a$.  So one of the four possibilities (c.1) -- (c.4) of Proposition~\ref{prop:bruhatCovers} must be satisfied.

If (c.3) or (c.4) are satisfied, we get $v\in \LP(x')$. Since $x' = x r_a$ is a length additive product, \crossRef{Lemma~}{lem:lengthAdditivity} shows $s_\alpha v\in \LP(x)$, finishing the proof.

Now assume that (c.1) is satisfied. Then $x' = xs_\beta$ for some $\beta\in \Phi^+$ means $k=0$ and $\alpha=\beta$. Now $v^{-1}\alpha\in \Phi^+$ means that $\ell(s_\alpha v)> \ell(v)$, so $s_\alpha v\rightarrow v$ cannot be a Bruhat edge.

Finally assume that (c.2) is satisfied. Then $x' = xr_{(-\beta,1)}$ for some $\beta\in \Phi^+$ means that $k=1$ and $\alpha=-\beta \in \Phi^-$. Then $s_\alpha v\rightarrow v$ cannot be a quantum edge, as $\ell(s_\alpha v)<\ell(v)$.

We get the desired claim or a contradiction, finishing the proof.
\end{proof}
As a second application, we discuss the semi-infinite order on $\widetilde W$ as introduced by Lusztig \cite{Lusztig1980}.  It plays a role for certain constructions related to the affine Hecke algebra, cf.\ \cite{Lusztig1980, Naito2017}.
\begin{definition}\label{def:semiInfiniteOrder}
Let $x = w\varepsilon^\mu\in \widetilde W$.
\begin{enumerate}[(a)]
\item We define the \emph{semi-infinite length} of $x$ as
\begin{align*}
\ell^{\frac\infty 2}(x) := \ell(w) +\langle \mu,2\rho\rangle.
\end{align*}
\item We define the \emph{semi-infinite order} on $\widetilde W$ to be the order $<^{\frac \infty 2}$ generated by the relations
\begin{align*}
\forall x\in \widetilde W, a\in \Phi_\af:~x<^{\frac \infty 2}xr_a\text{ if }\ell^{\frac\infty 2}(x)\leq \ell^{\frac\infty 2}(xr_a).
\end{align*}
\end{enumerate}
\end{definition}
We have the following link between the semi-infinite order and the Bruhat order:
\begin{proposition}[{\cite[Proposition~2.2.2]{Naito2017}}]\label{prop:semiInfiniteOrderVsBruhatOrder}
Let $x_1, x_2\in \widetilde W$. There exists a number $C>0$ such that for all $\lambda \in \mathbb Z\Phi^\vee$ satisfying the regularity condition $\langle \lambda,\alpha\rangle > C$ for every positive root $\alpha$,
we have
\begin{align*}
x_1\leq^{\frac\infty 2}x_2\iff x_1\varepsilon^\lambda \leq x_2\varepsilon^\lambda.
\end{align*}
\end{proposition}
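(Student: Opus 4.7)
The plan rests on the identity
\[
\ell(x\varepsilon^\lambda) = \ell^{\frac\infty 2}(x) + \langle\lambda,2\rho\rangle,
\]
valid for every $x = w\varepsilon^\mu \in \widetilde W$ once $\lambda$ is sufficiently regular dominant relative to $\mu$. This follows from Lemma~\ref{lem:positiveLengthFormula} applied with $v = 1$: as soon as $\langle\lambda,\alpha\rangle > -\langle\mu,\alpha\rangle$ for every positive root $\alpha$, the element $v = 1$ is length positive for $x\varepsilon^\lambda = w\varepsilon^{\mu+\lambda}$, forcing equality in the length formula. A complementary ingredient is the reflection bijection obtained from $\varepsilon^\lambda r_{(\alpha,k)}\varepsilon^{-\lambda} = r_{(\alpha,\,k-\langle\lambda,\alpha\rangle)}$: right-multiplication by an affine reflection on $z\in\widetilde W$ corresponds, after shifting by $\varepsilon^\lambda$, to right-multiplication of $z\varepsilon^\lambda$ by another affine reflection, and conversely. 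Finally, since any reflection in a Coxeter group changes Bruhat length by an odd amount, the displayed identity forces semi-infinite lengths across a reflection to differ by a nonzero (odd) quantity, so the generator $x<^{\frac\infty 2}xr_a$ with the condition $\ell^{\frac\infty 2}(x)\leq\ell^{\frac\infty 2}(xr_a)$ automatically enforces strict inequality.

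\emph{Forward direction.} Given $x_1\leq^{\frac\infty 2}x_2$, fix a chain $x_1 = z_0<^{\frac\infty 2}z_1<^{\frac\infty 2}\cdots<^{\frac\infty 2}z_n = x_2$ of generating relations, with $z_{i+1} = z_ir_{a_i}$ for some affine reflection. Take $\lambda$ regular enough for the length identity to apply to every $z_i$. Then $\ell^{\frac\infty 2}(z_i) < \ell^{\frac\infty 2}(z_{i+1})$ gives $\ell(z_i\varepsilon^\lambda) < \ell(z_{i+1}\varepsilon^\lambda)$, and the reflection bijection rewrites $z_{i+1}\varepsilon^\lambda = z_i\varepsilon^\lambda\cdot r_{a_i'}$. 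Each step is thus a Bruhat relation $z_i\varepsilon^\lambda < z_{i+1}\varepsilon^\lambda$, and chaining yields $x_1\varepsilon^\lambda \leq x_2\varepsilon^\lambda$.

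\emph{Reverse direction.} Given $x_1\varepsilon^\lambda\leq x_2\varepsilon^\lambda$, choose a chain of Bruhat covers $x_1\varepsilon^\lambda = y_0\lessdot y_1\lessdot\cdots\lessdot y_n = x_2\varepsilon^\lambda$. The reflection bijection uniquely expresses each $y_i$ as $z_i\varepsilon^\lambda$, with $z_{i+1} = z_ir_{a_i}$ for an affine reflection. Provided $\lambda$ is regular enough for the identity to apply to every $z_i$, the length jump $\ell(y_{i+1}) = \ell(y_i) + 1$ transfers to $\ell^{\frac\infty 2}(z_{i+1}) = \ell^{\frac\infty 2}(z_i) + 1$, so each step is a generating relation of the semi-infinite order, and chaining gives $x_1\leq^{\frac\infty 2}x_2$.

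The main obstacle is uniformity of $\lambda$: one needs a single bound $C$ under which the length identity applies to every intermediate $z_i$ in both directions. For the forward direction the chain is fixed in advance, so only a finite list of $z_i$ is involved. For the reverse direction the $z_i$ depend on the Bruhat chain, and a priori their translation parts $\mu_i$ could grow with $\lambda$. The crucial observation is that Theorem~\ref{thm:bruhat1}, applied to both $x_1\varepsilon^\lambda\leq y_i$ and $y_i\leq x_2\varepsilon^\lambda$, produces sandwich inequalities in which the $\lambda$-contributions cancel upon rearrangement, confining each $\mu_i$ to a region of $X_\ast$ that depends only on $x_1$ and $x_2$. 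Choosing $C$ large enough to accommodate this bounded region validates the length identity uniformly for every $z_i$, completing the proof.
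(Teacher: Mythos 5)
The paper does not actually prove this proposition---it is imported wholesale from Naito--Sagaki \cite[Proposition~2.2.2]{Naito2017} and used as a black box to derive Corollary~\ref{cor:semiInfiniteOrder}. So there is no internal proof to compare against; I will assess your argument on its own terms.

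Your preliminary observations (the identity $\ell(x\varepsilon^\lambda) = \ell^{\frac\infty 2}(x) + \langle\lambda,2\rho\rangle$ for $\lambda$ regular relative to $x$, the conjugation bijection on reflections, the parity argument forcing strict inequality across the generating relations) are all correct, and the forward direction is fine: the chain in the semi-infinite order is fixed in advance, so it involves only finitely many $z_i$, and one bound $C$ suffices.

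The reverse direction has a genuine gap. You pick a chain of Bruhat covers $x_1\varepsilon^\lambda = y_0 \lessdot \cdots \lessdot y_n = x_2\varepsilon^\lambda$, write $y_i = z_i\varepsilon^\lambda$, and claim that the translation parts of the $z_i$ are confined to a region of $X_\ast$ depending only on $x_1, x_2$ --- so that, after enlarging $C$, the length identity applies to every $z_i$. This is false, and the appeal to Theorem~\ref{thm:bruhat1} does not deliver the claimed cancellation. Applying the forward direction of Theorem~\ref{thm:bruhat1} to $x_1\varepsilon^\lambda\leq y_i$ and to $y_i\leq x_2\varepsilon^\lambda$ and dropping the nonnegative weight terms yields, for each $u$, inequalities of the form $u^{-1}(\mu_1+\lambda)\leq (u')^{-1}\nu_{y_i}\leq(v')^{-1}(\mu_2+\lambda)$ for some $u', v'\in W$ over which you have no control; the translation of $z_i$ is $\nu_{y_i} - \lambda$, and after subtracting $(u')^{-1}\lambda$ the $\lambda$-contributions cancel only if $u=u'=v'$, which there is no reason to expect. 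A concrete counterexample in $\widetilde{\SL}_2$: take $x_1=1$, $x_2 = s_\alpha\varepsilon^{\alpha^\vee}$, $\lambda = m\alpha^\vee$. Then $\varepsilon^\lambda \lessdot s_\alpha\varepsilon^{-(m+1)\alpha^\vee} \leq s_\alpha\varepsilon^{(m+1)\alpha^\vee} = x_2\varepsilon^\lambda$ (check the reduced words: the middle element is $s_0 s_1\cdots s_0$ with $2m+1$ letters, a subword of $s_1 s_0\cdots s_1$ with $2m+3$ letters). For this intermediate element, $z_1 = s_\alpha\varepsilon^{-(2m+1)\alpha^\vee}$, whose translation part goes to $-\infty$ with $m$; and the identity fails, since $\ell(z_1\varepsilon^\lambda) = 2m+1$ while $\ell^{\frac\infty 2}(z_1) + \langle\lambda,2\rho\rangle = -1-2m$. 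What is true --- and what your argument would need --- is that \emph{some} chain has uniformly bounded intermediates (in the example, $\varepsilon^\lambda < s_\alpha\varepsilon^\lambda < \varepsilon^{(m+1)\alpha^\vee} < s_\alpha\varepsilon^{(m+1)\alpha^\vee}$ works), but an arbitrary Bruhat chain does not, and establishing the existence of a good chain is exactly the hard part that your sketch bypasses.

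A cleaner route for the reverse direction, staying within the tools of this paper, is to bypass the chain entirely: for $\lambda$ superregular dominant both $x_1\varepsilon^\lambda$ and $x_2\varepsilon^\lambda$ lie in the shrunken chamber with $\LP=\{1\}$, so Corollary~\ref{cor:bruhatOrderShrunken} converts $x_1\varepsilon^\lambda\leq x_2\varepsilon^\lambda$ into the $\lambda$-free inequality $\mu_1 + \wt(w_1\Rightarrow w_2)\leq\mu_2$; one then builds an explicit chain in the semi-infinite order witnessing $x_1\leq^{\frac\infty 2}x_2$ by lifting a shortest path in $\QB(W)$ from $w_1$ to $w_2$ (each Bruhat resp.\ quantum edge raising $\ell^{\frac\infty 2}$ by one) and absorbing the remaining difference $\mu_2 - \mu_1 - \wt(w_1\Rightarrow w_2)\geq 0$ via the silly paths $u\to us_\alpha\to u$ of Lemma~\ref{lem:pathForWeight}, each of which also raises $\ell^{\frac\infty 2}$.
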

\begin{corollary}\label{cor:semiInfiniteOrder}
Let $x_1 = w_1\varepsilon^{\mu_1}, x_2 = w_2\varepsilon^{\mu_2}\in \widetilde W$. Then $x_1\leq^{\frac\infty 2}x_2$ if and only if
\begin{align*}
\mu_1 + \wt(w_1\Rightarrow w_2)\leq \mu_2.
\end{align*}
\end{corollary}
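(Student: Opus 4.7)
The plan is to reduce the claim about the semi-infinite order to a claim about the ordinary Bruhat order via Proposition~\ref{prop:semiInfiniteOrderVsBruhatOrder}, and then invoke Corollary~\ref{cor:bruhatOrderShrunken} applied to elements in a shrunken Weyl chamber.

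First, choose $\lambda \in \mathbb Z\Phi^\vee$ dominant and sufficiently regular. I will require $\lambda$ to satisfy the regularity hypothesis of Proposition~\ref{prop:semiInfiniteOrderVsBruhatOrder} for the pair $(x_1,x_2)$, and in addition to be so regular that $\mu_i + \lambda$ is itself superregular dominant for $i=1,2$. That proposition then yields
\begin{align*}
x_1 \leq^{\frac{\infty}{2}} x_2 \iff x_1 \varepsilon^\lambda \leq x_2 \varepsilon^\lambda,
\end{align*}
and a direct computation in the semidirect product $\widetilde W = W \ltimes X_\ast$ gives $x_i \varepsilon^\lambda = w_i \varepsilon^{\mu_i + \lambda}$.

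Next, I would identify the length positive elements of $x_i\varepsilon^\lambda$. For any $\alpha \in \Phi^+$,
\begin{align*}
\ell(x_i\varepsilon^\lambda,\alpha) = \langle \mu_i + \lambda,\alpha\rangle + \Phi^+(\alpha) - \Phi^+(w_i\alpha),
\end{align*}
and because $\mu_i + \lambda$ is superregular dominant, this value is nonzero (in fact strictly positive) for every $\alpha\in\Phi^+$. By \crossRef{Proposition~}{prop:shrunkenWeylChamber} this places $x_i\varepsilon^\lambda$ in a shrunken Weyl chamber, and since $v=1$ already satisfies the length positivity inequalities, it must be the unique length positive element of $x_i\varepsilon^\lambda$.

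Finally, apply Corollary~\ref{cor:bruhatOrderShrunken} with $v = v' = 1$. Using that $\wt(1\Rightarrow 1) = 0$ and $w_i \cdot 1 = w_i$, the criterion collapses to
\begin{align*}
(\mu_1 + \lambda) + \wt(w_1 \Rightarrow w_2) \leq \mu_2 + \lambda,
\end{align*}
which is equivalent to $\mu_1 + \wt(w_1\Rightarrow w_2)\leq \mu_2$. Combining this with the equivalence from the first step proves the corollary. There is no real obstacle; the only care needed is to pick a single $\lambda$ whose regularity constant simultaneously serves both Proposition~\ref{prop:semiInfiniteOrderVsBruhatOrder} and the shrunken-chamber requirement for Corollary~\ref{cor:bruhatOrderShrunken}, and this is achieved by taking the maximum of the two required constants.
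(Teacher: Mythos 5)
Your argument is correct and is essentially the paper's own proof: both reduce the semi-infinite order to the Bruhat order via Proposition~\ref{prop:semiInfiniteOrderVsBruhatOrder} and then apply Corollary~\ref{cor:bruhatOrderShrunken} after making $\lambda$ dominant and regular enough that $\LP(x_i\varepsilon^\lambda)=\{1\}$. You merely spell out the length-functional computation showing $1$ is the unique length positive element, which the paper leaves implicit.
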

\begin{proof}
Let $\lambda$ be as in Proposition~\ref{prop:semiInfiniteOrderVsBruhatOrder}. Choosing $\lambda$ sufficiently large, we may assume that $x_1\varepsilon^{\lambda}$ and $x_2\varepsilon^{\lambda}$ are superregular with $\LP(x_1\varepsilon^\lambda) = \LP(x_2\varepsilon^\lambda)=\{1\}$. Now $x_1\varepsilon^\lambda\leq x_2\varepsilon^\lambda$ if and only if
\begin{align*}
\mu_1 + \wt(w_1\Rightarrow w_2)\leq \mu_2,
\end{align*}
by Corollary~\ref{cor:bruhatOrderShrunken}.
\end{proof}
We finish this section with another application of our Theorem~\ref{thm:bruhat2}, namely a discussion of admissible and permissible sets in $\widetilde W$, as introduced by Kottwitz and Rapoport \cite{Kottwitz2000}.
\begin{definition}
Let $x = w\varepsilon^\mu\in \widetilde W$ and $\lambda\in \Xast$ a dominant coweight.
\begin{enumerate}[(a)]
\item We say that $x$ lies in the \emph{admissible} set defined by $\lambda$, denoted $x\in \Adm(\lambda)$, if there exists $u\in W$ such that $x\leq \varepsilon^{u\lambda}$ with respect to the Bruhat order on $\widetilde W$.
\item The \emph{fundamental coweight} associated with $a =(\alpha,k)\in \Delta_\af$ is the uniquely determined element $\omega_a\in \mathbb Q\Phi^\vee$ such that for each $\beta\in \Delta$,
\begin{align*}
\langle \omega_a,\beta\rangle = \begin{cases}1,&a = (\beta,0),\\
0,&a\neq (\beta,0).\end{cases}
\end{align*}
In particular, $\omega_a=0$ iff $k\neq 0$.
\item Let $a = (\alpha,k)\in \Delta_\af$, and denote by $\theta\in \Phi^+$ the longest root of the irreducible component of $\Phi$ containing $\alpha$. The \emph{normalized coweight} associated with $a$ is
\begin{align*}
\widetilde \omega_a = \begin{cases}0,&k\neq 0,\\
\frac{1}{\langle \omega_a,\theta\rangle}\omega_a,&k=0.\end{cases}
\end{align*}
\item We say that $x$ lies in the \emph{permissible} set defined by $\lambda$, denoted $x\in \Perm(\lambda)$, if $\mu\equiv\lambda\pmod{\Phi^\vee}$ and for every simple affine root $a\in \Delta_\af$, we have
\begin{align*}
(\mu + \widetilde\omega_a - w^{-1}\widetilde\omega_a)^{\dom}\leq \lambda\text{ in }\Xast\otimes\mathbb Q.
\end{align*}
\end{enumerate}
\end{definition}
It is shown in \cite{Kottwitz2000} that the admissible set is always contained in the permissible set and that equality holds for the groups $\GL_n$ and $\GSp_{2n}$ if $\lambda$ is \emph{minuscule} (i.e.\ a fundamental coweight of some special node). It is a result of Haines and Ng\^o \cite{Haines2002} that $\Adm(\lambda)\neq \Perm(\lambda)$ in general. We show how the latter result can be recovered using our methods.
\begin{proposition}[{Cf.\ \cite[Prop.~3.3]{He2021d}}]\label{prop:admissibleSubset}
Let $x = w\varepsilon^\mu\in \widetilde W$ and $\lambda\in \Xast$ a dominant coweight. Then the following are equivalent:
\begin{enumerate}[(1)]
\item $x\in \Adm(\lambda)$.
\item For all $v\in W$, we have
\begin{align*}
v^{-1}\mu + \wt(wv\Rightarrow v)\leq \lambda.
\end{align*}
\item For some $v\in \LP(x)$, we have
\begin{align*}
v^{-1}\mu + \wt(wv\Rightarrow v)\leq \lambda.
\end{align*}
\end{enumerate}
\end{proposition}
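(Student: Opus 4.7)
The plan is to establish the cycle of implications (1)$\Rightarrow$(2)$\Rightarrow$(3)$\Rightarrow$(1), with each step obtained from a single application of one of the previously proved Bruhat criteria to the comparison $x\leq\varepsilon^{u\lambda}$ for some $u\in W$.

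For (1)$\Rightarrow$(2), I would start from $x\leq\varepsilon^{u\lambda}$ and, for each given $v\in W$, invoke Theorem~\ref{thm:bruhat1} with $v_1=v$ (and $w_2=1$, $\mu_2=u\lambda$) to produce some $v_2\in W$ satisfying
\begin{align*}
v^{-1}\mu + \wt(v_2\Rightarrow v) + \wt(wv\Rightarrow v_2)\leq v_2^{-1}u\lambda.
\end{align*}
Combining the triangle inequality $\wt(wv\Rightarrow v)\leq\wt(wv\Rightarrow v_2)+\wt(v_2\Rightarrow v)$ (which follows from Proposition~\ref{prop:qbgWeightDefinition}(c) by concatenating shortest paths) with the standard bound $v_2^{-1}u\lambda\leq\lambda$ valid for any dominant $\lambda$, one immediately obtains (2). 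The implication (2)$\Rightarrow$(3) is trivial, since $\LP(x)$ is nonempty (by the adjustment process referenced before Lemma~\ref{lem:positiveLengthFormula}) and (2) quantifies over all $v\in W$.

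The key step is (3)$\Rightarrow$(1), where I would make the strategic choice $u:=v$, taking $v$ to be the length positive element supplied by (3). To verify $x\leq\varepsilon^{v\lambda}$ I would apply Theorem~\ref{thm:bruhat2} with Bruhat-deciding datum $(v,\emptyset)$ for $x$, which is valid because condition (2) of the definition of such a datum is vacuous when $J=\emptyset$. Choosing the witness $v'_1=v$ is admissible: it is length positive for $\varepsilon^{v\lambda}$ because $v^{-1}(v\lambda)=\lambda$ is dominant. With this choice, the two weight contributions $\wt(v\Rightarrow v)$ collapse to zero, and the required inequality becomes $v^{-1}\mu+\wt(wv\Rightarrow v)\leq v^{-1}(v\lambda)=\lambda$, which is exactly the hypothesis of (3).

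No serious technical obstacle arises. The only conceptual observation worth noting is that the freedom to vary $u\in W$ in the definition of $\Adm(\lambda)$ is precisely what permits the alignment $u=v$ above, which in turn is what makes the auxiliary weight terms $\wt(v'_1\Rightarrow v)$ and $\wt(wv\Rightarrow v'_1)$ in the Bruhat criterion degenerate cleanly.
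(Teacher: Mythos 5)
Your proof is correct and follows essentially the same route as the paper's: both establish (1)$\Rightarrow$(2) via the "only if" half of the Bruhat criterion plus the triangle inequality and the fact that a dominant coweight dominates its $W$-orbit, treat (2)$\Rightarrow$(3) as immediate from nonemptiness of $\LP(x)$, and prove (3)$\Rightarrow$(1) by applying Theorem~\ref{thm:bruhat2} to compare $x$ with $\varepsilon^{v\lambda}$. The only cosmetic differences are that you cite Theorem~\ref{thm:bruhat1} where the paper cites Lemma~\ref{lem:bruhat1}, and you make the witness $v'_1=v$ explicit in the final step (the paper leaves it implicit); your observation that $v$ is length positive for $\varepsilon^{v\lambda}$ is true but not actually required, since Theorem~\ref{thm:bruhat2}~(2) only demands existence of \emph{some} $v'_i\in W$.
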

\begin{proof}
(1) $\implies$ (2): Suppose that $x\in \Adm(\lambda)$, so $x\leq \varepsilon^{u\lambda}$ for some $u\in W$. Let also $v\in W$. By Lemma~\ref{lem:bruhat1}, we find $\tilde u\in W$ such that
\begin{align*}
v^{-1}\mu + \wt(\tilde u\Rightarrow v) + \wt(wv\Rightarrow \tilde u)\leq \tilde u^{-1}u\lambda.
\end{align*}
Thus
\begin{align*}
v^{-1}\mu+\wt(wv\Rightarrow v)\leq&v^{-1}\mu + \wt(\tilde u\Rightarrow v) + \wt(wv\Rightarrow \tilde u)\\\leq &\tilde u^{-1}u\lambda
\\\leq&(\tilde u^{-1}u\lambda)^\dom = \lambda.
\end{align*}
Since (2) $\implies$ (3) is trivial, it remains to show (3) $\implies$ (1). So let $v\in \LP(x)$ satisfy $v^{-1}\mu + \wt(wv\Rightarrow v)\leq \lambda$. By Theorem~\ref{thm:bruhat2}, we immediately get $x\leq \varepsilon^{v\lambda}$, showing (1).
\end{proof}
\begin{lemma}\label{lem:permissibleSubset}
Let $x = w\varepsilon^\mu\in \widetilde W$ and $\lambda\in \Xast$ a dominant coweight. Then the following are equivalent:
\begin{enumerate}[(1)]
\item $x\in \Perm(\lambda)$.
\item For all $v\in W$, we have
\begin{align*}
v^{-1}\mu + \sup_{a\in \Delta_\af}\left( v^{-1}\widetilde\omega_a - (wv)^{-1}\widetilde\omega_a\right)\leq \lambda.
\end{align*}
\end{enumerate}
If moreover $x$ lies in a shrunken Weyl chamber, the conditions are equivalent to
\begin{itemize}
\item[(3)] For the uniquely determined $v\in \LP(x)$, we have
\begin{align*}
v^{-1}\mu + \sup_{a\in \Delta_\af} \left(v^{-1}\widetilde\omega_a - (wv)^{-1}\widetilde\omega_a\right)\leq \lambda.
\end{align*}
\end{itemize}
\end{lemma}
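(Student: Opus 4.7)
The plan is to recast the permissibility condition, which is formulated in terms of the dominant orbit representative, as a family of pointwise inequalities, and then to exploit the regularity supplied by the shrunken Weyl chamber hypothesis in order to reduce that family to its single value at the unique length positive element.

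First I would dispatch (1) $\Leftrightarrow$ (2). Writing $\nu_a := \mu + \widetilde\omega_a - w^{-1}\widetilde\omega_a$, the standard fact that, for dominant $\lambda$, one has $\nu^{\dom} \leq \lambda$ in $X_\ast \otimes \mathbb Q$ if and only if $v^{-1}\nu \leq \lambda$ for every $v \in W$ (the forward direction uses $v^{-1}\nu \leq \nu^{\dom}$, the converse picks $v$ with $v^{-1}\nu = \nu^{\dom}$) lets me rewrite the second clause in the definition of $\Perm(\lambda)$ as: for all $v \in W$ and all $a \in \Delta_\af$, $v^{-1}\nu_a \leq \lambda$. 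Interpreting the expression $v^{-1}\mu + \sup_{a}\bigl(v^{-1}\widetilde\omega_a - (wv)^{-1}\widetilde\omega_a\bigr) \leq \lambda$ in (2) in the customary sense that each term indexed by $a$ must be $\leq \lambda$, this is exactly (2); the congruence $\mu \equiv \lambda \pmod{\Phi^\vee}$ from the definition of $\Perm(\lambda)$ is carried along in both formulations.

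The implication (2) $\Rightarrow$ (3) is a tautology. The substantive step is (3) $\Rightarrow$ (2) under the shrunken hypothesis. Let $v$ be the unique length positive element for $x$. The key claim is that $v^{-1}\nu_a$ is itself dominant for every $a \in \Delta_\af$, so that $v^{-1}\nu_a = \nu_a^{\dom}$; granting this, (3) immediately yields $\nu_a^{\dom} \leq \lambda$ for each $a$, which by the first paragraph is equivalent to (2).

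To establish the dominance claim, I would fix $\beta \in \Phi^+$ and expand
\[
\langle v^{-1}\nu_a, \beta\rangle = \ell(x, v\beta) + \bigl(\langle \widetilde\omega_a, v\beta\rangle - \Phi^+(v\beta)\bigr) + \bigl(\Phi^+(wv\beta) - \langle \widetilde\omega_a, wv\beta\rangle\bigr)
\]
via the defining formula for the length functional. The normalization of $\widetilde\omega_a$ forces $\langle \widetilde\omega_a, \gamma\rangle \in [0, 1]$ for $\gamma \in \Phi^+$ and $\langle \widetilde\omega_a, \gamma\rangle \in [-1, 0]$ for $\gamma \in \Phi^-$, so that $\Phi^+(\gamma) - \langle \widetilde\omega_a, \gamma\rangle \in [0, 1]$ for every root $\gamma$. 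Hence the first bracket lies in $[-1, 0]$ and the second in $[0, 1]$, contributing at least $-1$ in total. The shrunken Weyl chamber condition upgrades the length positivity of $v$ to $\ell(x, v\beta) \geq 1$ for every $\beta \in \Phi^+$, and therefore $\langle v^{-1}\nu_a, \beta\rangle \geq 0$, which proves dominance.

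The main obstacle will be the bookkeeping in this last step: confirming that the worst-case deficit of $1$ coming from the normalized-coweight terms is always exactly absorbed by the strict positivity of the length functional that is available on a shrunken Weyl chamber. Everything else is either a translation between orbit-dominance and pointwise inequalities or an immediate consequence of the definitions.
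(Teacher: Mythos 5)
Your proposal matches the paper's argument essentially step for step: the equivalence (1) $\Leftrightarrow$ (2) is the same translation of orbit-dominance into pointwise inequalities over $W$, and for the shrunken case you prove, exactly as the paper does, that $v^{-1}(\mu + \widetilde\omega_a - w^{-1}\widetilde\omega_a)$ is dominant by pairing with a positive root, bounding the $\widetilde\omega_a$-contributions by $-1$, and invoking $\ell(x,v\beta)\geq 1$ from length positivity plus the shrunken hypothesis. The bookkeeping you flagged as the obstacle works out exactly as you describe, with the estimate $\langle v^{-1}\nu_a,\beta\rangle \geq \ell(x,v\beta)-1 \geq 0$.
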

\begin{proof}
We have
\begin{align*}
\text{(1)}\iff&\forall a\in \Delta_\af:~\left(\mu +\widetilde\omega_a - w^{-1}\widetilde\omega_a\right)^{\dom}\leq\lambda
\\\iff&\forall a\in \Delta_\af, v\in W:~v^{-1}\left(\mu + \widetilde\omega_a - w^{-1}\widetilde\omega_a\right)\leq\lambda
\\\iff&\forall v\in W:\sup_{a\in \Delta_\af}v^{-1}\left(\mu + \widetilde\omega_a- w^{-1}\widetilde\omega_a\right)\leq\lambda
\\\iff&\text{(2)}.
\end{align*}
Now assume that $x$ is in a shrunken Weyl chamber, $\LP(x) = \{v\}$ and $a\in \Delta_\af$. We claim that
\begin{align*}
\left(\mu + \widetilde\omega_a - w^{-1}\widetilde\omega_a\right)^{\dom} = v^{-1}\left(\mu + \widetilde\omega_a - w^{-1}\widetilde\omega_a\right).
\end{align*}
Once this claim is proved, the equivalence (1) $\iff$ (3) follows.

It remains to show that $v^{-1}\left(\mu + \widetilde\omega_a - w^{-1}\widetilde\omega_a\right)$ is dominant. Hence let $\alpha\in\Phi^+$. We obtain
\begin{align*}
\left\langle v^{-1}\left(\mu + \widetilde\omega_a - w^{-1}\widetilde\omega_a\right),\alpha\right\rangle=&\langle \mu,v\alpha\rangle + \langle \widetilde\omega_a,v\alpha\rangle - \langle \widetilde\omega_a,wv\alpha\rangle
\\\geq&\langle \mu,v\alpha\rangle - \Phi^+(-v\alpha) - \Phi^+(wv\alpha)
\\=&\ell(x,v\alpha)-1\geq 0.\qedhere
\end{align*}
\end{proof}
\begin{corollary}\label{cor:admissibleVsPermissible}
For any fixed root system $\Phi$, the following are equivalent:
\begin{enumerate}[(1)]
\item For all dominant $\lambda\in \Xast$, we get the equality $\Adm(\lambda) = \Perm(\lambda)$.
\item For all $w_1, w_2\in W$, the element
\begin{align*}
\left\lceil \sup_{a\in \Delta_\af}w_2^{-1}\widetilde\omega_a-w_1^{-1}\widetilde\omega_a\right\rceil :=\min\{z\in \mathbb Z\Phi^\vee \mid z\geq \sup_{a\in \Delta_\af}w_2^{-1}\widetilde\omega_a-w_1^{-1}\widetilde\omega_a\text{ in }\mathbb Q\Phi^\vee\}
\end{align*}
agrees with $\wt(w_1\Rightarrow w_2)$.
\item Each irreducible component of $\Phi$ is of type $A_n$ $(n\geq 1)$, $B_2$, $C_3$ or $G_2$.
\end{enumerate}
\end{corollary}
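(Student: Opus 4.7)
The plan is to translate all three conditions into statements about the weight function of the quantum Bruhat graph and then dispose of the remaining root-system classification type by type. By Proposition~\ref{prop:admissibleSubset}, an element $x = w\varepsilon^\mu$ belongs to $\Adm(\lambda)$ precisely when $v^{-1}\mu + \wt(wv \Rightarrow v) \leq \lambda$ in $X_*$ for every $v \in W$. By Lemma~\ref{lem:permissibleSubset}, it belongs to $\Perm(\lambda)$ precisely when $\mu \equiv \lambda \pmod{\mathbb{Z}\Phi^\vee}$ and $v^{-1}\mu + \sup_{a \in \Delta_\af}(v^{-1}\widetilde\omega_a - (wv)^{-1}\widetilde\omega_a) \leq \lambda$ in $X_* \otimes \mathbb{Q}$ for every $v$. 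Under the congruence condition, the rational inequality is equivalent to the integral inequality $\lambda - v^{-1}\mu \geq \lceil \sup_a(v^{-1}\widetilde\omega_a - (wv)^{-1}\widetilde\omega_a) \rceil$, since the left-hand side lies in $\mathbb{Z}\Phi^\vee$. Writing $w_1 := wv$ and $w_2 := v$ puts the admissibility and permissibility inequalities in a common shape.

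With this translation in place, (1) becomes equivalent to both pointwise inequalities $\wt(w_1 \Rightarrow w_2) \leq \lceil \sup_a(w_2^{-1}\widetilde\omega_a - w_1^{-1}\widetilde\omega_a) \rceil$ (giving $\Adm \supseteq \Perm$) and its reverse (giving $\Adm \subseteq \Perm$), for all $w_1, w_2 \in W$; jointly they are (2). For the direction (1) $\implies$ (2), given $w_1, w_2$ I would set $w := w_1 w_2^{-1}$ and $v := w_2$, pick a sufficiently dominant $\lambda$, and choose $\mu$ so that $v^{-1}\mu = \lambda - \lceil \sup \rceil$. A shrunken-Weyl-chamber argument, using part (3) of Lemma~\ref{lem:permissibleSubset}, ensures that this specific $v$ carries the only active constraint, so $x \in \Perm(\lambda)$; then (1) yields $x \in \Adm(\lambda)$, and the admissibility inequality at $v$ gives the desired bound on $\wt(w_1 \Rightarrow w_2)$. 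A symmetric choice produces the reverse inequality from the reverse inclusion, and (2) $\implies$ (1) is a direct substitution back into the characterisations.

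The equivalence (2) $\iff$ (3) is a root-theoretic classification that I would verify type by type. In type $A_n$, formula~(\ref{eq:wtAn}) already expresses $\wt(w_1 \Rightarrow w_2)$ as the relevant supremum, and since every root is long we have $\widetilde\omega_a = \omega_a$ and no ceiling is needed, so (2) holds. For the small rank cases $B_2$, $C_3$, and $G_2$, I would carry out an explicit calculation, using Lemma~\ref{lem:weightIdentities} and Proposition~\ref{prop:maximalEdges} to cut the number of pairs $(w_1, w_2)$ that must actually be inspected. For all other irreducible types, I would exhibit explicit pairs $(w_1, w_2)$ with $\wt(w_1 \Rightarrow w_2) > \lceil \sup_a(w_2^{-1}\widetilde\omega_a - w_1^{-1}\widetilde\omega_a) \rceil$, thereby recovering the counterexamples of Haines and Ng\^o~\cite{Haines2002}; the underlying phenomenon is that once enough short roots are present, the rational supremum of normalized fundamental coweights is too coarse, even after rounding up, to match the coroot-valued weights recorded by $\QB(W)$.

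\textbf{Main obstacle.} The bulk of the work lies in the combinatorial analysis of (2) $\iff$ (3): verifying (2) in $C_3$ and $G_2$ by finite search, and producing clean counterexamples for $B_n$ ($n \geq 3$), $C_n$ ($n \geq 4$), $D_n$, $F_4$, and $E_6, E_7, E_8$. The reduction of (1) to (2) is, by contrast, a formal manipulation of the criteria established earlier in this section.
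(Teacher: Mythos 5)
Your reduction of (1) to (2) matches the paper's: both compare Proposition~\ref{prop:admissibleSubset} with Lemma~\ref{lem:permissibleSubset}, and your superregular specialisation (choosing $\mu$ so that $v^{-1}\mu = \lambda - \lceil\sup\rceil$ with $\lambda$ very dominant so that $\LP(x)$ is a singleton) is the right mechanism for extracting (2) from (1). Note also that the inclusion $\Adm(\lambda)\subseteq\Perm(\lambda)$ always holds (Kottwitz--Rapoport), so the ``reverse inequality'' you invoke is automatic; the content lies entirely in the one direction you actually compute. Where you diverge from the paper is in (2) $\iff$ (3). The paper exploits that condition (2) is \emph{hereditary} under passing to parabolic subsystems: if $\Phi_J\subseteq\Phi$ fails (2) for some $J\subseteq\Delta$, then so does $\Phi$. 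Combined with $A_n$ being good (formula~(\ref{eq:wtAn})) and with a finite machine check that $C_3, G_2$ are good while $B_3, C_4, D_4$ are bad, the whole classification falls out, since every type outside the list contains one of $B_3, C_4, D_4$ as a parabolic (for instance $F_4\supset B_3$ and $E_6\supset D_5\supset D_4$). Your plan to exhibit explicit bad pairs $(w_1,w_2)$ in each of $B_n$ ($n\geq3$), $C_n$ ($n\geq4$), $D_n$, $F_4$, $E_{6,7,8}$ separately is sound in principle, but it commits you to computations inside large Weyl groups (including $E_8$, of order $696{,}729{,}600$) rather than rank-$4$ groups, and it forces you to treat $B_3$ on your own since Haines--Ng\^o only cover rank $\geq4$. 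The parabolic heredity you have omitted is precisely what makes ``the bulk of the work'' light in the published argument; you should try to secure that reduction before starting any casework.
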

\begin{proof}
(1) $\implies$ (2): Comparing condition (3) of Proposition~\ref{prop:admissibleSubset} with condition (3) of Lemma~\ref{lem:permissibleSubset} for superregular elements $x\in \widetilde W$ yields the desired claim.

(2) $\implies$ (1): We can directly compare condition (2) of Proposition~\ref{prop:admissibleSubset} with condition (2) of Lemma~\ref{lem:permissibleSubset}.

(2) $\iff$ (3): Call an irreducible root system $\Phi'$ \emph{good} if condition (2) is satisfied for $\Phi'$, and \emph{bad} otherwise. Certainly, $\Phi$ is good iff each irreducible component of $\Phi$ is good. Moreover, root systems of type $A_n$ are good, we saw this in formula \eqref{eq:wtAn}.

If $\Phi_J\subseteq \Phi$ is bad for some $J\subseteq \Delta$, then certainly $\Phi$ is bad as well.
It remains to show that root systems of types $C_3$ and $G_2$ are good, and that root systems of types $B_3, C_4$ and $D_4$ are bad. Each of these claims is easily verified using the Sagemath computer algebra system \cite{sagemath, sage-combinat}.
\end{proof}
For irreducible root systems of rank $\geq 4$, the equivalence (1) $\iff$ (3) is due to \cite{Haines2002}.
\subsection{Proof of the criterion}\label{sec:bruhat-order-proof}
The goal of this section is to prove Theorem~\ref{thm:bruhat2}. We start with the direction (1) $\implies$ (2), which is the easier one.

\begin{lemma}\label{lem:bruhat1}
Let $x = w\varepsilon^\mu, x' =w'\varepsilon^{\mu'}\in \widetilde W$ and $v\in W$. If $x\leq x'$, then there exists an element $v'\in W$ such that
\begin{align*}
v^{-1}\mu + \wt(v'\Rightarrow v) + \wt(wv\Rightarrow w'v')\leq (v')^{-1}\mu.
\end{align*}
\end{lemma}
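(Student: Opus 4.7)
The plan is induction on $\ell(x')-\ell(x)$. The base case $\ell(x)=\ell(x')$ forces $x=x'$, so $v':=v$ works (both weight terms vanish). For the inductive step, pick $x''=w''\varepsilon^{\mu''}$ with $x\leq x''\lessdot x'$ --- possible because Bruhat intervals in Coxeter groups are graded --- and apply the inductive hypothesis to $x\leq x''$ to produce $v''\in W$ satisfying
\[
v^{-1}\mu + \wt(v''\Rightarrow v) + \wt(wv\Rightarrow w''v'')\leq (v'')^{-1}\mu''.\qquad(\star)
\]
It then suffices to handle the single Bruhat cover $x''\lessdot x'$ and promote $v''$ to a suitable $v'$.

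For the cover step, write $x'=x''r_b$ with $b=(\beta,k)\in\Phi_\af^+$ the unique positive affine root whose associated reflection is $r_b$; then $w'=w''s_\beta$, $\mu'=\mu''+(k-\langle\mu'',\beta\rangle)\beta^\vee$, and the cover condition $x''(b)\in\Phi_\af^+$ reads $k-\langle\mu'',\beta\rangle\geq\Phi^+(-w''\beta)$. The proposed rule is
\[
v' := \begin{cases} v'' & \text{if } (v'')^{-1}\beta \in \Phi^+, \\ s_\beta v'' & \text{if } (v'')^{-1}\beta \in \Phi^-. \end{cases}
\]
In the first case, writing $w''s_\beta v''=w''v''s_\gamma$ with $\gamma:=(v'')^{-1}\beta\in\Phi^+$, the desired inequality reduces, via $(\star)$, the triangle inequality for $\wt$, and the reverse form $\wt(w''v''\Rightarrow w''v''s_\gamma)\leq\Phi^+(-w''\beta)\gamma^\vee$ of \weightEstimateLong (applied at $w''v''s_\gamma$ in place of $w$), to exactly the cover inequality $\Phi^+(-w''\beta)\leq k-\langle\mu'',\beta\rangle$. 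In the second case, setting $\gamma:=-(v'')^{-1}\beta\in\Phi^+$ gives $s_\beta v''=v''s_\gamma$ with $v''\gamma=-\beta$ and $w'v'=w''v''$, so $(\star)$ plus the same weight estimate reduces the problem to $\Phi^+(-\beta)\leq k$, which is precisely the positivity condition on $b\in\Phi_\af^+$.

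The main obstacle is recognising the correct parameterization: working with the positive affine root $b\in\Phi_\af^+$, rather than an ad hoc choice $r_b=s_\alpha\varepsilon^{m\alpha^\vee}$ with $\alpha\in\Phi^+$ and $m\in\mathbb Z$ of arbitrary sign, collapses what would naively be a four-case enumeration --- mirroring the four cases (c.1)--(c.4) of Proposition~\ref{prop:bruhatCovers} --- into just the two sub-cases above. Beyond this observation the verification is a mechanical bookkeeping exercise.
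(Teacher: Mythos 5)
Your proof is correct and follows essentially the same strategy as the paper: reduce to a single reflection step, use the positive affine root $b\in\Phi_\af^+$ to parameterize the reflection, split into two cases according to whether $(v'')^{-1}\beta$ is positive or negative, and close each case with the triangle inequality for $\wt$ together with the estimate $\wt(ws_\gamma\Rightarrow w)\leq\Phi^+(w\gamma)\gamma^\vee$. The two cases you describe line up exactly with the paper's two cases (just in the opposite order), and your closing inequalities $\Phi^+(-w''\beta)\leq k-\langle\mu'',\beta\rangle$ and $\Phi^+(-\beta)\leq k$ are precisely the positivity conditions $x''(b)\in\Phi_\af^+$ and $b\in\Phi_\af^+$ that the paper invokes.

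The only structural difference is cosmetic. The paper observes that the relation $\preceq$ it defines (the conclusion of the lemma) is transitive and then reduces directly to the generating relations $x<xr_a$ with $\ell(x)<\ell(xr_a)$ --- which need not be covers --- since those generate the Bruhat order by definition. You instead induct on $\ell(x')-\ell(x)$ and factor through a cover $x''\lessdot x'$, which additionally relies on the chain property of Bruhat intervals. Both reductions are standard and unproblematic; yours handles a slightly more restricted step (covers only) but invokes one extra standard fact, while the paper's transitivity argument needs no gradedness input. Your observation about the $\Phi_\af^+$ parameterization collapsing four apparent cases into two is accurate and is indeed exactly what the paper does.
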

\begin{proof}
First note that the relation
\begin{align*}
x\preceq x':\iff \forall v\exists v':~ v^{-1}\mu + \wt(v'\Rightarrow v) + \wt(wv\Rightarrow w'v')\leq (v')^{-1}\mu
\end{align*}
is transitive. Thus, it suffices to show the implication $x\leq x'\implies x\preceq x'$ for generators $(x, x')$ of the Bruhat order.

In other words, we may assume that $x' = xr_{\mathbf a}$ for an affine root $\mathbf a = (\alpha,k)\in \Phi_\af^+$ with
\begin{align*}
x\mathbf a = (w\alpha,k-\langle \mu,\alpha\rangle)\in \Phi_\af^+.
\end{align*}
This means that $w' = ws_\alpha$ and $\mu' = \mu + (k-\langle \mu,\alpha\rangle)\alpha^\vee$, where $k-\langle \mu,\alpha\rangle\geq \Phi^+(-w\alpha)$.
We now do a case distinction depending on whether the root $v^{-1}\alpha$ is positive or negative.

\textbf{Case $v^{-1}\alpha\in \Phi^-$.} Put $v' = s_\alpha v$ such that $wv = w'v'$. Then using \weightEstimateLong,
\begin{align*}
&v^{-1}\mu + \wt(v'\Rightarrow v) + \wt(wv\Rightarrow w'v')
\\=\,&v^{-1}\mu + \wt(v s_{-v^{-1}\alpha}\Rightarrow v) + 0
\\\leq\,&v^{-1}\mu -\Phi^+(-\alpha)v^{-1}\alpha^\vee
\\\leq\,&v^{-1}\mu - kv^{-1}\alpha^\vee
\\=\,&(s_\alpha v)^{-1}(s_\alpha(\mu) + k\alpha^\vee) = (v')^{-1}\mu'.
\end{align*}
\textbf{Case $v^{-1}\alpha\in \Phi^+$.} Put $v' = v$ such that $w'v' = wvs_{v^{-1}\alpha}$. Then using \weightEstimateLong,
\begin{align*}
&v^{-1}\mu + \wt(v'\Rightarrow v) + \wt(wv\Rightarrow w'v')
\\=\,&v^{-1}\mu + \wt(wv\Rightarrow wvs_{v^{-1}\alpha})
\\\leq\,&v^{-1}\mu + \Phi^+(-w\alpha)v^{-1}\alpha^\vee
\\\leq\,&v^{-1}\mu + (k-\langle \mu,\alpha\rangle)\alpha^\vee=(v')^{-1}\mu'.
\end{align*}
This finishes the proof.
\end{proof}
The direction (1) $\implies$ (2) of Theorem~\ref{thm:bruhat2} follows directly from this lemma. We now start the journey to prove (2) $\implies$ (1).
\begin{lemma}\label{lem:bruhat-criterion1}
Let $x = w\varepsilon^\mu, x'=w'\varepsilon^{\mu'}\in \widetilde W$, and suppose that $(1, J_1,\dotsc,J_m)$ is a Bruhat-deciding datum for both $x$ and $x'$. If the inequality
\begin{align*}
\mu + \wt(w\Rightarrow w')\leq \mu'\pmod{\Phi_{J_i}^\vee}
\end{align*}
holds for $i=1,\dotsc,m$, then $x\leq x'$.
\end{lemma}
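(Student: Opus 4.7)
Set $J := J_1\cap\cdots\cap J_m\subseteq\Delta$. The Bruhat-deciding hypothesis (with $v=1$) forces $\ell(x,\alpha)=\ell(x',\alpha)=0$ for every $\alpha\in\Phi_J$, so both $x$ and $x'$ lie in $\widetilde{(W^J)}$. My strategy is to translate both elements into the super-dominant regime for $J$, invoke Corollary~\ref{cor:superdominantParabolicEmbedding}, and then descend via length-additivity.

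\emph{Super-dominant shift.} Choose a dominant $\lambda\in\mathbb Z\Phi^\vee$ with $\langle\lambda,\alpha\rangle=0$ for every $\alpha\in\Phi_J$ and $\langle\lambda,\alpha\rangle$ sufficiently large for every $\alpha\in\Phi^+\setminus\Phi_J$; such a $\lambda$ exists as a large non-negative integer combination of fundamental coweights indexed by $\Delta\setminus J$. Orthogonality to $\Phi_J$ preserves the condition $\ell(\cdot,\alpha)=0$ for $\alpha\in\Phi_J$ upon right-multiplication by $\varepsilon^\lambda$, so $x\varepsilon^\lambda,x'\varepsilon^\lambda\in\widetilde{(W^J)}$; super-dominance on $\Phi^+\setminus\Phi_J$ then places both in the super-dominant region $\Omega_J^{C_2}$ for the constant $C_2$ supplied by Corollary~\ref{cor:superdominantParabolicEmbedding} (applied with $C_1$ any upper bound on $\ell\bigl((x\varepsilon^\lambda)^{-1}(x'\varepsilon^\lambda)\bigr)$, which is finite once $\lambda$ is fixed; one may have to take $\lambda$ large relative to $C_2$, but the linear growth of $C_2$ in $C_1$ makes this manageable). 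Since $1\in\LP(x)\cap\LP(x')$ and $\lambda$ is dominant, Lemma~\ref{lem:positiveLengthFormula} yields the length-additive products $\ell(x\varepsilon^\lambda)=\ell(x)+\langle\lambda,2\rho\rangle$ and likewise for $x'$.

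\emph{Applying the criterion and combining congruences.} By Corollary~\ref{cor:superdominantParabolicEmbedding}, the comparison $x\varepsilon^\lambda\leq x'\varepsilon^\lambda$ is equivalent to
\[
  \mu+\wt(w\Rightarrow w')\leq\mu'\pmod{\Phi_J^\vee}.
\]
Hence the task reduces to deducing this single congruence modulo $\Phi_J^\vee$ from the $m$ hypothesised congruences modulo each $\Phi_{J_i}^\vee$. Because $\Phi_J=\bigcap_i\Phi_{J_i}$ as subsets of the finite root system, the natural map $\mathbb Z\Phi^\vee/\mathbb Z\Phi_J^\vee\hookrightarrow\prod_i\mathbb Z\Phi^\vee/\mathbb Z\Phi_{J_i}^\vee$ is injective at the lattice level; but the positive cones, given by non-negative combinations of positive coroots, do not a priori behave well under such an intersection of quotients. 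I expect this combinatorial comparison to be the main obstacle. My intended route is to realise each hypothesised inequality modulo $\Phi_{J_i}^\vee$ as a path in the parabolic quantum Bruhat graph $\QB(W^{J_i})$ via Lemma~\ref{lem:pathForWeight}, and then to assemble these parabolic paths into a single path in $\QB(W^J)$ of the required weight, which by Lemma~\ref{lem:pathForWeight} witnesses the desired inequality modulo $\Phi_J^\vee$.

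\emph{Descent.} Once $x\varepsilon^\lambda\leq x'\varepsilon^\lambda$ is established, length-additivity of both products together with the right-cancellation property of the Bruhat order on the Coxeter group $W_\af$ (applied through the semi-direct decomposition $\widetilde W=\Omega\ltimes W_\af$) yields $x\leq x'$, completing the proof.
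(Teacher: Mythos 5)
Your overall plan matches the paper's: shift both elements by a superdominant translation orthogonal to $\Phi_J$, apply Corollary~\ref{cor:superdominantParabolicEmbedding}, and descend by length-additivity. The genuine gap is precisely the step you single out as "the main obstacle": passing from the $m$ hypothesised inequalities modulo $\Phi_{J_1}^\vee, \dotsc, \Phi_{J_m}^\vee$ to the single inequality modulo $\Phi_J^\vee$. You leave this unproved and sketch a route that would realise each inequality as a path in $\QB(W^{J_i})$ via Lemma~\ref{lem:pathForWeight} and then "assemble" these into a single path in $\QB(W^J)$. As stated this is not a workable plan: a path in $\QB(W^{J_i})$ lives among the cosets $W^{J_i}$ while a path in $\QB(W^J)$ must live among the cosets $W^J$, and there is no splicing procedure between parabolic quantum Bruhat graphs for different $J_i$.

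Fortunately the step is not an obstacle at all, and the paper dispatches it in one line. Write $\nu := \mu + \wt(w\Rightarrow w') - \mu'$ and expand $-\nu = \sum_{\beta\in\Delta}n_\beta\beta^\vee$ in the simple coroot basis. Since each $J_i\subseteq\Delta$, the lattice $\mathbb Z\Phi_{J_i}^\vee = \bigoplus_{\beta\in J_i}\mathbb Z\beta^\vee$ is a direct summand of $\mathbb Z\Phi^\vee = \bigoplus_{\beta\in\Delta}\mathbb Z\beta^\vee$, so shifting $-\nu$ by an element of $\mathbb Z\Phi_{J_i}^\vee$ changes only the coefficients $n_\beta$ with $\beta\in J_i$; and "nonnegative integral combination of positive coroots" is equivalent to "all simple-coroot coefficients nonnegative". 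Therefore $\nu\leq 0\pmod{\Phi_{J_i}^\vee}$ says exactly that $n_\beta\geq 0$ for every $\beta\in\Delta\setminus J_i$, and intersecting over $i=1,\dotsc,m$ gives $n_\beta\geq 0$ for every $\beta\in\Delta\setminus J$, i.e.\ $\nu\leq 0\pmod{\Phi_J^\vee}$. With this filled in, the remainder of your argument agrees with the paper's proof.
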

\begin{proof}
Let $J = J_1\cap \cdots \cap J_m$. Then we get
\begin{align*}
\mu + \wt(w\Rightarrow w')\leq \mu'\pmod{\Phi_J^\vee}.
\end{align*}
Let $C_1 := \ell(x^{-1} x')$ and pick $C_2>0$ such that the conclusion of Corollary~\ref{cor:superdominantParabolicEmbedding} holds true. We can find an element $\lambda \in \mathbb Z\Phi^\vee$ such that $\langle \lambda,\alpha\rangle=0$ for all $\alpha\in J$ and
\begin{align*}
\langle \lambda,\alpha\rangle\geq C_2
\end{align*}
for all $\alpha \in \Phi^+\setminus \Phi_J$. Since $1\in W$ is length positive for both $x$ and $x'$, it follows from \crossRef{Lemma~}{lem:lengthAdditivity} that
\begin{align*}
\ell(x\varepsilon^\lambda) = \ell(x) + \ell(\varepsilon^\lambda),\qquad \ell(x'\varepsilon^\lambda) = \ell(x') + \ell(\varepsilon^\lambda).
\end{align*}
So it suffices to show $x\varepsilon^\lambda\leq x'\varepsilon^\lambda$. Note that $x\varepsilon^\lambda, x'\varepsilon^\lambda \in \Omega_J^{C_2}$ by choice of $\lambda$. Moreover, we have
\begin{align*}
\mu + \lambda + \wt(w\Rightarrow w')\leq \mu'+\lambda\pmod{\Phi_J^\vee}
\end{align*}
by assumption. Therefore, the inequality $x\varepsilon^\lambda\leq x'\varepsilon^\lambda$ follows from Corollary~\ref{cor:superdominantParabolicEmbedding}.
\end{proof}
\begin{lemma}\label{lem:bruhat-criterion2}
Let $x = w\varepsilon^\mu, x'=w'\varepsilon^{\mu'}\in \widetilde W$, and suppose that $(1, J_1,\dotsc,J_m)$ is a Bruhat-deciding datum for $x$. If the inequality
\begin{align*}
\mu + \wt(w\Rightarrow w')\leq \mu'\pmod{\Phi_{J_i}^\vee}
\end{align*}
holds for $i=1,\dotsc,m$, then $x\leq x'$.
\end{lemma}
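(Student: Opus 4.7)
The goal is to reduce Lemma~\ref{lem:bruhat-criterion2} to Lemma~\ref{lem:bruhat-criterion1}, whose hypothesis differs from ours only by additionally requiring $(1, J_1, \ldots, J_m)$ to be Bruhat-deciding for $x'$. The strategy is an induction on $\ell(x')$: in the base case, $(1, J_1, \ldots, J_m)$ happens to already be Bruhat-deciding for $x'$, and Lemma~\ref{lem:bruhat-criterion1} applies directly.

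For the inductive step, the failure of $(1, J_1, \ldots, J_m)$ to be Bruhat-deciding for $x'$ means either some $\alpha \in \Phi^+$ satisfies $\ell(x', \alpha) < 0$, or some $\alpha \in \Phi_J$ (with $J := J_1 \cap \cdots \cap J_m$) satisfies $\ell(x', \alpha) \neq 0$. The plan is to locate an affine root $a = (\alpha, k) \in \Phi_\af^+$ with $\alpha \in \Phi_J$ and $\ell(x'r_a) < \ell(x')$, and then to apply the inductive hypothesis to $(x, x'r_a)$. Since $x'r_a < x'$ in the Bruhat order, the conclusion $x \leq x' r_a$ from the inductive hypothesis yields $x \leq x'$ by transitivity.

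The key calculation is that when passing from $x'$ to $x'r_a = w's_\alpha \varepsilon^{\mu' + (k - \langle \mu', \alpha\rangle)\alpha^\vee}$, the length-lowering condition $x'r_a < x'$ forces $k - \langle \mu', \alpha\rangle \leq \Phi^+(-w'\alpha) - 1$. Combined with the estimate
\[
\wt(w \Rightarrow w's_\alpha) \leq \wt(w \Rightarrow w') + \wt(w' \Rightarrow w's_\alpha) \leq \wt(w \Rightarrow w') + \Phi^+(-w'\alpha)\alpha^\vee
\]
from \weightEstimateLong{} and the triangle inequality, the updated inequality
\[
\mu + \wt(w \Rightarrow w's_\alpha) \leq \mu' + (k - \langle \mu', \alpha\rangle)\alpha^\vee \pmod{\Phi_{J_i}^\vee}
\]
follows from the original inequality modulo $\Phi_{J_i}^\vee$, because $\alpha \in \Phi_J \subseteq \Phi_{J_i}$ forces $\alpha^\vee \in \Phi_{J_i}^\vee$ so that the offending difference on both sides vanishes in the quotient.

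The main obstacle is therefore locating an appropriate $a \in \Phi_\af^+$ with $\alpha \in \Phi_J$ and $x'r_a < x'$ whenever $(1, J_\bullet)$ fails to be Bruhat-deciding for $x'$. In case (B) (some $\alpha \in \Phi_J$ has $\ell(x', \alpha) \neq 0$) the existence of such $a$ is essentially immediate by direct choice of $k$. Case (A) (some $\alpha \in \Phi^+ \setminus \Phi_J$ has $\ell(x', \alpha) < 0$) is more delicate and requires exploiting the hypothesis that $(1, J_\bullet)$ is Bruhat-deciding for $x$: by Corollary~\ref{cor:simpleCoverLPFix} we have $W_J \subseteq \LP(x)$, which one would leverage to restructure the argument and, if necessary, to first move $x$ along length-preserving reflections in $W_J$ in order to produce a reduction targeting an affine root within $\Phi_J$ for $x'$. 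Iterating until the Bruhat-deciding property is restored for $x'$ and invoking Lemma~\ref{lem:bruhat-criterion1} then completes the proof.
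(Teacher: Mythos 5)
Your overall structure — induction on $\ell(x')$, reducing to Lemma~\ref{lem:bruhat-criterion1} once $(1, J_1,\dotsc,J_m)$ becomes Bruhat-deciding for $x'$, and the treatment of what you call Case~(B) — matches the paper's argument. The genuine gap is in Case~(A): when $\ell(x',\alpha)<0$ for some $\alpha\in\Phi^+$ that need not lie in $\Phi_J$, you insist on reflecting $x'$ by an affine root whose classical part lies in $\Phi_J$, and when no such root is available you gesture at Corollary~\ref{cor:simpleCoverLPFix} and ``moving $x$ along length-preserving reflections in $W_J$''. This does not produce an argument: nothing about the Bruhat-deciding datum for $x$ forces $x'$ to have a negative length functional on any root in $\Phi_J$, and manipulating $x$ cannot change that.

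The resolution is simpler than you anticipate, and it is exactly where your approach diverges from the paper: the restriction $\alpha\in\Phi_J$ is unnecessary in Case~(A). Taking any $\alpha\in\Phi^+$ with $\ell(x',\alpha)<0$, setting $a=(-\alpha,1)\in\Phi_\af^+$ and $x''=x'r_a=w's_\alpha\varepsilon^{\mu''}$ with $\mu''=\mu'-(1+\langle\mu',\alpha\rangle)\alpha^\vee$, the triangle inequality and \weightEstimateLong{} give $\mu+\wt(w\Rightarrow w'')\leq\mu'+\Phi^+(-w'\alpha)\alpha^\vee=\mu''+(\ell(x',\alpha)+1)\alpha^\vee\pmod{\Phi_{J_i}^\vee}$. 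Since $\ell(x',\alpha)\leq -1$, the correction term $(\ell(x',\alpha)+1)\alpha^\vee$ is a non-positive multiple of the positive coroot $\alpha^\vee$, hence $\leq 0$ in the quotient order on $\mathbb Z\Phi^\vee/\mathbb Z\Phi_{J_i}^\vee$ for every $i$. In other words, it is the \emph{sign of the coefficient}, not membership $\alpha^\vee\in\Phi_{J_i}^\vee$, that closes the estimate in Case~(A). The paper only invokes $\alpha^\vee\in\Phi_J^\vee$ in Case~(B) — after $1\in\LP(x')$ has been secured, so that $\ell(x',\alpha)>0$ for $\alpha\in\Phi^+\cap\Phi_J$ and the correction coefficient $\Phi^+(-w'\alpha)+\langle\mu',\alpha\rangle$ may well be positive — which is precisely the place where your mechanism is the right one.
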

\begin{proof}
Induction on $\ell(x')$.

If $(1,J,\dotsc,J_m)$ is also Bruhat-deciding for $x'$, we are done by Lemma~\ref{lem:bruhat-criterion1}. Otherwise, we must have that $1\in W$ is not length positive for $x'$, or that $J := J_1\cap\cdots\cap J_m$ allows some $\alpha\in \Phi_J$ with $\ell(x',\alpha)\neq 0$.

First consider the case that $1\in W$ is not length positive for $x'$. Then we find a positive root $\alpha\in \Phi^+$ with $\ell(x',\alpha)<0$. Hence $a := (-\alpha,1)\in \Phi_\af^+$ with $x' a \in \Phi^-$, so that
\begin{align*}
x'' := w''\varepsilon^{\mu''}:= x'r_{a} = w's_\alpha \varepsilon^{\mu'- (1+\langle\mu',\alpha\rangle)\alpha^\vee}<x'.
\end{align*}
We calculate
\begin{align*}
\mu + \wt(w\Rightarrow w'')\leq& \mu + \wt(w\Rightarrow w') + \wt(w'\Rightarrow w's_\alpha)
\\\leq&\mu' + \Phi^+(-w'\alpha)\alpha^\vee
\\=&\mu'- (1+\langle\mu',\alpha\rangle)\alpha^\vee + (\langle \mu',\alpha\rangle +1+\Phi^+(-w'\alpha))\alpha^\vee
\\=&\mu'' + (\ell(x',\alpha)+1)\alpha^\vee\leq \mu''\pmod{\Phi_J^\vee}.
\end{align*}
By induction, $x\leq x''$. Since $x''<x'$, we conclude $x<x'$ and are done.

Next consider the case that $1\in W$ is indeed length positive for $x'$, but we find some $\alpha\in \Phi_J$ with $\ell(x',\alpha)\neq 0$. We may assume $\alpha\in \Phi^+$, and then $\ell(x',\alpha)>0$ by length positivity. Then $a = (\alpha,0)\in \Phi_\af^+$ with $x' a\in \Phi^-$. We conclude that
\begin{align*}
x'' := w''\varepsilon^{\mu''}:= x'r_{ a} = w's_\alpha \varepsilon^{\mu' -\langle \mu',\alpha\rangle\alpha^\vee}<x'.
\end{align*}
We calculate
\begin{align*}
\mu + \wt(w\Rightarrow w'')\leq&\mu + \wt(w\Rightarrow w') + \wt(w'\Rightarrow w's_\alpha)
\\\leq&\mu' + \Phi^+(-w'\alpha)\alpha^\vee
\\=&\mu'' + (\Phi^+(-w'\alpha)+\langle \mu',\alpha\rangle)\alpha^\vee
\\\equiv&\mu''\pmod{\Phi_J^\vee},
\end{align*}
as $\alpha^\vee\in \Phi_J^\vee$. So as in the previous case, we get $x\leq x''<x'$ and are done.

This completes the induction and the proof.
\end{proof}
Before we can continue the series of incremental generalizations, we need a technical lemma.
\begin{lemma}\label{lem:criterionSlightImprovement}
Let $x = w\varepsilon^\mu, x'=w'\varepsilon^{\mu'}\in \widetilde W$. Let $J\subseteq \Delta$ and $v'\in W$ be given such that
\begin{align*}
\mu + \wt(v'\Rightarrow 1) + \wt(w\Rightarrow w'v')\leq (v')^{-1}\mu'\pmod{\Phi_J^\vee}.
\end{align*}
Then there exists an element $v''\in W$ satisfying the same inequality as $v'$ above, and satisfying moreover the condition $\ell(x',\gamma)<0$ for all $\gamma\in \maxinv(v'')$.
\end{lemma}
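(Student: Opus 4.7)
The plan is to argue by descent on $\ell(v')$. If every $\gamma\in\maxinv(v')$ already satisfies $\ell(x',\gamma)<0$, we simply take $v'':=v'$. Otherwise, pick some $\gamma\in\maxinv(v')$ with $\ell(x',\gamma)\geq 0$ and set $\tilde v:=s_\gamma v'=v's_\beta$, where $\beta:=-(v')^{-1}\gamma\in\Phi^+$. Lemma~\ref{lem:maximalEdges} tells us that $v'\to s_\gamma v'$ is a quantum edge in $\QB(W)$; in particular $\ell(\tilde v)<\ell(v')$. If we can show that $\tilde v$ still satisfies the hypothesis of the lemma, iterating this replacement a finite number of times will produce the desired $v''$.

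The crux is therefore to verify the inequality for $\tilde v$. We track the three changes separately. By Proposition~\ref{prop:maximalEdges}, $\wt(\tilde v\Rightarrow 1)=\wt(v'\Rightarrow 1)+(v')^{-1}\gamma^\vee$. By the triangle inequality combined with \weightEstimateLong{} (applied to the pair $w'v',w'v's_\beta$),
\[\wt(w\Rightarrow w'\tilde v)\leq \wt(w\Rightarrow w'v')+\Phi^+(-w'v'\beta)\beta^\vee=\wt(w\Rightarrow w'v')-\Phi^+(w'\gamma)(v')^{-1}\gamma^\vee,\]
using $-w'v'\beta=w'\gamma$ and $\beta^\vee=-(v')^{-1}\gamma^\vee$. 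Finally, $\tilde v^{-1}\mu'=(v')^{-1}\mu'-\langle\mu',\gamma\rangle(v')^{-1}\gamma^\vee$. Combining these three with the assumed inequality for $v'$ reduces the required inequality for $\tilde v$ to the statement
\[\bigl(1-\Phi^+(w'\gamma)+\langle\mu',\gamma\rangle\bigr)\cdot (v')^{-1}\gamma^\vee\leq 0\pmod{\Phi_J^\vee}.\]

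Since $\gamma\in\inv(v')$ the coroot $(v')^{-1}\gamma^\vee$ is \emph{negative}, so a non-negative scalar multiple of it is already $\leq 0$ in $\mathbb Z\Phi^\vee$. It therefore suffices to check that $1-\Phi^+(w'\gamma)+\langle\mu',\gamma\rangle\geq 0$, and using $\Phi^+(\gamma)=1$ this is precisely $\ell(x',\gamma)=\langle\mu',\gamma\rangle+\Phi^+(\gamma)-\Phi^+(w'\gamma)\geq 0$, which is our choice of $\gamma$. Termination of the descent is automatic since $\ell(v')$ strictly decreases at each step. The main pitfall I foresee is the sign bookkeeping around $(v')^{-1}\gamma^\vee$ being a negative coroot: it is easy to multiply through and flip the inequality in the wrong direction, thereby losing the clean equivalence with the hypothesis $\ell(x',\gamma)\geq 0$.
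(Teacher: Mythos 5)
Your proposal is correct and is essentially the paper's own argument: both replace $v'$ by $s_\gamma v'$ for a bad maximal inversion $\gamma$, invoke Proposition~\ref{prop:maximalEdges} and \weightEstimateLong{} to track the weight changes, and reduce to $\ell(x',\gamma)(v')^{-1}\gamma^\vee\leq 0$. The only cosmetic difference is that the paper phrases this as a minimal-length-counterexample argument while you run a direct descent on $\ell(v')$; the computations are identical.
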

\begin{proof}
Among all $v'\in W$ satisfying the inequality
\begin{align*}
\mu + \wt(v'\Rightarrow 1) + \wt(w\Rightarrow w'v')\leq (v')^{-1}\mu'\pmod{\Phi_J^\vee},
\end{align*}
pick one of minimal length in $W$. We prove that $\ell(x',\gamma)<0$ for all $\gamma\in \maxinv(v')$.

Suppose that this was not the case, so $\ell(x',\gamma)\geq 0$ for some $\gamma\in \maxinv(v')$. The condition $\gamma\in \inv(v')$ implies $\ell(s_\gamma v')<\ell(v')$. Moreover, $\wt(v'\Rightarrow 1) = \wt(s_\gamma v'\Rightarrow 1)-(v')^{-1}\gamma^\vee$ by Proposition~\ref{prop:maximalEdges}. We calculate
\begin{align*}
\mu + \wt(s_\gamma v'\Rightarrow 1)& + \wt(w\Rightarrow w's_\gamma v')
\\=& 
\mu + \wt(v'\Rightarrow 1) + (v')^{-1}\gamma^\vee + \wt(w\Rightarrow w's_\gamma v')\\\leq&
\mu + \wt(v'\Rightarrow 1) + (v')^{-1}\gamma^\vee + \wt(w\Rightarrow w'v') + \wt(w'v'\Rightarrow w's_\gamma v')
\\\leq&(v')^{-1}\mu' + (v')^{-1}\gamma^\vee + \wt(w' v' \Rightarrow w' s_\gamma v')
\\=&(v')^{-1}\mu' + (v')^{-1}\gamma^\vee + \wt(w' s_\gamma v' s_{-(v')^{-1}(\gamma)}\Rightarrow w' s_\gamma v')
\\\leq&(v')^{-1}\mu' + (v')^{-1}\gamma^\vee - \Phi^+(w'\gamma) (v')^{-1}\gamma^\vee
\\=&(s_\gamma v')^{-1}\mu' + \langle \mu',\gamma\rangle (v')^{-1}\gamma^\vee + (v')^{-1}\gamma^\vee - \Phi^+(w'\gamma)(v')^{-1}\gamma^\vee
\\=&(s_\gamma v')^{-1}\mu' + \ell(x', \gamma)(v')^{-1}\gamma^\vee \leq (s_\gamma v')^{-1}\mu'\pmod{\Phi_J^\vee}.
\end{align*}
This is a contradiction to the choice of $v'$, so we get the desired claim.
\end{proof}
\begin{lemma}\label{lem:bruhat-criterion3}
Let $x = w\varepsilon^\mu, x'=w'\varepsilon^{\mu'}\in \widetilde W$, and suppose that $(1, J_1,\dotsc,J_m)$ is a Bruhat-deciding datum for $x$. If for each $i=1,\dotsc,m$, there exists some $v'_i\in W$ with
\begin{align*}
\mu + \wt(v'_i\Rightarrow 1) + \wt(w\Rightarrow w'v'_i)\leq (v'_i)^{-1}\mu'\pmod{\Phi_{J_i}^\vee},
\end{align*}then $x\leq x'$.
\end{lemma}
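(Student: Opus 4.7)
The plan is to combine Lemma~\ref{lem:criterionSlightImprovement} with an induction on $\ell(x')$ to reduce the statement to the previously established Lemma~\ref{lem:bruhat-criterion2}. First, for each index $i$, I would apply Lemma~\ref{lem:criterionSlightImprovement} so that, replacing $v'_i$ if necessary, we may assume $\ell(x',\gamma) < 0$ for every $\gamma \in \maxinv(v'_i)$. If every $v'_i$ equals $1$, then Lemma~\ref{lem:bruhat-criterion2} applies directly and we are done. Otherwise, some $v'_i$ is nontrivial; any $\gamma \in \maxinv(v'_i)$ then witnesses that $S := \{\alpha \in \Phi^+ \mid \ell(x',\alpha) < 0\}$ is nonempty. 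I would choose $\alpha_0$ maximal in $S$ and shrink $x'$ to $x'' := x' r_{(-\alpha_0,1)}$, which lies strictly below $x'$ in the Bruhat order with $\ell(x'') < \ell(x')$.

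The inductive step will verify that the hypotheses of the lemma are preserved for $(x, x'')$ with suitably chosen elements $v''_i$. The key observation is a dichotomy: for every $i$, either $\alpha_0 \notin \inv(v'_i)$, or $\alpha_0 \in \maxinv(v'_i)$. Indeed, were $\alpha_0$ an inversion of $v'_i$ that is not maximal, it would be dominated by some $\gamma \in \maxinv(v'_i)$; but then $\gamma \in S$ by the assumed property of $v'_i$, contradicting the maximality of $\alpha_0$ in $S$. I would then define $v''_i := s_{\alpha_0} v'_i$ in the first case and $v''_i := v'_i$ in the second, and verify the inequality for $(x, x'', v''_i)$ modulo $\Phi_{J_i}^\vee$. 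When $\alpha_0 \in \maxinv(v'_i)$, Proposition~\ref{prop:maximalEdges} gives $\wt(s_{\alpha_0} v'_i \Rightarrow 1) = \wt(v'_i \Rightarrow 1) + (v'_i)^{-1}\alpha_0^\vee$, and together with $w''v''_i = w'v'_i$ and $(v''_i)^{-1}\mu'' = (v'_i)^{-1}\mu' + (v'_i)^{-1}\alpha_0^\vee$ the target inequality reduces exactly to the assumed one. When $\alpha_0 \notin \inv(v'_i)$, the triangle inequality and \weightEstimateLong applied to the positive root $\beta := (v'_i)^{-1}\alpha_0$ reduce the verification to the scalar estimate $\ell(x',\alpha_0) + 1 \leq 0$, which holds since $\alpha_0 \in S$.

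The main obstacle is that a single shrinking $x' \leadsto x''$ must preserve all $m$ inequalities at once. The naive choice $v''_i := v'_i$ breaks down precisely when $\alpha_0$ is a non-maximal inversion of $v'_i$, while $v''_i := s_{\alpha_0} v'_i$ is only controlled via Proposition~\ref{prop:maximalEdges} when $\alpha_0$ is a maximal inversion. The point of selecting $\alpha_0$ globally as a maximal element of $S$, combined with the property supplied by Lemma~\ref{lem:criterionSlightImprovement}, is precisely to rule out the troublesome intermediate case uniformly over all $i$, making the two-case construction of $v''_i$ work simultaneously. Once this is in place, the inductive hypothesis applied to $(x, x'')$ yields $x \leq x''$, and consequently $x \leq x'$.
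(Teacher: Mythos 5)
Your argument is correct and matches the paper's proof essentially line for line: induction on $\ell(x')$, the preparatory application of Lemma~\ref{lem:criterionSlightImprovement}, the choice of $\alpha_0$ maximal in $\{\alpha\in\Phi^+:\ell(x',\alpha)<0\}$, the shrinking $x''=x'r_{(-\alpha_0,1)}$, the dichotomy between $\alpha_0\in\maxinv(v_i')$ (handled via Proposition~\ref{prop:maximalEdges}) and $\alpha_0\notin\inv(v_i')$ (handled via the triangle inequality and \weightEstimateLong{} reducing to $\ell(x',\alpha_0)+1\leq 0$), and the final appeal to Lemma~\ref{lem:bruhat-criterion2}. The only cosmetic difference is that you trigger the base case with ``all $v_i'=1$'' rather than ``$1\in\LP(x')$'', but these are equivalent entry points under the Lemma~\ref{lem:criterionSlightImprovement} hypothesis and both land directly in Lemma~\ref{lem:bruhat-criterion2}.
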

\begin{proof}
Induction on $\ell(x')$.

By Lemma~\ref{lem:criterionSlightImprovement}, we may assume that for each $i\in\{1,\dotsc,m\}$ and $\gamma\in \maxinv(v_i')$, we have $\ell(x',\gamma)<0$.

If $1\in W$ is length positive for $x'$, i.e.\ $\ell(x',\alpha)\geq 0$ for all $\alpha\in \Phi^+$, then we get $\maxinv(v'_i)=\emptyset$ for all $i=1,\dotsc,m$, i.e.\ $v_i'=1$. Now the claim follows from Lemma~\ref{lem:bruhat-criterion2}.

Thus suppose that the set
\begin{align*}
\{\alpha\in \Phi^+\mid \ell(x',\alpha)<0\}
\end{align*}
is non-empty. We fix a root $\alpha$ that is maximal within this set. Now $\mathbf a = (-\alpha,1)\in \Phi_{\af}^+$ satisfies $x'\mathbf a \in \Phi_\af^-$, as $\ell(x',\alpha)<0$. Consider
\begin{align*}
x'' := w''\varepsilon^{\mu''}:= x'r_{\mathbf a} = w's_\alpha \varepsilon^{\mu'- (1+\langle\mu',\alpha\rangle)\alpha^\vee}<x'.
\end{align*}
We want to show $x\leq x''$ using the inductive assumption. So pick an index $i\in\{1,\dotsc,m\}$. We do a case distinction based on whether the root $(v_i')^{-1}\alpha$ is positive or negative.

\textbf{Case $(v_i')^{-1}\alpha\in \Phi^-$.} Then $\alpha\in \inv(v_i')$, so there exists some $\gamma\in \maxinv(v_i')$ with $\alpha\leq \gamma$. By choice of $v_i'$, we get $\ell(x',\gamma)<0$. By maximality of $\alpha$ and $\alpha\leq\gamma$, we get $\alpha=\gamma$. In other words, $\alpha\in \maxinv(v_i')$.

Define $v_i'' := s_\alpha v_i'$. 
Then by Proposition~\ref{prop:maximalEdges}, $\wt(v_i'\Rightarrow 1) = \wt(v_i''\Rightarrow 1)-(v_i')^{-1}\alpha^\vee$. We compute
\begin{align*}
&\mu + \wt(v''_i\Rightarrow 1) + \wt(w\Rightarrow w'' v''_i) \\=\,&
\mu + \wt(v_i'\Rightarrow 1) + (v_i')^{-1}\alpha^\vee + \wt(w\Rightarrow w'v'_i)
\\\leq\,&(v'_i)^{-1}\mu' + (v'_i)^{-1}\alpha^\vee
\\=\,&(s_\alpha v'_i)^{-1}(\mu' - (1+\langle \mu',\alpha\rangle)\alpha^\vee) = ( v''_i)^{-1}\mu''\pmod{\Phi_{J_i}^\vee}.
\end{align*}

\textbf{Case $(v_i')^{-1}\alpha\in \Phi^+$.} We define $v_i'' := v_i'$ and use \weightEstimateLong to compute
\begin{align*}
&\mu + \wt(v''_i\Rightarrow 1) + \wt(w\Rightarrow w''v''_i) \\\leq\,&\mu + \wt(v'_i\Rightarrow 1) + \wt(w\Rightarrow w'v'_i) +\wt(w'v'_i\Rightarrow w'v'_is_{(v_i')^{-1}\alpha})
\\\leq\,&(v'_i)^{-1}\mu' + \Phi^+(-w'\alpha)(v_i')^{-1}\alpha^\vee
\\=\,&(v'_i)^{-1}(\mu' - (1+\langle \mu', \alpha\rangle)\alpha^\vee) + (\langle \mu',\alpha\rangle + 1 + \Phi^+(-w'\alpha))(v_i')^{-1}\alpha^\vee
\\=\,&(v'_i)^{-1}\mu'' + (\ell(x',\alpha)+1)(v_i')^{-1}\alpha^\vee \leq (v''_i)^{-1}\mu''\pmod{\Phi_{J_i}^\vee}.
\end{align*}

In any case, we get the desired inequality
\begin{align*}
\mu + \wt(v''_i\Rightarrow 1) + \wt(w\Rightarrow w''v''_i)\leq (v_i'')^{-1}\mu''\pmod{\Phi_{J_i}^\vee}.
\end{align*}
By induction, $x\leq x''<x'$, completing the induction and the proof.
\end{proof}
\begin{lemma}\label{lem:bruhat-criterion4}
Let $x = w\varepsilon^\mu, x'=w'\varepsilon^{\mu'}\in \widetilde W$, and suppose that $(v, J_1,\dotsc,J_m)$ is a Bruhat-deciding datum for $x$. If for each $i=1,\dotsc,m$, there exists some $v'_i\in W$ with
\begin{align*}
v^{-1}\mu + \wt(v'_i\Rightarrow v) + \wt(wv\Rightarrow w'v'_i)\leq (v'_i)^{-1}\mu'\pmod{\Phi_{J_i}^\vee},
\end{align*}then $x\leq x'$.
\end{lemma}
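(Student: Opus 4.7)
The plan is to adapt the proof of Lemma \ref{lem:bruhat-criterion3} by carrying the extra element $v$ throughout, proceeding by induction on $\ell(x')$. The first step is to establish an analog of Lemma \ref{lem:criterionSlightImprovement} tailored to the inequality involving $v$: for each $i$, among all $v_i'\in W$ satisfying the displayed hypothesis, pick one of minimal length in $W$; I claim that such a minimal $v_i'$ satisfies $\ell(x',\gamma)<0$ for every $\gamma\in\maxinv(v_i')$. The calculation mirrors the one in Lemma \ref{lem:criterionSlightImprovement}, the essential input being the bound
\[
\wt(s_\gamma v_i'\Rightarrow v)\leq \wt(v_i'\Rightarrow v) + (v_i')^{-1}\gamma^\vee
\]
for $\gamma\in\maxinv(v_i')$. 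Granted this, the substitution $v_i'\mapsto s_\gamma v_i'$ preserves the inequality up to a term $\ell(x',\gamma)(v_i')^{-1}\gamma^\vee\leq 0$ (using \weightEstimateShort to bound $\wt(w'v_i'\Rightarrow w's_\gamma v_i')$ as in the original proof), contradicting minimality.

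With this refined step in hand, the induction on $\ell(x')$ runs parallel to the proof of Lemma \ref{lem:bruhat-criterion3}. If some positive root $\alpha$ has $\ell(x',\alpha)<0$, pick such $\alpha$ maximal and reduce to $x'':=x'r_{(-\alpha,1)}<x'$. The case analysis on the sign of $(v_i')^{-1}\alpha$ carries over: if $(v_i')^{-1}\alpha\in\Phi^-$, the maximality of $\alpha$ combined with the refined step forces $\alpha\in\maxinv(v_i')$, and the corresponding computation uses the bound above to replace $v_i'$ by $s_\alpha v_i'$; if $(v_i')^{-1}\alpha\in\Phi^+$, the adjustment $v_i''=v_i'$ works via \weightEstimateShort and the triangle inequality. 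The inductive hypothesis then yields $x\leq x''<x'$. Otherwise $1\in\LP(x')$ holds, which via the refined step forces $\maxinv(v_i')=\emptyset$, i.e.\ $v_i'=1$; the resulting simplified inequality then gives $x\leq x'$ via Lemma \ref{lem:bruhat-criterion2}, after accounting for the mismatch between $v\in\LP(x)$ and $1\in\LP(xv)$ by a translation-and-shift argument along the coroot lattice.

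The main obstacle is justifying the adapted bound $\wt(s_\gamma v_i'\Rightarrow v)\leq \wt(v_i'\Rightarrow v) + (v_i')^{-1}\gamma^\vee$ for $\gamma\in\maxinv(v_i')$. Proposition \ref{prop:maximalEdges} yields the exact identity at the endpoint $1$, but transferring it to a general endpoint $v$ is subtle, since the naive triangle inequality runs the wrong way. A viable strategy is to exhibit the quantum edge $v_i'\to s_\gamma v_i'$ of Lemma \ref{lem:maximalEdges} as the first edge of some shortest path from $v_i'$ to $v$ in the quantum Bruhat graph, so that its weight splits additively in the desired direction; this reduces to a compatibility statement between maximal inversions and shortest paths, provable via Lemma \ref{lem:qbgDiamond} and an induction on $\ell(v)$ that mimics the proof of Proposition \ref{prop:maximalEdges} itself.
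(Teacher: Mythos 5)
Your plan diverges from the paper's: the paper proves Lemma~\ref{lem:bruhat-criterion4} by induction on $\ell(v)$, using a simple root $\alpha\in\Delta$ with $v^{-1}\alpha\in\Phi^-$ to pass from $(x, v)$ to $(xs_\alpha, s_\alpha v)$ (showing this is again a Bruhat-deciding pair) and then distinguishing the cases $\ell(x',\alpha)\leq 0$ and $\ell(x',\alpha)>0$; the whole issue of carrying $v$ through an $\ell(x')$-induction never arises, because at $v=1$ one simply invokes Lemma~\ref{lem:bruhat-criterion3}. You instead try to run Lemma~\ref{lem:bruhat-criterion3}'s induction on $\ell(x')$ directly while carrying $v$ along, which requires a $v$-version of Lemma~\ref{lem:criterionSlightImprovement}.

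The obstacle you flag is fatal, not merely subtle. The bound
\begin{align*}
\wt(s_\gamma v_i'\Rightarrow v)\leq \wt(v_i'\Rightarrow v) + (v_i')^{-1}\gamma^\vee
\end{align*}
is simply false in general: take $v=v_i'$ and any $\gamma\in\maxinv(v_i')$. Then $s_\gamma v_i' < v_i'$ in Bruhat order, so the left-hand side is $0$, while $(v_i')^{-1}\gamma^\vee$ is a strictly negative coroot, so the right-hand side is strictly negative. The triangle inequality and \weightEstimateShort{} in fact give the reverse inequality $\wt(s_\gamma v_i'\Rightarrow v)\geq \wt(v_i'\Rightarrow v) + (v_i')^{-1}\gamma^\vee$, which is why you correctly observe that the naive argument "runs the wrong way." Your proposed rescue — showing that the quantum edge $v_i'\rightarrow s_\gamma v_i'$ is the first edge of some shortest path from $v_i'$ to $v$ — cannot hold uniformly either, as the same $v=v_i'$ case shows (the shortest path is trivial and has no first edge). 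Proposition~\ref{prop:maximalEdges} is genuinely about paths to $1$; it does not relativize to an arbitrary target $v$, and this is precisely why the paper avoids trying to.

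The base case is also unresolved. Even once you force $v_i'=1$ for all $i$, your hypothesis reads $v^{-1}\mu + \wt(1\Rightarrow v) + \wt(wv\Rightarrow w')\leq \mu'$, which is not the hypothesis $\mu + \wt(w\Rightarrow w')\leq \mu'$ of Lemma~\ref{lem:bruhat-criterion2}; the two are related by neither a coroot translation nor any operation that keeps $(1,J_\bullet)$ Bruhat-deciding for the modified $x$. A "translation-and-shift along the coroot lattice" does not reconcile the mismatch between the weight terms, nor does it replace the genuinely nontrivial manipulation the paper performs by shrinking $\ell(v)$.

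In short: you would need a new structural result about the quantum Bruhat graph (maximal inversions versus shortest paths to an arbitrary $v\in\LP(x)$), which the counterexample above rules out in this generality. The paper's induction on $\ell(v)$ is not a cosmetic choice — it is what lets one reduce to the endpoint $1$, where Proposition~\ref{prop:maximalEdges} applies.
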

\begin{proof}
Induction on $\ell(v)$. If $v=1$, this follows from Lemma~\ref{lem:bruhat-criterion3}.

Let $J := J_1\cap\cdots\cap J_m$. If $\alpha\in J$, then $vs_\alpha$ trivially satisfies the same condition as $v$. So we may assume that $v\in W^J$.

Since $v\neq 1$, we find a simple root $\alpha\in \Delta$ with $v^{-1}\alpha \in \Phi^-$. In particular, $\ell(x,\alpha)\leq 0$, such that $x<xs_\alpha$.

We claim that $(s_\alpha v, J_1,\dotsc,J_m)$ is a Bruhat-deciding datum for $xs_\alpha$.
Indeed, for $\beta\in \Phi$, we use \crossRef{Lemma~}{lem:lengthFunctionalForProducts} to compute
\begin{align*}
\ell(xs_\alpha, s_\alpha v\beta) =&
\ell(x,v\beta) + \ell(s_\alpha,s_\alpha v\beta)
\\=~\,&\ell(x,v\beta)+\begin{cases}1,&v\beta = -\alpha,\\-1,&v\beta=\alpha,\\0,&v\beta\neq \pm \alpha.\end{cases}
\end{align*}
If $\beta\in \Phi^+$, the condition $v^{-1}\alpha\in \Phi^-$ forces $v\beta\neq\alpha$, showing
\begin{align*}
\ell(xs_\alpha, s_\alpha v\beta) \geq \ell(x,v\beta)\geq 0.
\end{align*}
Now consider the case $\beta\in \Phi_J^+$. Then $\ell(x,v\beta)=0$ by assumption. Moreover, $v\beta\in \Phi^+$ as $v\in W^J$, so that $v\beta\neq -\alpha$. We conclude $\ell(xs_\alpha,s_\alpha v\beta)=\ell(x,v\beta)=0$ in this case.

This shows that $(s_\alpha v,J_1,\dotsc,J_m)$ is Bruhat-deciding for $xs_\alpha$. Since $\ell(s_\alpha v)<\ell(v)$, we may apply the inductive hypothesis to $xs_\alpha$ to prove $xs_\alpha\leq \max(x', x's_\alpha)$. We distinguish two cases. 

\textbf{Case $\ell(x',\alpha)\leq 0$.} This means $x'<x's_\alpha$, so we wish to prove $xs_\alpha < x's_\alpha$, using the inductive hypothesis. So let $i\in\{1,\dotsc,m\}$. By Lemma~\ref{lem:criterionAdjustment}, we may assume that $v_i'$ is length positive for $x'$.

First assume that $(v_i')^{-1}\alpha \in \Phi^-$. By Lemma~\ref{lem:qbgDiamond}, we get
\begin{align*}
\wt(v_i'\Rightarrow v) = \wt(s_\alpha v_i'\Rightarrow s_\alpha v).
\end{align*}
Define $v_i'':= s_\alpha v_i'$. Then
\begin{align*}
&(s_\alpha v)^{-1}(s_\alpha \mu) + \wt(v_i''\Rightarrow s_\alpha v) + \wt(ws_\alpha s_\alpha v\Rightarrow w's_\alpha v_i'')
\\=\,&v^{-1}\mu + \wt(v_i'\Rightarrow v) + \wt(wv\Rightarrow w' v_i')
\\\leq\,&(v_i')^{-1}\mu' = (v_i'')(s_\alpha\mu')\pmod{\Phi_{J_i}^\vee}.
\end{align*}

Next, assume that $(v_i')^{-1}\alpha \in \Phi^+$. By length positivity, we must have $\ell(x',\alpha)=0$. By Lemma~\ref{lem:qbgDiamond}, we get
\begin{align*}
\wt(v_i'\Rightarrow v) = \wt(v_i'\Rightarrow s_\alpha v).
\end{align*}
Define $v_i'' := v_i'$. Then using \weightEstimateLong,
\begin{align*}
&(s_\alpha v)^{-1}(s_\alpha \mu) + \wt(v_i''\Rightarrow s_\alpha v) + \wt(ws_\alpha s_\alpha v\Rightarrow w's_\alpha v_i'')
\\=\,&v^{-1}\mu + \wt(v_i'\Rightarrow v) + \wt(wv\Rightarrow w's_\alpha v_i')
\\\leq\,&v^{-1}\mu + \wt(v_i'\Rightarrow v) + \wt(wv\Rightarrow w' v_i') + \wt(w'v_i'\Rightarrow w'v_i's_{(v_i')^{-1}\alpha})
\\\leq\,&(v_i')^{-1}\mu' + \Phi^+(-w'\alpha) (v_i')^{-1}\alpha
\\=\,&(v_i')^{-1}s_\alpha \mu' + (\langle \mu',\alpha\rangle + \Phi^+(-w'\alpha))(v_i')^{-1}\alpha
\\=\,&(v_i'')^{-1}s_\alpha\mu' + \ell(x',\alpha)(v_i')^{-1}\alpha = (v_i'')^{-1}s_\alpha\mu.
\pmod{\Phi_{J_i}^\vee}.
\end{align*}
We see that the inequality
\begin{align*}
(s_\alpha v)^{-1}(s_\alpha \mu) + \wt(v_i''\Rightarrow s_\alpha v) + \wt(ws_\alpha s_\alpha v\Rightarrow w's_\alpha v_i'')\leq (v_i'')^{-1}s_\alpha\mu
\pmod{\Phi_{J_i}^\vee}
\end{align*}
always holds, proving $xs_\alpha\leq x's_\alpha$. Since $s_\alpha$ is a simple reflection in $\widetilde W$, $x<xs_\alpha$ and $x'<x's_\alpha$, we conclude that $x\leq x'$ must hold as well.

\textbf{Case $\ell(x',\alpha)>0$.} We now wish to show $xs_\alpha\leq x'$, as $x'>x's_\alpha$. We prove this using the inductive assumption, so let $i\in\{1,\dotsc,m\}$. As in the previous case, we assume that $v_i'$ is length positive for $x'$. In particular, $(v_i')^{-1}\alpha \in \Phi^+$.

By Lemma~\ref{lem:qbgDiamond}, we get
\begin{align*}
\wt(v_i'\Rightarrow v) = \wt(v_i'\Rightarrow s_\alpha v).
\end{align*}
Define $v_i'' := v_i'$. Then
\begin{align*}
&(s_\alpha v)^{-1}(s_\alpha \mu) + \wt(v_i''\Rightarrow s_\alpha v) + \wt(w s_\alpha s_\alpha  v\Rightarrow w'v_i'')
\\=&v^{-1}\mu + \wt(v_i'\Rightarrow v) + \wt(wv\Rightarrow w'v'_i)
\\\leq&(v_i')^{-1}\mu' = (v_i'')^{-1}\mu'.
\end{align*}
By the inductive assumption, we get $xs_\alpha \leq x'$. Thus $x<xs_\alpha \leq x'$.

This completes the induction and the proof.
\end{proof}
\begin{proof}[Proof of Theorem~\ref{thm:bruhat2}]
The implication (1) $\Rightarrow$ (2) follows from Lemma~\ref{lem:bruhat1}.

The implication (2) $\Rightarrow$ (1) follows from Lemma~\ref{lem:bruhat-criterion4}.
\end{proof}

\subsection{Deodhar's lemma}\label{sec:bruhat-order-deodhar}
In this section, we apply Deodhar's lemma \cite{Deodhar1977} to our Theorem~\ref{thm:bruhat2}. We need the semi-affine weight functions and related notions as introduced in Section~\ref{sec:semi-affine-quotients}. We moreover need a two-sided version of Deodhar's lemma, which seems to be well-known for experts, yet our standard reference \cite[Theorem~2.6.1]{Bjorner2005} only provides a one-sided version. We thus introduce the two-sided theory briefly. For convenience, we state it for the extended affine Weyl group $\widetilde W$, even though it holds true in a more general Coxeter theoretic context.
\begin{definition}
Let $L, R\subseteq \Phi_\af$ be any sets of affine roots (we will mostly be interested in sets of simple affine roots).
\begin{enumerate}[(a)]
\item By $\widetilde W_L$, we denote the subgroup of $\widetilde W$ generated by the affine reflections $r_a$ for $a\in L$.
\item We define \begin{align*}
\prescript L{}{}{\widetilde W}{}^R := \{x\in\widetilde W: x^{-1}L\subseteq\Phi_\af^+\text{ and }xR \subseteq \Phi_\af^+\}.
\end{align*}
\end{enumerate}
\end{definition}
Recall that we called a subset $L\subseteq \Delta_\af$ \emph{regular} if $\widetilde W_L$ is finite.
\begin{proposition}\label{prop:deodhar}
Let $x,y\in \widetilde W$ and $L,R\subseteq \Delta_\af$ be regular.
\begin{enumerate}[(a)]
\item The double coset $\widetilde W_L x \widetilde W_R$ contains a unique element of minimal length, denoted $\prescript L{}{}x{}^R$, and a unique element of maximal length, denoted $\prescript {-L}{}{}x{}^{-R}$. We have
\begin{align*}
\prescript L{}{}\widetilde W{}^R \cap \left(\widetilde W_L x \widetilde W_R\right) =& \left\{\prescript L{}{}x{}^R\right\},\\
\prescript {-L}{}{}\widetilde W{}^{-R} \cap \left(\widetilde W_L x \widetilde W_R\right) =& \left\{\prescript {-L}{}{}x{}^{-R}\right\}.
\end{align*}
\item We have 
\begin{align*}
\prescript L{}{}x{}^R\leq x \leq 
\prescript {-L}{}{}x{}^{-R}
\end{align*}
in the Bruhat order, and there exist (non-unique) elements $x_L, x_L' \in \widetilde W_L$ and $x_R, x_R'\in \widetilde W_R$ such that
\begin{align*}
x = x_L \cdot\prescript L{}{}x{}^R\cdot x_R&\text{ and }\ell(x) = \ell(x_L) + \ell\left(\prescript L{}{}x{}^R\right) + \ell(x_R),\\
\prescript {-L}{}{}x{}^{-R} = x_L'\cdot x \cdot x_R'&\text{ and }\ell\left(\prescript {-L}{}{}x{}^{-R}\right) = \ell(x_L') + \ell\left(x\right) + \ell(x_R').
\end{align*}
\item If $x\leq y$, then
\begin{align*}
\prescript L{}{}x{}^R\leq \prescript L{}{}y{}^R\text{ and }
\prescript {-L}{}{}x{}^{-R}\leq \prescript {-L}{}{}y{}^{-R}.
\end{align*}
\item Suppose $L_1, \dotsc, L_\ell,R_1,\dotsc,R_r\subseteq \Delta_\af$ are regular subsets such that $L = L_1\cap\cdots\cap L_\ell$ and $R = R_1\cap\cdots\cap R_r$. Then
\begin{align*}
\prescript L{}{}x{}^R\leq \prescript L{}{}y{}^R\iff\forall i,j:~
\prescript {L_i}{}{}x{}^{R_j}\leq \prescript {L_i}{}{}y{}^{R_j}.
\end{align*}
\end{enumerate}
\end{proposition}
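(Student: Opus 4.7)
The plan is to reduce parts (a)--(c) to iterated applications of the standard one-sided Deodhar lemma (cf.\ \cite[Proposition~2.4.4 and Theorem~2.6.1]{Bjorner2005}), treating the Weyl group $\widetilde W = \Omega\ltimes W_\af$ as the Coxeter group $W_\af$ with the length-zero twist by $\Omega$ (which does not affect the parabolic structure since $L,R\subseteq \Delta_\af$). The regularity of $L$ and $R$ is what guarantees that $\widetilde W_L,\widetilde W_R$ are finite so that both minimum- and maximum-length representatives exist in every coset.

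For part (a), I first invoke one-sided Deodhar to obtain the unique minimum-length representative $\prescript L{}z$ of each left coset $\widetilde W_L z$, characterized by $(\prescript L{}z)^{-1}L\subseteq\Phi_\af^+$, and symmetrically the minimum-length $z^R\in z\widetilde W_R$. A short verification shows the two projections commute, so that $(\prescript L{}x)^R=\prescript L{}(x^R)$ is the unique element of $\prescript L{}\widetilde W^R\cap \widetilde W_L x \widetilde W_R$; I take this as the definition of $\prescript L{}x{}^R$. The maximum-length element $\prescript{-L}{}x{}^{-R}$ is obtained symmetrically using the longest elements $w_0(L)\in\widetilde W_L$ and $w_0(R)\in\widetilde W_R$, or equivalently via the identity $\prescript{-L}{}x{}^{-R}=w_0(L)\cdot\prescript L{}(w_0(L)x w_0(R))^R\cdot w_0(R)$. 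Part (b) then follows by composing the length-additive factorizations $x=u\cdot\prescript L{}x$ and $\prescript L{}x=\prescript L{}x{}^R\cdot v$ given by the one-sided lemma, and the inequalities $\prescript L{}x{}^R\leq x\leq\prescript{-L}{}x{}^{-R}$ are immediate from length-additivity.

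Part (c) follows from the standard fact that the one-sided projection $z\mapsto\prescript L{}z$ is order-preserving on the Bruhat order, applied once on the left and once on the right. Equivalently, one uses the \emph{lifting property} $\prescript L{}x\leq\prescript L{}y\iff\exists\,y'\in\widetilde W_L y:\,x\leq y'$, which transfers from one-sided to two-sided cosets via the commutativity established in (a).

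The main obstacle is part (d). The direction $\Rightarrow$ is easy: since $\widetilde W_L\subseteq\widetilde W_{L_i}$ and $\widetilde W_R\subseteq\widetilde W_{R_j}$, the element $\prescript L{}x{}^R$ lies in $\widetilde W_{L_i}x\widetilde W_{R_j}$ and hence $\prescript{L_i}{}(\prescript L{}x{}^R)^{R_j}=\prescript{L_i}{}x{}^{R_j}$, and one applies (c). The direction $\Leftarrow$ is the delicate step. My plan is to first reduce to the one-sided situation ($r=1$, so $R=R_1$) by applying the argument once per side, then to induct on $\ell$, reducing to the basic case $\ell=2$, $r=1$. In this base case, I rephrase the hypotheses via the lifting characterization $\prescript{L_i}{}x{}^R\leq\prescript{L_i}{}y{}^R\iff\exists\,a_i\in\widetilde W_{L_i},\,b_i\in\widetilde W_R:\;a_i x b_i\leq y$, and use the Tits intersection theorem $\widetilde W_{L_1}\cap\widetilde W_{L_2}=\widetilde W_L$ combined with the exchange/subword property of Bruhat order to merge the witnesses $a_1\in\widetilde W_{L_1}$ and $a_2\in\widetilde W_{L_2}$ into a common witness $a\in\widetilde W_L$. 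The hard part here is controlling the interaction between the two lifts, and the argument is cleanest when phrased as a statement about projections of reduced subwords.
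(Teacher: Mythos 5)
Parts (a)--(c) of your outline are fine and largely parallel the paper's approach, with a cosmetic difference: you build $\prescript L{}{}x{}^R$ by composing one-sided projections and verifying they commute, while the paper proves uniqueness directly by taking a minimal-length element and inducting on $\ell(x_L)$. Both are correct; your commutativity identity $\prescript L{}{}x{}^R = \prescript L{}{}(x^R) = (\prescript L{}{}x)^R$ is in fact exactly what the paper uses later, so it is worth stating explicitly. Part (c) via the order-preserving one-sided projection is the same citation the paper uses.

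Part (d) is where you go wrong. The converse direction does not require inducting on $\ell$, invoking the Tits intersection theorem, or merging witnesses via the subword property, and your plan for that step is both more complicated than needed and never actually carried out (you concede the merging step is the hard part and leave it vague). The point you are missing is that the \emph{one-sided} Deodhar lemma, \cite[Theorem~2.6.1]{Bjorner2005}, already handles an arbitrary finite intersection $J=J_1\cap\cdots\cap J_k$ of parabolic subsets on a single side: it states $x^J\leq y^J$ iff $x^{J_i}\leq y^{J_i}$ for all $i$. So there is nothing to re-derive from Tits. The correct strategy, which is what the paper does, is to apply this one-sided theorem twice in sequence, using your commutativity identity: for each fixed $j$, the hypotheses $\prescript{L_i}{}{}x{}^{R_j}\leq \prescript{L_i}{}{}y{}^{R_j}$ for all $i$ become $\prescript{L_i}{}{}(x^{R_j})\leq \prescript{L_i}{}{}(y^{R_j})$, and the one-sided theorem for left quotients gives $\prescript{L}{}{}(x^{R_j})\leq \prescript{L}{}{}(y^{R_j})$, i.e.\ $(\prescript L{}{}x)^{R_j}\leq (\prescript L{}{}y)^{R_j}$; now apply the one-sided theorem for right quotients to conclude $(\prescript L{}{}x)^{R}\leq (\prescript L{}{}y)^{R}$, i.e.\ $\prescript L{}{}x{}^R\leq \prescript L{}{}y{}^R$. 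Your proposal as written leaves a genuine gap here, since the witness-merging argument is not supplied and would, if completed, essentially amount to re-proving \cite[Theorem~2.6.1]{Bjorner2005} from scratch.
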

\begin{proof}
\begin{enumerate}[(a)]
\item We only show the claim for $\prescript L{}{}x{}^R$, as the proof for $\prescript {-L}{}{}x{}^{-R}$ is analogous.

Let $x_1\in \widetilde W_Lx\widetilde W_R$ an element of minimal length. It is clear that each such element must lie in $\prescript L{}{}\widetilde W{}^R$.

Let now $x_0\in\prescript L{}{}\widetilde W{}^R\cap\left(\widetilde W_L x \widetilde W_R\right)$ be any element. It suffices to show that $x_0 = x_1$.

Since $x_1\in \widetilde W_L x_0\widetilde W_R$, we find $x_L\in \widetilde W_L, x_R\in \widetilde W_R$ such that $x_1 = x_L x_0 x_R$. We show $x_1 = x_0$ via induction on $\ell(x_L)$. If $x_L=1$, the claim is evident.

As $x_0 \in \prescript L{}{}\widetilde W{}^R$ and $x_R \in \widetilde W_R$, it follows that $\ell(x_0 x_R) = \ell(x_0) + \ell(x_R)$, cf.\ \crossRef{Lemma~}{lem:lengthAdditivity} or \cite[Proposition~2.4.4]{Bjorner2005}. Now
\begin{align*}
\ell(x_0) \geq \ell(x_1) = \ell(x_L x_0 x_R) \geq \ell(x_0 x_R) - \ell(x_L) = \ell(x_0) + \ell(x_R) - \ell(x_L).
\end{align*}
We conclude that $\ell(x_L)\geq \ell(x_R)$. By an analogous argument, we get $\ell(x_L)\leq \ell(x_R)$, such that $\ell(x_L) = \ell(x_R)$. It follows that \begin{align*}
\ell(x_0) = \ell(x_1) = \ell(x_L x_0 x_R) = \ell(x_0 x_R) - \ell(x_L).
\end{align*}
Since we may assume $x_L\neq 1$, we find a simple affine root $a\in L$ with $x_L(a)\in \Phi_\af^-$, so that $(x_0 x_R)^{-1}(a) \in \Phi_\af^-$. Since $x_0\in \prescript L{}{}\widetilde W{}^R$, we have $x_0^{-1}(a)\in \Phi_\af^+$, so $r_{x_0^{-1}(a)}x_R < x_R$.

We see that we can write
\begin{align*}
x_1 = x_L x_0 x_R = \underbrace{(x_L r_a)}_{<x_L} x_0 \underbrace{(r_{x_0^{-1}(a)} x_R)}_{<x_R},
\end{align*}
finishing the induction and thus the proof.
\item The claims on the Bruhat order are implied by the claimed existences of length additive products, so it suffices to show the latter. We again focus on $\prescript L{}{}x{}^R$.

Among all elements in
\begin{align*}
\{\tilde x \in \widetilde W\mid \exists x_L \in \widetilde W_L, x_R\in \widetilde W_R:~x = x_L \tilde x x_R\text{ and }\ell(x) = \ell(x_L) + \ell(\tilde x)+\ell(x_R)\},
\end{align*}
choose an element $x_0$ of minimal length. As in (a), one shows easily that $x_0\in \prescript L{}{}\widetilde W{}^R$. By (a), we get $x_0 = \prescript L{}{}x{}^R$, so the claim follows.
\item This is \cite[Proposition~2.5.1]{Bjorner2005}.
\item
If $\prescript L{}{}x{}^R\leq \prescript L{}{}y{}^R$ and $i\in\{1,\dotsc,\ell\},j\in\{1,\dotsc,r\}$, we get $L\subseteq L_i, R\subseteq R_i$ such that
\begin{align*}
\prescript {L_i}{}{}x{}^{R_j} = 
\prescript {L_i}{}{}\left(\prescript L{}{}x{}^R\right){}^{R_j}\underset{\text{(c)}}\leq
\prescript {L_i}{}{}\left(\prescript L{}{}y{}^R\right){}^{R_j} = 
\prescript {L_i}{}{}y{}^{R_j}.
\end{align*}
It remains to show the converse.

In case $R = \emptyset$ and $r=0$, this is exactly \cite[Theorem~2.6.1]{Bjorner2005}. Similarly, the claim follows if $L=\emptyset$ and $\ell=0$. Writing $\prescript L{}{}x{}^R = \prescript L{}{}\left(x^R\right)$ etc.\, one reduces the claim to applying \cite[Theorem~2.6.1]{Bjorner2005} twice.
\qedhere
\end{enumerate}
\end{proof}
We first describe a replacement for the length functional $\ell(x,\cdot)$ that is well-behaved with passing to $\prescript L{}{}x{}^R$.
\begin{definition}
Let $L, R\subseteq \Delta_\af$ be regular. Then we define for each $x=w\varepsilon^\mu\in \widetilde W$ the \emph{coset length functional}
\begin{align*}
&\prescript L{} {}\ell{}^R(x,\cdot):\Phi\rightarrow\mathbb Z,\quad \alpha\mapsto \prescript L{}{} \ell {}^R(x,\alpha),
\\&\prescript L{}{} \ell{}^R(x,\alpha) := \langle \mu,\alpha\rangle + \chi_R(\alpha) - \chi_L(w\alpha).
\end{align*}
We refer to Definition~\ref{def:semi-affine-weight} for the definition of $\chi_L, \chi_R$.
\end{definition}
\begin{lemma}\label{lem:cosetLengthFunctional}
Let $K, L, R \subseteq \Delta_\af$ be regular subsets and let $x=w\varepsilon^\mu\in \widetilde W$.
\begin{enumerate}[(a)]
\item For $\alpha\in \Phi$, we have
\begin{align*}
\chi_K(\alpha) + \chi_K(-\alpha)= \begin{cases}1,&\alpha\in \Phi\setminus \Phi_K,\\
0,&\alpha\in \Phi_K.\end{cases}
\end{align*}
If $\alpha,\beta\in \Phi$ satisfy $\alpha+\beta\in \Phi$, then
\begin{align*}
\chi_K(\alpha) + \chi_K(\beta)-\chi_K(\alpha+\beta)\in\{0,1\}.
\end{align*}
\item $\prescript L{}{} \ell{}^R(x,\cdot)$ is a root functional, as studied in \crossRef{Section~}{sec:root-functionals}.
\end{enumerate}
\end{lemma}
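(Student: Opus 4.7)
My plan is to handle part (a) first, with the second identity being the main combinatorial work, and then derive (b) by an essentially formal translation. The first identity in (a) unpacks directly from the defining formula $\chi_K=\Phi^+-\Phi^+_K$: applying Lemma~\ref{lem:phiPlusFacts}(a) to the ambient root system $\Phi$ gives $\Phi^+(\alpha)+\Phi^+(-\alpha)=1$ unconditionally, while the corresponding sum for the sub-root system $\Phi_K$ equals $1$ precisely when $\alpha\in\Phi_K$ (since exactly one of $\pm\alpha$ then lies in $\Phi^+_K$) and $0$ otherwise; subtracting gives the two cases.

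For the second identity I would run a case analysis on the $\Phi_K$-status of $\alpha,\beta$ and $\alpha+\beta$. The key structural input is that $\Phi_K$, being generated by $\cl(K)$ with $K\subseteq\Delta_\af$ regular, is closed under root sums in $\Phi$; equivalently $(\Phi_\af)_K$ is closed under addition inside $\Phi_\af$, which follows from the identification with a finite Dynkin subdiagram as in Lemma~\ref{lem:semiAffineRootSystemFacts}(a). Closure reduces the a priori many cases to three:
\begin{enumerate}[(i)]
\item All of $\alpha,\beta,\alpha+\beta\in\Phi_K$: lifting through the bijection of Lemma~\ref{lem:semiAffineRootSystemFacts} to $(\alpha,k_\alpha),(\beta,k_\beta),(\alpha+\beta,k_{\alpha+\beta})\in(\Phi_\af)_K$, uniqueness of the lift combined with additivity in $\Phi_\af$ forces $k_{\alpha+\beta}=k_\alpha+k_\beta$, so $\chi_K(\alpha)+\chi_K(\beta)-\chi_K(\alpha+\beta)=0$.
\item None of $\alpha,\beta,\alpha+\beta$ lies in $\Phi_K$: here $\chi_K=\Phi^+$ on all three inputs, and the claim is exactly Lemma~\ref{lem:phiPlusFacts}(b).
\item Exactly one of $\alpha,\beta$ lies in $\Phi_K$, say $\alpha$: closure of $\Phi_K$ then forces $\alpha+\beta\notin\Phi_K$. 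Expanding $\chi_K=\Phi^+-\Phi^+_K$, the expression becomes $[\Phi^+(\alpha)+\Phi^+(\beta)-\Phi^+(\alpha+\beta)]-\Phi^+_K(\alpha)$, which a priori lies in $\{-1,0,1\}$; ruling out the value $-1$ (which would require $\alpha\in\Phi^+_K\cap\Phi^-$ and a specific sign pattern in $\Phi$) is the delicate step.
\end{enumerate}

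For (b), one inserts the formula $\prescript L{}\ell^R(x,\alpha)=\langle\mu,\alpha\rangle+\chi_R(\alpha)-\chi_L(w\alpha)$ into the root-functional axioms of \crossRef{Section~}{sec:root-functionals}: the $\mathbb Z$-linear term $\langle\mu,\cdot\rangle$ satisfies every additivity and anti-symmetry identity trivially, and the two $\chi$-contributions verify the remaining identities by applying (a) to $\chi_R$ at $(\alpha,\beta)$ and to $\chi_L$ at $(w\alpha,w\beta)$ respectively, so that e.g.\ the triangle-like defect $\prescript L{}\ell^R(x,\alpha)+\prescript L{}\ell^R(x,\beta)-\prescript L{}\ell^R(x,\alpha+\beta)$ decomposes as a difference of two $\{0,1\}$-valued terms supplied by (a).

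The main obstacle is the sign-alternating sub-case in (iii), where $\chi_K$ takes the value $-1$ on $\alpha\in\Phi^+_K\cap\Phi^-$. Avoiding this configuration cleanly seems to require exploiting that $\cl(K)\subseteq\Delta\cup\{-\theta\mid \theta\text{ longest root of an irreducible component}\}$, so that a root in $\Phi^+_K\cap\Phi^-$ must be a non-positive $\mathbb Z_{\geq 0}$-combination involving $-\theta$, and the maximality of $\theta$ in $\Phi$ obstructs the compatibility with $\beta\notin\Phi_K$ and $\alpha+\beta\in\Phi$ required to produce the forbidden sign pattern; alternatively, one may lift the whole computation to $\Phi_\af$ and invoke the affine analogue of Lemma~\ref{lem:phiPlusFacts}(b) for $\Phi_\af$ together with its parabolic $(\Phi_\af)_K$, turning the $-1$ issue into an automatic consequence of closure.
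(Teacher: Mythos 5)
Your overall strategy — case analysis for (a) and a formal verification for (b) — is reasonable, and your treatment of the first identity in (a) and of part (b) matches the paper. The problem is that the case enumeration for the second identity in (a) is incomplete, and the one hard case you do identify is left unresolved.

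Concretely: you claim "closure reduces the a priori many cases to three," but closure of $\Phi_K$ only rules out the configuration $\alpha\in\Phi_K,\ \alpha+\beta\in\Phi_K,\ \beta\notin\Phi_K$ (since then $\beta=(\alpha+\beta)-\alpha$ would lie in $\mathbb Z\cl(K)\cap\Phi=\Phi_K$). It does \emph{not} exclude the case $\alpha,\beta\notin\Phi_K$ while $\alpha+\beta\in\Phi_K$ — e.g.\ $\Phi$ of type $A_2$, $K=\{(-\theta,1)\}$, $\alpha=-\alpha_1$, $\beta=-\alpha_2$, $\alpha+\beta=-\theta\in\Phi_K$. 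In this fourth case $\chi_K(\alpha)+\chi_K(\beta)-\chi_K(\alpha+\beta)=\bigl[\Phi^+(\alpha)+\Phi^+(\beta)-\Phi^+(\alpha+\beta)\bigr]+\Phi^+_K(\alpha+\beta)$, which a priori lies in $\{0,1,2\}$, so the value $2$ must be excluded — a "delicate step" symmetric to the $-1$ you flag in your case (iii), and equally unaddressed. So as written the argument has two open sub-cases, not one. The idea that maximality of $\theta$ might obstruct the bad sign patterns is plausible but not a proof, and sign-chasing in $\Phi$ directly tends to be error-prone here because it mixes the two orders $\Phi^+$ and $\Phi^+_K$.

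The "alternative" you mention in passing — lifting to $\Phi_\af$ — is in fact the route the paper takes, and it dissolves both problematic sub-cases simultaneously rather than one at a time. Writing $a=(\alpha,-\chi_K(\alpha))$, $b=(\beta,-\chi_K(\beta))$, one observes that the set $\Phi_\af^-\cup(\Phi_\af)_K$ and its complement $\Phi_\af^+\setminus(\Phi_\af)_K$ are each closed under addition inside $\Phi_\af$; the inclusion $a+b\in\Phi_\af^-\cup(\Phi_\af)_K$ gives $\chi_K(\alpha)+\chi_K(\beta)-\chi_K(\alpha+\beta)\geq 0$, and adding $(0,1)$ to each coordinate and using closure of the complement rules out $\geq 2$. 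This avoids a case split on $\Phi_K$-membership entirely, so the two delicate values $-1$ and $2$ never surface. I'd recommend developing that version rather than patching the four-way case analysis, since in particular the sub-case $\alpha\in\Phi_K^+\cap\Phi^-$ (so $\chi_K(\alpha)=-1$) is exactly the situation where direct sign reasoning in the finite root system is most slippery. Your treatment of part (b) — linearity of $\langle\mu,\cdot\rangle$ plus applying (a) to $\chi_R$ at $(\alpha,\beta)$ and to $\chi_L$ at $(w\alpha,w\beta)$ — is correct and matches the paper.
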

\begin{proof}
\begin{enumerate}[(a)]
\item We have
\begin{align*}
\chi_K(\alpha) + \chi_K(-\alpha) = 1-\Phi_K^+(\alpha) - \Phi_K^+(-\alpha)=\begin{cases}1,&\alpha\in \Phi\setminus \Phi_K,\\0,&\alpha\in \Phi_K.\end{cases}
\end{align*}
Now suppose $\alpha+\beta\in \Phi$. Observe that the set
\begin{align*}
R := \Phi_\af^-\cup(\Phi_\af)_K\subseteq \Phi_\af
\end{align*}
is closed under addition, in the sense that for $a,b\in R$ with $a+b\in \Phi_\af$, we have $a+b\in R$.

By definition, $(\alpha,-\chi_K(\alpha)),(\beta,-\chi_K(\beta))\in R$. Thus
\begin{align*}
c := (\alpha+\beta,-\chi_K(\alpha)-\chi_K(\beta))\in R.
\end{align*}
If $c\in (\Phi_\af)_K$, then $\chi_K(\alpha+\beta) = \chi_K(\alpha)+\chi_K(\beta)$ by definition of $\chi_K(\alpha+\beta)$. Hence let us assume that $c\in \Phi_\af^-\setminus (\Phi_\af)_K$.

The condition $c\in \Phi_\af^-$ means that
\begin{align*}
-\chi_K(\alpha)-\chi_K(\beta)\leq -\Phi^+(\alpha+\beta)\leq -\chi_K(\alpha+\beta).
\end{align*}
This shows $\chi_K(\alpha)+\chi_K(\beta)-\chi_K(\alpha+\beta)\geq 0$. We want to show it lies in $\{0,1\}$, so suppose that \begin{align*}
\chi_K(\alpha)+\chi_K(\beta)-\chi_K(\alpha+\beta)\geq 2.
\end{align*} We observe that
\begin{align*}
\underbrace{(\alpha,1-\chi_K(\alpha))}_{\in\Phi_\af\setminus R} + 
\underbrace{(\beta,1-\chi_K(\beta))}_{\in\Phi_\af\setminus R} = \underbrace{(\alpha+\beta,2-\chi_K(\alpha)-\chi_K(\beta))}_{\in R}.
\end{align*}
Since also the set $\Phi_\af\setminus R$ is closed under addition, this is impossible. The contradiction shows the claim.
\item This is immediate from (a):
\begin{align*}
\prescript L{}{}\ell{}^R (x,\alpha) + \prescript L{}{}\ell{}^R(x,-\alpha)=&\langle \mu,\alpha\rangle + \langle \mu,-\alpha\rangle + \underbrace{\chi_R(\alpha) + \chi_R(-\alpha)}_{\in\{0,1\}}
\\&\qquad-\underbrace{(\chi_L(w\alpha) + \chi_L(-w\alpha))}_{\in\{0,1\}}
\\\in&\{-1,0,1\}.
\end{align*}
Now if $\alpha+\beta\in \Phi$, we get
\begin{align*}
&\prescript L{}{}\ell{}^R (x,\alpha) + \prescript L{}{}\ell{}^R (x,\beta) - \prescript L{}{}\ell{}^R(x,\alpha+\beta)
\\=&\langle \mu,\alpha\rangle + \langle \mu,\beta\rangle - \langle \mu,\alpha+\beta\rangle + \underbrace{\chi_R(\alpha) + \chi_R(\beta) - \chi_R(\alpha+\beta)}_{\in\{0,1\}} \\&\qquad- \underbrace{(\chi_L(w\alpha) + \chi_L(w\beta) - \chi_L(w\alpha+w\beta))}_{\in\{0,1\}}
\\\in&\{-1,0,1\}.\qedhere
\end{align*}
\end{enumerate}
\end{proof}
We are ready to state our main result for this subsection:
\begin{proposition}\label{prop:bruhat3}
Let $x = w\varepsilon^\mu, x'=w'\varepsilon^{\mu'}\in \widetilde W$, let $L,R\subseteq \Delta_\af$ be regular subsets and $v\in W$ be positive for $\prescript L{}{}\ell{}^R(x,\cdot)$. Moreover, fix subsets $J_1,\dotsc,J_m\subseteq \Delta$ such that $J:=J_1\cap\cdots\cap J_m$ satisfies
\begin{align*}
\forall \alpha\in \Phi_J:~\prescript L{}{}\ell{}^R(x,v\alpha)\geq 0.
\end{align*}
We have $\prescript L{}{}x{}^R\leq \prescript L{}{}(x'){}^R$ if and only if for each $i=1,\dotsc,m$, there exists some $v_i'\in W$ with
\begin{align*}
v^{-1}\mu + \prescript R{}\wt(v_i'\Rightarrow v) + \prescript L{}\wt(wv\Rightarrow w'v'_i)\leq (v_i')^{-1}\mu'\pmod{\Phi_{J_i}^\vee}.
\end{align*}
\end{proposition}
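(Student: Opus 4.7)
The plan is to derive Proposition~\ref{prop:bruhat3} from Theorem~\ref{thm:bruhat2} by reducing to minimal double coset representatives $\tilde x := \prescript L{}{}x{}^R$ and $\tilde x' := \prescript L{}{}(x'){}^R$, and translating the semi-affine weight inequality into an ordinary weight inequality via the semi-affine projections. By Proposition~\ref{prop:deodhar}(a), the comparison $\prescript L{}{}x{}^R \leq \prescript L{}{}(x'){}^R$ is equivalent to the plain Bruhat relation $\tilde x \leq \tilde x'$, so it suffices to recast the hypotheses and the conclusion in terms of $\tilde x, \tilde x'$ and then invoke Theorem~\ref{thm:bruhat2}.

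First I would analyze how the coset length functional relates to the ordinary one. For $x \in \prescript L{}{}\widetilde W{}^R$ one has $\prescript L{}\ell{}^R(x, \cdot) = \ell(x, \cdot) = \ell(\tilde x, \cdot)$, so positivity of $v$ for $\prescript L{}\ell{}^R$ transfers directly to length positivity for $\tilde x$, and $(v, J_1, \ldots, J_m)$ becomes a Bruhat-deciding datum for $\tilde x$. For general $x$, the length-additive decomposition $x = x_L \tilde x x_R$ from Proposition~\ref{prop:deodhar}(b) lets me transport $v$ to a length positive element $\tilde v$ for $\tilde x$ by multiplying with the $W$-parts of $x_L, x_R$; that the $J_i$-condition survives this transport is straightforward from Lemma~\ref{lem:cosetLengthFunctional}.

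The key translation step unfolds the semi-affine weight function via Definition~\ref{def:semi-affine-weight}. Writing $\prescript R{}\pi(v) = (v^R, \mu_v)$, $\prescript R{}\pi(v_i') = ((v_i')^R, \mu_{v_i'})$, $\prescript L{}\pi(wv) = ((wv)^L, \mu_{wv})$ and $\prescript L{}\pi(w'v_i') = ((w'v_i')^L, \mu_{w'v_i'})$, one has
\begin{align*}
\prescript R{}\wt(v_i' \Rightarrow v) &= \wt((v_i')^R \Rightarrow v^R) - \mu_{v_i'} + \mu_v,\\
\prescript L{}\wt(wv \Rightarrow w'v_i') &= \wt((wv)^L \Rightarrow (w'v_i')^L) - \mu_{wv} + \mu_{w'v_i'}.
\end{align*}
The projection corrections $\mu_v, \mu_{v_i'}, \mu_{wv}, \mu_{w'v_i'}$ are lattice elements built from $\widetilde W_R, \widetilde W_L$, and by the very construction of $\prescript L{}\pi, \prescript R{}\pi$ they absorb exactly the translation parts by which $x$ differs from $\tilde x$ and $x'$ differs from $\tilde x'$. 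After collecting terms, the semi-affine inequality should become the criterion of Theorem~\ref{thm:bruhat2} applied to the pair $(\tilde x, \tilde x')$ with the transported Bruhat-deciding datum $(\tilde v, J_1, \ldots, J_m)$ and test elements $\tilde v_i'$.

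The main obstacle will be the careful bookkeeping in this last step: one must verify that all four projection corrections combine into precisely the translation shifts $\mu \mapsto \tilde \mu$ and $\mu' \mapsto \tilde \mu'$ where $\tilde x = \tilde w \varepsilon^{\tilde\mu}$, $\tilde x' = \tilde w' \varepsilon^{\tilde\mu'}$, and that the $W$-parts $v^R, (v_i')^R, (wv)^L, (w'v_i')^L$ match the classical parts of $\tilde v, \tilde v_i', \tilde w\tilde v, \tilde w' \tilde v_i'$ up to elements of $W_L, W_R$ which become invisible once one works modulo $\Phi_{J_i}^\vee$. Understanding precisely how $\tilde x$ sits inside its double coset, and how semi-affine projections behave under both left and right translation by parabolic elements, is the heart of this identification. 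Once the match is in place, both directions of the equivalence follow from the corresponding directions of Theorem~\ref{thm:bruhat2}.
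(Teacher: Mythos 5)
Your overall strategy---reduce to coset representatives and translate the semi-affine inequality into the ordinary one to invoke Theorem~\ref{thm:bruhat2}---is the right skeleton, and you correctly identify the invariance under the $\widetilde W_L \times \widetilde W_R$-action (via what the paper records as Lemma~\ref{lem:deodharCriterionInvariance}) as the tool to transport the hypothesis between double coset representatives. However, there are two genuine gaps.

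First, reducing \emph{both} $x$ and $x'$ to their minimal coset representatives $\prescript L{}{}x{}^R$ and $\prescript L{}{}(x'){}^R$ does not work for the harder direction. The paper instead reduces $x$ to the minimal element $\prescript L{}{}x{}^R$ but $x'$ to the \emph{maximal} element $\prescript{-L}{}{}(x'){}^{-R}$, and then proves the plain Bruhat inequality $x \leq x'$ for these specific representatives; this then implies $\prescript L{}{}x{}^R \leq \prescript L{}{}(x'){}^R$ by Proposition~\ref{prop:deodhar}. The asymmetry is not cosmetic: the step of converting $\prescript R{}\wt$ and $\prescript L{}\wt$ into the ordinary weight function requires that $v_i' \in \prescript R{}W$ and $wv \in \prescript L{}W$, and to arrange this one reflects $v$ and $v_i'$ by roots in $\Phi_L^+$ and $\Phi_R^+$ while preserving the semi-affine inequality. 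The reflection on $v$ uses $x \in \prescript L{}{}\widetilde W{}^R$ (Lemma~\ref{lem:bruhat3-preparation}(b)), while the reflection on $v_i'$ uses $x' \in \prescript{-L}{}{}\widetilde W{}^{-R}$ (Lemma~\ref{lem:bruhat3-preparation}(c)). With $x'$ minimal, there is no reason these reflections preserve validity, and the conversion step has no footing.

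Second, the description of the semi-affine-to-ordinary conversion as ``bookkeeping'' in which the projection corrections $\mu_v, \mu_{v_i'}, \mu_{wv}, \mu_{w'v_i'}$ passively absorb translation shifts is not how the identification is achieved. Since $\prescript R{}\wt \leq \wt$ and $\prescript L{}\wt \leq \wt$ always (Lemma~\ref{lem:semiAffineWeight}), the semi-affine inequality is strictly weaker than the ordinary one, so passive cancellation cannot work: you must \emph{actively} move $v$ and $v_i'$ into $\prescript L{}W$ and $\prescript R{}W$ respectively so that the two weight functions genuinely coincide. That this move is legal is the content of Lemma~\ref{lem:bruhat3-preparation}, and it in turn relies on identifying affine reflections $r_a \in \widetilde W_L$, $r_b \in \widetilde W_R$ with $r_a x r_b = x$ (resp. for $x'$), which is precisely where the choice of minimal vs.\ maximal representative enters.

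You do flag that ``how semi-affine projections behave under both left and right translation by parabolic elements is the heart of this identification,'' which is the right diagnosis, but the proposed mechanism for resolving it is incorrect, and the both-minimal reduction would need to be replaced by the minimal/maximal asymmetric reduction before the argument can close.
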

We remark that this recovers Theorem~\ref{thm:bruhat2} in case $L=R=\emptyset$.

We now start the work towards proving Proposition~\ref{prop:bruhat3}.
\begin{lemma}\label{lem:chiReflection}
Let $K\subseteq \Delta_\af$ be regular, $\alpha\in \Phi_K$ and $\beta\in \Phi$. Then
\begin{align*}
\chi_K(s_\alpha(\beta)) = \chi_K(\beta) - \langle \alpha^\vee,\beta\rangle\chi_K(\alpha).
\end{align*}
\end{lemma}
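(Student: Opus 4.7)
The plan is to reformulate the identity in terms of the affine reflection $r_a\in\widetilde W_K$, where $a:=(\alpha,-\chi_K(\alpha))\in(\Phi_\af)_K$ is the unique affine root in $(\Phi_\af)_K$ with classical part $\alpha$, provided by the bijection of Lemma~\ref{lem:semiAffineRootSystemFacts}(a). Define a section $\iota_K:\Phi\to\Phi_\af$ by $\iota_K(\beta):=(\beta,-\chi_K(\beta))$. Since $r_a=s_\alpha\varepsilon^{-\chi_K(\alpha)\alpha^\vee}$, a direct calculation using the $\widetilde W$-action on $\Phi_\af$ gives $r_a(\gamma,m)=(s_\alpha\gamma,\,m+\chi_K(\alpha)\langle\alpha^\vee,\gamma\rangle)$. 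Applying this to $\iota_K(\beta)$ and comparing second coordinates with $\iota_K(s_\alpha\beta)=(s_\alpha\beta,-\chi_K(s_\alpha\beta))$, the desired identity becomes equivalent to the equality $r_a(\iota_K(\beta))=\iota_K(s_\alpha\beta)$, which I would verify by a case split on whether $\beta\in\Phi_K$.

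If $\beta\in\Phi_K$, then $\iota_K(\beta)\in(\Phi_\af)_K$ by the unpacking of Definition~\ref{def:semi-affine-weight}(b). Since $r_a\in\widetilde W_K$ preserves $(\Phi_\af)_K$ and acts as $s_\alpha$ on classical parts, $r_a(\iota_K(\beta))$ lies in $(\Phi_\af)_K$ with classical part $s_\alpha\beta\in\Phi_K$. By the bijectivity from Lemma~\ref{lem:semiAffineRootSystemFacts}(a), this forces $r_a(\iota_K(\beta))=\iota_K(s_\alpha\beta)$.

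If $\beta\in\Phi\setminus\Phi_K$, then $s_\alpha\beta\notin\Phi_K$ as well (since $\Phi_K$ is $W_K$-stable), and $\chi_K(\beta)=\Phi^+(\beta)$, $\chi_K(s_\alpha\beta)=\Phi^+(s_\alpha\beta)$. The identity reduces to $\Phi^+(s_\alpha\beta)=\Phi^+(\beta)-\chi_K(\alpha)\langle\alpha^\vee,\beta\rangle$. Here I would apply $r_a$ to the minimum-level positive affine root with classical part $\beta$, namely $b:=(\beta,\Phi^+(-\beta))$, which lies in $\Phi_\af^+\setminus(\Phi_\af)_K^+$. Invoking the fact that $\widetilde W_K$ permutes $\Phi_\af^+\setminus(\Phi_\af)_K^+$ yields $r_a(b)\in\Phi_\af^+$, giving the inequality $\chi_K(\alpha)\langle\alpha^\vee,\beta\rangle\geq\Phi^+(\beta)-\Phi^+(s_\alpha\beta)$. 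Repeating the argument with $s_\alpha\beta$ in place of $\beta$, and using $\langle\alpha^\vee,s_\alpha\beta\rangle=-\langle\alpha^\vee,\beta\rangle$, yields the reverse inequality, hence equality.

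The main technical point to justify is the permutation property of $\widetilde W_K$ on $\Phi_\af^+\setminus(\Phi_\af)_K^+$, which follows from the standard Coxeter-theoretic observation that each simple affine reflection $r_c$ for $c\in K$ sends $c\mapsto-c$ and permutes $\Phi_\af^+\setminus\{c\}$, combined with the $\widetilde W_K$-invariance of $(\Phi_\af)_K$. Everything else amounts to bookkeeping with the explicit formulas for $\chi_K$ and the $\widetilde W$-action on affine roots.
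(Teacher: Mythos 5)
Your proof is correct and takes essentially the same approach as the paper's: the same affine root $a=(\alpha,-\chi_K(\alpha))$, the same reflection $r_a$, the same case split on $\beta\in\Phi_K$ versus $\beta\notin\Phi_K$, and the same invariance of the non-$K$ affine roots under $\widetilde W_K$. The only cosmetic difference is in the second case, where you apply stability of $\Phi_\af^+\setminus(\Phi_\af)_K^+$ symmetrically to $\beta$ and $s_\alpha\beta$ to get both inequalities directly, whereas the paper applies stability of the complementary set $\Phi_\af^-\setminus(\Phi_\af)_K$ once and finishes by contradiction --- but the paper's contradiction step is secretly the same symmetric application, so the reasoning is really identical.
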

\begin{proof}
Consider the affine roots $a = (\alpha,-\chi_K(\alpha))\in (\Phi_\af)_K$ and $b = (\beta,-\chi_K(\beta))\in \Phi_\af$.

If $\beta\in \Phi_K$, then $b\in (\Phi_\af)_K$ such that $r_a(b) \in (\Phi_\af)_K$. Explicitly,
\begin{align*}
r_a(b) = \left(s_\alpha(\beta), -\chi_K(\beta) + \langle \alpha^\vee,\beta\rangle \chi_K(\alpha)\right),
\end{align*}
such that the claim follows from the definition of $\chi_K(s_\alpha(\beta))$.

Next assume that $\beta\notin \Phi_K$, such that $b \in (\Phi_\af)^-\setminus (\Phi_\af)_K$. Since $r_a$ stabilizes the set $(\Phi_\af)^-\setminus (\Phi_\af)_K$, we get $r_a(b) \in (\Phi_\af)^-\setminus (\Phi_\af)_K$. This proves (together with the above calculation) that
\begin{align*}
-\chi_K(\beta) + \langle \alpha^\vee,\beta\rangle \chi_K(\alpha) \leq -\Phi^+(s_\alpha(\beta)) = -\chi_K(s_\alpha(\beta)).
\end{align*}
If the inequality above was strict, we would get
\begin{align*}
b' := \left(s_\alpha(\beta),-\chi_K(\beta) + \langle \alpha^\vee,\beta\rangle \chi_K(\alpha)+1\right)\in \Phi_\af^-\setminus (\Phi_\af)_K
\end{align*}
with
\begin{align*}
r_a(b') = (\beta,1-\chi_K(\beta))\in \Phi_\af^+,
\end{align*}
contradiction.
\end{proof}
\begin{lemma}\label{lem:cosetLengthFunctionalProjection}
Let $x \in \widetilde W, x_L\in \widetilde W_L$ and $x_R\in \widetilde W_R$ where $L,R\subseteq \Delta_\af$ are regular subsets. Denoting the image of $x_R$ in $W$ by $\cl(x_R)$, we have the following identity for every $\alpha\in \Phi$:
\begin{align*}
\prescript L{}{}\ell{}^R(x_L x x_R,\alpha) = \prescript L{}{}\ell{}^R(x,\cl(x_R)(\alpha)).
\end{align*}
\end{lemma}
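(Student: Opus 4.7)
The plan is to split the statement into two independent pieces: one showing $\prescript L{}{}\ell{}^R(x_L x,\alpha) = \prescript L{}{}\ell{}^R(x,\alpha)$ for $x_L\in\widetilde W_L$, and one showing $\prescript L{}{}\ell{}^R(xx_R,\alpha) = \prescript L{}{}\ell{}^R(x,\cl(x_R)\alpha)$ for $x_R\in\widetilde W_R$. Each can be obtained by induction on a reduced expression of $x_L$ (resp.\ $x_R$) as a product of reflections $r_a$ with $a\in L$ (resp.\ $a\in R$), so it suffices to check each piece when $x_L$ or $x_R$ is a single affine reflection. Note for the right-multiplication step that $\cl\colon \widetilde W_R\to W_R$ is a group homomorphism, so the composition of the Weyl-group factors matches the iteration.

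For the left multiplication step, let $a=(\beta,k)\in(\Phi_\af)_L$, so $\beta\in\Phi_L$ and $\chi_L(\beta)=-k$ by the definition of $\chi_L$. Writing $x=w\varepsilon^\mu$, one computes $r_a x = s_\beta w\,\varepsilon^{\mu+kw^{-1}\beta^\vee}$. Hence
\begin{align*}
\prescript L{}{}\ell{}^R(r_a x,\alpha) = \langle\mu,\alpha\rangle + k\langle\beta^\vee,w\alpha\rangle + \chi_R(\alpha) - \chi_L(s_\beta w\alpha).
\end{align*}
Lemma~\ref{lem:chiReflection} applied to $\beta\in\Phi_L$ yields $\chi_L(s_\beta w\alpha) = \chi_L(w\alpha) - \langle\beta^\vee,w\alpha\rangle\chi_L(\beta) = \chi_L(w\alpha) + k\langle\beta^\vee,w\alpha\rangle$, and the terms involving $k$ cancel, giving $\prescript L{}{}\ell{}^R(x,\alpha)$.

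For the right multiplication step, let $b=(\gamma,k)\in(\Phi_\af)_R$, so $\chi_R(\gamma)=-k$ and $\cl(r_b)=s_\gamma$. Now $x r_b = w s_\gamma\,\varepsilon^{s_\gamma\mu + k\gamma^\vee}$, hence
\begin{align*}
\prescript L{}{}\ell{}^R(x r_b,\alpha) = \langle\mu,s_\gamma\alpha\rangle + k\langle\gamma^\vee,\alpha\rangle + \chi_R(\alpha) - \chi_L(ws_\gamma\alpha).
\end{align*}
Lemma~\ref{lem:chiReflection} applied this time to $\gamma\in\Phi_R$ gives $\chi_R(s_\gamma\alpha) = \chi_R(\alpha) + k\langle\gamma^\vee,\alpha\rangle$, so the right-hand side rewrites as $\langle\mu,s_\gamma\alpha\rangle + \chi_R(s_\gamma\alpha) - \chi_L(ws_\gamma\alpha) = \prescript L{}{}\ell{}^R(x,s_\gamma\alpha)$, as required.

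Combining the two steps, $\prescript L{}{}\ell{}^R(x_Lxx_R,\alpha) = \prescript L{}{}\ell{}^R(xx_R,\alpha) = \prescript L{}{}\ell{}^R(x,\cl(x_R)\alpha)$. The only subtlety is bookkeeping: one must verify that the translation part of a product $r_a x$ or $x r_b$ is computed correctly using $\varepsilon^\nu w = w\varepsilon^{w^{-1}\nu}$, and that Lemma~\ref{lem:chiReflection} is applied with the correct root. Neither point poses a real obstacle, so this is essentially a direct computation once the statement is reduced to single reflections.
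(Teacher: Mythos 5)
Your proof is correct and takes essentially the same route as the paper: reduce to the case where $x_L$ and $x_R$ are single reflections from the generating set, compute the translation part of the product, and apply Lemma~\ref{lem:chiReflection} to simplify. The only cosmetic difference is that you separate the left- and right-multiplication steps explicitly and remark on the homomorphism property of $\cl$ to organize the iteration, whereas the paper carries out the two base cases and then says to iterate over a reduced decomposition; the substance is identical.
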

\begin{proof}
We start with two special cases:

In case $x_L = r_a$ and $x_R=1$ for some $(\beta,k):=a\in L$, we obtain
\begin{align*}
\prescript L{}{}\ell{}^R(x_L x x_R,\alpha) =& \prescript L{}{}\ell{}^R\left(s_\beta w\varepsilon^{\mu+kw^{-1}\beta^\vee},\alpha\right)
\\=&\langle \mu+kw^{-1}\beta^\vee,\alpha\rangle + \chi_R(\alpha) - \chi_L(s_\beta w\alpha)
\\=&\langle\mu,\alpha\rangle - \chi_L(\beta)\langle \beta^\vee,w\alpha\rangle + \chi_R(\alpha) - \chi_L(s_\beta w\alpha)
\\\underset{\text{L\ref{lem:chiReflection}}}=&\langle\mu,\alpha\rangle + \chi_R(\alpha) - \chi_L(w\alpha) = \prescript L{}{}\ell{}^R(x,\alpha).
\end{align*}

In case $x_L = 1$ and $x_R = r_a$ for some $(\beta,k):=a\in R$, we obtain
\begin{align*}
\prescript L{}{}\ell{}^R(x_L x x_R,\alpha) =& \prescript L{}{}\ell{}^R\left(ws_\beta \varepsilon^{s_\beta(\mu)+k\beta^\vee},\alpha\right)
\\=&\langle s_\beta(\mu)+k\beta^\vee,\alpha\rangle + \chi_R(\alpha) - \chi_L(ws_\beta\alpha)
\\=&\langle \mu,s_\beta(\alpha)\rangle - \chi_R(\beta)\langle\beta^\vee,\alpha\rangle + \chi_R(\alpha) -\chi_L(ws_\beta\alpha)
\\\underset{\text{L\ref{lem:chiReflection}}}=&\langle\mu,s_\beta(\alpha)\rangle +\chi_R(s_\beta\alpha) - \chi_L(ws_\beta\alpha)
\\=&\prescript L{}{}\ell{}^R(x,s_\beta\alpha).
\end{align*}

Now in the general case, pick reduced decompositions for $x_L\in \widetilde W_L$ and $x_R\in \widetilde W_R$ and iterate the previous arguments.
\end{proof}
\begin{definition}
By a \emph{valid tuple}, we mean a seven tuple \begin{align*}(x=w\varepsilon^\mu, x'=w'\varepsilon^{\mu'}, v, v', L, R, J)\end{align*} consisting of
\begin{itemize}
\item elements $x = w\varepsilon^\mu, x'=w'\varepsilon^{\mu'}\in \widetilde W$,
\item elements $v, v'\in W$,
\item regular subsets $L, R\subseteq \Delta_\af$ and
\item a subset $J\subseteq \Delta$,
\end{itemize}
satisfying the condition
\begin{align*}
v^{-1}\mu + \prescript R{}\wt(v'\Rightarrow v)+\prescript L{}\wt(wv\Rightarrow w'v')\leq (v')^{-1}\mu'\pmod{\Phi_J^\vee}.
\end{align*}
The tuple is called \emph{strict} if $v$ is positive for $\prescript L{}{}\ell{}^R(x,\cdot)$ and $v'$ is positive for $\prescript L{}{}\ell{}^R(x',\cdot)$.
\end{definition}
We have the following analogue of Lemma~\ref{lem:criterionAdjustment}:
\begin{lemma}\label{lem:deodharAdjustment}
Let $(x=w\varepsilon^\mu, x'=w'\varepsilon^{\mu'}, v, v', L, R, J)$ be a valid tuple.
If $v'$ is not positive for $\prescript L{}{}\ell{}^R(x',\cdot)$ and $v''$ is an adjustment in the sense of \crossRef{Definition~}{def:rootFunctionals}, then $(x, x', v, v'', L, R, J)$ is also a valid tuple.
\end{lemma}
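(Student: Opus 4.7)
The plan is to mirror the proof of Lemma~\ref{lem:criterionAdjustment}(b), with the semi-affine weight function $\prescript{\bullet}{}\wt$ playing the role of $\wt$, the coset length functional $\prescript L{}{}\ell{}^R$ playing the role of $\ell(x,\cdot)$, and the bound on $\chi_\bullet + \chi_\bullet$ from Lemma~\ref{lem:cosetLengthFunctional}(a) replacing the identity $\Phi^+(\beta)+\Phi^+(-\beta)=1$.

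By definition of adjustment, $v''=v's_\alpha$ for some $\alpha\in\Phi^+$ with $\prescript L{}{}\ell{}^R(x',v'\alpha)\leq -1$. To establish validity of the adjusted tuple, I would bound each of the two semi-affine weight terms appearing in the desired inequality using Lemma~\ref{lem:semiAffineWeightProperties}(c):
\begin{align*}
\prescript R{}\wt(v's_\alpha\Rightarrow v) &\leq \prescript R{}\wt(v'\Rightarrow v)+\chi_R(v'\alpha)\alpha^\vee,\\
\prescript L{}\wt(wv\Rightarrow w'v's_\alpha) &\leq \prescript L{}\wt(wv\Rightarrow w'v')+\chi_L(-w'v'\alpha)\alpha^\vee.
\end{align*}
Combining these estimates with the validity of the original tuple and expanding $(v'')^{-1}\mu' = (v')^{-1}\mu' - \langle\mu',v'\alpha\rangle\alpha^\vee$, the desired inequality reduces modulo $\Phi_J^\vee$ to showing that the scalar coefficient
\begin{align*}
\chi_R(v'\alpha)+\chi_L(-w'v'\alpha)+\langle\mu',v'\alpha\rangle
\end{align*}
is non-positive, since $\alpha^\vee$ is a positive coroot (so a non-positive multiple of $\alpha^\vee$ is automatically $\leq 0$ in $\mathbb Z\Phi^\vee/\mathbb Z\Phi_J^\vee$).

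Rearranging via the definition of $\prescript L{}{}\ell{}^R(x',v'\alpha) = \langle\mu',v'\alpha\rangle+\chi_R(v'\alpha)-\chi_L(w'v'\alpha)$, this coefficient rewrites as
\begin{align*}
\prescript L{}{}\ell{}^R(x',v'\alpha)+\chi_L(w'v'\alpha)+\chi_L(-w'v'\alpha).
\end{align*}
The first summand is $\leq -1$ by the choice of $\alpha$, while the remaining two sum to at most $1$ by Lemma~\ref{lem:cosetLengthFunctional}(a). This yields the bound $\leq 0$ and concludes the argument. I do not expect any real obstacle: Lemma~\ref{lem:cosetLengthFunctional}(a) is precisely the semi-affine surrogate for the dichotomy $\Phi^+(\beta)+\Phi^+(-\beta)=1$, and its interplay with the definition of $\prescript L{}{}\ell{}^R$ reproduces the classical cancellation verbatim.
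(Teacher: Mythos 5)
Your overall strategy matches the paper's: bound the two semi-affine weights via Lemma~\ref{lem:semiAffineWeightProperties}(c), cancel against $(v'')^{-1}\mu' = (v')^{-1}\mu' - \langle\mu',v'\alpha\rangle\alpha^\vee$, and show the resulting scalar multiple of $\alpha^\vee$ is non-positive. The reduction to bounding $\chi_R(v'\alpha)+\chi_L(-w'v'\alpha)+\langle\mu',v'\alpha\rangle$ is correct, and so is the rewrite as $\prescript L{}{}\ell{}^R(x',v'\alpha)+\chi_L(w'v'\alpha)+\chi_L(-w'v'\alpha)$.

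However, there is a genuine gap at the first line, where you assert that an adjustment is $v''=v's_\alpha$ with $\prescript L{}{}\ell{}^R(x',v'\alpha)\leq -1$. For a general root functional, this is only half the definition: an adjustment is $v's_\alpha$ where $\prescript L{}{}\ell{}^R(x',v'\alpha)<0$ \emph{or} $\prescript L{}{}\ell{}^R(x',-v'\alpha)>0$, and these are genuinely independent conditions here because $\prescript L{}{}\ell{}^R(x',\beta)+\prescript L{}{}\ell{}^R(x',-\beta)\in\{-1,0,1\}$ rather than being identically zero (Lemma~\ref{lem:cosetLengthFunctional}(b)). In the second case one can have $\prescript L{}{}\ell{}^R(x',v'\alpha)=0$, and then your justification ("the first summand is $\leq -1$") fails. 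The desired bound is still true in that case, but it requires the parallel estimate: rewrite the scalar instead as $\bigl(\chi_R(v'\alpha)+\chi_R(-v'\alpha)\bigr)-\prescript L{}{}\ell{}^R(x',-v'\alpha)$, then use $\chi_R(v'\alpha)+\chi_R(-v'\alpha)\leq 1$ and $\prescript L{}{}\ell{}^R(x',-v'\alpha)\geq 1$. This is exactly what the paper does: it treats the two halves of the adjustment dichotomy separately, with the $\chi_L$-bound in the first case and the $\chi_R$-bound in the second. Your closing remark that Lemma~\ref{lem:cosetLengthFunctional}(a) "reproduces the classical cancellation verbatim" is too optimistic: because $\chi_K(\beta)+\chi_K(-\beta)\in\{0,1\}$ is an inequality rather than an identity, the symmetry of the classical case breaks, and both directions must be checked.
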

\begin{proof}
This means that there is a root $\alpha\in \Phi^+$ such that $v'' = v's_\alpha$ and either
\begin{align*}\prescript L{}{}\ell{}^R(x',v'\alpha)<0\text{ or }\prescript L{}{}\ell{}^R(x',-v'\alpha)>0.\end{align*}
We abbreviate this condition to $\pm\prescript L{}{}\ell{}^R(x',\pm v'\alpha)<0$ and calculate
\begin{align*}
&v^{-1}\mu + \prescript R{}\wt(v''\Rightarrow v)+\prescript L{}\wt(wv\Rightarrow w'v'')
\\=&
v^{-1}\mu + \prescript R{}\wt(v's_\alpha\Rightarrow v)+\prescript L{}\wt(wv\Rightarrow w'v's_\alpha)
\\\underset{\text{L\ref{lem:semiAffineWeightProperties}}}\leq&
v^{-1}\mu + \prescript R{}\wt(v'\Rightarrow v)+\chi_R(v'\alpha)\alpha^\vee+\prescript L{}\wt(wv\Rightarrow w'v')+\chi_L(-w'v'\alpha)\alpha^\vee
\\\leq&(v')^{-1}\mu + (\chi_R(v'\alpha) + \chi_L(-w'v'\alpha))\alpha^\vee
\\=&(v'')^{-1}\mu + \left(\langle\mu,\alpha\rangle + \chi_R(v'\alpha) + \chi_L(-w'v'\alpha)\right)\alpha^\vee\pmod{\Phi_J^\vee}
\end{align*}
In case $\prescript L{}{}\ell{}^R(x',v'\alpha)<0$, we use the fact $\chi_L(-w'v'\alpha)\leq 1-\chi_L(w'v'\alpha)$ (cf.\ Lemma~\ref{lem:cosetLengthFunctional}) to show
\begin{align*}
&\langle\mu,\alpha\rangle + \chi_R(v'\alpha) + \chi_L(-w'v'\alpha)
\\\leq&
\langle\mu,\alpha\rangle + \chi_R(v'\alpha) + 1-\chi_L(w'v'\alpha)
\\=&\prescript L{}{}\ell{}^R(x',\alpha)+1\leq 0.
\end{align*}
Similarly if $\prescript L{}{}\ell{}^R(x',-v'\alpha)>0$, we get
\begin{align*}
&\langle\mu,\alpha\rangle + \chi_R(v'\alpha) + \chi_L(-w'v'\alpha)
\\\leq&
\langle\mu,\alpha\rangle + 1-\chi_R(-v'\alpha) +\chi_L(-w'v'\alpha)
\\=&1-\prescript L{}{}\ell{}^R(x',-\alpha)\leq 0.
\end{align*}
In any case, we see that
\begin{align*}
\langle\mu,\alpha\rangle + \chi_R(v'\alpha) + \chi_L(-w'v'\alpha)\leq 0,
\end{align*}
from where the desired claim is immediate.
\end{proof}
\begin{lemma}\label{lem:deodharCriterionInvariance}
Let $(x=w\varepsilon^\mu, x'=w'\varepsilon^{\mu'}, v, v', L, R, J)$ be a (strict) valid tuple.
Let moreover $x_L, x'_L\in \widetilde W_L$ and $x_R, x'_R\in \widetilde W_R$ be any elements. Then
\begin{align*}
(x_L x x_R, x'_L x' x'_R,\cl(x_R) v, \cl(x'_R)v',L,R,J)
\end{align*}
is a (strict) valid tuple as well.
\end{lemma}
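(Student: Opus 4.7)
The plan is to induct on $\ell(x_L) + \ell(x_L') + \ell(x_R) + \ell(x_R')$. The base case, in which all four factors equal the identity, is tautological. For the inductive step, I would reduce to four elementary moves, each of which multiplies one additional simple affine reflection $r_a$ onto exactly one of the four factors $x_L, x_L', x_R, x_R'$.

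I will describe the representative elementary move $x_R \mapsto x_R r_a$ for $a = (\alpha,k) \in R$; the other three are entirely analogous (using $\chi_L(\alpha) = -k$ for the $L$-sided moves, and passing to $x'$ via the same calculation on the right-hand side of the validity inequality). Writing $r_a = s_\alpha\varepsilon^{k\alpha^\vee}$ and applying the semidirect-product rule, the new element $x_L x x_R r_a$ has classical part $\cl(x_L x x_R)\,s_\alpha$ and a translation part differing from the old by a controlled amount. The updated $v$-factor $\tilde v$ is related to the old one by composition with $s_\alpha$, and the crucial point is that $s_\alpha^2 = 1$ makes the combination $\tilde w\tilde v$ agree with the old product $\cl(x_L x x_R)\cdot \cl(x_R)v$; hence the $\prescript L{}\wt$-term in the validity inequality is preserved.

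Lemma~\ref{lem:semiAffineWeightProperties}(b) then computes the change in the $\prescript R{}\wt$-term as $-\chi_R(\alpha)\,\tilde v^{-1}\alpha^\vee$, while a short direct computation using $s_\alpha(\mu) = \mu - \langle \mu,\alpha\rangle\alpha^\vee$ gives the change in $\tilde v^{-1}\tilde\mu$ as $-k\,\tilde v^{-1}\alpha^\vee$. Since $(\alpha,k)\in R$ forces $\chi_R(\alpha) = -k$ by the very definition of $\chi_R$ on $\Phi_R$, these two contributions cancel exactly modulo $\Phi_J^\vee$ (indeed, exactly in $\mathbb Z\Phi^\vee$), so validity is preserved. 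For the strict refinement, Lemma~\ref{lem:cosetLengthFunctionalProjection} gives the identity $\prescript L{}{}\ell{}^R(x_L x x_R r_a, \tilde v\beta) = \prescript L{}{}\ell{}^R(x,v\beta)$ (one pulls $\cl(x_R r_a)$ through and uses $s_\alpha^2 = 1$), so positivity of $\tilde v$ for the new coset length functional is equivalent to positivity of $v$ for the original one, which is the strictness assumption.

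The main technical obstacle is purely one of careful bookkeeping: tracking how $\tilde v$ evolves under iteration and coordinating the order in which successive $s_\alpha$'s compose relative to $\cl(x_R)$ (iterated elementary moves produce the reflections in the opposite order to the formal product, a discrepancy absorbed by the $W_R$-invariance properties of $\prescript R{}\pi$ applied on one side at a time). No deeper input is required beyond Lemma~\ref{lem:semiAffineWeightProperties}, Lemma~\ref{lem:cosetLengthFunctionalProjection}, and the defining identity $\chi_K(\alpha) = -k$ for $(\alpha,k)\in K$.
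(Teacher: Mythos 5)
Your approach matches the paper's: reduce to the case where exactly one of $x_L, x'_L, x_R, x'_R$ is a single simple affine reflection $r_a$ and the rest are trivial, verify validity of that elementary move via Lemma~\ref{lem:semiAffineWeightProperties}(b) together with the identity $\chi_K(\cl(a))=-k$, and obtain strictness from Lemma~\ref{lem:cosetLengthFunctionalProjection}. Your single-move computation for $x_R\mapsto x_R r_a$ is correct (the paper spells out the mirror case $x_L = r_a$ instead and declares the others analogous).

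However, the resolution you offer for the iteration is not a resolution. Writing $x_R = r_{a_1}\cdots r_{a_n}$ with $\cl(r_{a_i})=s_{\beta_i}$, the chain of single moves gives
$(x,v)\rightarrow(xr_{a_1}, s_{\beta_1}v)\rightarrow(xr_{a_1}r_{a_2}, s_{\beta_2}s_{\beta_1}v)\rightarrow\cdots\rightarrow(xx_R, s_{\beta_n}\cdots s_{\beta_1}v)$,
so the iterated third coordinate is $\cl(x_R)^{-1}v$, not $\cl(x_R)v$; these differ once $\ell(x_R)\geq 2$. The discrepancy is \emph{not} absorbed by ``$W_R$-invariance of $\prescript R{}\pi$'': that invariance concerns the first coordinate of $\prescript R{}\pi$ being constant on a coset $W_R v$, but the validity inequality picks up different $\chi_R$-correction terms for $\cl(x_R)v$ versus $\cl(x_R)^{-1}v$, and the key cancellation $\tilde w\tilde v = \cl(x_L)\,w\,v$ (which keeps the $\prescript L{}\wt$-term under control) visibly requires $\tilde v = \cl(x_R)^{-1}v$. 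The same issue appears in the strictness check: by Lemma~\ref{lem:cosetLengthFunctionalProjection}, $\tilde v$ is positive for $\prescript L{}{}\ell{}^R(x_Lxx_R,\cdot)$ exactly when $\cl(x_R)\tilde v$ is positive for $\prescript L{}{}\ell{}^R(x,\cdot)$, forcing $\tilde v = \cl(x_R)^{-1}v$. The upshot is that the lemma's statement should read $\cl(x_R^{-1})v$ and $\cl((x'_R)^{-1})v'$ (and the usage inside the proof of Proposition~\ref{prop:bruhat3} should correspondingly have $\cl(x_R)v$ in place of $\cl(x_R)^{-1}v$) --- a harmless off-by-inverse wherever the lemma is applied, since either version gives a bijection of $W$, but one that has to be stated consistently. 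The paper's own proof, by writing out only the $x_L$-case where $\cl(x_R)=1$, never makes this visible; your proof proposal runs into the same unfinished bookkeeping, and the vague appeal to projection invariance needs to be replaced by the explicit iteration above.
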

\begin{proof}
Similar to the proof of Lemma~\ref{lem:cosetLengthFunctionalProjection}, it suffices to show the claim in case three of the four elements $x_L, x'_L, x_R, x_R'$ are trivial and the remaining one is a simple affine reflection.

We just explain the argument in case $x_L = r_a, x'_L = x_R = x_R'=1$ for some $a\in L$, as the remaining arguments are very similar. Write $a = (\alpha,k)$ so that $\chi_L(\alpha)=-k$. Then $x_L x = s_\alpha w \varepsilon^{\mu + kw^{-1}\alpha^\vee}$ We calculate
\begin{align*}
&v^{-1}\left(\mu + kw^{-1}\alpha^\vee\right) + \prescript R{}\wt(v'\Rightarrow v) + \prescript L{}\wt(s_\alpha w v\Rightarrow w'v')
\\\underset{\text{L\ref{lem:semiAffineWeightProperties}}}=&v^{-1}\mu + k(wv)^{-1}\alpha^\vee + \prescript R{}\wt(v'\Rightarrow v) + \chi_L(\alpha)(wv)^{-1}\alpha^\vee + \prescript L{}\wt(w v\Rightarrow w'v')
\\=~\,&v^{-1}\mu + \prescript R{}\wt(v'\Rightarrow v) + \prescript L{}\wt(w v\Rightarrow w'v').
\end{align*}
It follows that $(x_L x, x', v, v', L, R, J)$ is a valid tuple. The strictness assertion follows from Lemma~\ref{lem:cosetLengthFunctionalProjection}.
\end{proof}
Using Lemma~\ref{lem:deodharCriterionInvariance}, it will suffice to show Proposition~\ref{prop:bruhat3} only in the case $x\in \prescript L{}{}\widetilde W{}^R$ and $x'\in \prescript{-L}{}{}\widetilde W{}^{-R}$.
\begin{lemma}\label{lem:bruhat3-preparation}
Let $(x=w\varepsilon^\mu, x'=w'\varepsilon^{\mu'}, v, v', L, R, J)$ be a strict valid tuple.
\begin{enumerate}[(a)]
\item If $x\in \prescript L{}{}\widetilde W{}^R$ and $\alpha\in \Phi$ satisfies $\prescript L{}{}\ell{}^R(x,\alpha)\geq 0$, then $\ell(x,\alpha)\geq 0$.
\item If $x\in \prescript L{}{}\widetilde W{}^R$ and $\alpha\in \Phi_L^+$ satisfies $(wv)^{-1}\alpha\in \Phi^-$, then
\begin{align*}
(x, x', s_{w^{-1}\alpha}v, v', L, R, J)
\end{align*}
is a strict valid tuple as well.
\item If $x'\in \prescript {-L}{}{}\widetilde W{}^{-R}$ and $\alpha\in \Phi_R^+$ satisfies $v^{-1}\alpha\in \Phi^-$, then
\begin{align*}
(x, x', v, s_\alpha v', L, R, J)
\end{align*}
is a strict valid tuple as well.
\end{enumerate}
\end{lemma}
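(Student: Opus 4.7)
I prove the three parts in sequence. For part (a), the identity $\chi_K(\beta) = \Phi^+(\beta) - \Phi_K^+(\beta)$ for $K \in \{L, R\}$ yields
\[
\ell(x,\alpha) - \prescript L{}{}\ell{}^R(x,\alpha) = \Phi_R^+(\alpha) - \Phi_L^+(w\alpha) \in \{-1, 0, 1\}.
\]
The only case needing care is $w\alpha \in \Phi_L^+$ with $\alpha \notin \Phi_R^+$. There, Lemma~\ref{lem:semiAffineRootSystemFacts}(b) realizes $(w\alpha, \Phi^+(-w\alpha))$ as an element of $(\Phi_\af)_L^+$, hence a $\mathbb Z_{\geq 0}$-combination of simple affine roots in $L$; the inclusion $x^{-1}L \subseteq \Phi_\af^+$ then forces $x^{-1}(w\alpha, \Phi^+(-w\alpha)) \in \Phi_\af^+$, which unpacks directly to $\ell(x,\alpha) \geq 0$.

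For part (b), I set $\tilde v := s_{w^{-1}\alpha}v = vs_\delta$ with $\delta := -v^{-1}w^{-1}\alpha \in \Phi^+$, so that $w\tilde v = s_\alpha wv$. Since $\alpha \in \Phi_L$, Lemma~\ref{lem:semiAffineWeightProperties}(b) gives the exact identity $\prescript L{}\wt(s_\alpha wv \Rightarrow w'v') = \prescript L{}\wt(wv \Rightarrow w'v') + \chi_L(\alpha)(wv)^{-1}\alpha^\vee$, while Lemma~\ref{lem:semiAffineWeightProperties}(c) bounds $\prescript R{}\wt(v' \Rightarrow vs_\delta)$ above by $\prescript R{}\wt(v' \Rightarrow v) + \chi_R(w^{-1}\alpha)\delta^\vee$. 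A direct computation of $\tilde v^{-1}\mu - v^{-1}\mu$ completes the comparison: the new LHS is bounded above by the old LHS plus a correction of the form $\prescript L{}{}\ell{}^R(x, w^{-1}\alpha)\,\delta^\vee$. The strict validity of the starting tuple, combined with part (a) applied to $-w^{-1}\alpha$ and the positivity of $v$, absorbs this correction modulo $\Phi_J^\vee$. Strictness of the new tuple reduces to checking $\tilde v$-positivity at positive roots inverted by $s_\delta$; at the distinguished root $\beta = \delta$ this reads $\prescript L{}{}\ell{}^R(x, w^{-1}\alpha) \geq 0$, which follows from $x \in \prescript L{}{}\widetilde W{}^R$ by exactly the same affine-root argument as in (a).

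Part (c) is proved by an analogous calculation, exchanging the roles of $L$ and $R$ (and of $v$ and $v'$) throughout and using $x' \in \prescript{-L}{}{}\widetilde W{}^{-R}$ in place of the hypothesis on $x$. The main obstacle is the correction estimate in (b) (and its symmetric counterpart in (c)): since the individual identities in Lemma~\ref{lem:semiAffineWeightProperties} are not sharp in general, the argument crucially relies on combining part (a), the $\prescript L{}{}\widetilde W{}^R$-hypothesis, positivity of $v$, and strict validity of the starting tuple to absorb the correction into the required inequality.
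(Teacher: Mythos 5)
Your part (a) matches the paper's argument. For parts (b) and (c), however, you take a genuinely different route from the paper, and as written your argument has a real gap in the strictness verification.

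The paper's proof of (b) does \emph{not} re-estimate the semi-affine weights. Instead it first establishes, using the hypotheses and positivity of $v$, that $\ell(x,w^{-1}\alpha)=0$, that $\prescript L{}{}\ell{}^R(x,w^{-1}\alpha)=0$, and — crucially — that $w^{-1}\alpha\in\Phi_R^+$. From these facts it deduces the group-element identity $x = r_a x r_b$ with $a=(\alpha,\Phi^+(-\alpha))\in(\Phi_\af)_L^+$ and $b=(w^{-1}\alpha,\Phi^+(-w^{-1}\alpha))\in(\Phi_\af)_R^+$, and then simply invokes Lemma~\ref{lem:deodharCriterionInvariance}, which already packages both validity and strictness. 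Your direct computation of the correction $\prescript L{}{}\ell{}^R(x,w^{-1}\alpha)\delta^\vee$ is correct and leads to a valid tuple (positivity of $v$ gives $\prescript L{}{}\ell{}^R(x,w^{-1}\alpha)=\prescript L{}{}\ell{}^R(x,-v\delta)\leq 0$, which is the fact you actually need — the detour through part (a) applied to $-w^{-1}\alpha$ is not what absorbs the correction). But you are, in effect, re-proving a special case of Lemma~\ref{lem:deodharCriterionInvariance}, which the paper avoids.

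The gap is in strictness. You correctly reduce to checking positivity of $\tilde v = s_{w^{-1}\alpha}v = vs_\delta$ at the positive roots inverted by $s_\delta$, but then treat only the root $\beta=\delta$. Unless $\delta$ is simple, $s_\delta$ inverts several positive roots, and for each such $\beta$ you would need $\prescript L{}{}\ell{}^R(x,vs_\delta\beta)=0$; this is not implied by $\prescript L{}{}\ell{}^R(x,w^{-1}\alpha)\geq 0$ alone. What actually makes all these checks go through is that $\prescript L{}{}\ell{}^R(x,\cdot)$ is invariant under $s_{w^{-1}\alpha}$; by the same computation as in Lemma~\ref{lem:chiReflection} one has
\begin{align*}
\prescript L{}{}\ell{}^R\bigl(x,s_{w^{-1}\alpha}\gamma\bigr)=\prescript L{}{}\ell{}^R(x,\gamma)-\langle w^{-1}\alpha^\vee,\gamma\rangle\,\prescript L{}{}\ell{}^R(x,w^{-1}\alpha),
\end{align*}
\emph{provided} $w^{-1}\alpha\in\Phi_R$ (so that Lemma~\ref{lem:chiReflection} applies to $\chi_R$). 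You never establish $w^{-1}\alpha\in\Phi_R$. It in fact follows: combining $\ell(x,w^{-1}\alpha)\geq 0$ (from $x\in\prescript L{}{}\widetilde W{}^R$), $\prescript L{}{}\ell{}^R(x,w^{-1}\alpha)\leq 0$ (positivity of $v$ at the negative root $-\delta$), and the identity $\prescript L{}{}\ell{}^R(x,w^{-1}\alpha)=\ell(x,w^{-1}\alpha)-\Phi_R^+(w^{-1}\alpha)+1$, one gets $\prescript L{}{}\ell{}^R(x,w^{-1}\alpha)=0$, $\ell(x,w^{-1}\alpha)=0$ and $\Phi_R^+(w^{-1}\alpha)=1$. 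These are precisely the three facts the paper derives; with them the invariance of $\prescript L{}{}\ell{}^R(x,\cdot)$ under $s_{w^{-1}\alpha}$ makes $\tilde v$-positivity equivalent to $v$-positivity at once, and the identity $x=r_axr_b$ lets you appeal to the already-proved Lemma~\ref{lem:deodharCriterionInvariance} rather than redoing the estimates by hand.

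Part (c) has the same issue, with $L,R$ and $v,v'$ swapped.
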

\begin{proof}
We write
\begin{align*}
\prescript L{}{}\ell{}^R(x,\alpha) =& \langle \mu,\alpha\rangle + \chi_R(\alpha) -\chi_L(w\alpha)
\\=& \langle \mu,\alpha\rangle + \Phi^+(\alpha)-\Phi^+_R(\alpha) -\Phi^+(w\alpha)+\Phi_L^+(w\alpha)
\\=&\ell(x,\alpha) - \Phi^+_R(\alpha) + \Phi_L^+(w\alpha)
\end{align*}
\begin{enumerate}[(a)]
\item If $w\alpha\notin \Phi_L^+$, then
\begin{align*}
\ell(x,\alpha) = \prescript L{}{}\ell{}^R(x,\alpha) + \Phi^+_R(\alpha)\geq 0.
\end{align*}
If $w\alpha\in \Phi_L^+$, then the condition $x\in \prescript L{}{}\widetilde W{}^R$ already implies $\ell(x,\alpha)\geq 0$.
\item The condition $\alpha\in \Phi_L^+$ together with $x\in  \prescript L{}{}\widetilde W{}^R$ yields $\ell(x,w^{-1}\alpha)\geq 0$. We have 
\begin{align*}
\prescript L{}{}\ell{}^R(x,-w^{-1}\alpha) = \prescript L{}{}\ell{}^R(x,v( -(wv)^{-1}\alpha))\geq 0
\end{align*} by the positivity assertion on $v$. By (a), we conclude $\ell(x,-w^{-1}\alpha)\geq 0$, so altogether we get $\ell(x,w^{-1}\alpha)=0$.

By the above computation, we get
\begin{align*}
\prescript L{}{}\ell{}^R(x,w^{-1}\alpha) =- \Phi^+_R(w^{-1}\alpha) + \Phi_L^+(\alpha)
=1-\Phi^+_R(w^{-1}\alpha).
\end{align*}
On the other hand, we have  
\begin{align*}
\prescript L{}{}\ell{}^R(x,w^{-1}\alpha) = 
\prescript L{}{}\ell{}^R(x,v (wv)^{-1}\alpha) \leq 0
\end{align*}
by the positivity assertion on $v$. Thus $
\prescript L{}{}\ell{}^R(x,w^{-1}\alpha)=0$ and $w^{-1}\alpha\in \Phi^+_R$.

Consider the elements $a = (\alpha,\Phi^+(-\alpha))\in (\Phi_\af)_L^+$ and $b = (w^{-1}\alpha,\Phi^+(-w^{-1}\alpha))\in (\Phi_\af)_R^+$. We have
\begin{align*}
x(b) =& (\alpha,\Phi^+(-w^{-1}\alpha) - \langle \mu,w^{-1}\alpha\rangle)
\\=&(\alpha,\Phi^+(-\alpha)+\ell(x,-w^{-1}\alpha)\rangle) = (\alpha,\Phi^+(-\alpha))=a.
\end{align*}
We see that $x = r_a x r_b$. Now the claim follows from Lemma~\ref{lem:deodharCriterionInvariance}.
\item The proof is analogous to (b).
\qedhere
\end{enumerate}
\end{proof}
\begin{proof}[Proof of Proposition~\ref{prop:bruhat3}]
Let us fix $L, R, J_1, \dotsc, J_m, J$ for the entire proof. To keep our notation concise, we make the following convention: We call a triple $(x, x', v)$ \emph{valid} if, for each $i=1,\dotsc,m$, there exists $v_i'\in W$ such that $(x, x', v, v_i', L, R, J_i)$ is a strict valid tuple.

First assume that $\prescript L{}{}x{}^R\leq \prescript L{}{}x'{}^R$. We want to show that $(x, x', v)$ is valid. Write $x = x_L\cdot \prescript L{}{}x{}^R \cdot x_R$ with $x_L\in \widetilde W_L, x_R\in \widetilde W_R$. It suffices to show that $\left(\prescript L{}{}x{}^R, x', \cl(x_R)^{-1}v\right)$ is valid by Lemma~\ref{lem:deodharCriterionInvariance}.

In other words, we may assume that $x\in \prescript L{}{}\widetilde W{}^R$ and $x\leq x'$ for proving that $(x, x', v)$ is valid. By Lemma~\ref{lem:bruhat1}, we find $v'\in W$ such that
\begin{align*}
v^{-1}\mu + \wt(v'\Rightarrow v) + \wt(wv\Rightarrow w'v')\leq (v')^{-1}\mu'.
\end{align*}
Now recall from Lemma~\ref{lem:semiAffineWeight} that \begin{align*}
&\prescript R{}\wt(v'\Rightarrow v)\leq \wt(v'\Rightarrow v),\\
&\prescript L{}\wt(wv\Rightarrow w'v')\leq \wt(wv\Rightarrow w'v').
\end{align*}
We conclude that $(x, x', v, v', L, R, J_i)$ is valid for all $i=1,\dotsc,m$. Up to iteratively choosing adjustments for $v'$, we may assume that the tuple is strict valid, so $(x, x', v)$ is indeed valid.

For the converse direction, let us assume that $(x, x', v)$ is valid. We have to show $\prescript L{}{}x{}^R \leq \prescript L{}{}(x'){}^R$. Again, we can use Lemma~\ref{lem:deodharCriterionInvariance} and Lemma~\ref{lem:cosetLengthFunctionalProjection} to reduce this to any other elements in $\widetilde W_L x \widetilde W_R$ resp.\ $\widetilde W_L x' \widetilde W_R$.

Thus, we may and will assume that $x\in \prescript L{}{}\widetilde W{}^R$ and $x'\in \prescript {-L}{}{}\widetilde W{}^{-R}$. We then have to show $x\leq x'$ using the fact that $(x, x', v)$ is valid for some $v\in W$.

Among all $v\in W$ such that $(x, x', v)$ is valid, choose one such that $\prescript L{}\ell(wv)$ is as small as possible. If $wv\notin \prescript L{}W$, then we find some $\alpha\in \Phi_L^+$ with $(wv)^{-1}\in\Phi^-$. By Lemma~\ref{lem:bruhat3-preparation}, also $(x, x', s_{w^{-1}\alpha}v)$ is valid and by Lemma~\ref{lem:semiAffineLength}, $\prescript L{}\ell(s_\alpha wv) < \prescript L{}\ell(wv)$. This is a contradiction to the minimality of $\prescript L{}\ell(wv)$.

We see that we always find some $v\in W$ such that $(x, x', v)$ is valid and $wv\in \prescript L{}W$.

We now prove that $x\leq x'$ using Theorem~\ref{thm:bruhat2}.

By Lemma \ref{lem:bruhat3-preparation} (a), it follows that $v\in W$ is length positive for $x$ and that $\ell(x,v\alpha)\geq 0$ for all $\alpha\in \Phi_J$. Since $\Phi_J = -\Phi_J$ and $\ell(x,-v\alpha) = - \ell(x,v\alpha)$, this is only possible if $\ell(x,v\alpha)=0$ for all $\alpha\in \Phi_J$. We conclude that $(v, J_1,\dotsc,J_m)$ is a Bruhat-deciding datum for $x$.

Now for each $i=1,\dotsc,m$, by assumption, there exists some $v_i\in W$ such that $(x, x', v, v_i', L, R, J_i)$ is a strict valid tuple. Minimizing $\prescript R{}\ell(v_i')$ as before, we may assume that $v_i'\in \prescript R{} W$ by Lemma~\ref{lem:bruhat3-preparation}.

We see that $(x, x', v, v_i', L, R, J_i)$ is a strict valid tuple with $wv\in \prescript L{}W$ and $v_i'\in \prescript R{}W$. By definition of the semi-affine weight function, we get
\begin{align*}
\prescript R{}\wt(v_i'\Rightarrow v) &= \wt(v_i'\Rightarrow v),
\\\prescript L{}\wt(wv\Rightarrow w'v'_i)& = \wt(wv\Rightarrow w'v'_i).
\end{align*}
We conclude
\begin{align*}
&v^{-1}\mu + \wt(v_i'\Rightarrow v) + \wt(wv\Rightarrow w'v_i')
\\=~\,&v^{-1}\mu + \prescript R{}\wt(v_i'\Rightarrow v) + \prescript L{}\wt(wv\Rightarrow w'v_i')
\\\underset{\text{valid}}\leq&(v')^{-1}\mu'\pmod{\Phi_{J_i}^\vee}.
\end{align*}
This is exactly the inequality we had to check in order to apply Theorem~\ref{thm:bruhat2}. So we conclude $x\leq x'$, finishing the proof.
\end{proof}
As an application, we present our most general criterion for the Bruhat order on affine Weyl groups.
\begin{definition}
Let $x \in \widetilde W$. A \emph{Deodhar datum for $x$} consists of the following:
\begin{itemize}
\item Regular subsets $L_1, \dotsc,L_\ell,R_1,\dotsc,R_r\subseteq \Delta_\af$ with $\ell,r\geq 1$ such that $L := L_1\cap\cdots\cap L_\ell$ and $R:= R_1\cap\cdots\cap R_r$ satisfy $x\in \prescript L{}{}\widetilde W{}^R$.
\item For each $i\in\{1,\dotsc,\ell\}$ and $j\in\{1,\dotsc,r\}$ an element $v_{i,j}\in W$ that is positive for $\prescript{L_i}{}{}\ell{}^{R_j}(x, \cdot)$.
\item For each $i\in\{1,\dotsc,\ell\}$ and  $j\in\{1,\dotsc,r\}$ a collection of subsets
\begin{align*}
J(i,j)_1,\dotsc,J(i,j)_{m(i,j)}\subseteq \Delta
\end{align*}
such that $m(i,j)\geq 1$ and $J(i,j) := J(i,j)_1\cap \cdots\cap J(i,j)_{m(i,j)}$ satisfies
\begin{align*}
\forall \alpha\in \Phi_{J(i,j)}:~\prescript {L_i}{}{}\ell{}^{R_j}(x, v_{i,j}\alpha)\geq 0.
\end{align*}
\end{itemize}
\end{definition}
\begin{theorem}\label{thm:bruhat4}
Let $x = w\varepsilon^\mu\in \widetilde W$ and fix a Deodhar datum
\begin{align*}
L_1,\dotsc,L_\ell, \quad R_1,\dotsc,R_r,\quad (v_{\bullet,\bullet}),\quad (J(\bullet,\bullet)_{\bullet}).
\end{align*}
Let $x' = w'\varepsilon^{\mu'}\in \widetilde W$. Then $x\leq x'$ if and only if for each $i\in\{1,\dotsc,\ell\}, j\in\{1,\dotsc,r\}$ and $k\in\{1,\dotsc,m(i,j)\}$, there exists some $v_{i,j,k}'\in W$ such that
\begin{align*}
v_{i,j}^{-1}\mu + \prescript{R_j}{}\wt(v'_{i,j,k}\Rightarrow v_{i,j}) + \prescript{L_i}{}\wt(wv_{i,j}\Rightarrow w'v'_{i,j,k})\leq (v'_{i,j,k})^{-1}\mu'\pmod{\Phi_{J(i,j)_k}^\vee}.
\end{align*}
\end{theorem}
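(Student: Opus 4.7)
The plan is to reduce Theorem~\ref{thm:bruhat4} to two ingredients already at hand: Proposition~\ref{prop:bruhat3} (the semi-affine analogue of Theorem~\ref{thm:bruhat2}) and the two-sided Deodhar machinery of Proposition~\ref{prop:deodhar}. Set $L := L_1 \cap \cdots \cap L_\ell$ and $R := R_1 \cap \cdots \cap R_r$. By the definition of a Deodhar datum, $x \in \prescript{L}{}{}\widetilde{W}{}^R$, and hence by Proposition~\ref{prop:deodhar}(a) we have $\prescript{L}{}{}x{}^R = x$. Combining this with $\prescript{L}{}{}(x'){}^R \leq x'$ from Proposition~\ref{prop:deodhar}(b) and the monotonicity of double-coset reduction in Proposition~\ref{prop:deodhar}(c) yields the chain of equivalences
\begin{align*}
x \leq x' \iff x \leq \prescript{L}{}{}(x'){}^R \iff \prescript{L}{}{}x{}^R \leq \prescript{L}{}{}(x'){}^R.
\end{align*}

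The next step is to apply Proposition~\ref{prop:deodhar}(d) with the prescribed decompositions $L = L_1 \cap \cdots \cap L_\ell$ and $R = R_1 \cap \cdots \cap R_r$. This refines the last condition to
\begin{align*}
\forall i \in \{1,\dotsc,\ell\},\ j \in \{1,\dotsc,r\}: \quad \prescript{L_i}{}{}x{}^{R_j} \leq \prescript{L_i}{}{}(x'){}^{R_j}.
\end{align*}

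For each fixed pair $(i,j)$, I would then invoke Proposition~\ref{prop:bruhat3} with parameters $L := L_i$, $R := R_j$, $v := v_{i,j}$, and the subsets $J_k := J(i,j)_k$ for $k = 1,\dotsc,m(i,j)$. The positivity of $v_{i,j}$ for $\prescript{L_i}{}{}\ell{}^{R_j}(x,\cdot)$ and the nonnegativity of $\prescript{L_i}{}{}\ell{}^{R_j}(x, v_{i,j}\alpha)$ for all $\alpha \in \Phi_{J(i,j)}$ are built into the definition of a Deodhar datum, so they supply exactly the hypotheses demanded by Proposition~\ref{prop:bruhat3}. Its conclusion is precisely the criterion that for every $k \in \{1,\dotsc,m(i,j)\}$ there exists $v'_{i,j,k} \in W$ satisfying the displayed inequality in the theorem. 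Collecting these equivalences across all $(i,j)$ gives the claim.

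There is no real obstacle left: all substantive work has been absorbed into Propositions \ref{prop:bruhat3} and \ref{prop:deodhar}, and Theorem~\ref{thm:bruhat4} amounts to bookkeeping on top of them. The only point worth a sanity check is that each tuple $(L_i, R_j, v_{i,j}, J(i,j)_1, \dotsc, J(i,j)_{m(i,j)})$ is a valid input to Proposition~\ref{prop:bruhat3}, which, as noted, is immediate from the definition of a Deodhar datum.
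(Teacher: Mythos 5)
Your proof is correct and follows essentially the same route as the paper: apply Proposition~\ref{prop:bruhat3} for each fixed pair $(i,j)$ with the data $(L_i, R_j, v_{i,j}, J(i,j)_\bullet)$ supplied by the Deodhar datum, then glue the resulting conditions across all pairs via Proposition~\ref{prop:deodhar}. Your explicit intermediate chain $x \leq x' \iff x \leq \prescript{L}{}{}(x'){}^R \iff \prescript{L}{}{}x{}^R \leq \prescript{L}{}{}(x'){}^R$ only spells out what the paper compresses into one appeal to Deodhar's lemma.
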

\begin{proof}
In view of Proposition~\ref{prop:bruhat3}, the existence of the $v_{i,j,k}'$ for fixed $i,j$ means precisely
\begin{align*}
\prescript {L_i}{}{}x{}^{R_j}\leq \prescript {L_i}{}{}(x'){}^{R_j}.
\end{align*}
By Deodhar's lemma, i.e.\ Proposition~\ref{prop:deodhar}, this is equivalent to $x = \prescript L{}{}x{}^R\leq x'$.
\end{proof}
\begin{lemma}\label{lem:semiAffineWeightSupremum}
Let $w_1, w_2\in W$.
Let moreover $R_1,\dotsc,R_k\subseteq \Delta_{\af}$ be regular subsets with $k\geq 1$ and $R:=R_1\cap\cdots\cap R_k$.
Then we have the following equality in $\mathbb Z\Phi^\vee$:
\begin{align*}
\prescript R{}\wt(w_1\Rightarrow w_2) = \sup_{i=1,\dotsc,k}\prescript {R_i}{}\wt(w_1\Rightarrow w_2).
\end{align*}
\end{lemma}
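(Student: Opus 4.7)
The plan is to deduce the statement from Deodhar's lemma (Proposition~\ref{prop:deodhar}(d)) via a Bruhat-order characterization of the semi-affine weight on left $\widetilde W_R$-cosets of $\widetilde W$.

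Fix $\lambda\in\mathbb Z\Phi^\vee$ dominant and sufficiently regular. Set $x_1 := w_1\varepsilon^{\lambda}$ and, for a parameter $\nu\in\mathbb Z\Phi^\vee$, $x_2(\nu) := w_2\varepsilon^{\lambda+\nu}$. The core intermediate claim is: for every regular subset $R'\subseteq\Delta_\af$ and every $\nu$ in a bounded range (whose radius depends only on $w_1,w_2$ and the $R_i$), the equivalence
\begin{equation*}
\prescript{R'}{}x_1\leq\prescript{R'}{}x_2(\nu)\quad\iff\quad\nu\geq\prescript{R'}{}\wt(w_1\Rightarrow w_2)\tag{$\ast$}
\end{equation*}
holds. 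To establish $(\ast)$, I apply Proposition~\ref{prop:bruhat3} in the specialization $L:=R'$, right subset $\emptyset$, $v:=1$, $m:=1$, $J_1:=\emptyset$; positivity of $v=1$ for $\prescript{R'}{}\ell{}^{\emptyset}(x_1,\cdot)$ is automatic for $\lambda$ superdominant, and the resulting Bruhat criterion becomes the existence of $v'\in W$ satisfying $\lambda+\wt(v'\Rightarrow 1)+\prescript{R'}{}\wt(w_1\Rightarrow w_2 v')\leq(v')^{-1}(\lambda+\nu)$. Taking $v'=1$ gives exactly the condition $\nu\geq\prescript{R'}{}\wt(w_1\Rightarrow w_2)$, proving the backward direction. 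For the forward direction, rearranging the inequality for any $v'\neq 1$ forces $(v')^{-1}\nu\geq\lambda-(v')^{-1}\lambda+(\textrm{bounded terms})$; since $\lambda-(v')^{-1}\lambda$ is a nonzero sum of positive coroots growing linearly in $\lambda$, this constraint is incompatible with $\nu$ in the bounded range once $\lambda$ is large enough, ruling out $v'\neq 1$.

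Given $(\ast)$, Proposition~\ref{prop:deodhar}(d), applied with left-regular subsets $R_1,\dotsc,R_k$ (whose intersection is $R$) and right-regular subset $\emptyset$, yields $\prescript{R}{}x_1\leq\prescript{R}{}x_2(\nu)\iff\prescript{R_i}{}x_1\leq\prescript{R_i}{}x_2(\nu)\text{ for all }i$. Translating through $(\ast)$ for $R$ and for each $R_i$ gives, for $\nu$ in the relevant range,
\begin{equation*}
\nu\geq\prescript{R}{}\wt(w_1\Rightarrow w_2)\iff\nu\geq\prescript{R_i}{}\wt(w_1\Rightarrow w_2)\text{ for all }i.
\end{equation*}
Setting $\nu:=\prescript{R}{}\wt(w_1\Rightarrow w_2)$ shows $\prescript{R_i}{}\wt(w_1\Rightarrow w_2)\leq\prescript{R}{}\wt(w_1\Rightarrow w_2)$ for every $i$, so $\prescript{R}{}\wt(w_1\Rightarrow w_2)$ is an upper bound for $\{\prescript{R_i}{}\wt(w_1\Rightarrow w_2)\}_i$; in particular the supremum exists and is bounded by $\prescript{R}{}\wt(w_1\Rightarrow w_2)$. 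Setting $\nu:=\sup_i\prescript{R_i}{}\wt(w_1\Rightarrow w_2)$ conversely yields $\prescript{R}{}\wt(w_1\Rightarrow w_2)\leq\sup_i\prescript{R_i}{}\wt(w_1\Rightarrow w_2)$, and the asserted equality follows.

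The main obstacle is $(\ast)$: one must show that the minimization over $v'$ in the Bruhat criterion of Proposition~\ref{prop:bruhat3} yields precisely the bound $\prescript{R'}{}\wt(w_1\Rightarrow w_2)$, neither smaller nor larger. The backward direction is immediate via $v'=1$; the forward direction relies essentially on superdominance of $\lambda$ to exclude every other choice of $v'$, an argument whose precise constants must be tracked in terms of $w_1, w_2$ and the $R_i$.
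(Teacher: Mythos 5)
The proposal is correct and takes essentially the same approach as the paper: apply Proposition~\ref{prop:bruhat3} together with Deodhar's Proposition~\ref{prop:deodhar}(d) to superregular translation elements, with only a cosmetic left--right swap (your $L=R'$ and right subset $\emptyset$ versus the paper's $L=\emptyset$ and right subsets $R_i$). Your superdominance argument ruling out $v'\neq 1$ correctly supplies the details the paper leaves implicit with the phrase \enquote{with little effort.}
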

\begin{proof}
Consider Proposition~\ref{prop:bruhat3} for $\mu$ and $\mu'$ sufficiently regular, with $L=\emptyset$ and $(J_1,\dotsc,J_m)=(\emptyset)$. Then by Proposition~\ref{prop:deodhar},
\begin{align*}
x^R\leq (x')^R\iff \forall i\in\{1,\dotsc,k\}:~x^{R_i}\leq (x')^{R_i}.
\end{align*}
The claim follows from Proposition~\ref{prop:bruhat3} with little effort.
\end{proof}
Together with Lemma~\ref{lem:semiAffineWeightSimplification}, this result allows us to express the weight function of the quantum Bruhat graph $\wt: W\times W\rightarrow \mathbb Z\Phi^\vee$ as a supremum of semi-affine weight functions.

As our final application of Proposition~\ref{prop:bruhat3}, we generalize Proposition~\ref{prop:admissibleSubset} to the admissible subsets considered in \cite{Rapoport2002}.
\begin{proposition}\label{prop:generalizedAdmissibleSubset}
Let $K\subseteq \Delta_\af$ be regular, $x = w\varepsilon^\mu\in \widetilde W$ and $\lambda\in \Xast$ dominant.
Then the following are equivalent:
\begin{enumerate}[(i)]
\item $x \in \widetilde W_K\Adm(\lambda)\widetilde W_K$.
\item For every $v\in W$, we have
\begin{align*}
v^{-1}\mu + \prescript K{}\wt(wv\Rightarrow v)\leq \lambda.
\end{align*}
\item There exists some $v\in W$ that is positive for $\prescript K{}{}\ell{}^K(x,\cdot)$ and satisfies \begin{align*}
v^{-1}\mu+\prescript K{}\wt(wv\Rightarrow v)\leq \lambda.
\end{align*}
\end{enumerate}
\end{proposition}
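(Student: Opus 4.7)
The plan is to mimic the proof of Proposition~\ref{prop:admissibleSubset}, with Proposition~\ref{prop:bruhat3} replacing Theorem~\ref{thm:bruhat2} and Lemma~\ref{lem:deodharCriterionInvariance} handling the passage between double cosets. The key preliminary reformulation is that (i) is equivalent to $\prescript K{}{}x{}^K\in\Adm(\lambda)$: since $\Adm(\lambda)$ is closed under $\leq$ and $\prescript K{}{}x{}^K$ is the minimum of $\widetilde W_Kx\widetilde W_K$ by Proposition~\ref{prop:deodhar}(b), any representative of the double coset lying in $\Adm(\lambda)$ forces $\prescript K{}{}x{}^K$ into $\Adm(\lambda)$. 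So (i) amounts to the existence of some $u\in W$ with $\prescript K{}{}x{}^K\leq \varepsilon^{u\lambda}$, equivalently (by Proposition~\ref{prop:deodhar}(c)) $\prescript K{}{}x{}^K\leq \prescript K{}{}\varepsilon^{u\lambda}{}^K$; this is the form in which Proposition~\ref{prop:bruhat3} applies.

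For (iii) $\Rightarrow$ (i), I would take $x':=\varepsilon^{v\lambda}$ and observe $\prescript K{}{}\ell{}^K(x',v\alpha)=\langle\lambda,\alpha\rangle\geq 0$ for $\alpha\in \Phi^+$, so $v$ is positive for $\prescript K{}{}\ell{}^K(x',\cdot)$ as well. Apply Proposition~\ref{prop:bruhat3} with $L=R=K$, $m=1$, $J_1=\emptyset$, and the choice $v'_1=v$. Since $\prescript K{}\wt(v\Rightarrow v)=0$ and $v^{-1}(v\lambda)=\lambda$, the Proposition's criterion collapses to exactly the inequality in (iii), yielding $\prescript K{}{}x{}^K\leq \prescript K{}{}\varepsilon^{v\lambda}{}^K$ and hence (i).

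For (i) $\Rightarrow$ (ii), set $z:=\prescript K{}{}x{}^K=w_z\varepsilon^{\mu_z}$, so $z\leq \varepsilon^{u\lambda}$ for some $u$. Lemma~\ref{lem:bruhat1} supplies, for each $v\in W$, some $v'\in W$ with
\begin{align*}
v^{-1}\mu_z+\wt(v'\Rightarrow v)+\wt(w_zv\Rightarrow v')\leq (v')^{-1}u\lambda\leq\lambda,
\end{align*}
and combining with $\prescript K{}\wt\leq\wt$ (Lemma~\ref{lem:semiAffineWeight}) shows that $(z,\varepsilon^{u\lambda},v,v',K,K,\emptyset)$ is a valid tuple. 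Writing $x=x_Lzx_R$ with $x_L,x_R\in \widetilde W_K$ (Proposition~\ref{prop:deodhar}(b)) and transferring by Lemma~\ref{lem:deodharCriterionInvariance} produces the valid tuple $(x,\varepsilon^{u\lambda},\cl(x_R)v,v',K,K,\emptyset)$. As $v$ ranges over $W$, so does $\tilde v:=\cl(x_R)v$; the triangle inequality for $\prescript K{}\wt$ (Lemma~\ref{lem:semiAffineWeightProperties}(a)) then gives $\tilde v^{-1}\mu+\prescript K{}\wt(w\tilde v\Rightarrow \tilde v)\leq \lambda$ for every $\tilde v$, which is (ii). The transfer step via Lemma~\ref{lem:deodharCriterionInvariance} is the main technical move.

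Finally (ii) $\Rightarrow$ (iii) is immediate: $\prescript K{}{}\ell{}^K(x,\cdot)$ is a root functional by Lemma~\ref{lem:cosetLengthFunctional}(b), so by iterative adjustment (\crossRef{Lemma~}{lem:rootFunctionalAdjustment}) there exists some $v\in W$ that is positive for $\prescript K{}{}\ell{}^K(x,\cdot)$, and specializing (ii) to this $v$ produces (iii).
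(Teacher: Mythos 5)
Your proof is correct, and the (iii) $\Rightarrow$ (i) and (ii) $\Rightarrow$ (iii) steps match the paper exactly. The one place you diverge is (i) $\Rightarrow$ (ii). The paper applies Proposition~\ref{prop:bruhat3} as a black box, which yields the inequality only for $v\in W$ positive for $\prescript K{}{}\ell{}^K(x,\cdot)$, and then completes the argument with a separate adjustment step (analogous to Lemma~\ref{lem:deodharAdjustment}) to extend to all $v$. You instead descend one level into the machinery: apply Lemma~\ref{lem:bruhat1} directly to $z:=\prescript K{}{}x{}^K\leq\varepsilon^{u\lambda}$, downgrade $\wt$ to $\prescript K{}\wt$ via Lemma~\ref{lem:semiAffineWeight} to obtain a valid tuple, and transfer it to $x=x_Lzx_R$ via Lemma~\ref{lem:deodharCriterionInvariance}. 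Since $\cl(x_R)$ is a bijection of $W$, the transferred inequality holds for all $v\in W$ at once and the adjustment step becomes unnecessary. This is essentially the forward half of the proof of Proposition~\ref{prop:bruhat3} inlined into the present argument; it rests on the same ingredients, but makes explicit a step the paper leaves as a sketch, at the modest cost of re-deriving rather than citing. Both are valid; your version is slightly more self-contained. One small remark: the observation that $v$ is positive for $\prescript K{}{}\ell{}^K(\varepsilon^{v\lambda},\cdot)$ in the (iii) $\Rightarrow$ (i) step is not actually required by Proposition~\ref{prop:bruhat3} (which only constrains $v$ relative to $x$, not $x'$), so it is harmless but superfluous.
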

\begin{proof}
By definition, (i) means that there exists $u\in W$ such that
\begin{align*}
\prescript K{}{}{}x{}^K\leq \prescript K{}{}{}(\varepsilon^{u\lambda}){}^K.
\end{align*}
By Proposition~\ref{prop:bruhat3}, we get condition (ii) for every $v\in W$ that is positive for $\prescript K{}{}\ell{}^K(x,\cdot)$. Now a simple adjustment argument, similar to Lemma~\ref{lem:deodharAdjustment}, shows that (ii) holds for every $v\in W$.

(ii) $\implies$ (iii) is clear, as we always find a positive element for each root functional \crossRef{Corollary~}{cor:rootFunctionalAdjustments}.

(iii) $\implies$ (i): It suffices to show that $\prescript K{}{}x{}^K\leq \varepsilon^{v\lambda}$. This follows immediately from Proposition~\ref{prop:bruhat3}.
\end{proof}
\section{Demazure product}\label{chap:demazure-products}
The Demazure product $\ast$ is another operation on the extended affine Weyl group $\widetilde W$. In the context of the Iwahori-Bruhat decomposition of a reductive group, the Demazure product describes the closure of the product of two Iwahori double cosets, cf.\ \cite[Section~2.2]{He2021c}. In a more Coxeter-theoretic style, we can define the Demazure product of $\widetilde W$ as follows:
\begin{proposition}[{\cite[Lemma~1]{He2009}}]\label{prop:demazure}
Let $x_1, x_2\in \widetilde W$. Then each of the following three sets contains a unique maximum (with respect to the Bruhat order), and the maxima agree:
\begin{align*}
\{x_1x_2'\mid x_2'\leq x_2\},\quad \{x_1' x_2\mid x_1'\leq x_1\},\quad \{x_1' x_2'\mid x_1'\leq x_1,~x_2'\leq x_2\}.
\end{align*}
The common maximum is denoted $x_1\ast x_2$. If we write $x_1 \ast x_2 = x_1x_2' = x_1' x_2$, then
\begin{align*}
\ell(x_1\ast x_2) = \ell(x_1) + \ell(x_2') = \ell(x_1') + \ell(x_2).&\rightqed
\end{align*}
\end{proposition}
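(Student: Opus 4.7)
The plan is to follow the standard Coxeter-theoretic induction, after reducing to the affine Weyl group. Using the decomposition $\widetilde W = \Omega\ltimes W_\af$, any element factors as $x_2 = \omega y$ with $\omega\in\Omega$, $y\in W_\af$; since multiplication by length-zero elements preserves both the Bruhat order and the length function, we can absorb $\omega$ into $x_1$ and assume $x_2\in W_\af$. We then proceed by induction on $\ell(x_2)$. The base case $\ell(x_2)=0$, i.e.\ $x_2 = 1$, is immediate: all three sets reduce to $\{x_1' \mid x_1'\leq x_1\}$ or $\{x_1\}$, with common maximum $x_1$, and the length identity is trivial.

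For the inductive step, choose a simple affine reflection $s\in S_\af$ with $\ell(x_2 s)<\ell(x_2)$, set $y := x_2 s$, and let $z := x_1\ast y$ from the inductive hypothesis. The crucial auxiliary fact is the \emph{lifting property} for Coxeter groups: for $s\in S_\af$ and any $v\in\widetilde W$ with $\ell(vs)>\ell(v)$, one has $u\leq vs$ if and only if $\min(u, us)\leq v$ (minimum in the Bruhat order). I would then claim that $x_1 \ast x_2$ equals $zs$ when $\ell(zs)>\ell(z)$ and equals $z$ otherwise; denote this candidate by $z\circledast s$.

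To verify that $z\circledast s$ is the maximum of the first set $\{x_1 x_2' \mid x_2'\leq x_2\}$, split $x_2' \leq x_2 = ys$ via the lifting property: either $x_2'\leq y$, giving $x_1 x_2' \leq z\leq z\circledast s$ by induction, or $\ell(x_2' s)<\ell(x_2')$ and $x_2' s\leq y$, in which case $x_1 x_2' = (x_1(x_2' s))\,s$ is bounded by $z s$ and hence by $z \circledast s$ after a second application of the lifting property. Conversely, writing the inductive equality $z = x_1 u$ with $\ell(z) = \ell(x_1)+\ell(u)$ and $u\leq y$ produces an element realizing $z\circledast s$ inside the first set, and the length identity $\ell(x_1\ast x_2) = \ell(x_1)+\ell(u')$ is preserved along the induction. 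The middle set is treated symmetrically by choosing a simple reflection on the left end of $x_1$ and running the mirror induction on $\ell(x_1)$; the third set sits between the other two and inherits its maximum from the coincidence of those endpoints.

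The main obstacle will be the careful bookkeeping around the lifting property: one must produce, simultaneously, factorizations $z\circledast s = x_1 x_2'' = x_1'' x_2$ with length-additive products satisfying $x_1''\leq x_1$ and $x_2''\leq x_2$, and verify that the case distinction $\ell(zs)\gtrless \ell(z)$ yields the same witness on both sides. This is precisely the content of \cite[Lemma~1]{He2009}, and the argument above follows that proof; in the written paper one may simply invoke the citation rather than reproduce the induction.
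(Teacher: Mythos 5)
The paper gives no proof of this proposition: it is stated with the reference \cite[Lemma~1]{He2009} and closed directly with a QED mark. Your outline of the inductive argument (reduction to $W_\af$, induction on $\ell(x_2)$, the step $z\mapsto z\circledast s$ via the lifting property, and the need to carry along both length-additive factorizations $z\circledast s = x_1 x_2'' = x_1'' x_2$) is the correct route and matches the proof in He's paper. However, the last sentence of your sketch contains a genuine gap. Both one-sided sets are \emph{subsets} of $\{x_1'x_2'\mid x_1'\leq x_1,\ x_2'\leq x_2\}$, not the other way around, so the third set does not ``sit between'' the first two; even after you prove that the two one-sided maxima coincide, the third set could a priori contain a strictly larger element, and the equality of all three maxima does not follow by containment alone.

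What is missing is the monotonicity step that $x_1'x_2'\leq x_1\ast x_2$ whenever $x_1'\leq x_1$ and $x_2'\leq x_2$. Once the one-sided statements are established for every pair, this follows in two moves: $x_1'x_2'$ lies in the second one-sided set attached to the pair $(x_1,x_2')$, so $x_1'x_2'\leq x_1\ast x_2'$; and the first one-sided set attached to $(x_1,x_2')$ is contained in the one attached to $(x_1,x_2)$, so $x_1\ast x_2'\leq x_1\ast x_2$. It would also be worth making the coincidence of the two one-sided maxima explicit rather than leaving it to ``the mirror induction'': the simultaneous factorizations $z\circledast s = x_1 x_2'' = x_1'' x_2$ that you mention place each one-sided maximum inside the other one-sided set, so each bounds the other, forcing equality.
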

Demazure products have recently been studied in the context of affine Deligne-Lusztig varieties \cite{Sadhukhan2021, He2021b, He2021c}. While the Demazure product is a somewhat simple Coxeter-theoretic notion, it is connected to the question of generic Newton points of elements in $\widetilde W$. He \cite{He2021b} shows how to compute generic Newton points in terms of iterated Demazure products, a method that we will review in Section~\ref{sec:demazure-newton}. Conversely, He and Nie \cite{He2021c} use the Mili\'cevi\'c's formula for generic Newton points \cite{Milicevic2021} to show new properties of the Demazure product.

In this chapter, we prove a new description of Demazure products in $\widetilde W$, generalizing the aforementioned results of \cite{He2021c}. As applications, we obtain new results on the quantum Bruhat graph that shed some light on our previous results on the Bruhat order. Moreover, we give a new description of generic Newton points.
\subsection{Computation of Demazure products}
If one plays a bit with our Theorem~\ref{thm:bruhat2} or \cite[Proposition~3.3]{He2021c}, one will soon get an idea of how Demazure products should roughly look like. We capture the occurring formulas as follows.
\begin{situation}\label{sit:demazure}
Let $x_1 = w_1 \varepsilon^\mu_1, x_2 = w_2 \varepsilon^\mu_2\in \widetilde W$. Let $v_1, v_2\in W$ and define
\begin{align*}
x_1':=& w_1'\varepsilon^{\mu_1'} := (w_1 v_1) (w_2 v_2)^{-1}\varepsilon^{w_2 v_2 v_1^{-1}\mu_1 - w_2 v_2 \wt(v_1\Rightarrow w_2 v_2)},\\
x_2':=& w_2'\varepsilon^{\mu_2'} := v_1 v_2^{-1}\varepsilon^{\mu_2 - v_2 \wt(v_1\Rightarrow w_2 v_2)},\\
x_\ast :=& w_\ast \varepsilon^{\mu_\ast} := w_1 v_1 v_2^{-1}\varepsilon^{v_2 v_1^{-1}\mu_1 + \mu_2 - v_2\wt(v_1\Rightarrow w_2 v_2)} = x_1' x_2 = x_1 x_2'.
\end{align*}
\end{situation}
In this situation, we want to compute the Demazure product $x_1\ast x_2$, knowing that $x_1\ast x_2$ can be written as $\tilde x_1 x_2 = x_1\tilde x_2$ for some $\tilde x_1\leq x_1$ and $\tilde x_2\leq x_2$. If $x_1$ is in a shrunken Weyl chamber with $\LP(x_1) = v_1$, and $x_2$ is shrunken with $\LP(x_2) = \{v_2\}$, then $x_\ast = x_1\ast x_2$ by \cite[Proposition~3.3]{He2021c}, so $\tilde x_1 = x_1'$ and $\tilde x_2 = x_2'$.

In the general case, our goal is to find conditions on $v_1, v_2\in W$ to ensure that $x_\ast = x_1\ast x_2$.

Before examining this situation further, it will be very convenient for our proofs to see that the property
\begin{align*}
(x_1\ast x_2)^{-1} = x_2^{-1}\ast x_1^{-1}
\end{align*}
is reflected by our construction in Situation~\ref{sit:demazure}.

\begin{lemma}\label{lem:demazureDuality}
Suppose we are in Situation~\ref{sit:demazure}. Let us write $y_1 := x_2^{-1}$ and $y_2 := x_1^{-1}$. Define $v_1' :=w_2  v_2 w_0$ resp.\ $v_2' :=w_1  v_1 w_0$.

Construct $y_1', y_2', y_\ast$ associated with $(y_1, y_2, v_1', v_2')$ as in Situation~\ref{sit:demazure}. Then
\begin{align*}
y_1' = (x_2')^{-1},\quad y_2' = (x_1')^{-1},\quad y_\ast = x_\ast^{-1}.
\end{align*}
Moreover,
\begin{itemize}
\item $v_1 \in \LP(x_1)$ iff $v_2' \in \LP(y_1)$.
\item $v_2 \in \LP(x_2)$ iff $v_1' \in \LP(y_2)$.
\item $d_{\QB(W)}(v_1\Rightarrow w_2 v_2) = d_{\QB(W)}(v_1'\Rightarrow w_1^{-1} v_2')$ and\\ $\wt(v_1\Rightarrow w_2 v_2) = -w_0\wt(v_1'\Rightarrow w_1^{-1} v_2)$.
\end{itemize}
\end{lemma}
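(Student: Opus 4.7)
The plan is that all five assertions reduce to straightforward algebraic manipulations in $\widetilde W = W\ltimes \Xast$ once one establishes the single key identity
\begin{align*}
\wt(v_1'\Rightarrow u_2 v_2') = -w_0\wt(v_1\Rightarrow w_2 v_2),
\end{align*}
where I write $y_1 = u_1\varepsilon^{\nu_1} = w_2^{-1}\varepsilon^{-w_2\mu_2}$ and $y_2 = u_2\varepsilon^{\nu_2} = w_1^{-1}\varepsilon^{-w_1\mu_1}$, obtained by applying $(w\varepsilon^\mu)^{-1} = w^{-1}\varepsilon^{-w\mu}$ to $x_2$ and $x_1$. Granting this identity, the formulas for $y_1', y_2', y_\ast$ produced by Situation~\ref{sit:demazure} for the tuple $(y_1,y_2,v_1',v_2')$ simplify by direct substitution. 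For example, I would check $y_2' = (x_1')^{-1}$ by first computing $v_1'(v_2')^{-1} = w_2 v_2 v_1^{-1} w_1^{-1}$, which matches the finite part of $(x_1')^{-1}$, and then matching the translation parts after plugging in the key identity. The identities $y_1' = (x_2')^{-1}$ and $y_\ast = x_\ast^{-1}$ are verified in exactly the same spirit.

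To prove the key identity, observe that $u_2 v_2' = w_1^{-1}\cdot w_1 v_1 w_0 = v_1 w_0$ and $v_1' = w_2 v_2 w_0$. Part~(b) of Lemma~\ref{lem:weightIdentities}, applied to the pair $(v_1, w_2 v_2)$, gives
\begin{align*}
-w_0\wt(v_1\Rightarrow w_2 v_2) = \wt(w_0 v_1 w_0 \Rightarrow w_0 w_2 v_2 w_0),
\end{align*}
and part~(a), applied to the pair $(v_1 w_0, w_2 v_2 w_0)$, gives
\begin{align*}
\wt(w_0\cdot v_1 w_0 \Rightarrow w_0\cdot w_2 v_2 w_0) = \wt(w_2 v_2 w_0 \Rightarrow v_1 w_0) = \wt(v_1'\Rightarrow u_2 v_2').
\end{align*}
Chaining these yields the required formula. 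The companion equalities in the third bullet (for $d_{\QB(W)}$ and for $\wt$) follow along the same lines, because distances in $\QB(W)$ are clearly preserved by the automorphisms appearing in Lemma~\ref{lem:weightIdentities}.

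For the length positivity equivalences, I would first record the direct computation
\begin{align*}
\ell(x^{-1},\alpha) = \ell(x, -w^{-1}\alpha)\qquad (x = w\varepsilon^\mu\in\widetilde W,\ \alpha\in\Phi),
\end{align*}
which uses only $\Phi^+(-\gamma) = 1-\Phi^+(\gamma)$. Since $\alpha\mapsto -w_0\alpha$ is a bijection $\Phi^+\to\Phi^+$, this yields $v\in\LP(x^{-1})$ iff $w^{-1}vw_0\in\LP(x)$; equivalently, $\LP(x^{-1}) = \{\,wuw_0\mid u\in\LP(x)\,\}$. Specializing to $x = x_1$ makes the equivalence involving $v_2' = w_1 v_1 w_0$ a tautology, and specializing to $x = x_2$ handles the equivalence involving $v_1' = w_2 v_2 w_0$.

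The main obstacle is bookkeeping: the four auxiliary elements $v_1, v_2, v_1', v_2'$ play dual roles via the $u_i$ on the $y$-side and via the $w_i$ on the $x$-side, so one must track how $w_0$-conjugation, inversion of Weyl group elements, and the Weyl group action on $\Xast$ interact. Once the key weight identity has been isolated, each of the three identifications of $y_1', y_2', y_\ast$ becomes a one-line calculation, and the length positivity equivalences are immediate from the explicit formula for $\ell(x^{-1},\cdot)$.
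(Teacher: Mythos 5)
Your proposal is correct and matches the paper's route: establish the key weight identity $\wt(v_1'\Rightarrow w_1^{-1}v_2') = -w_0\wt(v_1\Rightarrow w_2 v_2)$ by chaining parts (a) and (b) of Lemma~\ref{lem:weightIdentities}, substitute into the formulas of Situation~\ref{sit:demazure} to obtain $y_1'=(x_2')^{-1}$, $y_2'=(x_1')^{-1}$, $y_\ast = x_\ast^{-1}$, and use $\LP(x^{-1})=w\LP(x)w_0$ for the length-positivity equivalences. The only superficial difference is that you derive $\LP(x^{-1})=w\LP(x)w_0$ from the elementary identity $\ell(x^{-1},\alpha)=\ell(x,-w^{-1}\alpha)$, whereas the paper cites the companion paper for the same fact.
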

\begin{proof}
Write
\begin{align*}
y_1 = w_2^{-1}\varepsilon^{-w_2 \mu_2},\quad y_2 = w_1^{-1}\varepsilon^{-w_1 \mu_1}
\end{align*}
and compute
\begin{align*}
y_2' =& (w_2v_2  w_0)( w_1v_1 w_0)^{-1}\varepsilon^{-w_1 \mu_1 - w_1v_1  w_0\wt(w_2 v_2 w_0\Rightarrow (w_1)^{-1} w_1 v_1 w_0)} 
\\=&(w_2 v_2) (w_1 v_1)^{-1} \varepsilon^{-w_1 \mu_1 + w_1 v_1\wt(v_1\Rightarrow w_2 v_2)} = (x_1')^{-1}.
\end{align*}
A similar computation, or a repetition of this argument for $x_1 = (y_2)^{-1}, x_2 = (y_1)^{-1}$, shows that $y_1' = (x_2')^{-1}$. Then the conclusion $y_\ast = x_\ast^{-1}$ is immediate.

For the \enquote{Moreover} statements, recall that
\begin{align*}
\LP(y_1) = \LP(x_2^{-1})\underset{\text{\crossRef{Lemma~}{lem:lengthFunctionalForProducts}}}=w_2 \LP(x_2) w_0.
\end{align*}
The same holds for $y_2 = x_1^{-1}$. The final statement is due to the fact that $v_1' = w_2 v_2 w_0$ and $w_1^{-1} v_2' = v_1 w_0$ using the duality anti-automorphism of the quantum Bruhat graph, cf.\ Lemma~\ref{lem:weightIdentities}.
\end{proof}
The first step towards proving $x_1\ast x_2 = x_\ast$ is the following estimate:
\begin{lemma}\label{lem:demazureLengthEstimate}
Let $x_1, x_2\in \widetilde W$ and $v_2\in \LP(x_1\ast x_2)$. There exists $v_1\in \LP(x_1)$ such that
\begin{align*}
\ell(x_1 \ast x_2) \leq \ell(x_1) + \ell(x_2) - d(v_1\Rightarrow w_2 v_2).
\end{align*}
\end{lemma}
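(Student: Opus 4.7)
The plan is to work with the factorization $x_\ast := x_1 \ast x_2 = x_1' x_2$ from Proposition~\ref{prop:demazure}, where $x_1' \leq x_1$ and $\ell(x_\ast) = \ell(x_1') + \ell(x_2)$, so that $\ell(x_1) + \ell(x_2) - \ell(x_\ast) = \ell(x_1) - \ell(x_1')$ and the claim reduces to exhibiting $v_1 \in \LP(x_1)$ with $d(v_1 \Rightarrow w_2 v_2) \leq \ell(x_1) - \ell(x_1')$. Since $x_\ast = x_1' \cdot x_2$ is length additive and $v_2 \in \LP(x_\ast)$, the sign analysis for length-positive elements under length-additive products from \crossRef{Lemma~}{lem:lengthAdditivity} forces $u := w_2 v_2 \in \LP(x_1')$.

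Next I would apply Lemma~\ref{lem:bruhat1} to $x_1' \leq x_1$ with $v := u$ to obtain some $u' \in W$ satisfying
\begin{align*}
u^{-1}\mu_1' + \wt(u' \Rightarrow u) + \wt(w_1' u \Rightarrow w_1 u') \leq (u')^{-1}\mu_1,
\end{align*}
and then iteratively adjust $u'$ via Lemma~\ref{lem:criterionAdjustment}(b) until a length-positive element $v_1 \in \LP(x_1)$ is reached. Lemma~\ref{lem:criterionAdjustment}(b) guarantees that the analogue of the displayed inequality with $u'$ replaced by $v_1$ still holds, while termination of the adjustment procedure is provided by \crossRef{Lemma~}{lem:rootFunctionalAdjustment}.

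Finally, I would pair the adjusted inequality with $2\rho$, using Lemma~\ref{lem:weight2rho} to expand each weight as a distance plus length correction, and Lemma~\ref{lem:positiveLengthFormula} at $u \in \LP(x_1')$ and $v_1 \in \LP(x_1)$ to rewrite $\langle u^{-1}\mu_1', 2\rho\rangle$ as $\ell(x_1') + \ell(u) - \ell(w_1' u)$ and $\langle v_1^{-1}\mu_1, 2\rho\rangle$ as $\ell(x_1) + \ell(v_1) - \ell(w_1 v_1)$. All auxiliary length terms cancel, leaving
\begin{align*}
\ell(x_1') + d(v_1 \Rightarrow w_2 v_2) + d(w_1' w_2 v_2 \Rightarrow w_1 v_1) \leq \ell(x_1);
\end{align*}
dropping the non-negative second distance yields $d(v_1 \Rightarrow w_2 v_2) \leq \ell(x_1) - \ell(x_1')$, which is equivalent to the claimed bound. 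The main technical point is the adjustment step — one must verify that the key weight inequality survives each replacement $u' \mapsto u' s_\alpha$ — which is precisely the content of Lemma~\ref{lem:criterionAdjustment}(b); once this machinery is in place, the rest of the argument is a bookkeeping exercise with Lemma~\ref{lem:weight2rho} and Lemma~\ref{lem:positiveLengthFormula}.
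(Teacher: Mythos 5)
Your proposal is correct and follows the same route as the paper: factor $x_1\ast x_2 = x_1' x_2$ with $x_1'\leq x_1$ length additive, deduce $w_2 v_2\in\LP(x_1')$, apply Lemma~\ref{lem:bruhat1} to $x_1'\leq x_1$ (adjusting to land in $\LP(x_1)$ via Lemma~\ref{lem:criterionAdjustment}(b)), and then pair with $2\rho$ using Lemma~\ref{lem:weight2rho} and Lemma~\ref{lem:positiveLengthFormula}. The paper compresses the adjustment step into a single phrase, and its displayed intermediate inequality contains a typo ($\leq\ell(x_2)$ where $\leq\ell(x_1)$ is meant, as the following line makes clear); your bookkeeping gives the correct version.
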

\begin{proof}
Write $x_1\ast x_2 = y x_2$ for some element $y = w' \varepsilon^{\mu'}\leq x_1$. Observe that $\ell(y x_2) = \ell(y) + \ell(x_2)$, so that $v_2$ must be length positive for $x_2$ and $w_2 v_2$ must be length positive for $y$.

Since $y\leq x_1$, using Lemma~\ref{lem:bruhat1}, we find a length positive element $v_1$ for $x_1$ such that
\begin{align*}
(w_2 v_2)^{-1}\mu' + \wt(v_1\Rightarrow w_2 v_2) + \wt(w'w_2 v_2\Rightarrow w_1 v_1)\leq (v_1)^{-1}\mu_1.
\end{align*}
Pairing with $2\rho$ and using Lemma~\ref{lem:weight2rho}, we compute
\begin{align*}
&\langle 2\rho, (w_2 v_2)^{-1}\mu'\rangle + \ell(v_1) - \ell(w_2 v_2) \\&\qquad+ d(v_1\Rightarrow w_2 v_2) + \ell(w'w_2 v_2) - \ell(w_1 v_1) + d(w' w_2 v_2\Rightarrow w_1 v_1)\\&\leq \langle 2\rho,(v_1)^{-1}\mu_1\rangle.
\end{align*}
Using the length positivity of $w_2 v_2$ for $y$ and $v_1$ for $x_1$ (\positiveLengthFormulaLong{}), we conclude
\begin{align*}
\ell(y) + d(v_1\Rightarrow w_2 v_2) + d(w'w_2 v_2\Rightarrow w_1v_1)\leq \ell(x_2).
\end{align*}
Thus
\begin{align*}
\ell(x_1\ast x_2) = \ell(y) + \ell(x_2) \leq \ell(x_1) + \ell(x_2) - d(v_1\Rightarrow w_2v_2) - d(w' w_2 v_2\Rightarrow w_1 v_1).
\end{align*}
We obtain the desired conclusion.
\end{proof}
We now study the Situation~\ref{sit:demazure} further.
\begin{lemma}\label{lem:analysis_x1}
Consider Situation~\ref{sit:demazure}, and assume that $v_1\in \LP(x_1)$. Then we always have the estimate
\begin{align*}
\ell(x_1') \geq \ell(x_1) - d_{\QB(W)}(v_1\Rightarrow w_2 v_2).
\end{align*}
The following are equivalent:
\begin{enumerate}[(i)]
\item Equality holds above:
\begin{align*}
\ell(x_1') = \ell(x_1) - d_{\QB(W)}(v_1\Rightarrow w_2 v_2).
\end{align*}
\item $w_2 v_2$ is length positive for $x_1'$.
\item For any positive root $\alpha$, we have
\begin{align*}
\ell(x_1,v_1\alpha) - \langle \wt(v_1\Rightarrow w_2 v_2),\alpha\rangle+ \Phi^+(w_2 v_2\alpha) - \Phi^+(v_1\alpha) \geq 0
\end{align*}
\end{enumerate}
In that case, $x_1'\leq x_1$, so that $x_\ast \leq x_1\ast x_2$.
\end{lemma}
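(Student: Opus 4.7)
The plan is to derive the length estimate from \positiveLengthFormulaLong{} applied to $x_1' = w_1'\varepsilon^{\mu_1'}$ with the candidate element $w_2 v_2$, and then to read off both equivalences as direct reformulations. The two identities that do all the work are
\begin{align*}
w_1'\cdot w_2v_2 = w_1 v_1,\qquad (w_2v_2)^{-1}\mu_1' = v_1^{-1}\mu_1 - \wt(v_1\Rightarrow w_2v_2),
\end{align*}
which are immediate from the definition of $x_1'$ in Situation~\ref{sit:demazure}.

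For the inequality, I would substitute these into Lemma~\ref{lem:positiveLengthFormula} with $v=w_2v_2$, and then pair the weight term with $2\rho$ via Lemma~\ref{lem:weight2rho}. This yields
\begin{align*}
\ell(x_1')\geq \langle v_1^{-1}\mu_1,2\rho\rangle - \ell(v_1) + \ell(w_1v_1) - d_{\QB(W)}(v_1\Rightarrow w_2v_2).
\end{align*}
Because $v_1\in\LP(x_1)$, the first three terms on the right equal $\ell(x_1)$, again by \positiveLengthFormulaLong{}. The equivalence (i)$\iff$(ii) is then exactly the equality clause of Lemma~\ref{lem:positiveLengthFormula}.

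For (ii)$\iff$(iii), I would just unfold the definition of $\ell(x_1',w_2v_2\alpha)$ using $w_1'w_2v_2 = w_1v_1$ and the expression for $(w_2v_2)^{-1}\mu_1'$ above, and compare with the definition of $\ell(x_1,v_1\alpha)$. The $\Phi^+(w_1v_1\alpha)$ terms cancel, and the remaining quantity is precisely the expression in (iii). Hence condition (iii) is literally the statement that $w_2v_2$ is length positive for $x_1'$.

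It remains to check $x_1'\leq x_1$ in the equality case, from which $x_\ast = x_1'x_2$ immediately gives $x_\ast\leq x_1\ast x_2$ by Proposition~\ref{prop:demazure}. For this I would apply Theorem~\ref{thm:bruhat2} to the pair $(x_1', x_1)$ using the Bruhat-deciding datum $(w_2v_2,\emptyset)$ for $x_1'$, which is legitimate by (ii). Taking $v_1' := v_1$ as the witness, the left-hand side of the criterion becomes
\begin{align*}
(w_2v_2)^{-1}\mu_1' + \wt(v_1\Rightarrow w_2v_2) + \wt(w_1'w_2v_2\Rightarrow w_1v_1) = v_1^{-1}\mu_1,
\end{align*}
because $w_1'w_2v_2 = w_1v_1$ makes the last weight vanish, and the first two terms telescope using the identity for $(w_2v_2)^{-1}\mu_1'$. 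This is exactly the required inequality, so Theorem~\ref{thm:bruhat2} gives $x_1'\leq x_1$. The only mildly delicate step is keeping the book-keeping of $\Phi^+(\cdot)$ contributions straight in the unfolding for (iii); everything else is a direct substitution.
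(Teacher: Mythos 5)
Your proof is correct and follows essentially the same approach as the paper: apply \positiveLengthFormulaLong{} to $x_1'$ with candidate $w_2 v_2$, convert the weight term via Lemma~\ref{lem:weight2rho} and use $v_1\in\LP(x_1)$ to identify the remaining terms with $\ell(x_1)$, read off (i)$\iff$(ii) from the equality clause, unwind the definition of $\ell(x_1',w_2v_2\alpha)$ for (ii)$\iff$(iii), and invoke Theorem~\ref{thm:bruhat2} with the Bruhat-deciding datum $(w_2v_2,\emptyset)$ and witness $v_1$ to get $x_1'\leq x_1$. The telescoping computation you give at the end is exactly the one the paper performs.
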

\begin{proof}
Consider the calculation
\begin{align*}
\ell(x_1') \underset{\text{\positiveLengthFormulaShort}}\geq& \left\langle (w_2 v_2)^{-1}\left(w_2 v_2 v_1^{-1}\mu_1 - w_2 v_2 \wt(v_1\Rightarrow w_2 v_2)\right),2\rho\right\rangle - \ell(w_2 v_2) + \ell(w_1 v_1)
\\\underset{\text{L\ref{lem:weight2rho}}}=&\langle v_1^{-1}\mu,2\rho\rangle - \ell(v_1) + \ell(w_2 v_2) - d(v_1\Rightarrow w_2 v_2)- \ell(w_2 v_2) + \ell(w_1 v_1)
\\\underset{\text{\positiveLengthFormulaShort}}=&\ell(x_1) - d(v_1\Rightarrow w_2 v_2).
\end{align*}
This shows the estimate and (i) $\iff$ (ii). In order to show (ii) $\iff$ (iii), we compute
\begin{align*}
\ell(x_1', w_2 v_2\alpha)=&\langle w_2 v_2\alpha, w_2 v_2 v_1^{-1}\mu_1 - w_2 v_2\wt(v_1\Rightarrow w_2 v_2),\alpha\rangle + \Phi^+(w_2 v_2\alpha) - \wt(w_1 v_1\alpha)
\\=&\ell(x_1,v_1\alpha) - \Phi^+(v_1\alpha) - \langle \wt(v_1\Rightarrow w_2 v_2),\alpha\rangle + \Phi^+(w_2 v_2\alpha).
\end{align*}

Finally, assume that (i) -- (iii) are satisfied. We have to show $x_1'\leq x_1$. For this, we calculate
\begin{align*}
&(w_2 v_2)^{-1}\left(w_2 v_2 v_1^{-1}\mu_1 - w_2 v_2 \wt(v_1\Rightarrow w_2 v_2)\right) + \wt(v_1\Rightarrow w_2 v_2) \\&\qquad+ \wt(w_1 v_1\Rightarrow w_1 v_1) \\=&~ v_1^{-1}\mu_1.
\end{align*}
Since we assumed $w_2 v_2\in \LP(x_1')$, we conclude $x_1'\leq x_1$ by Theorem~\ref{thm:bruhat2}. Now by definition of the Demazure product, we get $x_\ast = x_1' x_2\leq x_1\ast x_2$.
\end{proof}
By the duality presented in Lemma~\ref{lem:demazureDuality}, we obtain the following:
\begin{lemma}\label{lem:analysis_x2}
Consider Situation~\ref{sit:demazure}, and assume that $v_2\in \LP(x_2)$. Then we always have the estimate
\begin{align*}
\ell(x_2') \geq \ell(x_2) - d_{\QB(W)}(v_1\Rightarrow w_2 v_2).
\end{align*}
The following are equivalent:
\begin{enumerate}[(i)]
\item Equality holds above:
\begin{align*}
\ell(x_2') = \ell(x_2) - d_{\QB(W)}(v_1\Rightarrow w_2 v_2).
\end{align*}
\item $v_2$ is length positive for $x_2'$.
\item For any positive root $\alpha$, we have
\begin{align*}
\ell(x_2, v_2\alpha) - \langle\wt(v_1\Rightarrow w_2 v_2),\alpha\rangle + \Phi^+(w_2 v_2\alpha) - \Phi^+(v_1\alpha)\geq 0.
\end{align*}
\end{enumerate}
In that case, $x_2'\leq x_2$, so that $x_\ast \leq x_1\ast x_2$.
\end{lemma}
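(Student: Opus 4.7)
The strategy is to reduce the statement to Lemma~\ref{lem:analysis_x1} via the duality machinery set up in Lemma~\ref{lem:demazureDuality}. Concretely, I put $y_1 = x_2^{-1}$, $y_2 = x_1^{-1}$, $v_1' = w_2 v_2 w_0$, $v_2' = w_1 v_1 w_0$, and consider the dual Situation~\ref{sit:demazure} formed from the data $(y_1, y_2, v_1', v_2')$. By Lemma~\ref{lem:demazureDuality}, the dual-$x_1'$ equals $y_1' = (x_2')^{-1}$, the classical parts satisfy $\cl(y_1) = w_2^{-1}$, and the relevant distance in $\QB(W)$ is preserved: $d(v_1' \Rightarrow \cl(y_2)^{-1} v_2') = d(v_1' \Rightarrow w_1^{-1} v_2') = d(v_1 \Rightarrow w_2 v_2)$.

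Next I translate the hypothesis. The hypothesis $v_2 \in \LP(x_2)$ is equivalent, via the identity $\LP(x^{-1}) = \cl(x)\LP(x)w_0$, to $v_1' = w_2 v_2 w_0 \in \LP(y_1)$, which is precisely the hypothesis of Lemma~\ref{lem:analysis_x1} for the dual setup. Applying that lemma yields the dual length estimate
\[ \ell(y_1') \geq \ell(y_1) - d(v_1' \Rightarrow w_1^{-1} v_2'), \]
which, using $\ell(y_1') = \ell((x_2')^{-1}) = \ell(x_2')$, $\ell(y_1) = \ell(x_2)$, and the distance identity above, is exactly the desired estimate $\ell(x_2') \geq \ell(x_2) - d(v_1 \Rightarrow w_2 v_2)$. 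The final claim $x_2' \leq x_2$ in the equality case follows from the corresponding dual conclusion $y_1' \leq y_1$, since the Bruhat order is preserved under inversion.

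It remains to translate the three equivalent conditions. For (i), the equality in the length estimate is evidently the same on both sides. For (ii), the dual condition reads $w_1^{-1} v_2' \in \LP(y_1')$, and since $w_1^{-1} v_2' = v_1 w_0 = \cl(x_2') \cdot v_2 \cdot w_0$, another application of $\LP((x_2')^{-1}) = \cl(x_2')\LP(x_2')w_0$ shows this is equivalent to $v_2 \in \LP(x_2')$. For (iii), I substitute $\alpha' := -w_0 \alpha$ (which ranges over $\Phi^+$ as $\alpha$ does) into the dual inequality. Using the identity $\ell(x^{-1}, -w\beta) = \ell(x, \beta)$ one obtains $\ell(y_1, v_1'\alpha) = \ell(x_2, v_2\alpha')$, the weight identity $\wt(v_1' \Rightarrow w_1^{-1}v_2') = -w_0 \wt(v_1 \Rightarrow w_2 v_2)$ from Lemma~\ref{lem:demazureDuality} gives $\langle \wt(v_1' \Rightarrow w_1^{-1}v_2'), \alpha\rangle = \langle \wt(v_1 \Rightarrow w_2 v_2), \alpha'\rangle$, and the $\Phi^+$-terms transform via $\Phi^+(-\beta) = 1 - \Phi^+(\beta)$ with the two $\pm 1$ contributions cancelling. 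The dual inequality becomes exactly (iii) for $\alpha'$, as required.

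The main obstacle is really just the bookkeeping in translating condition (iii): one has to track three sign-flips (from $\alpha \to -w_0\alpha$, from the weight duality, and from the two $\Phi^+$-indicators) and verify that they combine to leave the inequality form-invariant. Everything else is a direct transport along the involution $x \mapsto x^{-1}$.
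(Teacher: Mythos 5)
Your proof is correct and follows exactly the paper's approach, which is to reduce to Lemma~\ref{lem:analysis_x1} via the inversion duality of Lemma~\ref{lem:demazureDuality}; the paper simply states this reduction as a one-liner, while you have carefully worked out the translation of each of the three conditions (correctly handling the sign bookkeeping under $\alpha\mapsto -w_0\alpha$, the weight identity, and the $\Phi^+$ indicators). Note one small slip in your write-up: where you write $\cl(y_2)^{-1}v_2'$ you mean $\cl(y_2)v_2' = w_1^{-1}v_2'$, as your subsequent explicit computation shows.
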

\begin{proof}
Under Lemma~\ref{lem:demazureDuality}, this is precisely Lemma~\ref{lem:analysis_x1}.
\end{proof}
\begin{lemma}\label{lem:analysis_xast}
Suppose we are given Situation~\ref{sit:demazure}, and that $v_1\in \LP(x_1)$ and $v_2\in \LP(x_2)$. We have the estimate
\begin{align*}
\ell(x_\ast) \geq \ell(x_1) + \ell(x_2) - d(v_1\Rightarrow w_2 v_2).
\end{align*}
Equality holds if and only if $v_2\in \LP(x_\ast)$.
\end{lemma}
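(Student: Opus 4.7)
The plan is a direct computation using the length formula of Lemma~\ref{lem:positiveLengthFormula} (which gives $\ell(x) \geq \langle v^{-1}\mu, 2\rho\rangle - \ell(v) + \ell(wv)$ with equality exactly when $v\in\LP(x)$) applied to $x_\ast$ with the test element $v_2$.

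First I would unpack the definition of $x_\ast = w_\ast\varepsilon^{\mu_\ast}$ from Situation~\ref{sit:demazure} under the action of $v_2$. A short computation shows
\[
v_2^{-1}\mu_\ast = v_1^{-1}\mu_1 + v_2^{-1}\mu_2 - \wt(v_1\Rightarrow w_2 v_2), \qquad w_\ast v_2 = w_1 v_1.
\]
Applying \positiveLengthFormulaLong{} to $x_\ast$ with the element $v_2\in W$ yields
\[
\ell(x_\ast) \geq \langle v_1^{-1}\mu_1, 2\rho\rangle + \langle v_2^{-1}\mu_2, 2\rho\rangle - \langle\wt(v_1\Rightarrow w_2 v_2), 2\rho\rangle - \ell(v_2) + \ell(w_1 v_1),
\]
with equality if and only if $v_2\in\LP(x_\ast)$.

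Next I would substitute $\langle\wt(v_1\Rightarrow w_2 v_2), 2\rho\rangle = d(v_1\Rightarrow w_2 v_2) + \ell(v_1) - \ell(w_2 v_2)$ using Lemma~\ref{lem:weight2rho}, and then regroup the right-hand side as
\[
\bigl[\langle v_1^{-1}\mu_1, 2\rho\rangle - \ell(v_1) + \ell(w_1 v_1)\bigr] + \bigl[\langle v_2^{-1}\mu_2, 2\rho\rangle - \ell(v_2) + \ell(w_2 v_2)\bigr] - d(v_1\Rightarrow w_2 v_2).
\]
By the assumption $v_1\in\LP(x_1)$ and $v_2\in\LP(x_2)$, the two bracketed expressions equal $\ell(x_1)$ and $\ell(x_2)$ respectively (again by Lemma~\ref{lem:positiveLengthFormula}, this time in its equality form). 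This gives exactly the claimed inequality.

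For the equality statement, the only inequality used was the initial application of \positiveLengthFormulaLong{} to $x_\ast$; all subsequent manipulations were equalities given the length positivity of $v_1$ for $x_1$ and $v_2$ for $x_2$. Hence equality in the overall estimate is equivalent to $v_2\in\LP(x_\ast)$. There is no real obstacle here — the statement is essentially a bookkeeping exercise combining the two fundamental tools (length positivity formula and the $2\rho$-pairing of weights) that have already been developed in the preceding sections.
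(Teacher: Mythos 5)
Your proof is correct and follows essentially the same route as the paper: apply \positiveLengthFormulaLong{} to $x_\ast$ with the test element $v_2$, convert $\langle\wt(v_1\Rightarrow w_2 v_2),2\rho\rangle$ via Lemma~\ref{lem:weight2rho}, and regroup using length positivity of $v_1$ for $x_1$ and $v_2$ for $x_2$. The observation that the only inequality in the chain is the initial application of the length formula, so that equality is equivalent to $v_2\in\LP(x_\ast)$, is exactly how the paper closes the argument.
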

\begin{proof}Using again \positiveLengthFormulaLong and Lemma~\ref{lem:weight2rho}, we calculate
\begin{align*}
\ell(x_\ast)\geq&\left\langle v_2^{-1}\left( v_2 v_1^{-1}\mu_1 + \mu_2 - v_2\wt(v_1\Rightarrow w_2 v_2)\right),2\rho\right\rangle - \ell(v_2) + \ell(w_1 v_1)
\\=&\langle v_1^{-1}\mu_1,2\rho\rangle + \langle v_2^{-1}\mu_2,2\rho\rangle - d(v_1\Rightarrow w_2 v_2) - \ell(v_1) + \ell(w_2 v_2) + \ell(v_2) + \ell(w_1 v_1)
\\=&\ell(x_1) + \ell(x_2) - d(v_1\Rightarrow w_2 v_2)
\end{align*}
Both claims follow from this calculation.
\end{proof}
\begin{lemma}\label{lem:genericActionConstruction}
Let $x = w\varepsilon^\mu\in \widetilde W$ and $u\in W$. Among all $v\in \LP(x)$, there is a unique one such that $d(v\Rightarrow u)$ becomes minimal. For this particular $v$, we have
\begin{align*}
\forall \alpha\in \Phi^+:~
\ell(x,v\alpha) - \langle \wt(v\Rightarrow u),\alpha\rangle+ \Phi^+(u\alpha) - \Phi^+(v\alpha) \geq 0.
\end{align*}
\end{lemma}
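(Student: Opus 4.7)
The plan is to construct $v$ by a descent procedure on $d(\cdot \Rightarrow u)$ within $\LP(x)$, and then derive both minimality and uniqueness from the inequality it satisfies. Starting from an arbitrary $v_0 \in \LP(x)$ (whose existence is guaranteed by \crossRef{Lemma~}{lem:rootFunctionalAdjustment}), whenever the stated inequality fails at some $\alpha \in \Phi^+$ I would replace $v_0$ by $v_0 s_\alpha$, after picking $\alpha$ minimally (for instance, of minimal height among violators). The critical claim, which requires a careful case analysis on the signs of $v_0\alpha$ and $u\alpha$, is that this replacement both preserves length positivity for $x$ and that $v_0 \to v_0 s_\alpha$ is an edge of $\QB(W)$ lying on a shortest path to $u$, so that $d(v_0 s_\alpha \Rightarrow u) = d(v_0 \Rightarrow u) - 1$. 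The main tools are Lemma~\ref{lem:qbgDiamond}, which tracks how $d(\cdot \Rightarrow u)$ and $\wt(\cdot \Rightarrow u)$ change under a single reflection, and \weightEstimateLong, which controls $\wt(v_0 s_\alpha \Rightarrow u)$ in terms of $\wt(v_0 \Rightarrow u)$; the quantitative failure of the inequality is strong enough to force the desired shortest-path edge to exist. Since $d(\cdot \Rightarrow u) \geq 0$, the descent terminates at some $v \in \LP(x)$ satisfying the inequality for every $\alpha \in \Phi^+$.

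For minimality, I would use the global identity
\begin{align*}
\sum_{\alpha \in \Phi^+} \bigl(\ell(x, v\alpha) - \langle \wt(v \Rightarrow u), \alpha\rangle + \Phi^+(u\alpha) - \Phi^+(v\alpha)\bigr) = \ell(x) - d(v \Rightarrow u),
\end{align*}
which is valid for every $v \in \LP(x)$ by combining Lemma~\ref{lem:weight2rho} with \positiveLengthFormulaLong{} and the standard count $\sum_{\alpha\in\Phi^+}\Phi^+(u\alpha) = \#\Phi^+ - \ell(u)$. Together with the descent, this shows that starting from any $v' \in \LP(x)$ and running the procedure yields a $v \in \LP(x)$ with $d(v \Rightarrow u) \leq d(v' \Rightarrow u)$, so the terminal $v$ is indeed a minimizer.

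For uniqueness, suppose $v_1, v_2 \in \LP(x)$ both satisfy the inequality. I would consider a shortest path $v_1 \to v_1' \to \cdots \to v_2$ in $\QB(W)$; each edge of this path, applied via Lemma~\ref{lem:qbgDiamond}, converts the termwise inequality for $v_1$ into a termwise inequality for $v_1'$ that would have to be saturated in a way incompatible with the length-positivity of both endpoints, forcing the path to be trivial and hence $v_1 = v_2$.

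The hardest part will be the descent step: ensuring that the reflection $v_0 \to v_0 s_\alpha$ simultaneously preserves length positivity for $x$ and strictly decreases $d(\cdot \Rightarrow u)$. The uniform treatment of the four sign combinations of $v_0 \alpha$ and $u\alpha$, and the possible need for a small further adjustment to remain inside $\LP(x)$, form the principal technical obstacle.
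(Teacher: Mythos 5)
Your approach is genuinely different from the paper's, and the high-level strategy has appealing features (the global identity you state is correct and is a useful consistency check), but the proposal has a significant gap at exactly the step you yourself flag as hardest: the descent step.

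Concretely, you assert that if $v_0\in\LP(x)$ and the inequality
\begin{align*}
\ell(x,v_0\alpha) - \langle \wt(v_0\Rightarrow u),\alpha\rangle+ \Phi^+(u\alpha) - \Phi^+(v_0\alpha) \geq 0
\end{align*}
fails at some $\alpha\in\Phi^+$, then $v_0\to v_0 s_\alpha$ is a $\QB(W)$-edge on a shortest path to $u$ with $v_0 s_\alpha\in\LP(x)$. Neither claim is established, and I do not believe either follows from Lemma~\ref{lem:qbgDiamond} or \weightEstimateLong{} without substantially more work. Lemma~\ref{lem:qbgDiamond} only controls the distance and weight under \emph{left} multiplication by reflections attached to \emph{simple} affine roots, whereas your step is a \emph{right} reflection by a possibly non-simple root. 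Moreover, a large value of $\langle\wt(v_0\Rightarrow u),\alpha\rangle$ for some $\alpha\in\Phi^+$ does not by itself produce an edge: shortest paths are built from quantum roots and Bruhat edges, and there is no reason a positive pairing at $\alpha$ decomposes as a single edge labelled by $\alpha$. Staying in $\LP(x)$ after the reflection is equally unproven: since $v_0\in\LP(x)$ means $\ell(x,v_0\beta)\geq 0$ for all $\beta\in\Phi^+$, one must check every root $\beta$ for $v_0 s_\alpha$, and length positivity is simply not preserved by right reflections in general (you acknowledge a ``small further adjustment'' may be needed, but that undermines the strict decrease of $d$). Your uniqueness sketch is also vague at the crucial step; ``incompatible with the length-positivity of both endpoints'' is not an argument, and it is not hard to imagine two distinct elements of $\LP(x)$ at equal minimal distance unless one does real work.

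The paper sidesteps all of this by reducing to the Demazure product machinery it has already built: it sets $x_2 = \varepsilon^{u\lambda}$ for superregular dominant $\lambda$ and uses Situation~\ref{sit:demazure}, noting that the analogous termwise inequality for $x_2$ (Lemma~\ref{lem:analysis_x2}(iii)) holds automatically by superregularity, so $x_\ast \leq x_1\ast x_2$ for free. The length sandwich from Lemmas \ref{lem:demazureLengthEstimate} and \ref{lem:analysis_xast} then forces $x_1\ast x_2 = x_\ast$, and uniqueness of $v$ drops out because the Demazure product is a canonical operation whose classical part $wvu^{-1}$ determines $v$. The termwise inequality for $x_1$ is then extracted at the end from Lemma~\ref{lem:analysis_x1}. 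In short, the superregular trick buys the minimality and the inequality simultaneously from the length-positivity characterization already proved, rather than by pointwise descent. If you want to pursue a direct combinatorial route, you would essentially have to reprove the content of Lemmas \ref{lem:analysis_x1}--\ref{lem:analysis_xast} in a more elementary way, which is a considerably bigger undertaking than the proposal suggests.
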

\begin{proof}
Let $x_2 = t^{u\lambda}$ with $\lambda\in \Xast$ superregular and dominant.
Let $v = v_1\in \LP(x)$ such that $d(v\Rightarrow u)$ becomes minimal. Set $v_2 = u$.

Consider Situation~\ref{sit:demazure} for $x_1 = x$ and $x_2$ as above. Now the condition (iii) of Lemma~\ref{lem:analysis_x2} is satisfied by superregularity of $\lambda$. We conclude that $x_2'\leq x_2$, so that $x_\ast \leq x\ast x_2$.

Combining Lemma~\ref{lem:demazureLengthEstimate} with Lemma~\ref{lem:analysis_xast} shows
\begin{align*}
\ell(x) + \ell(x_2) - d(v\Rightarrow u) \geq \ell(x_1\ast x_2) \geq \ell(x_\ast)\geq
\ell(x) + \ell(x_2) - d(v\Rightarrow u).
\end{align*}
In particular, we get $x_1\ast x_2 = x_\ast$.

The above argument works whenever $v\in \LP(x)$ is chosen such that $d(v\Rightarrow u)$ becomes minimal. Since the value of $x_1\ast x_2$ does not depend on the choice of such an element $v$, nor does $x_\ast = x_1\ast x_2$. In particular, the classical part $\mathrm{cl}(x_\ast) = wvu^{-1}$ does not depend on $v$, hence $v$ is uniquely determined.

The formula $x_\ast = x_1\ast x_2 = x_1' x_2$ implies that $\ell(x_\ast) = \ell(x_1') + \ell(x_2)$. Using the previously computed length of $x_\ast$, we conclude $\ell(x_1') = \ell(x_1) - d(v\Rightarrow u)$. Now the estimate follows from Lemma~\ref{lem:analysis_x1}.
\end{proof}
Considering Lemma~\ref{lem:genericActionConstruction} for the inverse $x^{-1}$, we obtain the following:
\begin{lemma}\label{lem:genericActionDualConstruction}
Let $x = w\varepsilon^\mu\in \widetilde W$ and $u\in W$. Among all $v\in \LP(x)$, there is a unique one such that $d(u\Rightarrow wv)$ becomes minimal. For this particular $v$, we have
\begin{align*}
&\forall \alpha\in \Phi^+:~
\ell(x,v\alpha) - \langle\wt(u\Rightarrow wv),\alpha\rangle- \Phi^+(u\alpha) + \Phi^+(wv\alpha) \geq 0.\rightqed
\end{align*}
\end{lemma}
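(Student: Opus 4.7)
The plan is to deduce this lemma from Lemma~\ref{lem:genericActionConstruction} by applying that result to the inverse $x^{-1} = w^{-1}\varepsilon^{-w\mu}$ together with the parameter $u' := uw_0 \in W$, then translating everything back via the involution $v \mapsto \tilde v := wvw_0$.

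First I would set up the dictionary between $x$ and $x^{-1}$. A direct expansion of the length functional yields the identity $\ell(x^{-1},\beta) = \ell(x,-w^{-1}\beta)$ for all $\beta\in\Phi$. Combined with the fact that $\beta\mapsto -w_0\beta$ is a bijection on $\Phi^+$, this gives a bijection $\LP(x)\xrightarrow{\sim}\LP(x^{-1})$ via $v\mapsto \tilde v = wvw_0$. (This is essentially the same bijection already employed in Lemma~\ref{lem:demazureDuality}.)

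The key ingredient is that $w\mapsto ww_0$ induces an anti-automorphism of the weighted quantum Bruhat graph. This can be obtained by composing the anti-automorphism $w\mapsto w_0w$ of Lemma~\ref{lem:weightIdentities}(a) with the automorphism $w\mapsto w_0ww_0$ of Lemma~\ref{lem:weightIdentities}(b), yielding
\begin{align*}
d(a\Rightarrow b) = d(bw_0\Rightarrow aw_0),\qquad \wt(a\Rightarrow b) = -w_0\wt(bw_0\Rightarrow aw_0).
\end{align*}
Taking $a=u$, $b=wv$, the first identity converts $d(u\Rightarrow wv)$ into $d(\tilde v\Rightarrow u')$, so minimizing the former over $v\in\LP(x)$ is equivalent to minimizing the latter over $\tilde v\in\LP(x^{-1})$. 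The uniqueness of the minimizer then follows directly from Lemma~\ref{lem:genericActionConstruction}.

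For the inequality, I would substitute $\alpha := -w_0\beta$ (again a bijection $\Phi^+\to\Phi^+$) into the conclusion of Lemma~\ref{lem:genericActionConstruction} for $(x^{-1},u',\tilde v)$. Each of the four terms translates cleanly: $\ell(x^{-1},\tilde v\alpha) = \ell(x,v\beta)$ by the dictionary above, the pairing $\langle\wt(\tilde v\Rightarrow u'),\alpha\rangle$ becomes $\langle\wt(u\Rightarrow wv),\beta\rangle$ using the weight identity, and the two indicator terms $\Phi^+(u'\alpha)$ and $\Phi^+(\tilde v\alpha)$ become $1-\Phi^+(u\beta)$ and $1-\Phi^+(wv\beta)$ respectively, the constant $+1$'s cancelling. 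The resulting inequality is precisely the claimed bound. The only real obstacle is carefully tracking the anti-automorphism $w\mapsto ww_0$ on the weighted quantum Bruhat graph, but this is pure bookkeeping with no new conceptual ingredient beyond Lemma~\ref{lem:weightIdentities}.
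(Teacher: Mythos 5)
Your proposal is correct and follows exactly the approach the paper indicates: the paper states before the lemma "Considering Lemma~\ref{lem:genericActionConstruction} for the inverse $x^{-1}$, we obtain the following:" with no further proof given, and you have simply filled in the bookkeeping — the length-functional identity $\ell(x^{-1},\beta)=\ell(x,-w^{-1}\beta)$, the resulting bijection $\LP(x)\to\LP(x^{-1})$, $v\mapsto wvw_0$, and the anti-automorphism $\wt(a\Rightarrow b)=-w_0\wt(bw_0\Rightarrow aw_0)$ obtained from Lemma~\ref{lem:weightIdentities}. All four terms translate as you describe, so the argument is complete and matches the paper.
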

\begin{definition}\label{def:genericAction}
Let $x \in \widetilde W$ and $u\in W$. The uniquely determined $v\in \LP(x)$ such that $d(v\Rightarrow u)$ is minimal will be denoted by $v = \rho^\vee_x(u)$. The uniquely determined $v\in \LP(x)$ such that $d(u\Rightarrow wv)$ is minimal will be denoted by $v = \rho_x(u) = w^{-1}\rho^\vee_{x^{-1}}(uw_0)w_0$.
\end{definition}
The functions $\rho_x$ and $\rho_x^\vee$ will be studied in Section~\ref{sec:generic-action}. For now, we state our announced description of Demazure products in $\widetilde W$.
\begin{theorem}\label{thm:demazure}
Let $x_1 = w_1\varepsilon^{\mu_1}, x_2 = w_2\varepsilon^{\mu_2}\in \widetilde W$. Among all pairs $(v_1, v_2)\in \LP(x_1)\times \LP(x_2)$, pick one such that the distance $d(v_1\Rightarrow w_2 v_2)$ becomes minimal.

Construct $x_\ast$ as in Situation~\ref{sit:demazure}. Then
\begin{align*}
&x_1\ast x_2 = x_\ast = w_1 v_1 \varepsilon^{v_1^{-1}\mu_1 + v_2^{-1}\mu_2 - \wt(v_1\Rightarrow w_2 v_2)}v_2^{-1},\\& \ell(x_1\ast x_2) = \ell(x_1) + \ell(x_2) - d(v_1\Rightarrow w_2 v_2),\\&v_2\in \LP(x_1\ast x_2).
\end{align*}
\end{theorem}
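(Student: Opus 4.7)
The strategy is to combine the upper and lower length bounds provided by Lemmas~\ref{lem:demazureLengthEstimate} and \ref{lem:analysis_xast} with the existence results of Lemmas~\ref{lem:genericActionConstruction} and \ref{lem:genericActionDualConstruction}, which are precisely what is needed to verify the optimality conditions of Lemmas~\ref{lem:analysis_x1} and \ref{lem:analysis_x2}.

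First, I would argue that any overall minimizer $(v_1, v_2) \in \LP(x_1) \times \LP(x_2)$ of $d(v_1 \Rightarrow w_2 v_2)$ automatically satisfies conditions~(iii) of both Lemmas~\ref{lem:analysis_x1} and \ref{lem:analysis_x2}. Indeed, with $v_2$ held fixed, $v_1$ must minimize $d(\cdot \Rightarrow w_2 v_2)$ over $\LP(x_1)$; by the uniqueness part of Lemma~\ref{lem:genericActionConstruction} (applied with $u = w_2 v_2$), this forces $v_1 = \rho^\vee_{x_1}(w_2 v_2)$, which is precisely the content of Lemma~\ref{lem:analysis_x1}~(iii). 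Symmetrically, fixing $v_1$ and varying $v_2$, Lemma~\ref{lem:genericActionDualConstruction} applied to $x_2$ with $u = v_1$ identifies $v_2 = \rho_{x_2}(v_1)$ and delivers Lemma~\ref{lem:analysis_x2}~(iii).

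With both conditions verified, Lemmas~\ref{lem:analysis_x1} and \ref{lem:analysis_x2} yield $x_1' \leq x_1$ and $x_2' \leq x_2$, hence $x_\ast = x_1' x_2 = x_1 x_2' \leq x_1 \ast x_2$. Moreover, since $w_2 v_2 \in \LP(x_1')$ by condition~(ii) of Lemma~\ref{lem:analysis_x1} while $v_2 \in \LP(x_2)$, the product $x_1' x_2$ is length additive by \crossRef{Lemma~}{lem:lengthAdditivity}, giving
\[\ell(x_\ast) = \ell(x_1') + \ell(x_2) = \ell(x_1) + \ell(x_2) - d(v_1 \Rightarrow w_2 v_2).\]

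For the matching upper bound on $\ell(x_1 \ast x_2)$, I would invoke Lemma~\ref{lem:demazureLengthEstimate}: pick any $v_2^\circ \in \LP(x_1 \ast x_2)$. Since $x_1 \ast x_2 = y x_2$ is length additive, $v_2^\circ$ lies in $\LP(x_2)$, and the lemma supplies $v_1^\circ \in \LP(x_1)$ with $\ell(x_1 \ast x_2) \leq \ell(x_1) + \ell(x_2) - d(v_1^\circ \Rightarrow w_2 v_2^\circ)$. The minimality of $(v_1, v_2)$ over $\LP(x_1) \times \LP(x_2)$ forces $d(v_1^\circ \Rightarrow w_2 v_2^\circ) \geq d(v_1 \Rightarrow w_2 v_2)$, so $\ell(x_1 \ast x_2) \leq \ell(x_\ast)$. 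Combined with $x_\ast \leq x_1 \ast x_2$, this forces $x_\ast = x_1 \ast x_2$, and Lemma~\ref{lem:analysis_xast} then yields $v_2 \in \LP(x_\ast)$. The only subtle point is ensuring that a \emph{joint} minimizer over the product set satisfies both one-sided optimality conditions simultaneously; this is automatic once the uniqueness clauses of Lemmas~\ref{lem:genericActionConstruction} and \ref{lem:genericActionDualConstruction} are applied coordinate-wise.
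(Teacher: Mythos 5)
Your proposal is correct and follows essentially the same route as the paper's proof: Lemma~\ref{lem:genericActionConstruction} combined with Lemma~\ref{lem:analysis_x1} gives $x_\ast \leq x_1\ast x_2$, Lemma~\ref{lem:demazureLengthEstimate} gives the matching upper length bound, and the minimality of $(v_1, v_2)$ forces the chain of inequalities to collapse, with $v_2\in\LP(x_\ast)$ coming from the equality case of Lemma~\ref{lem:analysis_xast}. The only (harmless) redundancy in your argument is also verifying condition~(iii) of Lemma~\ref{lem:analysis_x2} via Lemma~\ref{lem:genericActionDualConstruction} — the paper gets $x_\ast\leq x_1\ast x_2$ from the $x_1'$ side alone and does not need $x_2'\leq x_2$ separately.
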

\begin{proof}
We have $x_\ast \leq x_1\ast x_2$ by Lemmas \ref{lem:genericActionConstruction} and \ref{lem:analysis_x1}. By Lemma~\ref{lem:demazureLengthEstimate}, we find $(v_1', v_2')\in \LP(x_1)\times \LP(x_2)$ such that
\begin{align*}
\ell(x_1) + \ell(x_2) - d(v_1'\Rightarrow w_2 v_2') \geq \ell(x_1\ast x_2) \geq \ell(x_\ast)\geq 
\ell(x_1) + \ell(x_2) - d(v_1\Rightarrow w_2 v_2).
\end{align*}
By choice of $(v_1, v_2)$, the result follows.
\end{proof}
We note the following consequences of Theorem~\ref{thm:demazure}.
\begin{proposition}\label{prop:minDistancePairs}
Let $x_1 = w_1 \varepsilon^{\mu_1}, x_2 = w_2 \varepsilon^{\mu_2}\in \widetilde W$. Write
\begin{align*}
M = M(x_1, x_2) := \{&(v_1, v_2)\in \LP(x_1)\times \LP(x_2)\mid \\&\forall (v_1', v_2')\in \LP(x_1)\times \LP(x_2):~d(v_1\Rightarrow w_2 v_2)\leq d(v_1'\Rightarrow w_2 v_2')\}
\end{align*}
for the set of all pairs $(v_1, v_2)$ such that the theorem's condition is satisfied.
\begin{enumerate}[(a)]
\item The following two functions on $M$ are both constant:
\begin{align*}
&\varphi_1 : M\rightarrow W,\quad (v_1, v_2)\mapsto v_1 v_2^{-1},
\\&\varphi_2 : M\rightarrow \mathbb Z\Phi^\vee,\quad (v_1, v_2)\mapsto v_2 \wt(v_1\Rightarrow w_2 v_2).
\end{align*}
\item The following is a well-defined bijective map:
\begin{align*}
M\rightarrow \LP(x_1\ast x_2),\quad (v_1, v_2)\mapsto v_2.
\end{align*}
\end{enumerate}
\end{proposition}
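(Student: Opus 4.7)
To prove part (a), the plan is to read off both functions directly from Theorem~\ref{thm:demazure}. For any $(v_1,v_2)\in M$, that theorem gives
\[x_1\ast x_2 = w_1v_1v_2^{-1}\,\varepsilon^{v_2v_1^{-1}\mu_1+\mu_2-v_2\wt(v_1\Rightarrow w_2v_2)},\]
so the left-hand side, which depends only on $(x_1,x_2)$, determines both the classical part and the translation part of the right-hand side. Matching classical parts yields $\varphi_1(v_1,v_2)=v_1v_2^{-1}=w_1^{-1}\mathrm{cl}(x_1\ast x_2)$, proving $\varphi_1$ is constant. Once $\varphi_1$ is constant, the factor $v_2v_1^{-1}=\varphi_1^{-1}$, and hence $v_2v_1^{-1}\mu_1$, is constant, so matching translation parts forces $\varphi_2(v_1,v_2)=v_2\wt(v_1\Rightarrow w_2v_2)$ to be constant as well.

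For part (b), well-definedness of the map $(v_1,v_2)\mapsto v_2$ is the final assertion of Theorem~\ref{thm:demazure} (namely $v_2\in\LP(x_1\ast x_2)$). Injectivity is immediate from constancy of $\varphi_1$: knowing $v_2$ recovers $v_1=\varphi_1\cdot v_2$. All the real work therefore lies in surjectivity.

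My plan for surjectivity is as follows. Given $v\in\LP(x_1\ast x_2)$, apply Lemma~\ref{lem:demazureLengthEstimate} with this $v$ playing the role of ``$v_2$''. The lemma hands us some $v_1\in\LP(x_1)$ with
\[\ell(x_1\ast x_2)\leq \ell(x_1)+\ell(x_2)-d(v_1\Rightarrow w_2v),\]
and, as a byproduct of the length-additive factorisation $x_1\ast x_2=y\cdot x_2$ used in its proof, we also obtain $v\in\LP(x_2)$. Rearranging the displayed inequality, $d(v_1\Rightarrow w_2v)\leq\ell(x_1)+\ell(x_2)-\ell(x_1\ast x_2)$, and by the length formula of Theorem~\ref{thm:demazure} this right-hand side equals $d(v_1^0\Rightarrow w_2v_2^0)$ for any reference pair $(v_1^0,v_2^0)\in M$. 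Combined with the reverse inequality coming from the very definition of $M$ as the set of minimisers, we obtain $d(v_1\Rightarrow w_2v)=d(v_1^0\Rightarrow w_2v_2^0)$, whence $(v_1,v)\in M$ is a preimage of $v$.

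The hard part is essentially nonexistent: Lemma~\ref{lem:demazureLengthEstimate} and Theorem~\ref{thm:demazure} were engineered with exactly this statement in mind, and the remaining work is bookkeeping. The only mildly subtle point, which I would make explicit in the write-up, is that $v\in\LP(x_2)$ is extracted from the \emph{proof} of Lemma~\ref{lem:demazureLengthEstimate} rather than from its bare statement.
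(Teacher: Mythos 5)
Your proposal matches the paper's proof: part (a) reads both constant functions off the closed-form expression for $x_1\ast x_2$ in Theorem~\ref{thm:demazure}, and surjectivity in (b) combines Lemma~\ref{lem:demazureLengthEstimate} with the theorem's length formula and minimality in $M$ exactly as the paper does. Your remark that $v\in\LP(x_2)$ must be extracted from the length-additive factorisation inside the \emph{proof} of Lemma~\ref{lem:demazureLengthEstimate}, rather than from its bare statement, is a genuine point the paper elides with \enquote{certainly}.
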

\begin{proof}
\begin{enumerate}[(a)]
\item From the Theorem, we get that the function
\begin{align*}
M\rightarrow\widetilde W,\quad (v_1, v_2)\mapsto &w_1 v_1 v_2^{-1}\varepsilon^{v_2 v_1^{-1}\mu_1+\mu_2 - v_2\wt(v_1\Rightarrow w_2 v_2)}
\\=&w_1 \varphi_1(v_1, v_2) \varepsilon^{\varphi_1(v_1, v_2)^{-1}\mu_1 + \mu_2 - \varphi_2(v_1, v_2)}
\end{align*}
is constant with image $\{x_1\ast x_2\}$. This proves that $\varphi_1$ and $\varphi_2$ are constant. 
\item Injectivity follows from (a). Well-definedness follows from the theorem. For surjectivity, let $v_2\in \LP(x_1\ast x_2)$. Then certainly $v_2\in \LP(x_2)$. By Lemma~\ref{lem:demazureLengthEstimate}, we find $v_1\in W$ such that $\ell(x_1\ast x_2)\leq \ell(x_1) + \ell(x_2) - d(v_1\Rightarrow w_2 v_2)$. By the theorem, we find $(v_1', v_2')\in M$ with $\ell(x_1\ast x_2) = \ell(x_1) + \ell(x_2) - d(v_1'\Rightarrow w_2 v_2')$, such that $d(v_1\Rightarrow w_2 v_2) \leq d(v_1'\Rightarrow w_2 v_2')$. It follows that $(v_1, v_2)\in M$, finishing the proof of surjectivity.\qedhere
\end{enumerate}
\end{proof}
\begin{remark}
In case $\ell(x_1x_2) =\ell(x_1) + \ell(x_2)$, we get $x_1x_2= x_1\ast x_2$. In this case, we recover \crossRef{Lemma~}{lem:lengthAdditivity}.
\end{remark}

\subsection{Generic action}\label{sec:generic-action}
Studying the Demazure product where one of the factors is superregular induces actions of $(\widetilde W, \ast)$ on $W$, that we denoted by $\rho_x$ resp.\ $\rho^\vee_x$ in Definition~\ref{def:genericAction}. In this section, we study these actions and the consequences for the quantum Bruhat graph.
\begin{lemma}\label{lem:genericActionChaining}
Let $x_1 = w_1\varepsilon^{\mu_1}, x_2 = w_2\varepsilon^{\mu_2}\in \widetilde W$. Then
\begin{align*}
\rho_{x_1\ast x_2} = \rho_{x_2}\circ \rho_{x_1}.
\end{align*}
\end{lemma}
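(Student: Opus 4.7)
The strategy is to realize $\rho_x(u)$ as the unique length-positive element of $\varepsilon^{u\lambda}\ast x$ for superregular dominant $\lambda$, and then exploit the associativity of the Demazure product.

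First I would establish the following key observation: for any $x = w\varepsilon^\mu \in \widetilde W$, any $u\in W$ and any strictly dominant superregular $\lambda$, the Demazure product $y := \varepsilon^{u\lambda}\ast x$ has classical part $u\,\rho_x(u)^{-1}$ and satisfies $\LP(y) = \{\rho_x(u)\}$. Indeed, $\LP(\varepsilon^{u\lambda}) = \{u\}$ for $\lambda$ strictly dominant. Applying Theorem~\ref{thm:demazure} to $\varepsilon^{u\lambda}\ast x$, the minimizing pair lies in $\{u\}\times \LP(x)$, so we must minimize $d(u\Rightarrow wv_2)$ over $v_2\in\LP(x)$; by Lemma~\ref{lem:genericActionDualConstruction} the unique minimizer is $v_2 = \rho_x(u)$. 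Theorem~\ref{thm:demazure} then gives the asserted classical part and $\rho_x(u)\in \LP(y)$, while Proposition~\ref{prop:minDistancePairs}(b), together with uniqueness of the minimizer, forces $|\LP(y)| = |M| = 1$.

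Next I would compute $\varepsilon^{u\lambda}\ast(x_1\ast x_2)$ in two ways, using associativity of $\ast$. Direct application of the first step (with $x = x_1\ast x_2$) shows that this element has classical part $u\,\rho_{x_1\ast x_2}(u)^{-1}$. On the other hand, rewriting as $(\varepsilon^{u\lambda}\ast x_1)\ast x_2$: by the first step, $\varepsilon^{u\lambda}\ast x_1$ has $\LP = \{v_1\}$ with $v_1 := \rho_{x_1}(u)$ and classical part $u v_1^{-1}$. Applying Theorem~\ref{thm:demazure} to $(\varepsilon^{u\lambda}\ast x_1)\ast x_2$, the first entry of the minimizing pair is forced to be $v_1$, and the second entry is the unique $v_2\in \LP(x_2)$ minimizing $d(v_1\Rightarrow w_2 v_2)$, which by Lemma~\ref{lem:genericActionDualConstruction} is precisely $v_2 = \rho_{x_2}(v_1) = \rho_{x_2}(\rho_{x_1}(u))$. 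The resulting classical part is $(u v_1^{-1})v_1 v_2^{-1} = u\,v_2^{-1}$.

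Associativity of the Demazure product (immediate from any of the three descriptions of $x_1\ast x_2$ given in Proposition~\ref{prop:demazure}) implies
\begin{align*}
u\,\rho_{x_1\ast x_2}(u)^{-1} = u\,\bigl(\rho_{x_2}(\rho_{x_1}(u))\bigr)^{-1},
\end{align*}
which yields $\rho_{x_1\ast x_2}(u) = \rho_{x_2}(\rho_{x_1}(u))$ as desired. There is no essential obstacle here: the whole argument is a clean bookkeeping exercise once Theorem~\ref{thm:demazure} and Proposition~\ref{prop:minDistancePairs} are available; the only point requiring care is tracking that the length-positive sets remain singletons after each Demazure product with a superregular translation, and this is exactly what Proposition~\ref{prop:minDistancePairs}(b) delivers.
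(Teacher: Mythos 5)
Your proof is correct and is essentially the paper's argument: both use associativity of the Demazure product together with the fact that for a shrunken $z = \varepsilon^{u\lambda}$ with $\LP(z)=\{u\}$, one has $\LP(z\ast x)=\{\rho_x(u)\}$ (which the paper draws directly from Proposition~\ref{prop:minDistancePairs}). You additionally track the classical part of $\varepsilon^{u\lambda}\ast(\cdot)$, but this is redundant — comparing the singleton $\LP$ sets already forces the conclusion — so the extra bookkeeping adds nothing beyond what the paper's two-line argument does.
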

\begin{proof}
Note that if $z\in \widetilde W$ is in a shrunken Weyl chamber with $\LP(z) = \{u\}$ and $x\in \widetilde W$, then by Proposition~\ref{prop:minDistancePairs},
\begin{align*}
\LP(z\ast x) = \{\rho_x(u)\}.
\end{align*}
Hence we have
\begin{align*}
\{\rho_{x_2}(\rho_{x_1}(u))\} = \LP\left((z\ast x_1)\ast x_2\right) = \LP\left(z\ast(x_1\ast x_2)\right) = \{\rho_{x_1\ast x_2}(u)\}.
\end{align*}
This shows the desired claim.
\end{proof}
\begin{remark}
\begin{enumerate}[(a)]
\item
There is a dual, albeit more complicated statement for the dual generic action $\rho^\vee$.
\item
If $x = \omega r_{a_1}\cdots r_{a_n}$ is a reduced decomposition with simple affine roots $a_1,\dotsc,a_n\in \Delta_\af$ and $\omega\in \Omega$ of length zero, then
\begin{align*}
\rho_x &= \rho_{\omega\ast r_{a_1}\ast\cdots\ast r_{a_n}} = \rho_{r_{a_n}}\circ\cdots\circ \rho_{r_{a_1}}\circ \rho_\omega.
\end{align*}
The map $\rho_\omega$ is simply given by $\rho_\omega(v) = \cl(\omega)v$, as $\LP(\omega) = W$.
We now describe the $\rho_{r_{a_i}}$ as follows:

For a simple affine root $(\alpha,k)\in \Delta_\af$, we have
\begin{align*}
\ell(r_{(\alpha,k)},\beta) = \begin{cases}1,&\beta =\alpha,\\
-1,&\beta=-\alpha,\\
0,&\beta\neq\pm\alpha.\end{cases}
\end{align*}
Thus
\begin{align*}
\LP(r_{(\alpha,k)}) = \{v\in W\mid v^{-1}\alpha\in \Phi^+\}.
\end{align*}
Let $v\in W$. If $v^{-1}\alpha\in \Phi^-$, then $s_\alpha v\in \LP(r_{(\alpha,k)})$ with $d(v\Rightarrow s_\alpha(s_\alpha v))=0$. Hence $\rho_{r_{(\alpha,k)}}(v) = s_\alpha v$.

If $v^{-1}\alpha\in \Phi^+$, then $v\in \LP(r_{(\alpha,k)})$ with $d(v\Rightarrow s_\alpha v)=1$ by Lemma~\ref{lem:qbgDiamond}. Since there exists no $u\in \LP(r_{(\alpha,k)})$ with $d(v\Rightarrow s_\alpha u)=0$, a distance of $1$ is already minimal. We see that $\rho_{r_{(\alpha,k)}}(v) = v$. Summarizing:
\begin{align*}
\rho_{r_{(\alpha,k)}}(v) = \begin{cases}v,&v^{-1}\alpha\in \Phi^+,\\
s_\alpha v,&v^{-1}\alpha\in \Phi^-.\end{cases}
\end{align*}
This gives an alternative method to compute $\rho_x$. One easily obtains a dual method to compute $\rho_x^\vee$ in a similar fashion.
\end{enumerate}
\end{remark}
\begin{lemma}\label{lem:LPshortestPath}
Let $x \in \widetilde W$ and $v, v'\in \LP(x)$ be two length positive elements. There exists a shortest path $p$ from $v$ to $v'$ in the quantum Bruhat graph such that each vertex in $p$ lies in $\LP(x)$.
\end{lemma}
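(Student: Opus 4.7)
I will proceed by induction on $d := d_{\QB(W)}(v \Rightarrow v')$. The base case $d = 0$ is immediate. For $d \geq 1$, it suffices to exhibit a $\QB(W)$-edge $v \to v_1$ with $v_1 \in \LP(x)$ and $d_{\QB(W)}(v_1 \Rightarrow v') = d - 1$; then prepending this edge to a shortest path from $v_1$ to $v'$ in $\LP(x)$ (furnished by the inductive hypothesis applied to the pair $(v_1, v')$) gives the desired shortest path from $v$ to $v'$ inside $\LP(x)$.

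To produce $v_1$, I plan to iterate Lemma~\ref{lem:qbgDiamond}. Since $v \neq v'$, one can find a simple affine root $a = (\alpha, k) \in \Delta_\af$ with $(v')^{-1}\alpha \in \Phi^-$. If additionally $v^{-1}\alpha \in \Phi^+$, then part (b) of Lemma~\ref{lem:qbgDiamond} furnishes an edge $s_\alpha v' \to v'$ in $\QB(W)$ lying on a shortest path from $v$ to $v'$, so $d(v \Rightarrow s_\alpha v') = d - 1$; I then invoke the inductive hypothesis to the pair $(v, s_\alpha v')$ and append the edge $s_\alpha v' \to v'$ as the final edge. If on the other hand $v^{-1}\alpha \in \Phi^-$, part (c) of the same lemma asserts $d_{\QB(W)}(v \Rightarrow v') = d_{\QB(W)}(s_\alpha v \Rightarrow s_\alpha v')$, reducing the problem to the pair $(s_\alpha v, s_\alpha v')$ at the same distance.

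For either reduction to preserve the hypothesis that both endpoints lie in $\LP(x)$, the crucial structural input is Corollary~\ref{cor:simpleCoverLPFix}: when $\ell(x, \alpha) = 0$ and $v^{-1}\alpha \in \Phi^+$, left multiplication by $s_\alpha$ keeps $v$ inside $\LP(x)$. Combined with the involution $(s_\alpha)^2 = 1$ (applied to the element $s_\alpha v'$, whose inverse sends $\alpha$ to $-v'^{-1}\alpha \in \Phi^+$), this gives the converse direction $s_\alpha v' \in \LP(x)$ when $\ell(x, \alpha) = 0$ and $v' \in \LP(x)$. Thus, so long as the chosen $\alpha$ satisfies $\ell(x, \alpha) = 0$, both endpoints of the reduction remain in $\LP(x)$, and we may apply the inductive hypothesis (or recurse).

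\textbf{Main obstacle.} The crux is to ensure that a suitable $a \in \Delta_\af$ with $\ell(x, \alpha) = 0$ is always available, and that after finitely many applications of the ``diamond'' reduction of case (c) we encounter case (b) and strictly decrease the $\QB(W)$-distance. This requires a careful termination argument (e.g.\ on $\ell(v) + \ell(v')$, which strictly drops under a well-chosen case-(c) reduction) together with the fact that if no useful simple affine root existed whose classical part lay in the zero-locus $\Phi_0(x) := \{\alpha \in \Phi : \ell(x,\alpha) = 0\}$, then $v$ and $v'$ would already determine the same Weyl chamber and we would be in the base case. Verifying these two points, together with a careful book-keeping of how length-positivity is transferred under the reductions, constitutes the technical heart of the proof.
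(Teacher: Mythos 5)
The decisive gap is the appeal to a ``converse'' of Corollary~\ref{cor:simpleCoverLPFix}, which as stated is circular and in fact false. The corollary says: if $u\in\LP(x)$, $u^{-1}\alpha\in\Phi^+$ and $\ell(x,\alpha)=0$, then $s_\alpha u\in\LP(x)$. You want the implication with $u^{-1}\alpha\in\Phi^-$ instead, and you argue by ``applying'' the corollary to $s_\alpha u$ (whose inverse does send $\alpha$ into $\Phi^+$); but this requires already knowing $s_\alpha u\in\LP(x)$, which is exactly the desired conclusion. The claim is actually false: in type $A_2$ take $x=s_{\alpha_1}$. One computes $\LP(x)=\{1,s_2,s_2s_1\}$ and $\ell(x,\alpha_2)=0$, yet $s_{\alpha_2}\cdot(s_2s_1) = s_1\notin\LP(x)$ even though $(s_2s_1)^{-1}\alpha_2\in\Phi^-$. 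With $v=1$, $v'=s_2s_1$ and $a=(\alpha_2,0)$ (the only element of $\Delta_\af$ whose classical part is sent into $\Phi^-$ by $(v')^{-1}$), this defeats your case-(b) reduction $(v,v')\mapsto(v,s_\alpha v')$; with $v=s_2$, $v'=s_2s_1$ it equally defeats the case-(c) reduction $(v,v')\mapsto(s_\alpha v, s_\alpha v')$, since the new target $s_{\alpha_2}v'=s_1$ leaves $\LP(x)$ in both cases and the inductive hypothesis becomes inapplicable.

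The paper's proof sidesteps this at precisely the point where yours fails. When $v^{-1}\alpha\in\Phi^-$ (and $(v')^{-1}\alpha\in\Phi^-$), it does \emph{not} try to keep the reduced pair inside $\LP(x)$: it passes to $x':=xs_\alpha$, verifies directly that $\LP(x') = \{s_\alpha u\mid u\in\LP(x),\ u^{-1}\alpha\in\Phi^-\}$, so $s_\alpha v, s_\alpha v'\in\LP(x')$ with $\ell(s_\alpha v')<\ell(v')$, runs the induction for $x'$, and then conjugates the resulting path back by left-multiplication with $s_\alpha$. When $v^{-1}\alpha\in\Phi^+$, Corollary~\ref{cor:simpleCoverLPFix} is applied to $v$ --- not $v'$ --- giving $s_\alpha v\in\LP(x)$, and the reduction moves the \emph{source} to $(s_\alpha v, v')$ rather than the target to $(v,s_\alpha v')$. (The constraint $\ell(x,\alpha)=0$ you flag as the ``main obstacle'' is automatic in that case, since $v\in\LP(x)$ forces $\ell(x,\alpha)\geq 0$ while $v'\in\LP(x)$ with $(v')^{-1}\alpha\in\Phi^-$ forces $\ell(x,\alpha)\leq 0$.) Finally, the paper treats the base case $v'=1$ by a separate induction on $\ell(v)$ via Lemma~\ref{lem:quantumEdgesSuffice}. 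The passage to $xs_\alpha$ is the key device your scheme is missing, and without it the induction cannot be closed.
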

\begin{proof}
Let us first study the case $v' = 1$.

We do induction on $\ell(v)$. If $\ell(v) = 0$, the statement is clear.

Otherwise, there exists a quantum edge $v\rightarrow vs_\alpha$ for some quantum root $\alpha\in \Phi^+$ such that $d(v\Rightarrow v') = d(vs_\alpha\Rightarrow v')+1$ (Lemma~\ref{lem:quantumEdgesSuffice}). In this case, it suffices to show that $vs_\alpha\in \LP(x)$.

The quantum edge condition means that $\ell(vs_\alpha) = \ell(v) - \ell(s_\alpha)$. In other words, every positive root $\beta\in \Phi^+$ with $s_\alpha(\beta)\in \Phi^-$ satisfies $v(\beta)\in \Phi^-$.

Let $\beta\in \Phi^+$, we want to show that $\ell(x,vs_\alpha(\beta))\geq 0$. This follows from length positivity of $v$ if $s_\alpha(\beta)\in \Phi^+$. So let us assume that $s_\alpha(\beta)\in \Phi^-$. Then $vs_\alpha(\beta)\in \Phi^+$, applying the above observation to $-s_\alpha(\beta)$. Hence $\ell(x,vs_\alpha(\beta))\geq 0$, as $1\in \LP(x)$. This finishes the induction, so the claim is established whenever $v'=1$.

For the general case, we do induction on $\ell(v')$. If $v'=1$, we have proved the claim, so let us assume that $\ell(v')>0$. Then we find a simple root $\alpha\in \Delta$ with $s_\alpha v' < v'$. In particular, $(v')^{-1}\alpha\in \Phi^-$ so that $\ell(x,\alpha)\leq 0$. Consider the element $x' := xs_\alpha \gtrdot x$. We observe that for any $u\in W$ and $\beta\in \Phi$, 
\begin{align*}
\ell(x',s_\alpha u\beta) = \ell(x,u\beta) + \ell(s_\alpha, -u\beta) = \begin{cases}\ell(x,u\beta),&u\beta \neq \pm \alpha,\\
-\ell(x,\alpha)+1>0,&u\beta = -\alpha,\\
\ell(x,\alpha)-1<0,&u\beta = \alpha.\end{cases}
\end{align*}
It follows that
\begin{align*}
\LP(x') = \{s_\alpha u \mid u\in \LP(x)\text{ and }u^{-1}\alpha \in \Phi^-\}.
\end{align*}
In particular, $s_\alpha v'\in \LP(x')$. Now suppose that $v^{-1}\alpha \in \Phi^-$. Then also $s_\alpha v\in \LP(x')$. We may apply the inductive assumption to get a path $p'$ from $s_\alpha v$ to $s_\alpha v'$ in $\LP(x')$. Multiplying each vertex by $s_\alpha$ on the left, we obtain the desired path $p$ in $\LP(x)$.

Finally assume that $v^{-1}\alpha \in \Phi^+$. Then $s_\alpha v\in \LP(x)$ by Corollary~\ref{cor:simpleCoverLPFix}.

By Lemma~\ref{lem:qbgDiamond}, $v\rightarrow s_\alpha v$ is an edge in $\QB(W)$ and \begin{align*}
d_{\QB(W)}(v\Rightarrow v') = d_{\QB(W)}(v\Rightarrow s_\alpha  v') = d_{\QB(W)}(s_\alpha v\Rightarrow v')+1.
\end{align*} We get a path from $s_\alpha v$ to $v'$ in $\LP(x)$ by repeating the above argument, then concatenate it with $v\rightarrow s_\alpha v$.

This finishes the induction and the proof.
\end{proof}
\begin{corollary}\label{cor:lengthPositiveComparison}
Let $x = w\varepsilon^\mu\in \widetilde W$ and $v, v'\in \LP(x)$. Then
\begin{align*}
v^{-1}\mu - (v')^{-1}\mu - \wt(v\Rightarrow v') + \wt(wv\Rightarrow wv')=0.
\end{align*}
In particular, $d(v\Rightarrow v') = d(wv\Rightarrow wv')$.
\end{corollary}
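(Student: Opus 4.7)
The strategy is to reduce to the case of a single edge in the quantum Bruhat graph via Lemma~\ref{lem:LPshortestPath} and then use a duality argument with $x^{-1}$ to obtain both directions of the desired equality. First, suppose $v, v' = vs_\alpha \in \LP(x)$ with $\alpha \in \Phi^+$ and $v \to vs_\alpha$ an edge in $\QB(W)$. Since $v \in \LP(x)$ gives $\ell(x, v\alpha) \geq 0$ while $vs_\alpha \in \LP(x)$ applied to the positive root $\alpha$ gives $\ell(x, -v\alpha) \geq 0$ (hence $\ell(x, v\alpha) \leq 0$), one has $\ell(x, v\alpha) = 0$. This forces $\langle \mu, v\alpha \rangle = \Phi^+(wv\alpha) - \Phi^+(v\alpha) \in \{-1, 0, 1\}$.

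Applying Lemma~\ref{lem:positiveLengthFormula} to both $v$ and $vs_\alpha$ and subtracting yields the key length identity
\begin{align*}
\bigl(\ell(v) - \ell(vs_\alpha)\bigr) + \bigl(\ell(wvs_\alpha) - \ell(wv)\bigr) = \langle \alpha^\vee, 2\rho\rangle \langle \mu, v\alpha\rangle.
\end{align*}
The edge condition on $v \to vs_\alpha$ gives $\ell(v) - \ell(vs_\alpha) \in \{-1, \langle \alpha^\vee, 2\rho\rangle - 1\}$ (Bruhat vs.\ quantum, corresponding to $v\alpha \in \Phi^+$ vs.\ $\Phi^-$). A case analysis over the four sign combinations of $v\alpha, wv\alpha$ (each determining $\langle \mu, v\alpha\rangle$) shows that in every case $\ell(wvs_\alpha) - \ell(wv) \in \{1, 1 - \langle \alpha^\vee, 2\rho\rangle\}$, so $wv \to wvs_\alpha$ is a valid $\QB(W)$ edge of the complementary type. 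Comparing the edge weights (Bruhat $=0$, quantum $= \alpha^\vee$) and noting $(vs_\alpha)^{-1}\mu - v^{-1}\mu = -\langle \mu, v\alpha\rangle \alpha^\vee$ then verifies the single-edge identity
\begin{align*}
\wt(wv \Rightarrow wvs_\alpha) - \wt(v \Rightarrow vs_\alpha) = (vs_\alpha)^{-1}\mu - v^{-1}\mu.
\end{align*}

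With this in hand, Lemma~\ref{lem:LPshortestPath} provides a shortest path $v = v_0 \to v_1 \to \cdots \to v_k = v'$ of length $k = d(v \Rightarrow v')$ inside $\LP(x)$. The single-edge step lifts this to a path $wv \to wv_1 \to \cdots \to wv'$ in $\QB(W)$; summing the single-edge identity shows its weight is $\wt(v \Rightarrow v') + (v')^{-1}\mu - v^{-1}\mu$. By Proposition~\ref{prop:qbgWeightDefinition}(c) this upper-bounds $\wt(wv \Rightarrow wv')$, yielding
\begin{align*}
v^{-1}\mu - (v')^{-1}\mu - \wt(v \Rightarrow v') + \wt(wv \Rightarrow wv') \leq 0. \quad (\ast)
\end{align*}

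For the reverse inequality, apply $(\ast)$ to $x^{-1} = w^{-1}\varepsilon^{-w\mu}$, whose length-positive elements are $\LP(x^{-1}) = w\LP(x)w_0$ (established by the same computation used in Lemma~\ref{lem:demazureDuality}). Applying $(\ast)$ with the pair $(wv'w_0,\, wvw_0) \in \LP(x^{-1})^2$ and translating back via the identity $\wt(vw_0 \Rightarrow v'w_0) = -w_0 \wt(v' \Rightarrow v)$ (derived by composing Lemma~\ref{lem:weightIdentities}(a) and (b) after writing $vw_0 = w_0(w_0 v w_0)$) together with the fact that $-w_0$ preserves the partial order on $\mathbb{Z}\Phi^\vee$, one obtains the reverse inequality. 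Combining yields the main equality; pairing it with $2\rho$ and using Lemmas~\ref{lem:weight2rho} and~\ref{lem:positiveLengthFormula} then gives $d(v \Rightarrow v') = d(wv \Rightarrow wv')$. The main obstacle is the case analysis in the single-edge step, where one must verify in each of the four sign sub-cases that the computed value of $\ell(wvs_\alpha) - \ell(wv)$ matches a Bruhat or quantum edge of $\QB(W)$.
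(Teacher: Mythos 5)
Your proof is correct and follows the paper's skeleton closely: reduce to a shortest $\QB(W)$-path inside $\LP(x)$ via Lemma~\ref{lem:LPshortestPath}, telescope along it for one inequality, apply the same argument to $x^{-1}$ (translating via Lemma~\ref{lem:weightIdentities} and $\LP(x^{-1})=w\LP(x)w_0$) for the reverse, and pair with $2\rho$ using Lemmas~\ref{lem:weight2rho} and~\ref{lem:positiveLengthFormula} for the distance claim. Where you diverge is the single-edge step. The paper simply invokes the weight estimate $\wt(wv_i\Rightarrow wv_is_{\alpha_i})\leq\Phi^+(wv_i\alpha_i)\alpha_i^\vee$, so that each summand of the telescope is bounded above by $\ell(x,v_i\alpha_i)\alpha_i^\vee=0$; you instead extract a length identity from Lemma~\ref{lem:positiveLengthFormula} and run a four-case sign analysis to conclude that $wv_i\to wv_is_{\alpha_i}$ is itself a $\QB(W)$-edge, with the single-edge difference an exact equality. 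Your route is longer but records a structural fact the paper does not state — left multiplication by $w$ sends $\QB(W)$-edges between elements of $\LP(x)$ to $\QB(W)$-edges — and it is self-contained at that point, bypassing the superregular-lifting machinery underlying the weight estimate. One small slip in wording: the lifted edge is not always of \enquote{complementary type}; when $v\alpha$ and $wv\alpha$ have the same sign (so $\langle\mu,v\alpha\rangle=0$), both $v\to vs_\alpha$ and $wv\to wvs_\alpha$ are of the \emph{same} Bruhat/quantum type. This does not affect the computation.
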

\begin{proof}
Let
\begin{align*}
p : v = v_1\xrightarrow{\alpha_1} v_2\xrightarrow{\alpha_2} \cdots\xrightarrow{\alpha_{n-1}} v_n = v'
\end{align*}
be a path in $\LP(x)$ of weight $\wt(v\Rightarrow v')$. Now for $i=1,\dotsc,n-1$, observe that both $v_i$ and $v_i s_{\alpha_i}$ are in $\LP(x)$. Thus $\ell(x, v_i \alpha_i)=0$. We conclude that
\begin{align*}
&(v_i)^{-1}\mu - (v_{i+1})^{-1}\mu - \wt(v_i \Rightarrow v_{i+1}) + \wt(wv_i\Rightarrow wv_{i+1})
\\=&\langle v_i \alpha_i, \mu\rangle \alpha_i^\vee - \Phi^+(-v_i \alpha_i)\alpha_i^\vee + \wt(wv_i\Rightarrow wv_i s_{\alpha_i})
\\\leq& \langle v_i\alpha_i, \mu\rangle\alpha_i^\vee -  \Phi^+(-v_i \alpha_i)\alpha_i^\vee + \Phi^+(w v_i\alpha_i)\alpha_i^\vee
\\=&\ell(x,v_i\alpha_i)\alpha_i^\vee = 0.
\end{align*}
Summing these estimates for $i=1,\dotsc,n-1$, we conclude
\begin{align*}
v^{-1}\mu - (v')^{-1}\mu - \wt(v\Rightarrow v') + \wt(wv\Rightarrow w'v')\leq 0.
\end{align*}
Considering the same argument for $x^{-1}, wvw_0, wv'w_0$, we get the other inequality.

The \enquote{in particular} part follows from inspecting the argument given. Alternatively, pair the identity just proved with $2\rho$, then apply Lemma~\ref{lem:weight2rho} and \positiveLengthFormulaLong{}.
\end{proof}
\begin{remark}
The corollary can be shown directly by evaluating the Demazure product
\begin{align*}
\varepsilon^{w v' 2\rho} \ast x \ast \varepsilon^{v 2\rho}
\end{align*}
in two different ways, using the associativity property of Demazure products.
\end{remark}
\begin{proposition}\label{prop:LPTiltedBruhat}
Let $x = w\varepsilon^\mu\in \widetilde W$, $v\in \LP(x)$ and $u\in W$. Then
\begin{align*}
d(u\Rightarrow wv) = d(u\Rightarrow w\rho_x(u)) + d(w\rho_x(u)\Rightarrow wv).
\end{align*}
\end{proposition}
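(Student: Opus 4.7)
The plan is to prove the identity by computing the length of one well-chosen iterated Demazure product in two ways via associativity of $\ast$. Fix a sufficiently regular dominant $\lambda\in X_\ast$ and introduce the auxiliary translations $y:=\varepsilon^{u\lambda}$ and $y':=\varepsilon^{v\lambda}$. Direct inspection of the length functionals shows $\LP(y)=\{u\}$ and $\LP(y')=\{v\}$, and for $\lambda$ large enough both $y$ and $y'$ lie in a shrunken Weyl chamber.

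Since $v\in \LP(x)\cap \LP(y')$ and both translational parts are compatibly dominant, the product $x\ast y'$ is length-additive and equals $x y' = w\varepsilon^{\mu+v\lambda}$; for $\lambda$ large enough this element is shrunken with $\LP(x\ast y')=\{v\}$. On the other hand, by Proposition~\ref{prop:minDistancePairs}(b) combined with the very definition of $\rho_x$, one has $\LP(y\ast x)=\{\rho_x(u)\}$. In each of the two Demazure products $y\ast(x\ast y')$ and $(y\ast x)\ast y'$ all LP-sets involved are singletons, so the minimising pair $(v_1,v_2)$ in Theorem~\ref{thm:demazure} is forced and the theorem's length formula gives
\begin{align*}
\ell\bigl(y\ast(x\ast y')\bigr)&=\ell(y)+\ell(x\ast y')-d(u\Rightarrow wv)\\&=\ell(y)+\ell(x)+\ell(y')-d(u\Rightarrow wv),\\
\ell\bigl((y\ast x)\ast y'\bigr)&=\ell(y\ast x)+\ell(y')-d(\rho_x(u)\Rightarrow v)\\&=\ell(y)+\ell(x)+\ell(y')-d(u\Rightarrow w\rho_x(u))-d(\rho_x(u)\Rightarrow v).
\end{align*}

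Associativity of the Demazure product equates the two left-hand sides, and after cancelling the common summand we obtain the distance identity $d(u\Rightarrow wv)=d(u\Rightarrow w\rho_x(u))+d(\rho_x(u)\Rightarrow v)$. Since both $\rho_x(u)$ and $v$ lie in $\LP(x)$, Corollary~\ref{cor:lengthPositiveComparison} converts $d(\rho_x(u)\Rightarrow v)$ into $d(w\rho_x(u)\Rightarrow wv)$, which is exactly the claim. No serious obstacle is expected: the only routine verification is that $\lambda$ can be chosen regular enough to force the claimed singleton length-positive sets, which is immediate from the definition of the length functional.
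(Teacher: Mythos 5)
Your proof is correct, but it takes a genuinely different route from the paper's. The paper introduces only one auxiliary translation $y=\varepsilon^{u\lambda}$, forms $z=y\ast x$ and a comparison element $\tilde y'$, and then applies the Bruhat-order criterion (Theorem~\ref{thm:bruhat2}) to the inequality $\tilde y' x\leq z$ to extract a weight inequality, finishing with Corollary~\ref{cor:lengthPositiveComparison}. You instead introduce two translations $y=\varepsilon^{u\lambda}$ and $y'=\varepsilon^{v\lambda}$, and exploit the associativity of the Demazure product by computing $\ell\bigl(y\ast x\ast y'\bigr)$ in two bracketings via the length formula of Theorem~\ref{thm:demazure}, which is valid here precisely because all relevant length-positive sets collapse to singletons; after cancellation one gets the distance identity, and Corollary~\ref{cor:lengthPositiveComparison} converts $d(\rho_x(u)\Rightarrow v)$ into $d(w\rho_x(u)\Rightarrow wv)$. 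Your approach avoids the explicit weight bookkeeping and the back-and-forth with the Bruhat criterion, at the price of invoking the full Demazure product machinery (Theorem~\ref{thm:demazure} and Proposition~\ref{prop:minDistancePairs}) plus associativity of $\ast$ — which the paper uses without explicit statement, but does acknowledge (see the remark after Corollary~\ref{cor:lengthPositiveComparison}, where the same associativity trick is sketched to reprove that corollary). All the ingredients you cite appear earlier in the paper than Proposition~\ref{prop:LPTiltedBruhat}, so there is no circularity, and the routine regularity checks you flag (that $\lambda$ can be chosen so that $y$, $y'$ and $xy'$ are shrunken with the stated singleton length-positive sets) are indeed straightforward.
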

\begin{proof}
Let $\lambda$ be superregular and $y := \varepsilon^{u\lambda}$. Define the element
\begin{align*}
z := y \ast x = u \rho_x(u)^{-1} \varepsilon^{\rho_x(u) \lambda + \mu - \rho_x(u) \wt(u\Rightarrow w\rho_x(u))}.
\end{align*}
Then $z$ is superregular with $\LP(z) = \{\rho_x(u)\}$. Consider the element
\begin{align*}
\tilde y' := u(wv)^{-1}\varepsilon^{wv\lambda - wv \wt(u\Rightarrow wv)}.
\end{align*}
This is superregular with $\LP(\tilde y') = \{wv\}$. Note that Theorem~\ref{thm:bruhat2} implies $\tilde y'\leq y$, as
\begin{align*}
(wv)^{-1}(wv \lambda - wv\wt(u\Rightarrow wv)) + \wt(u\Rightarrow wv) + \wt(u\Rightarrow u) = \lambda.
\end{align*}
Thus $\tilde z\leq z$, where
\begin{align*}
\tilde z = \tilde y x = u v^{-1} \varepsilon^{v \lambda + \mu - v \wt(u\Rightarrow wv)}.
\end{align*}
Note that $\tilde z$ is superregular with $\LP(\tilde z) = \{v\}$. In light of Theorem~\ref{thm:bruhat2}, the inequality $\tilde z\leq z$ means
\begin{align*}
v^{-1}(v \lambda + \mu - v \wt(u\Rightarrow wv)) +& \wt(\rho_x(u)\Rightarrow v) + \wt(u\Rightarrow u)\\&\leq \rho_x(u)^{-1}(\rho_x(u) \lambda + \mu - \rho_x(u) \wt(u\Rightarrow w\rho_x(u))).
\end{align*}
Rewriting this, we get
\begin{align*}
v^{-1}\mu - \wt(u\Rightarrow wv) + \wt(\rho_x(u)\Rightarrow v) \leq \rho_x(u)^{-1}\mu - \wt(u\Rightarrow w\rho_x(u)).
\end{align*}
Corollary~\ref{cor:lengthPositiveComparison} yields the equation
\begin{align*}
v^{-1}\mu -\rho_x(u)^{-1}\mu +\wt(\rho_x(u)\Rightarrow v) = \wt(w\rho_x(u) \Rightarrow wv).
\end{align*}
We conclude
\begin{align*}
\wt(u\Rightarrow wv) \geq \wt(u\Rightarrow w\rho_x(u)) + \wt(w\rho_x(u)\Rightarrow wv).
\end{align*}
This implies the desired claim.
\end{proof}
By the duality from Lemma~\ref{lem:demazureDuality}, we obtain the following.
\begin{corollary}\label{cor:LPTiltedBruhatDual}
Let $x = w\varepsilon^\mu\in \widetilde W$, $v\in \LP(x)$ and $u\in W$. Then
\begin{align*}
&d(v\Rightarrow u) = d(v\Rightarrow \rho^\vee_x(u)) + d(\rho^\vee_x(u)\Rightarrow u).\rightqed
\end{align*}
\end{corollary}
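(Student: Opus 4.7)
The plan is to apply Proposition~\ref{prop:LPTiltedBruhat} to the inverse element $x^{-1} = w^{-1}\varepsilon^{-w\mu}$ and then translate the resulting distance identity back to the original quantum Bruhat graph using the anti-automorphism of Lemma~\ref{lem:weightIdentities}. This is exactly the kind of duality formalized in Lemma~\ref{lem:demazureDuality}.

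Concretely, I would substitute $x \mapsto x^{-1}$, $v \mapsto wvw_0$ (which lies in $\LP(x^{-1}) = w\LP(x)w_0$), and $u \mapsto uw_0$ into Proposition~\ref{prop:LPTiltedBruhat}. The classical part of $x^{-1}$ is $w^{-1}$, so $w^{-1}\cdot wvw_0 = vw_0$ on the right-hand side. The defining relation from Definition~\ref{def:genericAction}, applied with $x$ and $x^{-1}$ swapped, gives $\rho_{x^{-1}}(uw_0) = w\rho^\vee_x(u)w_0$, hence $w^{-1}\rho_{x^{-1}}(uw_0) = \rho^\vee_x(u)w_0$. Substituting into the proposition yields
\begin{equation*}
d(uw_0 \Rightarrow vw_0) \;=\; d\bigl(uw_0 \Rightarrow \rho^\vee_x(u)w_0\bigr) + d\bigl(\rho^\vee_x(u)w_0 \Rightarrow vw_0\bigr).
\end{equation*}

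It now remains to show the general identity $d(y_1 w_0 \Rightarrow y_2 w_0) = d(y_2 \Rightarrow y_1)$, i.e.\ that right multiplication by $w_0$ reverses distances. Writing $y_i w_0 = w_0(w_0 y_i w_0)$ and applying part (a) of Lemma~\ref{lem:weightIdentities} converts this into $d(w_0 y_2 w_0 \Rightarrow w_0 y_1 w_0)$, and then the graph automorphism $y \mapsto w_0 y w_0$ from part (b) of Lemma~\ref{lem:weightIdentities} identifies this with $d(y_2 \Rightarrow y_1)$. Applying this identity termwise to the three distances in the displayed equation above produces exactly
\begin{equation*}
d(v\Rightarrow u) = d(v\Rightarrow \rho^\vee_x(u)) + d(\rho^\vee_x(u)\Rightarrow u),
\end{equation*}
as required.

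The only potential obstacle is keeping track of whether the composition of the two automorphisms of Lemma~\ref{lem:weightIdentities} really corresponds to right multiplication by $w_0$ with the correct direction-reversing behaviour, but this is pure bookkeeping; no new combinatorial input beyond Proposition~\ref{prop:LPTiltedBruhat} is needed.
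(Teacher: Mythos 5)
Your proof is correct and is exactly the argument the paper intends by "By the duality from Lemma~\ref{lem:demazureDuality}"; you have simply made the bookkeeping explicit. The substitution $x\mapsto x^{-1}$, $v\mapsto wvw_0$, $u\mapsto uw_0$, the identity $\rho_{x^{-1}}(uw_0) = w\rho^\vee_x(u)w_0$ from Definition~\ref{def:genericAction}, and the direction-reversing automorphism $y\mapsto yw_0$ (composition of parts (a) and (b) of Lemma~\ref{lem:weightIdentities}) are all handled correctly.
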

\begin{remark}
In the language of \cite[Section~6]{Brenti1998}, this means that the set $w\LP(x)$ contains a unique minimal element with respect to the tilted Bruhat order $\preceq_u$. Since $w \LP(x) = \LP(x^{-1})w_0$, it follows that the set $\LP(x)$ contains a unique maximal element with respect to $\preceq_u$. If $x = \varepsilon^\mu$ is a pure translation element, this recovers \cite[Theorem~7.1]{Lenart2015}.

The converse statements are generally false, i.e.\ $\LP(x)$ will in general not contain tilted Bruhat minima, and $w\LP(x)$ will not contain maxima. For a concrete example, choose $x$ to be a simple affine reflection of type $A_2$.

The set $\LP(x)$ satisfies a number of interesting structural properties with respect to the quantum Bruhat graph, namely containing shortest paths for any pair of elements (Lemma \ref{lem:LPshortestPath}) and the existence of tilted Bruhat maxima. One may ask the question which subsets of $W$ occur as the set $\LP(x)$ for some $x\in \widetilde W$.
\end{remark}
\begin{corollary}\label{cor:genericActionMaximality}
Let $x =w\varepsilon^\mu \in \widetilde W$ and $u_1, u_2\in W$. Then the function
\begin{align*}
\varphi: W\rightarrow \Xast,~ v\mapsto v^{-1}\mu - \wt(u_1\Rightarrow wv) - \wt(v\Rightarrow u_2)
\end{align*}
has a global maximum at $\rho_x(u_1)$, and another global maximum at $\rho^\vee_x(u_2)$.
\end{corollary}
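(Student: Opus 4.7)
The plan is to first reduce to the case $v\in \LP(x)$ via an adjustment argument, and then combine the tilted Bruhat decompositions of Proposition~\ref{prop:LPTiltedBruhat} and Corollary~\ref{cor:LPTiltedBruhatDual} with Corollary~\ref{cor:lengthPositiveComparison}. The equality of the two maximum values drops out for free.

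For the reduction, suppose $v\in W$ admits $\alpha\in\Phi^+$ with $\ell(x,v\alpha)<0$. Expanding $\varphi(vs_\alpha)-\varphi(v)$ via the triangle inequality and the weight estimate \weightEstimateLong{} (applied as $\wt(wv\Rightarrow wvs_\alpha)\leq \Phi^+(-wv\alpha)\alpha^\vee$ and $\wt(vs_\alpha\Rightarrow v)\leq \Phi^+(v\alpha)\alpha^\vee$), a direct calculation yields
\begin{align*}
\varphi(vs_\alpha)-\varphi(v)\geq -\bigl(\langle v^{-1}\mu,\alpha\rangle + \Phi^+(-wv\alpha) + \Phi^+(v\alpha)\bigr)\alpha^\vee = -(\ell(x,v\alpha)+1)\alpha^\vee\geq 0.
\end{align*}
By the termination of iterated adjustments (\crossRef{Lemma~}{lem:rootFunctionalAdjustment}), it suffices to prove the maximum property for $v\in \LP(x)$.

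For such $v$, write $v_\bullet := \rho_x^\vee(u_2)$. The distance identity of Corollary~\ref{cor:LPTiltedBruhatDual} promotes to the weight identity $\wt(v\Rightarrow u_2) = \wt(v\Rightarrow v_\bullet) + \wt(v_\bullet\Rightarrow u_2)$, since two shortest paths concatenate into a shortest one whose weight is independent of the path by Proposition~\ref{prop:qbgWeightDefinition}(b). Since $v,v_\bullet\in \LP(x)$, Corollary~\ref{cor:lengthPositiveComparison} gives $v^{-1}\mu-\wt(v\Rightarrow v_\bullet) = v_\bullet^{-1}\mu-\wt(wv\Rightarrow wv_\bullet)$. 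Substituting both identities into the definition of $\varphi(v)$ and simplifying yields
\begin{align*}
\varphi(v) = \varphi(v_\bullet) + \wt(u_1\Rightarrow wv_\bullet) - \wt(u_1\Rightarrow wv) - \wt(wv\Rightarrow wv_\bullet) \leq \varphi(v_\bullet)
\end{align*}
by the triangle inequality, showing that $\rho_x^\vee(u_2)$ is a global maximum.

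To see that $v_\circ := \rho_x(u_1)$ also attains the maximum, apply the preceding display with $v = v_\circ \in \LP(x)$. Proposition~\ref{prop:LPTiltedBruhat} evaluated at $v=v_\bullet\in \LP(x)$ states that shortest paths from $u_1$ to $wv_\bullet$ factor through $wv_\circ$, i.e.\ $\wt(u_1\Rightarrow wv_\bullet) = \wt(u_1\Rightarrow wv_\circ)+\wt(wv_\circ\Rightarrow wv_\bullet)$, whence $\varphi(v_\circ)-\varphi(v_\bullet) = 0$. The chief technical point will be the adjustment inequality in Step~1: the precise form of the weight estimate combined with the identity $\langle v^{-1}\mu,\alpha\rangle = \ell(x,v\alpha) + \Phi^+(wv\alpha) - \Phi^+(v\alpha)$ is what makes the coefficient of $\alpha^\vee$ collapse to $-(\ell(x,v\alpha)+1)$, turning a non-length-positive direction into a genuine improvement of $\varphi$.
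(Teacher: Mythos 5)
Your proof is correct and follows essentially the same route as the paper's: reduce to $v\in\LP(x)$ via an adjustment inequality, then combine a tilted-Bruhat decomposition with Corollary~\ref{cor:lengthPositiveComparison} and the triangle inequality. The one organizational difference is how the two maxima are linked. The paper proves maximality at $\rho_x(u_1)$ directly using Proposition~\ref{prop:LPTiltedBruhat}, then disposes of $\rho^\vee_x(u_2)$ in one line by citing the duality automorphism of Lemma~\ref{lem:demazureDuality}. You instead prove maximality at $\rho^\vee_x(u_2)$ first via Corollary~\ref{cor:LPTiltedBruhatDual}, and then show $\rho_x(u_1)$ attains the same value by plugging $v=\rho_x(u_1)$ into your identity and invoking Proposition~\ref{prop:LPTiltedBruhat} a second time. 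This is a touch more explicit and avoids the appeal to $\ref{lem:demazureDuality}$, at the modest cost of one extra computation; both are fine. Your adjustment step (the collapse of the coefficient of $\alpha^\vee$ to $-(\ell(x,v\alpha)+1)$) is the one the paper leaves as ``easy to see,'' and your derivation of it is correct.
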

\begin{proof}
If $v\in W$ is not length positive for $x$, and $vs_\alpha$ is an adjustment, it is easy to see that $\varphi(v)\leq\varphi(vs_\alpha)$. So we may focus on $\varphi\vert_{\LP(x)}$.

Let $v\in \LP(x)$ and $v' = \rho_x(u_1)$, so that
\begin{align*}
\varphi(v) =~\,& v^{-1}\mu - \wt(u_1\Rightarrow wv) - \wt(v\Rightarrow u_2) \\=~\,& v^{-1}\mu - \wt(u_1\Rightarrow w v') - \wt(wv'\Rightarrow wv) - \wt(v\Rightarrow u_2)
\\\underset{\text{C\ref{cor:lengthPositiveComparison}}}=&(v')^{-1}\mu - \wt(v'\Rightarrow v) - \wt(u_1\Rightarrow w v') - \wt(v\Rightarrow u_2)
\\\leq~\,&(v')^{-1}\mu - \wt(u_1\Rightarrow wv') - \wt(v'\Rightarrow u_2) = \varphi(v').
\end{align*}
This shows the first maximality claim. The second one follows from the duality of Lemma~\ref{lem:demazureDuality}.
\end{proof}
\begin{remark}\label{rem:bruhat5}Let $x_1 = w_1 \varepsilon^{\mu_1}, x_2 = w_2 \varepsilon^{\mu_2}\in \widetilde W$ and $v_1\in \LP(x_1)$. Theorem~\ref{thm:bruhat2} states that $x_1\leq x_2$ in the Bruhat order if and only if there is some $v_2\in W$ with
\begin{align*}
v_1^{-1}\mu_1 + \wt(v_2\Rightarrow v_1) + \wt(w_1v_1\Rightarrow w_2v_2) \leq v_2^{-1}\mu_2.
\end{align*}
By the above corollary, it is equivalent to require this inequality for $v_2 = \rho_{x_2}(w_1 v_1)$. One can alternatively require it for $v_2 = \rho_{x_2}^\vee(v_1)$.
\end{remark}
\begin{lemma}\label{lem:minimalityConditionReformulation}
Let $x_1 = w_1\varepsilon^{\mu_1}, x_2 = w_2 \varepsilon^{\mu_2}\in \widetilde W$ and $v_1 \in \LP(x_1), v_2\in \LP(x_2)$. The following are equivalent:
\begin{enumerate}[(i)]
\item The distance $d(v_1\Rightarrow w_2 v_2)$ is minimal for all pairs in $\LP(x_1)\times \LP(x_2)$, i.e.\ $(v_1, v_2)\in M(x_1, x_2)$.
\item $v_1 = \rho^\vee_{x_1}(w_2 v_2)$ and $v_2 = \rho_{x_2}(v_1)$.
\end{enumerate}
\end{lemma}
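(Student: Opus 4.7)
For (i) $\implies$ (ii): Suppose $(v_1, v_2) \in M(x_1, x_2)$. Holding $v_2$ fixed and letting $v_1'$ range over $\LP(x_1)$, minimality of $d(v_1 \Rightarrow w_2 v_2)$ over all pairs forces $v_1$ to be the unique minimizer of $d(v_1' \Rightarrow w_2 v_2)$, which by Definition~\ref{def:genericAction} is $\rho^\vee_{x_1}(w_2 v_2)$. The identity $v_2 = \rho_{x_2}(v_1)$ follows by the symmetric argument, fixing $v_1$ and varying $v_2'$.

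The reverse implication is the substantive part. My plan is to show that (ii) forces $v_2 \in \LP(x_1 \ast x_2)$, after which Proposition~\ref{prop:minDistancePairs}(b) closes the argument. To establish this, I would evaluate $\rho_{x_1 \ast x_2}(w_1 v_1)$ using the chaining identity of Lemma~\ref{lem:genericActionChaining}. Since $v_1 \in \LP(x_1)$ achieves $d(w_1 v_1 \Rightarrow w_1 v_1) = 0$, the uniqueness clause in Definition~\ref{def:genericAction} yields $\rho_{x_1}(w_1 v_1) = v_1$, and consequently
\begin{align*}
\rho_{x_1 \ast x_2}(w_1 v_1) = \rho_{x_2}\!\bigl(\rho_{x_1}(w_1 v_1)\bigr) = \rho_{x_2}(v_1) = v_2,
\end{align*}
using the second half of (ii) in the last step. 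Since $\rho_{x_1 \ast x_2}$ takes values in $\LP(x_1 \ast x_2)$ by definition, this gives $v_2 \in \LP(x_1 \ast x_2)$.

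To finish, Proposition~\ref{prop:minDistancePairs}(b) supplies a unique pair $(\hat v_1, v_2) \in M(x_1, x_2)$ with second coordinate $v_2$. Applying the already-proved direction (i) $\implies$ (ii) to this minimizer gives $\hat v_1 = \rho^\vee_{x_1}(w_2 v_2)$, which coincides with $v_1$ by the first half of (ii). Hence $(v_1, v_2) = (\hat v_1, v_2) \in M(x_1, x_2)$, as required. The main conceptual hurdle is precisely the chaining argument producing $v_2 \in \LP(x_1 \ast x_2)$ from the fixed-point condition; the remaining reasoning is bookkeeping using results of this section.
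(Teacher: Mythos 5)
Your proof is correct, and the forward direction matches the paper exactly. The converse, however, takes a genuinely different and arguably cleaner route. The paper's argument for (ii) $\implies$ (i) goes back to Situation~\ref{sit:demazure}: it invokes Lemmas~\ref{lem:analysis_x1}, \ref{lem:genericActionConstruction}, \ref{lem:analysis_xast} to establish $x_\ast \leq x_1 \ast x_2$ with $v_2 \in \LP(x_\ast)$, then applies Lemma~\ref{lem:bruhat1} (with the adjustment Lemma~\ref{lem:criterionAdjustment}), picks a pair $(v_1', v_2') \in M(x_1, x_2)$, expresses $x_1\ast x_2$ via Theorem~\ref{thm:demazure}, and after applying Corollary~\ref{cor:lengthPositiveComparison} twice arrives at a weight inequality saying some shortest path from $v_1$ to $w_2 v_2$ passes through $v_1'$ and $w_2 v_2'$; condition (ii) then forces $(v_1, v_2) = (v_1', v_2')$. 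Your argument sidesteps all of this computation: you observe $\rho_{x_1}(w_1 v_1) = v_1$ is forced by the zero-distance fixed point, apply the chaining identity $\rho_{x_1\ast x_2} = \rho_{x_2}\circ\rho_{x_1}$ (Lemma~\ref{lem:genericActionChaining}) together with the second half of (ii) to get $v_2 \in \LP(x_1\ast x_2)$, then use the bijection $M(x_1,x_2) \cong \LP(x_1\ast x_2)$ from Proposition~\ref{prop:minDistancePairs}(b) to pull back to a pair $(\hat v_1, v_2) \in M$, and finally identify $\hat v_1 = v_1$ by feeding the already-proved direction plus the first half of (ii). What the paper's approach buys is a self-contained inequality manipulation that stays close to the Bruhat-order machinery; what yours buys is brevity and conceptual transparency, showing the lemma is essentially a formal consequence of the chaining identity and the $M \cong \LP$ bijection already established. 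Both are valid, and there is no circularity in your use of Lemma~\ref{lem:genericActionChaining} and Proposition~\ref{prop:minDistancePairs}, as these are proved earlier and independently.
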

\begin{proof}
(i) $\Rightarrow$ (ii): Certainly, $v_1$ minimizes the function $d(\cdot\Rightarrow w_2 v_2)$ on $\LP(x_1)$, showing the first claim. The second claim is analogous.

(ii) $\Rightarrow$ (i): Consider Situation~\ref{sit:demazure}. By Lemmas \ref{lem:analysis_x1} and \ref{lem:genericActionConstruction}, we conclude that $w_2 v_2$ must be length positive for $x_1'$. It follows that $x_\ast \leq x_1\ast x_2$ and \begin{align*}
\ell(x_\ast) = \ell(x_1') + \ell(x_2) = \ell(x_1) + \ell(x_2) - d(v_1\Rightarrow w_2 v_2).
\end{align*}By Lemma~\ref{lem:analysis_xast}, $v_2$ is length positive for $x_\ast$.
Write $x_1\ast x_2$ as $\tilde w \varepsilon^{\tilde \mu}$.
Using Lemma~\ref{lem:bruhat1} with Lemma~\ref{lem:criterionAdjustment}, the condition $x_\ast \leq x_1\ast x_2$ yields some $v_2'\in \LP(x_1 \ast x_2)$ with
\begin{align*}
v_1^{-1}\mu_1 + v_2^{-1}\mu_2 - \wt(v_1\Rightarrow w_2 v_2) + \wt(v_2'\Rightarrow v_2) + \wt(w_1 v_1\Rightarrow \tilde w v_2')\leq (v_2')^{-1}\tilde\mu.
\end{align*}
By Proposition~\ref{prop:minDistancePairs}, we find $v_1'$ such that $(v_1', v_2')\in M(x_1, x_2)$. By Theorem~\ref{thm:demazure}, we can express $x_1\ast x_2$ in terms of $(v_1', v_2')$. Then the above inequality becomes
\begin{align*}
&v_1^{-1}\mu_1 + v_2^{-1}\mu_2 - \wt(v_1\Rightarrow w_2 v_2) + \wt(v_2'\Rightarrow v_2) + \wt(w_1 v_1\Rightarrow w_1 v_1')\\&\leq (v_1')^{-1}\mu_1 + (v_2')^{-1}\mu_2 - \wt(v_1'\Rightarrow w_2 v_2').
\end{align*}
Since $v_1, v_1'\in \LP(x_1)$ and $v_2, v_2'\in \LP(x_2)$, we can apply Corollary~\ref{cor:lengthPositiveComparison} twice to obtain
\begin{align*}
\wt(v_1\Rightarrow v_1') + \wt(w_2 v_2'\Rightarrow w_2 v_2) - \wt(v_1\Rightarrow w_2 v_2)\leq -\wt(v_1'\Rightarrow w_2 v_2').
\end{align*}
Rewriting, we get
\begin{align*}
\wt(v_1\Rightarrow v_1') +\wt(v_1'\Rightarrow w_2 v_2') + \wt(w_2 v_2'\Rightarrow w_2 v_2) \leq \wt(v_1\Rightarrow w_2 v_2).
\end{align*}
In other words, there is a shortest path from $v_1$ to $w_2 v_2$ that passes through $v_1'$ and $w_2 v_2'$. By condition (ii), this is only possible if $v_1 = v_1'$ and $v_2 = v_2'$, showing (i).
\end{proof}
\begin{corollary}\label{cor:minimalPairFactorization}
Consider Situation~\ref{sit:demazure} with $v_1 \in \LP(x_1), v_2\in \LP(x_2)$. There exists $(v_1', v_2')\in M(x_1, x_2)$ such that \begin{align*}d(v_1\Rightarrow w_2 v_2) = d(v_1\Rightarrow v_1') + d(v_1'\Rightarrow w_2 v_2') + d(w_2v_2'\Rightarrow w_2 v_2).\end{align*}
\end{corollary}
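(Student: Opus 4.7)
The plan is to construct the desired pair by iteratively applying the generic action maps $\rho_{x_1}^\vee$ and $\rho_{x_2}$, which by Proposition~\ref{prop:LPTiltedBruhat} and Corollary~\ref{cor:LPTiltedBruhatDual} strip off initial resp.\ terminal segments of shortest paths in $\QB(W)$ and thus naturally produce factorizations of the required form. Concretely, I would set $v_1^{(0)} := v_1$, $v_2^{(0)} := v_2$, and define inductively
\begin{align*}
v_1^{(k+1)} := \rho_{x_1}^\vee(w_2 v_2^{(k)}), \qquad v_2^{(k+1)} := \rho_{x_2}(v_1^{(k+1)}),
\end{align*}
so that by definition $v_1^{(k)} \in \LP(x_1)$ and $v_2^{(k)} \in \LP(x_2)$ for all $k \geq 0$.

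Applying Corollary~\ref{cor:LPTiltedBruhatDual} to $x_1$ with $u = w_2 v_2^{(k)}$ and then Proposition~\ref{prop:LPTiltedBruhat} to $x_2$ with $u = v_1^{(k+1)}$ yields the recursive identity
\begin{align*}
d(v_1^{(k)} \Rightarrow w_2 v_2^{(k)}) = d(v_1^{(k)} \Rightarrow v_1^{(k+1)}) + d(v_1^{(k+1)} \Rightarrow w_2 v_2^{(k+1)}) + d(w_2 v_2^{(k+1)} \Rightarrow w_2 v_2^{(k)}).
\end{align*}
This forces $d(v_1^{(k+1)} \Rightarrow w_2 v_2^{(k+1)}) \leq d(v_1^{(k)} \Rightarrow w_2 v_2^{(k)})$, so the sequence is a weakly decreasing sequence of non-negative integers and hence stabilizes. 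At the first index $K$ where the value repeats, the recursive identity compels both outer summands to vanish, so $v_1^{(K)} = v_1^{(K+1)} = \rho_{x_1}^\vee(w_2 v_2^{(K)})$ and $v_2^{(K)} = v_2^{(K+1)} = \rho_{x_2}(v_1^{(K)})$. Lemma~\ref{lem:minimalityConditionReformulation} then identifies $(v_1', v_2') := (v_1^{(K)}, v_2^{(K)})$ as an element of $M(x_1, x_2)$.

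Finally, iterating the recursive identity $K$ times telescopes to
\begin{align*}
d(v_1 \Rightarrow w_2 v_2) = \sum_{k=0}^{K-1} d(v_1^{(k)} \Rightarrow v_1^{(k+1)}) + d(v_1' \Rightarrow w_2 v_2') + \sum_{k=0}^{K-1} d(w_2 v_2^{(k+1)} \Rightarrow w_2 v_2^{(k)}).
\end{align*}
The triangle inequality bounds the first sum below by $d(v_1 \Rightarrow v_1')$ and the third by $d(w_2 v_2' \Rightarrow w_2 v_2)$, while applying the triangle inequality to $d(v_1 \Rightarrow w_2 v_2)$ itself through the intermediate vertices $v_1'$ and $w_2 v_2'$ gives the reverse bound. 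The two chains force all inequalities to collapse to equalities, yielding the required factorization. The main subtleties are purely bookkeeping: ensuring that the stabilization of the sequence actually forces a fixed point of both $\rho_{x_1}^\vee(w_2 \cdot)$ and $\rho_{x_2}$, and then invoking Lemma~\ref{lem:minimalityConditionReformulation} to conclude membership in $M(x_1, x_2)$.
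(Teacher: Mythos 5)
Your proof is correct and rests on the same key ingredients as the paper's proof: Proposition~\ref{prop:LPTiltedBruhat}, Corollary~\ref{cor:LPTiltedBruhatDual}, Lemma~\ref{lem:minimalityConditionReformulation}, and the triangle inequality in the quantum Bruhat graph. The only difference is in presentation — the paper picks a pair minimizing $d(v_1'\Rightarrow w_2 v_2')$ among those already satisfying the factorization (the set $A$) and shows the minimizer is a fixed point of $\rho_{x_1}^\vee(w_2\,\cdot)$ and $\rho_{x_2}$, whereas you reach the same fixed point by explicit iteration of those maps from $(v_1,v_2)$, using the monotone, $\mathbb{Z}_{\geq 0}$-valued sequence $d(v_1^{(k)}\Rightarrow w_2 v_2^{(k)})$ to force stabilization; the two arguments are interchangeable.
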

\begin{proof}
For convenience, we define a set of \emph{admissible pairs} by
\begin{align*}
&A := \{(v_1', v_2')\in \LP(x_1)\times \LP(x_2)\mid \\&\qquad d(v_1\Rightarrow w_2 v_2) = d(v_1\Rightarrow v_1') + d(v_1'\Rightarrow w_2 v_2') + d(w_2v_2'\Rightarrow w_2 v_2)\}.
\end{align*}
Then $(v_1, v_2)\in A$, so that $A$ is non-empty. Choose $(v'_1, v_2')\in A$ such that $d(v_1'\Rightarrow w_2 v_2')$ becomes minimal among all pairs in $A$. We claim that $(v_1', v_2')\in M(x_1, x_2)$. For this, we use Lemma~\ref{lem:minimalityConditionReformulation}. It remains to show that $v_1' = \rho^\vee_{x_1}(w_2 v_2')$ and $v_2' = \rho_{x_2}(v_1)$. By Proposition~\ref{prop:LPTiltedBruhat} and Corollary~\ref{cor:LPTiltedBruhatDual}, we obtain
\begin{align*}
&d(v_1'\Rightarrow w_2 v_2') = d(v_1'\Rightarrow \rho^\vee_{x_1}(w_2 v_2')) + d(\rho^\vee_{x_1}(w_2 v_2')\Rightarrow w_2 v_2'),
\\&d(v_1'\Rightarrow w_2 v_2') = d(v_1'\Rightarrow w_2 \rho_{x_2}(v_1)) + d(w_2 \rho_{x_2}(v_1)\Rightarrow w_2 v_2').
\end{align*}
It follows that $(\rho^\vee_{x_1}(w_2 v_2'), v_2') \in A$ and $(v_1', \rho_{x_2}(v_1')) \in A$. By choice of $(v_1', v_2')$ and the above computation, we get that $v_1' = \rho^\vee_{x_1}(w_2 v_2')$ and $v_2' = \rho_{x_2}(v_1')$. This finishes the proof.
\end{proof}
\begin{corollary}\label{cor:lengthPositiveDemazureProduct}
For $x_1, x_2\in \widetilde W$, we have $\LP(x_1\ast x_2)= \rho_{x_2}(\LP(x_1)) = \rho_{x_1}^\vee(w_2 \LP(x_2))$, where $w_2\in W$ is the classical part of $x_2$.
\end{corollary}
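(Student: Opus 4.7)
The plan is to extract the corollary directly from Proposition~\ref{prop:minDistancePairs}(b) together with Lemma~\ref{lem:minimalityConditionReformulation}: the proposition identifies $\LP(x_1\ast x_2)$ with the set of second coordinates of elements of $M(x_1, x_2)$, and the lemma characterizes membership in $M$ via $v_1 = \rho_{x_1}^\vee(w_2 v_2)$ together with $v_2 = \rho_{x_2}(v_1)$.

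For the first equality, the inclusion $\LP(x_1\ast x_2)\subseteq \rho_{x_2}(\LP(x_1))$ is immediate: any $v_2\in\LP(x_1\ast x_2)$ comes from a unique $(v_1, v_2)\in M$, for which $v_1\in\LP(x_1)$ and $v_2=\rho_{x_2}(v_1)$. For the reverse inclusion, I would start with $v_1\in\LP(x_1)$ and first observe the identity $\rho_{x_1}(w_1 v_1)=v_1$: indeed, in the minimization defining $\rho_{x_1}(w_1 v_1)$, the choice $v=v_1\in\LP(x_1)$ already gives $d(w_1 v_1\Rightarrow w_1 v_1)=0$. Lemma~\ref{lem:genericActionChaining} then yields
\[
\rho_{x_2}(v_1)=\rho_{x_2}(\rho_{x_1}(w_1 v_1))=\rho_{x_1\ast x_2}(w_1 v_1)\in \LP(x_1\ast x_2),
\]
the last membership being immediate from the definition of $\rho$.

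For the second equality I would argue by duality. Applying the first equality to the reversed pair $(x_2^{-1}, x_1^{-1})$ and invoking $(x_1\ast x_2)^{-1}=x_2^{-1}\ast x_1^{-1}$ from Lemma~\ref{lem:demazureDuality}, I would translate back via the identities $\LP(y^{-1})=w_y\LP(y)w_0$ (where $w_y=\cl(y)$) and $\rho_{x^{-1}}(u)=w\,\rho_x^\vee(uw_0)w_0$ built into Definition~\ref{def:genericAction}. Unwinding the $w_0$- and classical-part factors reexpresses the right-hand side in terms of $\rho_{x_1}^\vee$ applied to $w_2\LP(x_2)$, giving the stated equality.

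I do not anticipate a serious obstacle here: all the structural ingredients (Proposition~\ref{prop:minDistancePairs}, Lemma~\ref{lem:minimalityConditionReformulation}, Lemma~\ref{lem:genericActionChaining}, and Lemma~\ref{lem:demazureDuality}) are in place, and the proof is essentially a matter of assembling them. The only place demanding some care is keeping the conjugation by $w_0$ and the multiplication by classical parts straight when passing through the duality, so that the final formula indeed collapses to the clean statement involving $\rho_{x_1}^\vee(w_2\LP(x_2))$.
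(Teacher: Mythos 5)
Your proof is correct. The forward inclusion $\LP(x_1\ast x_2)\subseteq\rho_{x_2}(\LP(x_1))$ matches the paper's argument exactly, via Proposition~\ref{prop:minDistancePairs}(b) and Lemma~\ref{lem:minimalityConditionReformulation}. For the reverse inclusion, however, you take a genuinely different and in fact cleaner route than the paper: the paper fixes $v_1\in\LP(x_1)$ with $v_2=\rho_{x_2}(v_1)$, invokes Corollary~\ref{cor:minimalPairFactorization} to produce $(v_1',v_2')\in M(x_1,x_2)$ on a shortest path, then plays that against Proposition~\ref{prop:LPTiltedBruhat} to force $v_2=v_2'$, concluding via Proposition~\ref{prop:minDistancePairs}. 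You instead observe the one-liner $\rho_{x_1}(w_1 v_1)=v_1$ (immediate from the definition, since $d(w_1v_1\Rightarrow w_1v_1)=0$) and apply the chaining identity $\rho_{x_1\ast x_2}=\rho_{x_2}\circ\rho_{x_1}$ of Lemma~\ref{lem:genericActionChaining} to get $\rho_{x_2}(v_1)=\rho_{x_1\ast x_2}(w_1v_1)\in\LP(x_1\ast x_2)$ directly. Since Lemma~\ref{lem:genericActionChaining} is proved independently from Proposition~\ref{prop:minDistancePairs} alone, there is no circularity, and your route avoids Corollary~\ref{cor:minimalPairFactorization} and Proposition~\ref{prop:LPTiltedBruhat} entirely. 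What your approach buys is brevity and a more conceptual proof (the corollary becomes a near-formal consequence of chaining plus the fixed-point identity); what the paper's approach buys is that it stays closer to the structure of $M(x_1,x_2)$ and the factorization machinery that the rest of the section develops, so the paper's version is perhaps more illustrative of how those tools interact. Both are valid. The duality argument for the second equality is the same in both.
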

\begin{proof}
We only show $\LP(x_1\ast x_2) = \rho_{x_2}(\LP(x_1))$, the other claim is completely dual.

If $v_2\in \LP(x_1\ast x_2)$, we find $v_1\in \LP(x_1)$ such that $(v_1, v_2)\in M(x_1, x_2)$. By Lemma~\ref{lem:minimalityConditionReformulation}, $v_2 = \rho_{x_2}(v_1)\in \rho_{x_2}(\LP(x_1))$.

Now let $v_2\in \rho_{x_2}(\LP(x_1))$ and write $v_2 = \rho_{x_2}(v_1)$ for some $\widetilde{v_1}\in \LP(x_1)$. By Corollary~\ref{cor:minimalPairFactorization}, we find $(v_1', v_2')\in M(x_1, x_2)$ such that
\begin{align*}d(v_1\Rightarrow w_2 v_2) =& d(v_1\Rightarrow w_2 v_2') + d(w_2v_2'\Rightarrow w_2 v_2).\end{align*}
Since $v_2 = \rho_{x_2}(v_1)$, we use Proposition~\ref{prop:LPTiltedBruhat} to obtain
\begin{align*}d(v_1\Rightarrow w_2 v_2') =& d(v_1\Rightarrow w_2 v_2) + d(w_2v_2\Rightarrow w_2 v_2').\end{align*}
This is only possible if $v_2 = v_2'$. Since $v_2'\in \LP(x_1\ast x_2)$ by Proposition~\ref{prop:minDistancePairs}, we obtain the desired claim $v_2\in \LP(x_1\ast x_2)$.
\end{proof}
\ifthesis\else
\subsection{Generic $\sigma$-conjugacy class}\label{sec:demazure-newton}
To conclude the paper, we apply our results to the notion of generic $\sigma$-conjugacy classes. For this, we have to assume that our affine Weyl group actually comes from a quasi-split reductive group $G$ over a non-archimedian local field $F$, as described in \crossRef{Section~}{sec:notation}. This means that $W$ is the finite Weyl group of $G$, and $X_\ast$ are the $\Gal(\overline{\breve F}/\breve F)$-coinvariants of the cocharacter group of a maximal torus. Denote by $B(G)$ the set of $\sigma$-conjugacy classes in $G(\breve F)$. For $x\in \widetilde W$, we write $[x]\in B(G)$ for the $\sigma$-conjugacy classes associated with any representative of $x$ in $G(\breve F)$, and $[b_x]$ for the generic $\sigma$-conjugacy class of the Iwahori double coset indexed by $x$.

The Frobenius action on $W$ and $\widetilde W$ will be denoted $\presig{(\cdot)}$, so the Frobenius image of $x$ is $\presig x$.

Throughout this section, we fix an element $x=w\varepsilon^\mu\in \widetilde W$. Following He \cite{He2021b}, we consider twisted Demazure powers of $x$.
\begin{definition}
Let $n\geq 1$. We define the $n$-th $\sigma$-twisted Demazure power of $x$ as
\begin{align*}
x^{\ast,\sigma,n} := x\ast (\presig x)\ast\cdots \ast\left(\prescript{\sigma^{n-1}}{}x\right)\in \widetilde W.
\end{align*}
\end{definition}

For $n\geq 2$, let us write
\begin{align*}
x_n := \prescript{\sigma^{1-n}}{}{}\left(\left(x^{\ast,\sigma,n-1}\right)^{-1}x^{\ast,\sigma,n}\right),
\end{align*}
such that
\begin{align*}
x^{\ast,\sigma,n} = x^{\ast,\sigma,n-1}\ast \left(\prescript{\sigma^{n-1}}{}x\right) = x^{\ast,\sigma,n-1}\cdot \left(\prescript{\sigma^{n-1}}{}x_n\right).
\end{align*}
We can calculate $x_n$ in terms of $x$ and $\prescript{\sigma^{1-n}}{}\LP(x^{\ast,\sigma,n-1})$ using Theorem~\ref{thm:demazure}. By Corollary~\ref{cor:lengthPositiveDemazureProduct}, we have
\begin{align*}
\LP(x^{\ast,\sigma,n}) = \rho_{\prescript{\sigma^{n-1}}{}x}\left(\LP(x^{\ast,\sigma,n-1})\right)
=\cdots=\rho_{\prescript{\sigma^{n-1}}{}x}\circ\cdots\circ \rho_{\presig x}\left(\LP(x)\right).
\end{align*}
Observe that by definition of the generic action $\rho_x$, we may write
\begin{align*}
\rho_{\prescript{\sigma^n}{}x}(\prescript{\sigma^n}{}(u)) = \prescript{\sigma^n}{}(\rho_x(u)).
\end{align*}
Let us define the map $\rho_{x,\sigma} := \rho_x\circ \prescript{\sigma^{-1}}{}(\cdot):W\rightarrow W$ by
\begin{align*}
\rho_{x,\sigma}(u) := \rho_x(\prescript{\sigma^{-1}}{}(u)).
\end{align*}
Then
\begin{align*}
\LP(x^{\ast,\sigma,n}) =&\rho_{\prescript{\sigma^{n-1}}{}x}\circ\cdots\circ \rho_{\presig x}\left(\LP(x)\right).
\\=&\left(\prescript{\sigma^{n-1}}{}(\cdot)\circ \rho_x\circ \prescript{\sigma^{1-n}}{}(\cdot)\right)\circ\cdots\circ\left(\prescript{\sigma^{1}}{}(\cdot)\circ \rho_x\circ \prescript{\sigma^{-1}}{}(\cdot)\right)(\LP(x))
\\=&\prescript{\sigma^{n-1}}{}(\cdot)\circ \rho_{x,\sigma}\circ\cdots\circ \rho_{x,\sigma}(\LP(x))
\\=&\prescript{\sigma^{n-1}}{}{}\left(\rho_{x,\sigma}^{n-1}(\LP(x))\right).
\end{align*}
\begin{lemma}\label{lem:xInfConstruction}
\begin{enumerate}[(a)]
\item There exists an integer $N>1$ such that for each $n\geq N$, \begin{align*}
x_N = x_n\text{ and }\rho_{x,\sigma}^N(\LP(x)) = \rho_{x,\sigma}^n(\LP(x)).
\end{align*}Denote the eventual values by $x_\infty:= x_N$ resp.\ $\rho_{x,\sigma}^\infty(\LP(x)):= \rho_{x,\sigma}^N(\LP(x))$.
\item We have
\begin{align*}
\rho_{x,\sigma}^\infty(\LP(x)) =&\{v\in \LP(x)\mid\exists n\geq 1:~v = \rho_{x,\sigma}^n(v)\}.
\\\lim_{n\to\infty}\frac{\ell(x^{\ast,\sigma,n})}n = &\ell(x_\infty).
\end{align*}
\item The element $x_\infty$ is fundamental. For each $v\in \rho_{x,\sigma}^\infty(\LP(x))$, it can be written as
\begin{align*}
x_\infty = (\prescript{\sigma^{-1}}{}v)\rho_{x,\sigma}(v)^{-1}\varepsilon^{\mu - \rho_{x,\sigma}(v)\wt\left(\prescript{\sigma^{-1}}{}v\Rightarrow w\rho_{x,\sigma}(v)\right)}.
\end{align*}
\end{enumerate}
\end{lemma}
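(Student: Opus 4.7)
The plan is to leverage the preamble's identification
\[
\LP(x^{\ast,\sigma,n}) = \prescript{\sigma^{n-1}}{}{\bigl(\rho_{x,\sigma}^{n-1}(\LP(x))\bigr)}
\]
together with the observation that $\rho_{x,\sigma}$ restricts to a self-map of the finite set $\LP(x)$. Since the descending chain $\rho_{x,\sigma}^n(\LP(x))$ lives in a finite set, it stabilizes from some index $N$ onwards, and a standard argument on self-maps of finite sets identifies the eventual image $\rho_{x,\sigma}^\infty(\LP(x))$ with the set of $\rho_{x,\sigma}$-periodic points, on which $\rho_{x,\sigma}$ restricts to a bijection. This handles the set-theoretic half of (a) and the first equation in (b).

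To prove the stabilization of $x_n$ and the explicit formula in (c) simultaneously, I would apply Theorem~\ref{thm:demazure} to the decomposition $x^{\ast,\sigma,n} = x^{\ast,\sigma,n-1}\ast\prescript{\sigma^{n-1}}{}{x}$. Picking a minimal pair $(v_1,v_2)\in M(x^{\ast,\sigma,n-1},\prescript{\sigma^{n-1}}{}{x})$, Situation~\ref{sit:demazure} yields
\[
\prescript{\sigma^{n-1}}{}{x_n} = v_1 v_2^{-1}\varepsilon^{\prescript{\sigma^{n-1}}{}{\mu} - v_2\wt(v_1\Rightarrow \prescript{\sigma^{n-1}}{}{w}\cdot v_2)}.
\]
Setting $v := \prescript{\sigma^{2-n}}{}{v_1}\in\rho_{x,\sigma}^{n-2}(\LP(x))$ and using Lemma~\ref{lem:minimalityConditionReformulation} to rewrite $v_2 = \rho_{\prescript{\sigma^{n-1}}{}{x}}(v_1) = \prescript{\sigma^{n-1}}{}{\bigl(\rho_{x,\sigma}(v)\bigr)}$, a direct substitution produces exactly the displayed formula for $x_n$ in (c). Since $x_n$ is intrinsically defined from $x^{\ast,\sigma,n-1}$ and $x^{\ast,\sigma,n}$, this value is independent of the chosen minimal pair; for $n$ large enough that $\rho_{x,\sigma}^{n-2}(\LP(x))$ already equals $\rho_{x,\sigma}^\infty(\LP(x))$, the formula depends only on an element $v$ of the stable set, and $x_n$ stabilizes to the stated expression $x_\infty$.

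For the average-length statement in (b), I would note that by Proposition~\ref{prop:demazure} the factorization $x^{\ast,\sigma,n} = x\cdot\prescript{\sigma}{}{x_2}\cdots\prescript{\sigma^{n-1}}{}{x_n}$ is length additive, giving $\ell(x^{\ast,\sigma,n}) = \ell(x) + \sum_{k=2}^{n}\ell(x_k)$; once $x_k = x_\infty$ for all $k\geq N$, dividing by $n$ and letting $n\to\infty$ yields the limit $\ell(x_\infty)$. For the fundamentality of $x_\infty$, the same length-additive factorization applied to $x^{\ast,\sigma,n+k}$ for $n\geq N$, followed by stripping off $x^{\ast,\sigma,n}$ and applying $\sigma^{-n}$, shows that $x_\infty\cdot\prescript{\sigma}{}{x_\infty}\cdots\prescript{\sigma^{k-1}}{}{x_\infty}$ is an ordinary product of length $k\ell(x_\infty)$ for every $k\geq 1$; by Proposition~\ref{prop:demazure} it must coincide with the Demazure power $x_\infty^{\ast,\sigma,k}$, which is precisely the fundamentality condition.

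The main technical obstacle I anticipate is the Frobenius-twist bookkeeping in translating from Theorem~\ref{thm:demazure} to the clean formula in (c), in particular identifying $\prescript{\sigma^{1-n}}{}{v_2}$ with $\rho_{x,\sigma}(v)$ via Lemma~\ref{lem:minimalityConditionReformulation} and verifying that all $\sigma$-conjugations in the translation part cancel cleanly. Once this equivariance is pinned down, the remaining steps reduce to formal manipulations.
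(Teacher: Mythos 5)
Your treatment of (a), the first equation in (b), the average-length computation, and the fundamentality argument is correct and matches the paper's in substance: stabilization of the descending chain of finite sets, length-additivity of the factorization $x^{\ast,\sigma,n} = x\cdot\prescript{\sigma}{}{x_2}\cdots\prescript{\sigma^{n-1}}{}{x_n}$ via Proposition~\ref{prop:demazure}, and stripping off $x^{\ast,\sigma,N}$ to exhibit $x_\infty\prescript{\sigma}{}{x_\infty}\cdots\prescript{\sigma^{k-1}}{}{x_\infty}$ as length-additive. The main structural difference is that you derive the explicit formula for $x_n$ first and use it for stabilization, while the paper first argues stabilization abstractly (the minimization problem in Theorem~\ref{thm:demazure} is Frobenius-equivariantly determined by $\rho_{x,\sigma}^{n-2}(\LP(x))$, $\LP(x)$ and $x$) and then extracts the formula; both orderings work.

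However, there is a gap in the \emph{for each $v$} claim of part (c). Your argument produces the displayed formula only for those $v = \prescript{\sigma^{2-n}}{}{v_1}$ that arise as first components of minimal pairs $(v_1,v_2)\in M(x^{\ast,\sigma,n-1},\prescript{\sigma^{n-1}}{}x)$. You observe that $x_n$ is independent of the chosen minimal pair, which is enough for the stabilization claim, but the lemma asserts the formula holds for \emph{every} $v\in\rho_{x,\sigma}^\infty(\LP(x))$, and you never verify that every element of the stable set occurs as such a $v$. The fix is short but not automatic: for any $v$ in the stable set, set $v_2 := \prescript{\sigma^{n-1}}{}{\rho_{x,\sigma}(v)}\in\LP(x^{\ast,\sigma,n})$; Proposition~\ref{prop:minDistancePairs}(b) produces a unique $v_1$ with $(v_1,v_2)\in M$; Lemma~\ref{lem:minimalityConditionReformulation} gives $\rho_{x,\sigma}(\prescript{\sigma^{2-n}}{}{v_1}) = \rho_{x,\sigma}(v)$; and \emph{injectivity} of $\rho_{x,\sigma}$ on the stable set (which you noted but never used here) forces $\prescript{\sigma^{2-n}}{}{v_1} = v$. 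Without this last step the quantifier in (c) is unjustified.
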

\begin{proof}
\begin{enumerate}[(a)]
\item Observe that $\rho^n_{x,\sigma}$ induces an endomorphism $\LP(x)\rightarrow \LP(x)$. We obtain a weakly decreasing sequence of subsets of $W$
\begin{align*}
\LP(x)\supseteq \rho_{x,\sigma}(\LP(x))\supseteq\rho_{x,\sigma}^2(\LP(x))\supseteq\cdots.
\end{align*}Since $W$ is finite, this sequence must stabilize eventually.

Because $x_n$ only depends on the values of $\rho_{x,\sigma}^{n-1}(\LP(x))$ and $x$, the result follows.
\item
Both claims follow immediately from (a).
\item Let $N$ be as in (a), and let $n\geq 1$. Then
\begin{align*}
x^{\ast,\sigma,N+n} = x^{\ast,\sigma,N}\cdot \prescript{\sigma^N}{}x_\infty\cdots \prescript{\sigma^{N+n-1}}{}x_\infty
\end{align*}
is a length additive product. In particular,
\begin{align*}
\ell(x_\infty\cdots\prescript{\sigma^{n-1}}{}x_\infty) = n\ell(x_\infty).
\end{align*}
By \cite[Theorem~1.3]{Nie2015} or \crossRef{Proposition~}{prop:fundamental}, $x_\infty$ is fundamental.

Next let $v\in \rho^\infty_{x,\sigma}(\LP(x))$. Then also $\rho_{x,\sigma}(v)\in \rho^\infty_{x,\sigma}(\LP(x))$, and we get
\begin{align*}
\prescript{\sigma^{N}}{}\rho_{x,\sigma}(v)\in \LP(x^{\ast,\sigma,N+1}) = \LP(x^{\ast,\sigma,N}\ast \prescript{\sigma^N}{}x) = \LP(x^{\ast,\sigma,N}\cdot\prescript{\sigma^N}{}(x_\infty)).
\end{align*}
In view of Proposition~\ref{prop:minDistancePairs}, we find a uniquely determined element $\prescript{\sigma^{N}}{}v'\in \LP(x^{\ast,\sigma,N})$ such that
\begin{align*}
(\prescript{\sigma^{N}}{}v', \prescript{\sigma^{N}}{}\rho_{x,\sigma}(v))\in M(x^{\ast,\sigma,N}, \prescript{\sigma^N}{}x).
\end{align*}
Then by Theorem~\ref{thm:demazure},
\begin{align*}
x_\infty = v'\rho_{x,\sigma}(v)^{-1}\varepsilon^{\mu - \rho_{x,\sigma}(v)\wt(v'\Rightarrow w\rho_{x,\sigma}(v))}.
\end{align*}
Note that $\presig v'\in\prescript{\sigma^{1-N}}{}\LP(x^{\ast,\sigma,N}) = \rho_{x,\sigma}^\infty(\LP(x))$. The minimality condition on the tuple
$(\prescript{\sigma^{N}}{}v', \prescript{\sigma^{N}}{}\rho_{x,\sigma}(v))$ moreover implies that $\rho_x(v') = \rho_{x,\sigma}(\presig v') = \rho_{x,\sigma}(v)$ (Lemma~\ref{lem:minimalityConditionReformulation}).

The map $\rho_{x,\sigma} : \rho_{x,\sigma}^\infty(\LP(x))\rightarrow \rho_{x,\sigma}^\infty(\LP(x))$ is a surjective, and the set $\rho_{x,\sigma}^\infty(\LP(x))$ is finite. It follows that the restriction of $\rho_{x,\sigma}$ to  $\rho_{x,\sigma}^\infty(\LP(x))$ is bijective. Recall that $v$ and $\presig v'$ are two elements of $\rho_{x,\sigma}^\infty(\LP(x))$ whose images under $\rho_{x,\sigma}$ coincide. Thus $v =\presig v'$, finishing the proof.\qedhere\end{enumerate}\end{proof}
\begin{theorem}\label{thm:gnpViaDemazure}
\begin{enumerate}[(a)]
\item
The $\sigma$-conjugacy class $[x_\infty]\in B(G)$ is the generic $\sigma$-conjugacy class of $x$.
\item For any $v\in \rho_{x,\sigma}^\infty(\LP(x))$, we have $\ell(x_\infty) = \ell(x)-d(v\Rightarrow\presig(w\rho_{x,\sigma}(v)))$.
\item Fix $v\in \rho_{x,\sigma}^\infty(\LP(x))$ and define $J=\supp_\sigma(\rho_{x,\sigma}(v)^{-1}v)$, so $J\subseteq \Delta$ consists of all $\sigma$-orbits of simple roots whose corresponding simple reflections occur in some reduced decomposition of $\rho_{x,\sigma}(v)^{-1}v\in W$.

We can express the generic Newton point of $x$ as
\begin{align*}
\nu_x = \pi_J\left(v^{-1}\mu - \wt(v\Rightarrow \presig(w v))\right).
\end{align*}
Here, $\pi_J$ denotes the projection function as defined in \cite[Definition~3.2]{Chai2000}.
\end{enumerate}
\end{theorem}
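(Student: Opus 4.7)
The plan is to prove parts (a), (b), (c) in order, with the fundamental element $x_\infty$ from Lemma~\ref{lem:xInfConstruction} as the central object. Since $x_\infty$ is fundamental, $[x_\infty] \in B(G)$ is a well-defined $\sigma$-conjugacy class, and its Newton point can be read off from its explicit presentation in Lemma~\ref{lem:xInfConstruction}(c). The overall strategy is to reduce the generic $\sigma$-class of $x$ to that of $x_\infty$, compute $\ell(x_\infty)$ via Theorem~\ref{thm:demazure}, and finally transport the Newton-point formula for fundamental elements through the description of $x_\infty$.

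\emph{Part (a).} I would invoke He's reduction for generic $\sigma$-conjugacy classes via iterated twisted Demazure products \cite{He2021b}, giving $[b_x] = [b_{x^{\ast,\sigma,n}}]$ for every $n \geq 1$. Combined with the length-additive decomposition
\begin{align*}
x^{\ast,\sigma,n+N} = x^{\ast,\sigma,N} \cdot \prescript{\sigma^N}{}{x_\infty} \cdots \prescript{\sigma^{N+n-1}}{}{x_\infty}
\end{align*}
from Lemma~\ref{lem:xInfConstruction}, and using that a length-additive product of Frobenius shifts of a fundamental element is itself fundamental, one absorbs the prefix $x^{\ast,\sigma,N}$ via a $\sigma$-conjugation. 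The identity $[b_y] = [y]$ for fundamental $y$ (cf.\ \cite[Theorem~1.3]{Nie2015}) then yields $[b_x] = [x_\infty]$.

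\emph{Part (b).} I would apply Theorem~\ref{thm:demazure} to the Demazure product $x^{\ast,\sigma,N} \ast \prescript{\sigma^N}{}{x} = x^{\ast,\sigma,N+1}$ and read off $\ell(x_\infty)$ as the resulting length increment. The minimizing pair $(v_1, v_2) \in M(x^{\ast,\sigma,N}, \prescript{\sigma^N}{}{x})$ is determined by choosing any $v \in \rho_{x,\sigma}^\infty(\LP(x))$ and setting $v_1 := \prescript{\sigma^{N-1}}{}{v}$; Corollary~\ref{cor:lengthPositiveDemazureProduct} ensures $v_1 \in \LP(x^{\ast,\sigma,N})$, while Lemma~\ref{lem:minimalityConditionReformulation} combined with the identity $\rho_{\prescript{\sigma^n}{}{y}} = \prescript{\sigma^n}{}{} \circ \rho_y \circ \prescript{\sigma^{-n}}{}{}$ forces $v_2 = \prescript{\sigma^N}{}{\rho_{x,\sigma}(v)}$. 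Since the quantum Bruhat distance is $\sigma$-equivariant, the length drop $d(v_1 \Rightarrow \prescript{\sigma^N}{}{w}\, v_2)$ simplifies to $d(v \Rightarrow \presig(w\rho_{x,\sigma}(v)))$, as required.

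\emph{Part (c).} The Newton point $\nu_x = \nu_{x_\infty}$ is computed directly from Lemma~\ref{lem:xInfConstruction}(c). For a fundamental element $y = u\varepsilon^\lambda$, one has $\nu_y = \pi_{J'}(u^{-1}\lambda)$, where $J'$ is the $\sigma$-support of $u$ (cf.\ \cite[Section~3]{Chai2000}). Applied to the decomposition $u = (\prescript{\sigma^{-1}}{}{v})\rho_{x,\sigma}(v)^{-1}$ and $\lambda = \mu - \rho_{x,\sigma}(v)\wt(\prescript{\sigma^{-1}}{}{v} \Rightarrow w\rho_{x,\sigma}(v))$, this identifies $J = \supp_\sigma(\rho_{x,\sigma}(v)^{-1}v)$ immediately; rewriting $u^{-1}\lambda$ using Lemma~\ref{lem:weightIdentities} together with $\sigma$-equivariance of $\wt$ produces the stated formula $\pi_J(v^{-1}\mu - \wt(v \Rightarrow \presig(wv)))$. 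The main obstacle will be the $\sigma$-twist bookkeeping in this last step: one has to verify that, modulo $\mathbb{Z}\Phi_J^\vee$ (which is killed by $\pi_J$), the difference between $u^{-1}\lambda$ and $v^{-1}\mu - \wt(v\Rightarrow\presig(wv))$ vanishes. A secondary delicate point is ensuring that He's input used in Part~(a) transfers cleanly to the extended affine setting, but this should follow by routine extension of \cite{He2021b}.
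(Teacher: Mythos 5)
Your parts (a) and (b) are correct but take a different route from the paper, while part (c) has a genuine gap.

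For (a), you invoke He's reduction $[b_x]=[b_{x^{\ast,\sigma,n}}]$ from \cite{He2021b}; the paper instead uses Viehmann's description $[b_x]=\max\{[y]\mid y\leq x,\ y\ \text{fundamental}\}$ from \cite{Viehmann2014} and compares $\langle\nu(x_\infty),2\rho\rangle$ with $\langle\nu(y),2\rho\rangle$ via the limit $\lim_n \ell(x^{\ast,\sigma,n})/n$. Both are legitimate; note the paper deliberately avoids citing \cite{He2021b} here because part (a) is meant to \emph{reprove} that result (see the remark following the theorem), so your route trades a self-contained argument for a shorter one. For (b), you read the length drop directly off Theorem~\ref{thm:demazure} applied to $x^{\ast,\sigma,N}\ast\prescript{\sigma^N}{}x$, whereas the paper plugs the explicit formula of Lemma~\ref{lem:xInfConstruction}(c) into the length formula (Lemma~\ref{lem:positiveLengthFormula}) together with $\rho_{x,\sigma}(v)\in\LP(x_\infty)$; both give the same identity.

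For (c) the proposal does not go through as stated. The claimed off-the-shelf formula ``$\nu_y=\pi_{J'}(u^{-1}\lambda)$ with $J'=\supp_\sigma(u)$ for fundamental $y=u\varepsilon^\lambda$'' is not what the machinery provides: the formula the paper actually uses is $\nu_{x_\infty}=\pi_J(\rho_{x,\sigma}(v)^{-1}\mu_\infty)$ with a \emph{length-positive} element $\rho_{x,\sigma}(v)\in\LP(x_\infty)$ in place of the classical part, and with $J$ the $\sigma$-support of $\rho_{x,\sigma}(v)^{-1}\presig(w_\infty\rho_{x,\sigma}(v))$, and it only applies after one verifies that the averaged vector
\begin{align*}
v^{-1}\sum_{k=1}^N(\sigma\circ w_\infty)^k\mu_\infty
\end{align*}
is dominant. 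That dominance check is the substantive content of the paper's proof of (c): it exploits that $\presig(w_\infty v)$ remains in $\rho^\infty_{x,\sigma}(\LP(x))$ and rewrites each summand as a nonnegative length functional $\ell(x_\infty,(\sigma\circ w_\infty)^k v\alpha)\geq 0$. Your write-up omits this entirely, treating the Newton-point formula as a black box; without it the reduction to $\pi_J(v^{-1}\mu-\wt(v\Rightarrow\presig(wv)))$ is unjustified. You should add the dominance verification (or equivalently cite and apply the companion paper's Lemma on Newton points as averages with its hypothesis), and replace the formula in terms of $u^{-1}\lambda$ with the correct one using a length-positive representative.
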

\begin{proof}
\begin{enumerate}[(a)]
\item
By a result of Viehmann \cite[Corollary~5.6]{Viehmann2014}, we can express the generic $\sigma$-conjugacy class of $x$ as
\begin{align*}
[b_x] = \max\{[y]\mid y\leq x\} = \max\{[y]\mid y\leq x\text{ and }y\text{ is fundamental}\}.
\end{align*}
In particular, $[b_x]\geq [x_\infty]$. For the converse inequality, pick some $y\leq x$ fundamental with $[b_x] = [y]\in B(G)$.

By definition of the Demazure product, we get
\begin{align*}
x^{\ast,\sigma,n} = x \ast \left(\presig x\right)\cdots\ast \left(\prescript{\sigma^{n-1}}{}x\right) \geq y \left(\presig y\right)\cdots \left(\prescript{\sigma^{n-1}}{}y\right).
\end{align*}
Thus, using the fact that $y$ and $x_\infty$ are fundamental, we get
\begin{align*}
\langle \nu(x_\infty),2\rho\rangle =& \ell(x_\infty) = \lim_{n\to \infty} \frac{\ell(x^{\ast,\sigma,n})}n \\\geq& \lim_{n\to \infty} \frac{\ell(y \presig y\cdots \prescript{\sigma^{n-1}}{}y)}n = \lim_{n\to \infty} \ell(y) = \langle \nu(y),2\rho\rangle = \langle \nu(b_x),2\rho\rangle.
\end{align*}
This estimate shows that $[x_\infty] = [b_x]$.

\item This follows from the explicit description of $x_\infty$ in Lemma \ref{lem:xInfConstruction} together with Lemma \ref{lem:positiveLengthFormula}
and the simple observation $\rho_{x,\sigma}(v)\in \LP(x_\infty)$.
\item Let us write $x_\infty = w_\infty\varepsilon^{\mu_\infty}$. The generic Newton point of $x$ is the Newton point of $x_\infty$, which we express using \crossRef{Lemma~}{lem:newtonPointsAsAverages}.

Let $N\geq 1$ such that the action of $(\sigma\circ w_\infty)$ on $X_\ast$ becomes trivial. We want to show for each $v\in \rho_{x,\sigma}^\infty(\LP(x))$ that
\begin{align*}
v^{-1}\sum_{k=1}^N(\sigma\circ w_\infty)^k \mu_\infty\in X_\ast\otimes\mathbb Q
\end{align*}
is dominant.

Note each $v\in \rho_{x,\sigma}^\infty(\LP(x))$ may be written as $v = \rho_{x,\sigma}(u)$ for some $u\in \rho_{x,\sigma}^\infty(\LP(x))$. By Lemma~\ref{lem:xInfConstruction}, it follows that $w_\infty = (\prescript{\sigma^{-1}}{}u)v^{-1}$. Thus $u = \presig(w_\infty v)\in \rho_{x,\sigma}^\infty(\LP(x))$. This shows $\presig(w_\infty v)\in \rho_{x,\sigma}^\infty(\LP(x))$ for each $v\in \rho_{x,\sigma}^\infty(\LP(x))$. It follows for each $\alpha\in \Phi^+$ that
\begin{align*}
&\left\langle v^{-1}\sum_{k=1}^N(\sigma\circ w_\infty)^k \mu_\infty,\alpha\right\rangle =\sum_{k=1}^N \langle \mu_\infty,(\sigma\circ w_\infty)^k v\alpha\rangle
\\=&\sum_{k=1}^N \left(\langle \mu_\infty,(\sigma\circ w_\infty)^k v\alpha\rangle + \Phi^+((\sigma\circ w_\infty)^k v\alpha) - \Phi^+((\sigma\circ w_\infty)^{k+1} v\alpha)\right)
\\=&\sum_{k=1}^N \ell(x_\infty,(\sigma\circ w_\infty)^k v\alpha)\geq 0.
\end{align*}

This shows the above dominance claim. As $v\in \rho_{x,\sigma}^\infty(\LP(x))$ was arbitrary, the same claim holds for $\rho_{x,\sigma}(v)$. With \begin{align*}J:=\supp_\sigma(\rho_{x,\sigma}(v)^{-1}\presig(w_\infty \rho_{x,\sigma}(v))) = \supp_\sigma(\rho_{x,\sigma}(v)^{-1} v),\end{align*}  \crossRef{Lemma~}{lem:newtonPointsAsAverages} proves that
\begin{align*}
\nu(x_\infty) =~& \pi_J(\rho_{x,\sigma}(v)^{-1}\mu_\infty) \underset{\text{L\ref{lem:xInfConstruction}}}=\pi_J(\rho_{x,\sigma}(v)^{-1}\mu - \wt(\prescript{\sigma^{-1}}{} v\Rightarrow w\rho_{x,\sigma}(v)))
\\\underset{\text{Def.~}\pi_J}=&\pi_J(\rho_{x,\sigma}(v)^{-1}\mu - \wt(v\Rightarrow\presig(w\rho_{x,\sigma}(v)))).
\end{align*}
Now observe that
\begin{align*}
\rho_{x,\sigma}(v)^{-1}\mu\equiv \,&v^{-1}\mu\pmod{\mathbb Q\Phi_J^\vee},
\\
\wt(v\Rightarrow\presig(w\rho_{x,\sigma}(v)))\equiv\,& \wt(v\Rightarrow\presig(wv))\pmod{\mathbb Q\Phi_J^\vee}.\qedhere
\end{align*}
\end{enumerate}
\end{proof}
Part (a) of the above Theorem readily implies \cite[Theorem~0.1]{He2021b}. Our previous result \crossRef{Corollary~}{cor:genericGKPMinDistance} expresses the generic Newton point $\nu_x$ as a formula similar to part (c) of the above Theorem, but the allowed elements $v\in \LP(x)$ in the cited result are usually different ones. If $x$ is in a shrunken Weyl chamber, this formula for the generic Newton point coincides with \cite[Proposition~3.1]{He2021c}.
\fi
\addcontentsline{toc}{section}{Bibliography}
\printbibliography
\end{document}